\newif\ifofficial
\theoremstyle:=definition,remark,plain\do{%
        \expandafter\g@addto@macro\csname th@\theoremstyle\endcsname{%
            \addtolength\thm@preskip\parskip
            }%
        }
  \newtheorem{definition}{Definition}[section]
  \newtheorem{lemma}{Lemma}[section]
  \newtheorem{notation}{Notation}[section]
  \newtheorem{theorem}{Theorem}[section]
  \newtheorem{proposition}{Proposition}[section]
  \newtheorem{corollary}{Corollary}[section]
\declaretheorem[style=definition,qed=$\triangle$,numberwithin=section]{example}
\declaretheorem[style=definition,qed=$\triangle$,numberwithin=section]{remark}
\numberwithin{equation}{section}
\DeclarePairedDelimiterX{\rvect}[1]{[}{]}{\,\makervect{#1}\,}
\NewDocumentCommand{\makervect}{m}
 {
  \seq_set_split:Nnn \l_tmpa_seq { , } { #1 }
  \begin{matrix}
  \seq_use:Nn \l_tmpa_seq { & }
  \end{matrix}
 }
\newif\iflocal
\renewcommand{\epigraphsize}{\small}
\renewcommand{\textflush}{flushright}
\renewcommand{\sourceflush}{flushright}
\newcommand{\epitextfont}{\itshape}
\newcommand{\episourcefont}{\scshape}
\newsavebox{\epi@textbox}
\newsavebox{\epi@sourcebox}
\newlength\epi@finalwidth
\renewcommand{\epigraph}[2]{%
  \vspace{\beforeepigraphskip}
  {\epigraphsize\begin{\epigraphflush}
   \epi@finalwidth=\z@
   \sbox\epi@textbox{%
     \varwidth{\epigraphwidth}
     \begin{\textflush}\epitextfont#1\end{\textflush}
     \endvarwidth
   }%
   \epi@finalwidth=\wd\epi@textbox
   \sbox\epi@sourcebox{%
     \varwidth{\epigraphwidth}
     \begin{\sourceflush}\episourcefont#2\end{\sourceflush}%
     \endvarwidth
   }%
   \ifdim\wd\epi@sourcebox>\epi@finalwidth 
     \epi@finalwidth=\wd\epi@sourcebox
   \fi
   \leavevmode\vbox{
     \hb@xt@\epi@finalwidth{\hfil\box\epi@textbox}
     \vskip1.75ex
     \hrule height \epigraphrule
     \vskip.75ex
     \hb@xt@\epi@finalwidth{\hfil\box\epi@sourcebox}
   }%
   \end{\epigraphflush}
   \vspace{\afterepigraphskip}}}
\newcommand{\graphf}[1]{#1}
\newcommand{\cev}[1]{\reflectbox{\ensuremath{\vec{\reflectbox{\ensuremath{#1}}}}}}
\newcommand{\hype}{{\hbox{-}}}
\DeclareMathOperator{\folds}{folds}
\DeclareMathOperator{\merges}{merges}
\DeclareMathOperator{\gap}{gap}
\DeclareMathOperator{\red}{red}
\DeclareMathOperator{\Var}{Var}
\DeclareMathOperator{\spr}{spr}
\DeclareMathOperator{\fix}{fix}
\DeclareMathOperator{\acyc}{acyc}
\DeclareMathOperator{\lcm}{lcm}
\newcommand{\bigzero}{\mbox{\normalfont\Large\bfseries 0}}
\newcommand{\Zk}{\mathbb{Z}_k}
\newcommand{\SEQ}{\textsc{Seq}}
\newcommand{\disjun}{\mathbin{\dot{\cup}}}
\newcommand{\caret}{{}^{\wedge}}
\newcommand{\too}{\longrightarrow}
\begin{document}
\renewcommand\thmcontinues[1]{Continued}

\begin{titlepage}
\pagenumbering{gobble}
\begin{center}
{\LARGE
Enumerative properties of restricted words and compositions}

\vspace{1cm}
{\large Andrew MacFie}

\vspace{1cm}
\ifofficial
{\small
A thesis submitted to the Faculty of Graduate and Postdoctoral Affairs
in partial fulfillment of the requirements for the degree of}

{\large Doctor of Philosophy}\\
{\small in}\\
{\large Mathematics}

\vspace{2cm}
{\small
School of Mathematics and Statistics\\
Ottawa-Carleton Institute for Mathematics and Statistics\\
Carleton University\\
Ottawa, Ontario}

{©2018 Andrew MacFie}
\fi

\end{center}
\end{titlepage}

\pagenumbering{roman}
\setcounter{page}{2}

\ifofficial
\newpage
\emph{It is not knowledge, but the act of learning, not possession but the act of
getting there, which grants the greatest enjoyment.}\\
\begin{flushright}
-Carl Friedrich Gauss, 1808
\end{flushright}
\fi

\section*{Abstract}

Words and integer compositions are fundamental combinatorial objects.
In each case, the object is a finite sequence of terms over a particular set.
Relevant properties, sometimes called \enquote{parameters}, are the length of
the sequence and, for integer compositions, the sum of the sequence.

There has been interest within enumerative combinatorics in counting words
and compositions, especially restricted variations where the objects satisfy
extra conditions.
\enquote{Local} restrictions are related to contiguous subsequences, for
example Smirnov words where adjacent letters must be different.
For integer compositions or words over an ordered alphabet, a
\enquote{subword pattern avoidance} restriction requires all contiguous
subsequences of a fixed length to not satisfy a certain relative ordering.
For example, we may count compositions not containing a strictly increasing
contiguous subsequence of length three.
\enquote{Global} restrictions, on the other hand, are related to arbitrary
subsequences.
A \enquote{subsequence pattern avoidance} restriction requires all subsequences
of a fixed length to not satisfy a certain relative ordering.

Beyond sequences we may also consider objects with different structures, and
interpret local and global restrictions appropriately.
We say \enquote{cyclically restricted} finite sequences are those where the
last and first terms are considered adjacent for the purposes of the
restriction, i.e.\ the restriction wraps around from the end to the start.
\enquote{Circular} objects are the orbits of finite sequences under circular
shifts, so all circular shifts of a finite sequence are considered the same
object.

We can generalize integer compositions by replacing the semigroup of
positive integers with a different additive semigroup, giving the broader
concept of a \enquote{composition over a semigroup}, i.e.\ a finite sequence
with a certain sum over the semigroup.
Beyond the positive integers,
we focus on semigroups which are finite groups -- where such
compositions are in fact also \enquote{words} in the group theory sense.
Compositions over a finite group are relatively little-studied in combinatorics
but turn out to be amenable to combinatorial analysis in analogy to both words
and integer compositions.

In this document we achieve exact and asymptotic enumeration of
words,
compositions over a finite group,
and/or
integer compositions
characterized by local
restrictions and, separately, subsequence pattern avoidance.
We also count cyclically restricted and circular objects.
This either fills gaps in the current literature by e.g.\ considering
particular new patterns, or involves general progress, notably with locally
restricted compositions over a finite group.
We associate these compositions to walks on a
covering graph whose structure is exploited to simplify asymptotic expressions.
Specifically, we show that under certain conditions the number of locally
restricted compositions of a group element is asymptotically independent of
the group element.
For some problems our results extend to the case of a positive number of
subword pattern occurrences (instead of zero for pattern avoidance) or
convergence in distribution of the normalized number of occurrences.
We typically apply the more general propositions to concrete examples such as
the familiar Carlitz compositions or simple subword patterns.


\ifofficial
\section*{Acknowledgements}

I thank Zhicheng Gao, not just for supervision on this thesis but as well for
collaboration and opportunities over the years.
I am also grateful for the suggestions and corrections of the thesis committee
which includes Toufik Mansour, Mike Newman, Daniel Panario, and Michiel Smid.
Professor Panario also played a big role in letting me get into combinatorics
research.

This work was done in part on the premises of the Fields Institute for Research
in Mathematical Sciences and the Massachussetts Institute of Technology.
\fi

\setcounter{tocdepth}{4}
\tableofcontents

\ifofficial
\cleardoublepage
\pagenumbering{arabic}
\addcontentsline{toc}{section}{List of tables}
\listoftables
\cleardoublepage
\addcontentsline{toc}{section}{List of figures}
\listoffigures
\else
\pagenumbering{arabic}
\fi

\section*{Notation}
\addcontentsline{toc}{section}{Notation}

All $n$-tuples over set $\Xi$ \cite{ac}: $\SEQ_n(\Xi)$

All finite sequences over set $\Xi$: $\SEQ(\Xi) = \cup_n \SEQ_n(\Xi)$

Asymptotic equivalence; $f(n)$ asymptotic to $g(n)$:
$\lim_{n \to \infty} f(n)/g(n) = 1 \iff f(n) \sim g(n)$

Big-Oh;
there is $c>0$ such that for all sufficiently large $n$, $|f(n)| \leq c g(n)$:
$f(n) = O(g(n))$

Big-Theta;
there are $c,d>0$ such that for all sufficiently large $n$,
$|f(n)| \leq c g(n)$ and $|f(n)| \geq d g(n)$:
$f(n) = \Theta(g(n))$

Cardinality: $|\Xi|$

Closed neighborhood: $N[v]$,
open neighborhood (excludes $v$ unless there is a loop) \cite{bondy1976graph}:
$N(v)$

Concatenation of finite sequences: $(a, b)^\frown (c, d) = (a,b,c,d)$

Convergence in distribution, weak convergence: $\Rightarrow$

Disjoint union: $\disjun$

Falling factorial: $n^{\underline{k}} = n(n-1)\cdots (n-k+1)$

Finite cyclic groups: $\mathbb{Z}_k = \{0,\ldots,k-1\}$

Finite sequence short form: $1^4 2 3^2 = (1,1,1,1,2,3,3)$

First $k$ positive integers: $[k] = \{ n : 1 \leq n \leq k, n \in \mathbb{Z} \}$

In-neighborhoods (open and closed) \cite{bondy1976graph}: $N^-(v), N^-[v]$,
out-neighborhoods: $N^+(v), N^+[v]$

Iverson bracket; $1$ if the
statement $\phi$ is true and $0$ otherwise \cite{knuth1992two}:
$[\phi]$

Matrix entry: $[M]_{i,j}$

Matrix/vector literal:
$\left[
\begin{array}{c}
 1 \\
 2 \\
\end{array}
\right] = \rvect{1, 2}^\top$

Non-negative integers: $\mathbb{Z}_{\geq 0} = \{0, 1, 2, \ldots\}$

Normal distribution function with mean $\mu$, variance $\sigma^2$:
$N(\mu, \sigma^2)$

Partial derivative of power series with respect to indeterminate $u$: $D_u f$

Positive integers: $\mathbb{Z}_{>0} = \{1, 2, 3, \ldots\}$

Reversal of finite sequence: if $a = (a(1), \ldots, a(m))$, then
$\cev{a} = (a(m), \ldots, a(1))$

Stirling subset numbers (Stirling numbers of the second kind)
  \cite[p.\ 258]{concrete}: $\left\{ {m \atop k} \right\}$

Subset: $\subseteq$, strict subset: $\subset$

Sum of finite sequence $a$: $\Sigma a$ (capital sigma)

Transitive closure of arc relation: $\longrightarrow$ (long arrow)

%
%
%
%
%
%
%
%
%
%
%
%
%
%
%
%
%
%
%
%
%
%
%
%
%
%
%

\section{Introduction} \label{sec:intro}

If $\Xi$ is a finite set (sometimes called an \emph{alphabet}), a \emph{word}
$w$ over $\Xi$ is a sequence $w = (w(1), \ldots, w(m))$ where $w(i) \in \Xi$
for each $i$.
In particular, if $|\Xi| = k$ we call $w$ a $k$-ary $m$-word.
Without loss of generality, if $|\Xi| = k$ we assume
$\Xi = [k] = \{1, \ldots, k\}$, which
is an alphabet with a total order.
The terms that make up a word are called letters.
Of course the number of all $k$-ary $m$-words is $k^m$.

If $(S, +)$ is a semigroup, an \emph{$m$-composition of $s \in S$ over $S$} is
a sequence $x = (x(1), \ldots, x(m))$ where $x(i) \in S$ for each $i$,
and $\Sigma x = x(1) + \cdots + x(m) = s$.
If $S$ is finite, a composition over $S$ and a word over $S$ mean the same
thing; the difference is that we use the word \emph{composition} in the context
where we pay attention to the sum of the sequence.
The prototypical compositions are integer compositions, where
$(S, +) = (\mathbb{Z}_{> 0}, +)$.
The terms that make up a composition are called parts.
A simple exercise gives that the number of $m$-compositions of $n$ over
the positive integers is $\binom{n-1}{m-1}$.

A \emph{subword} of a finite sequence is a contiguous subsequence, so
$(a,a,c,b)$ is a subword of $(a,a,a,a,c,b,b)$.
For any kind of finite sequences we may sometimes use the shorthand word
notation $(a,a,a,a,c,b,b) = a^4 c b^2 = aaaacbb$.

A directed graph (\emph{digraph}) is a pair $(V, E)$, where $V$ is a
finite set (the \enquote{vertices}), and $E \subseteq V \times V$ is a binary
relation (the \enquote{directed edges} or \enquote{arcs}).
If a digraph is specified only by its arcs, the vertices are taken to be
all those which appear in an arc.
A \emph{weighted digraph} is a digraph $(V,E)$ together with a vertex weight
function $W: V \to S$, where $S$ is a fixed semigroup.
Words and compositions are both finite sequences over a set.
Equivalently, we may regard them as directed paths (digraphs with an ordered
set of vertices and arcs from predecessors to successors) where vertices take
weights from the set (which is taken without loss of generality to be a
semigroup).
The benefit of this view comes when generalizing beyond directed paths to
different types of weighted digraphs.

Our goal, ultimately, is to count weighted digraphs.
Specifically, we are interested in counting how many of these objects
satisfy a certain restriction.
Below, we describe a general concept of pattern occurrence and avoidance
in weighted digraphs which we can use to express restricted weighted digraph
families.
The familiar definitions of patterns in words and compositions (e.g.\
\cite{cofc}) are available as special cases.

If $\Gamma$ is a digraph, we write $V(\Gamma)$ and $E(\Gamma)$ for the sets
of vertices and arcs of $\Gamma$.
Given digraphs $\Gamma_1, \Gamma_2$, a digraph \emph{homomorphism} is a function
$h: V(\Gamma_1) \to V(\Gamma_2)$ such that for any two vertices
$u, v \in V(\Gamma_1)$, we have
\[ (u,v) \in E(\Gamma_1) \implies (h(u), h(v)) \in E(\Gamma_2). \]
If $\Gamma_1, \Gamma_2$ have weight functions $W_1, W_2$, a \emph{weighted
homomorphism} is a homomorphism $h$ such that $W_1(v) = W_2(h(v))$ for all
$v \in V(\Gamma_1)$.
A (weighted) \emph{isomorphism} is a bijective (weighted) homomorphism.

\begin{notation}
\label{not:disjun}
We use the notation $A \disjun B$ to denote the union of the disjoint
sets $A$ and $B$.
\end{notation}

A one-vertex subdivision of a digraph $(V, E)$ is a new digraph $(V', E')$,
where $V' = V \disjun \{v\}$ and for some $(v_1, v_2) \in E$, we have
\[ E' = \{(v_1, v), (v, v_2)\} \cup E \setminus \{(v_1, v_2)\}. \]
A weighted one-vertex subdivision is one where the weight function is not
modified except for the new vertex $v$, which may take any weight.
In general, a \emph{subdivision} of a digraph is a digraph obtained
by $0$ or more one-vertex subdivisions.
In the context of weighted graphs, subdivisions are always weighted.

\begin{example}
Figure~\ref{fig:undivided} shows digraphs $\Gamma, \Gamma'$.
The digraph $\Gamma'$ is a subdivision of $\Gamma$ obtained by adding
the vertices $u_5, u_6$ which are shown in bold.\qedhere

\begin{figure}
\centering
\begin{tikzpicture}[scale=0.9]
\begin{scope}[every node/.style={circle,thick,draw}]
    \node[label={below:$1$}] (A) at (0,0) {$u_1$};
    \node[label={below:$2$}] (B) at (2,0) {$u_2$};
    \node[label={-45:$3$}] (C) at (4,1) {$u_3$};
    \node[label={below:$1$}] (D) at (4,-1) {$u_4$};
\end{scope}

\begin{scope}[>={Stealth[black]},
              every node/.style={fill=white,circle},
              every edge/.style={draw=gray,very thick}]
    \path [->] (A) edge (B);
    \path [->] (B) edge (C);
    \path [->] (C) edge (D);
    \path [->] (D) edge (B);

\end{scope}
\end{tikzpicture}

\vspace{0.5cm}
\centering
\begin{tikzpicture}[scale=0.9]
\begin{scope}[every node/.style={circle,thick,draw}]
    \node[label={below:$1$}] (A) at (-2,0) {$u_1$};
    \node[label={below:$2$}] (B) at (2,0) {$u_2$};
    \node[label={below:$3$}] (C) at (6,0) {$u_3$};
    \node[label={below:$1$}] (D) at (4,-1) {$u_4$};
    \node[label={below:$3$}] (E) at (0,0) {$\bm{u_5}$};
    \node[label={below:$3$}] (F) at (4,1) {$\bm{u_6}$};
\end{scope}

\begin{scope}[>={Stealth[black]},
              every node/.style={fill=white,circle},
              every edge/.style={draw=gray,very thick}]
    \path [->] (A) edge (E);
    \path [->] (E) edge (B);
    \path [->] (B) edge (F);
    \path [->] (F) edge (C);
    \path [->] (C) edge (D);
    \path [->] (D) edge (B);

\end{scope}
\end{tikzpicture}

\caption{Weighted digraphs $\Gamma$ (above) and $\Gamma'$ (below).
  Integer vertex weights are shown below the corresponding vertices.
\label{fig:undivided}}
\end{figure}

\end{example}

Given weighted digraphs $\Gamma$, $P$, a subdivision $P'$ of $P$, and a
subgraph $\Gamma_1$ of $\Gamma$, if we have a weighted isomorphism $f: V(P')
\to V(\Gamma_1)$, we say that $f_{|V(P)}$ is the \emph{match of $f$ with
respect to $P$}.
A \emph{local occurrence of $P$ in $\Gamma$} is the match with respect to $P$
of some weighted $f$ from $P$ to a subgraph of $\Gamma$.
A \emph{global occurrence of $P$} is the match with respect to $P$ of some
weighted $f$ from any subdivision $P'$ of $P$ to a subgraph of $\Gamma$.

That is, global occurrences may map adjacent vertices in $P$ to non-adjacent
vertices in $\Gamma$ while local occurrences cannot.
The semigroup $S$ of weights is always the same for $\Gamma$ and $P$.

\begin{example}
Figure~\ref{fig:c6_123} shows weighted digraphs $\Gamma$ and $P$.
There exist no local occurrences of $P$ in $\Gamma$ but many global occurrences.
One global occurrence is given by matching the
vertices
\[ u_1 \mapsto v_1, u_2 \mapsto v_2, u_3 \mapsto v_3. \]
Another is given by
\[ u_1 \mapsto v_1, u_2 \mapsto v_5, u_3 \mapsto v_6. \qedhere\]
\end{example}


\begin{figure}
\centering
\begin{tikzpicture}[scale=0.9]
\begin{scope}[every node/.style={circle,thick,draw}]
    \node[label={-135:$1$}] (A) at (-2,-1) {$v_1$};
    \node[label={below:$2$}] (B) at (-2,-3) {$v_2$};
    \node[label={below:$3$}] (C) at (0,-4) {$v_3$};
    \node[label={below:$1$}] (D) at (2,-3) {$v_4$};
    \node[label={-45:$2$}] (E) at (2,-1) {$v_5$};
    \node[label={below:$3$}] (F) at (0,0) {$v_6$};
\end{scope}

\begin{scope}[>={Stealth[black]},
              every node/.style={fill=white,circle},
              every edge/.style={draw=gray,very thick}]
    \path [->] (A) edge (B);
    \path [->] (B) edge (C);
    \path [->] (C) edge (D);
    \path [->] (D) edge (E);
    \path [->] (E) edge (F);
    \path [->] (F) edge (A);

\end{scope}
\end{tikzpicture}
\centering
\begin{tikzpicture}[scale=0.9]
\begin{scope}[every node/.style={circle,thick,draw}]
    \node[label={below:$1$}] (A) at (0,0) {$u_1$};
    \node[label={below:$2$}] (B) at (2,0) {$u_2$};
    \node[label={below:$3$}] (C) at (1,2) {$u_3$};
\end{scope}

\begin{scope}[>={Stealth[black]},
              every node/.style={fill=white,circle},
              every edge/.style={draw=gray,very thick}]
    \path [->] (A) edge (B);
    \path [->] (B) edge (C);
    \path [->] (C) edge (A);

\end{scope}
\end{tikzpicture}
\caption{Weighted digraphs $\Gamma$ (left) and $P$ (right).
\label{fig:c6_123}}
\end{figure}

The size of a digraph is the number of arcs it contains.
A \emph{digraph pattern} is a set $\mathcal{P}$ of weighted digraphs such
that the sizes of digraphs in $\mathcal{P}$ form a bounded set.
The elements $P \in \mathcal{P}$ are \emph{pattern instances} and for
our purposes, the instances of $\mathcal{P}$ are always one or more
different weightings of a single digraph.
A digraph $\Gamma$ has $r$ occurrences of $\mathcal{P}$ if the total, over all
instances $P \in \mathcal{P}$, of the number of occurrences of $P$ in $\Gamma$
is $r$.
Avoiding a digraph pattern means having $0$ occurrences, and avoidance
of a set of patterns means avoiding each of the patterns.

Given an arbitrary finite vertex set, say $V = [n]$,
an \emph{unlabeled} weighted digraph $\tilde{\Gamma}$ on $V$ is an equivalence
class of weighted digraphs on $V$ where $\Gamma_1$ and $\Gamma_2$ are
equivalent if there is a weighted isomorphism from $\Gamma_1$ to $\Gamma_2$.
Relabeling vertices has no effect on digraph pattern matching
because it does not affect the structure of the digraph or its weights,
so we may speak of the number of occurrences of a pattern in an unlabeled
weighted digraph.

The sum, a.k.a.\ total, of a weighted digraph $\Gamma$ is
$\sum_{v \in V(\Gamma)} W(v)$.
This expression is always well defined for abelian semigroups $S$.
For non-abelian $S$ we must have labeled digraphs, and the vertices
must have a fixed total order which determines the order of summation.

\begin{example} \label{exa:pathcycle}
Define the set of directed paths
$$\big\{ \{(j, j+1) : 1 \leq j < n \}: n \geq 1 \big \},$$
and define the set of directed cycles
$$\big\{ \{(j, j+1) : 1 \leq j < n \} \cup \{(n,1)\}: n \geq 1
  \big \}.$$
With the terminology of \cite{cofc} we may make the following identifications.
Weighted paths are words or compositions, and
weighted cycles are cyclic words or compositions (where the last term is
considered to precede the first for pattern occurrence purposes).
Unlabeled weighted cycles over the vertex sets $[n]$
correspond to circular words or compositions.
\end{example}

In the remainder, \enquote{path} means directed path and \enquote{cycle}
means directed cycle.

With the above concepts laid down we are able to describe a wide taxonomy of
counting problems which all ask, how many weighted digraphs are there with $r$
occurrences of a digraph pattern $\mathcal{P}$?
The primary dimensions of this taxonomy follow.
\begin{itemize}
\item \emph{Class of digraphs.}
  There are many options for the kind of digraph that we are counting.
  Paths and cycles are the most basic, but others could be used:
  regular, planar, bipartite, et cetera.
  The elements of $\mathcal{P}$ are also digraphs and can come in any
  form.
  We largely focus on digraph patterns $\mathcal{P}$ that consist of various
  weightings of a path.
\item \emph{Labeled vs.\ unlabeled.}
  The digraphs we count may be either labeled or unlabeled.
  Usually counting the labeled case is a prerequisite for the unlabeled case.
\item \emph{Local vs.\ global occurrences.}
  If we ask for $r$ occurrences, we are either talking about local or global
  occurrences.
\item \emph{Track size, total, or both.}
  When counting words, any algebraic structure of the alphabet is ignored,
  unlike with compositions.
  Similarly for digraphs, we may or may not keep track of the total.
\item \emph{Choice of semigroup.}
  Any semigroup could be used as long as it gives finite counts, e.g.\ the
  number of $6$-compositions of $17$ over $\mathbb{Z}$ is infinite.
\end{itemize}

The remainder of this document contains solutions to a selection of problems
from the taxonomy just described.
We largely defer discussions of the relevant prior literature to the
sections that follow due to their heterogeneity.
However, we mention here the 2010 book \cite{cofc} by Heubach and Mansour which
is a useful reference for many of the topics of this document, especially for
exact (as opposed to asymptotic) counting.
The remaining sections of this document are organized as follows.

We begin in \S \ref{sec:localpaths} with local occurrences in weighted
paths, where the semigroup $S$ is a finite group.
A weighted path with no local occurrences of some pattern is known as a
locally restricted composition, assuming we track the total.
(In this and subsequent sections we generally use the familiar concepts, such
as \enquote{compositions}, although we refer to weighted digraphs when useful.)
We find, under conditions, that the number of locally restricted
compositions of a group element is asymptotically independent of the group
element.
We reach the same conclusion for compositions containing $r>0$ local
occurrences of a pattern.
After verifying these conditions for a variety of examples, we
show that under similar conditions the number of local occurrences in a random
composition is asymptotically normal.
In \S \ref{sec:transfer} we make a note on when the matrices used in the
transfer matrix method can be reduced in size for computational and
practical benefits.
This section and others with heading \enquote{Note on\ldots} are extended
remarks which briefly introduce relevant lines of research.
The problem of counting directed rooted trees is noted in \S \ref{sec:trees},
also in the context of local pattern occurrence.

Next, \S \ref{sec:cycliccomps} is similar to \S \ref{sec:localpaths} but counts
digraphs which are cycles rather than paths, which correspond to
objects known as locally cyclically restricted compositions.
Again we find that under conditions the asymptotic number of
such compositions of a finite group element does not depend
on the group element, and we show asymptotic normality of the number of
local pattern occurrences.
In \S \ref{sec:intcomps} we note how to count locally cyclically restricted
integer compositions, i.e.\ cycles weighted by $\mathbb{Z}_{> 0}$,
in the framework of locally restricted integer compositions of \cite{infinite}.

The results of \S \ref{sec:circcomps} together with Moebius inversion
allow us to count circular locally restricted
compositions over a finite group which is done in \S \ref{sec:circcomps}.
As in Example \ref{exa:pathcycle},
circular objects correspond to unlabeled weighted cycles.
In \S \ref{sec:palcomps} we note how to count \enquote{undirected}
locally restricted compositions, i.e.\ unlabeled weighted undirected paths.

\enquote{Subsequence patterns} and \enquote{generalized patterns}
are types of patterns that are used in the context of ordered semigroups $S$.
In the language of this section they are digraph patterns made up of all paths
that have a certain size and a certain relative ordering among the vertex
weights.
\enquote{Partially ordered patterns} can be used to represent a set of
subsequence patterns.
Subsequence and partially ordered patterns are used in the context of
global occurrences, while generalized patterns actually specify which
arcs may be subdivided and which may not.
In \S \ref{sec:pairs} we count weighted paths, specifically words or integer
compositions, that avoid different pairs of generalized patterns.
The counting results in \S \ref{sec:pops} are concerned with words or integer
compositions that avoid a family of partially ordered patterns
(roughly, patterns where the maximal weights must be at the beginning or end).
In \S \ref{sec:subseqsym} we note how to adapt results for subsequence
pattern avoidance in words to circular words (unlabeled weighted cycles) or
\enquote{undirected} words (unlabeled weighted undirected paths).
We make a note on subsequence pattern avoidance in objects other than words and
integer compositions in \S \ref{sec:zk}, namely in compositions over
$\mathbb{Z}_k$.
Our technique involves using the multisection formula together with results
for integer compositions, and we apply it to an example partially
ordered pattern.

Finally, we list open problems in \S \ref{sec:conc}.

\section{Locally restricted compositions} \label{sec:labeledlocal}

A locally restricted composition is one that avoids a certain set of length-$l$
sequences as subwords.
Over the integers, these objects have been studied successfully in a number
of papers by Bender et al.\ \cite{infinite, compositionsiv, bender2014part}.
In fact, those works include somewhat more general restrictions, where a
subword may or may not be allowed based on the residue of its position in a
composition, and special rules can apply to parts near the beginning or end.
Under some conditions, asymptotics for the number of locally restricted integer
compositions were given in \cite{infinite}.
That paper also established a normal limiting distribution for the number of
occurrences of a subword in a uniform random integer composition.
The later papers \cite{compositionsiv, bender2014part} focused on the
probability distributions of part sizes and other parameters.

Given two sequences $x,y$ of the same length over ordered sets, we say
$x$ and $y$ are order isomorphic if
$x(i) < x(j) \iff y(i) < y(j)$ for all $i,j$.
A \emph{subword pattern} $\tau$ is a word over $[k]$.
Assume the length of $\tau$ is $l$.
An \emph{occurrence} of $\tau$, as a subword pattern, in $x$ is a sequence of
indices $i, \ldots, i + l - 1$ such that
$(x(i), \ldots, x(i + l - 1))$ is order isomorphic to $\tau$.

Mansour and others in \cite{mansour2006counting, subwords, cofc} count
integer compositions by number of occurrences of specific subword patterns such
as $123$ and $112$.
These results are less general than those obtained by Bender and
collaborators in the case of avoidance but give simpler expressions.
The umbral technique in \cite{zeilberger2000umbral} is also used to explicitly
count locally restricted objects.

\begin{remark} \label{rem:undirectedpaths}
In the language of \S \ref{sec:intro}, compositions are weighted directed
paths where we keep track of the total weight.
Weighted undirected paths may be counted in a similar manner.
\end{remark}

\subsection{Compositions over a finite group} \label{sec:localpaths}

Locally restricted compositions over finite fields and even finite
abelian groups were counted in \cite{abelian} under some conditions, and in
less generality in the preceding papers mentioned therein.
Over $\mathbb{Z}_k$, the method used in that paper involves obtaining the
relevant generating function $F(z)$ for integer compositions over $[k]$, and
working with $\sum_{j \equiv s \pmod{k}}[z^j]F(z)$.
For other finite abelian groups, the method is extended to multivariate
generating functions.
Below we give an alternative counting method that expands the range of
applicability beyond abelian groups, addressing a problem posed in
\cite{abelian}.
We begin this section considering compositions over a finite semigroup
$(S,+)$ and eventually specialize to finite groups.


\begin{notation} \label{not:seq}
For a finite set $\Xi$, we denote all $n$-tuples over $\Xi$ by
$\SEQ_n(\Xi)$.
We define $\SEQ(\Xi) = \cup_{n \geq 0} \SEQ_n(\Xi)$.
\end{notation}

\begin{definition}
Let $\Xi$ be a finite set, and let $n$ be a positive integer.
The $n$-dimensional \emph{de Bruijn graph} (actually a digraph) on $\Xi$ has
vertex set $V = \SEQ_n(\Xi)$ and includes the arc from
$(\graphf{u}_{1}, \ldots, \graphf{u}_{n})$ to
$(\graphf{v}_{1}, \ldots, \graphf{v}_{n})$
if and only if
\[
(\graphf{u}_{2}, \ldots, \graphf{u}_{n}) =
(\graphf{v}_{1}, \ldots, \graphf{v}_{n - 1}).
\]
\end{definition}

\begin{notation}
\label{not:concat}
We use the symbol ${}^\frown$ to denote concatenation of finite sequences,
e.g.\ $(a,b)^\frown(c,d) = (a,b,c,d)$.
\end{notation}

Let $\sigma$ be a positive integer which we call the \emph{span}, and let
$\graphf{D}$ be a subgraph of the $\sigma$-dimensional de Bruijn graph on $S$.
Then $D$ is a \emph{de Bruijn subgraph}.
This digraph $\graphf{D}$ is associated with a set of locally restricted
compositions as follows.
A walk in a digraph is a sequence of vertices, not necessarily distinct,
where for any subword $(u, v)$ there is an arc from $u$ to $v$.
An $m$-composition over $S$ is legal according to $\graphf{D}$ if it takes
the form
\[ {\graphf{w}_1}^\frown (\graphf{w}_{2}(\sigma), \ldots,
  \graphf{w}_{m-\sigma+1}(\sigma)) =
(\graphf{w}_{1}(1), \ldots, \graphf{w}_{1}(\sigma),
\graphf{w}_{2}(\sigma), \ldots,
  \graphf{w}_{m-\sigma+1}(\sigma)), \]
where $\graphf{w}_{1}, \ldots, \graphf{w}_{m-\sigma+1}$ is a walk in
$\graphf{D}$.
In other words, we build compositions from $\graphf{D}$ by starting at
any vertex, and taking a walk in which we append the last element of each
vertex we visit after the first vertex.
Additionally, we may designate sets of start and end vertices which are the
allowed vertices for walks to start and end at.

We write the set of all $m$-compositions of $s$ that are legal according to
$D$ with start set $\Psi$ and finish set $\Phi$ as
$\mathcal{P}_s(m; D, \Psi, \Phi)$.
We may also write this with $s, m, \Psi$, or $\Phi$ omitted to remove those
conditions,
e.g.\ $\mathcal{P}_s(D, \Psi, \Phi) = \cup_m \mathcal{P}_s(m; D, \Psi, \Phi)$.
Also, $p_s(m; D, \Psi, \Phi) = |\mathcal{P}_s(m; D, \Psi, \Phi)|$,
$P_s(z; D, \Psi, \Phi) = \sum_{m \geq 0} p_s(m; D, \Psi, \Phi) z^m$.

Define a new digraph $\graphf{D}_\times$ with vertex set $V(\graphf{D}) \times
S$ such that $((\graphf{u}, s), (\graphf{v}, t)) \in E(\graphf{D}_\times)$ if and only if
$(\graphf{u}, \graphf{v}) \in E(\graphf{D})$ and $s + \graphf{v}(\sigma) = t$.
We call $\graphf{D}_\times$ the \emph{derived digraph} (of the \emph{base
digraph} $\graphf{D}$).
We define the start set $\Psi_\times \subseteq V(\graphf{D}_\times)$ to contain
all $(\graphf{v}, s)$ such that $\sum \graphf{v} = s$ and $\graphf{v} \in
\Psi$.
For each $s \in S$ the finish set $\Phi_s \subseteq V(\graphf{D})$ for $s$
contains all vertices $(\graphf{v}, s)$ where $\graphf{v} \in \Phi$.

Fix an ordering on $V(\graphf{D}_\times)$ so we can define an adjacency matrix
$M_\times$ of $\graphf{D}_\times$.
Let $\psi_{\times} \in \mathbb{R}^{|V(D_\times)|}$ be the indicator vector for
$\Psi_\times$, and let $\phi_s \in \mathbb{R}^{|V(D_\times)|}$ be the indicator
vector for $\Phi_s$.

\begin{proposition} \label{prop:dtimescount}
For $m \geq \sigma$, we have
\[
p_s(m; D, \Psi, \Phi) = \psi_\times^\top M_\times^{m-\sigma} \phi_s.
\]
The generating function $P_s(z; D, \Psi, \Phi)$ is rational.
\end{proposition}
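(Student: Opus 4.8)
The plan is to reduce the count $p_s(m; D, \Psi, \Phi)$ to a count of walks in the derived digraph $D_\times$ by a chain of two bijections, then invoke the standard adjacency-matrix identity for the number of walks of a given length, and finally deduce rationality of $P_s$ from a geometric-series argument. Throughout I fix $m \geq \sigma$ so that the walks involved are nonempty.

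First I would make explicit the bijection between legal $m$-compositions according to $D$ and walks $w_1, \ldots, w_{m-\sigma+1}$ in $D$. The definition of ``legal'' produces a composition from any such walk; conversely, given a legal composition $c$, the de Bruijn arc condition forces each $w_j$ to be the $j$-th length-$\sigma$ sliding window $(c(j), \ldots, c(j+\sigma-1))$, so the walk is uniquely recovered from $c$. Under this correspondence the conditions $w_1 \in \Psi$ and $w_{m-\sigma+1} \in \Phi$ are exactly the start and finish conditions defining $\mathcal{P}_s(m; D, \Psi, \Phi)$, and the total satisfies $\Sigma c = \Sigma w_1 + w_2(\sigma) + \cdots + w_{m-\sigma+1}(\sigma)$.

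Next I would lift each such walk to one in $D_\times$ by recording running totals. Putting $s_1 = \Sigma w_1$ and $s_j = s_{j-1} + w_j(\sigma)$, the arc condition of $D_\times$ holds by construction, so $(w_1, s_1), \ldots, (w_{m-\sigma+1}, s_{m-\sigma+1})$ is a walk of length $m - \sigma$ whose initial vertex lies in $\Psi_\times$ (because $s_1 = \Sigma w_1$ and $w_1 \in \Psi$) and whose terminal vertex lies in $\Phi_s$ precisely when $w_{m-\sigma+1} \in \Phi$ and $s_{m-\sigma+1} = s$. By the displayed identity for $\Sigma c$, the second coordinate $s_{m-\sigma+1}$ equals the composition's total, so the finish condition matches summing to $s$. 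Projection to the first coordinate is the inverse: it returns the walk in $D$, and since $\Psi_\times$ pins down $s_1$ and each arc then determines the next second coordinate, the lift is unique. I expect this middle step to be the main obstacle, as it requires careful bookkeeping — in particular, for non-abelian $S$ one must check that the convention $s + v(\sigma) = t$ (adding on the right) matches the left-to-right order of summation, which is what makes $s_{m-\sigma+1} = \Sigma c$ hold.

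Finally, the standard fact that $[M_\times^k]_{u,v}$ counts length-$k$ walks from $u$ to $v$ gives $\psi_\times^\top M_\times^{m-\sigma} \phi_s$ as the number of length-$(m-\sigma)$ walks from $\Psi_\times$ to $\Phi_s$, which by the two bijections equals $p_s(m; D, \Psi, \Phi)$, proving the formula. For rationality I would write $P_s(z) = \sum_{0 \leq m < \sigma} p_s(m) z^m + z^\sigma \psi_\times^\top (I - z M_\times)^{-1} \phi_s$, using $\sum_{k \geq 0} (z M_\times)^k = (I - z M_\times)^{-1}$. The first summand is a polynomial, while the entries of $(I - z M_\times)^{-1}$ are rational in $z$ over the common denominator $\det(I - z M_\times)$, which has nonzero constant term, so the quadratic form and hence $P_s$ is rational.
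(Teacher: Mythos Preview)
Your proposal is correct and follows essentially the same approach as the paper: establish a bijection between legal $m$-compositions of $s$ and walks in $D_\times$ from $\Psi_\times$ to $\Phi_s$, invoke the transfer-matrix identity $[M_\times^k]_{u,v}$ for walk counts, and deduce rationality via $z^\sigma \psi_\times^\top (I - zM_\times)^{-1}\phi_s$ plus a polynomial correction for $m<\sigma$. The paper's proof is terser, asserting the bijection ``by the definition of $D_\times$'' in one sentence, whereas you factor it into two explicit steps (composition $\leftrightarrow$ walk in $D$ via sliding windows, then lift to $D_\times$ via running partial sums) and note the non-abelian bookkeeping; this is a more careful write-up of the same argument, not a different route.
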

\begin{proof}
Let $W_s$ be a walk in $\graphf{D}_\times$ starting in $\Psi_\times$ and ending
in $\Phi_s$ in the form
\[W_s =
((\graphf{w}_1, t), \ldots, (\graphf{w}_{m-\sigma+1}, s)),
\]
so the $D$-vertices corresponding to $W_s$ are
$\graphf{w}_1, \ldots, \graphf{w}_{m-\sigma+1}$.
We say the $m$-composition of $s$ defined by $W_s$ is
\[ {\graphf{w}_1}^\frown (\graphf{w}_{2}(\sigma), \ldots,
  \graphf{w}_{m-\sigma+1}(\sigma)) . \]
By the definition of $D_\times$, the $m$-compositions corresponding to a walk
$W_s$ in $D_\times$ are exactly those $m$-compositions allowed by $D$ with
total $s$.
That is, the compositions defined by $\graphf{D}_\times$ and $\graphf{D}$
are the same, but $\graphf{D}_\times$ also directly keeps track of the total.

Counting walks in a digraph via the adjacency matrix is a well-known procedure.
The result follows from the relation
\[ [M_\times^q]_{i,j}
  = \sum_{k=1}^{|V(D_\times)|} [M_\times]_{i,k} [M_\times^{q-1}]_{k,j} \]
which means walks of length $q+1$ from vertex $i$ to $j$ are walks of length
$2$ from $i$ to $k$, merged with a walk of length $q$ from $k$ to $j$.
This is known as the transfer matrix method;
background may be found in \cite{stanley1}.

We have
\begin{align*}
P_s(z; D, \Psi, \Phi) &= \sum_{m \geq \sigma }
  \psi_\times^\top M_\times^{m-\sigma} \phi_s z^m + P(z) \\
&= z^\sigma \psi_\times^\top \left(\sum_{m \geq 0 } M_\times^{m} z^m \right)
  \phi_s + P(z) \\
&= z^\sigma \psi_\times^\top \left(I - z M_\times \right)^{-1}
  \phi_s + P(z),
\end{align*}
where $P(z)$ is a polynomial which counts the appropriate $m$-compositions with
$m < \sigma$.
The entries of $(I - z M_\times)^{-1}$ lie in the field of fractions of
$\mathbb{Q}[z]$, i.e.\ the rational functions $\mathbb{Q}(z)$.
\end{proof}

\begin{example} \label{exa:exactcount}
Carlitz compositions are those where adjacent parts must be different.
Figure \ref{fig:carl_count} shows an example of $\graphf{D}_\times$ for Carlitz
compositions over $\mathbb{Z}_3$.

\begin{figure}
\centering

\begin{tikzpicture}[scale=0.9]
\begin{scope}[every node/.style={circle,thick,draw}]
    \node (A) at (0,0) {(0)};
    \node (B) at (2,-1) {(1)};
    \node (C) at (-2,-1) {(2)};
\end{scope}

\begin{scope}[>={Stealth[black]},
              every node/.style={fill=white,circle},
              every edge/.style={draw=gray,very thick}]
    \path [<->] (A) edge (B);
    \path [<->] (B) edge (C);
    \path [<->] (C) edge (A);

\end{scope}
\end{tikzpicture}
\begin{tikzpicture}[scale=0.8]
\begin{scope}[every node/.style={circle,thick,draw}]
    \node (A) [double] at (0,0) {(0),0};
    \node (B) [double] at (2,-1) {(1),1};
    \node (C) at (-2,-1) {(2),0};
    \node (D) at (0,-3) {(0),2};
    \node (E) at (2,-4) {(1),0};
    \node (F) [double] at (-2,-4) {(2),2} ;
    \node (G) at (0,-6) {(0),1};
    \node (H) at (2,-7) {(1),2};
    \node (I) at (-2,-7) {(2),1} ;
\end{scope}

\begin{scope}[>={Stealth[black]},
              every node/.style={fill=white,circle},
              every edge/.style={draw=gray,very thick}]
    \path [->] (A) edge (B);
    \path [<->] (B) edge (C);
    \path [->] (C) edge (A);
    \path [->] (D) edge (E);
    \path [<->] (E) edge (F);
    \path [->] (F) edge (D);
    \path [->] (G) edge (H);
    \path [<->] (H) edge (I);
    \path [->] (I) edge (G);

    \path [->] (A) edge (F);
    \path [->] (D) edge (I);
    \path [->] (G) edge[bend left=70] (C);

    \path [->] (B) edge[bend left=70] (G);
    \path [->] (E) edge (A);
    \path [->] (H) edge (D);

\end{scope}
\end{tikzpicture}

\caption{A base digraph $D$ (left) and derived digraph $D_\times$ (right)
  representing Carlitz compositions over $\mathbb{Z}_3$.
  Here all vertices of $D$ are allowed start and finish vertices.
  Vertices in $D_\times$ that are allowed start vertices are shown with a
  double circle.
\label{fig:carl_count}}
\end{figure}
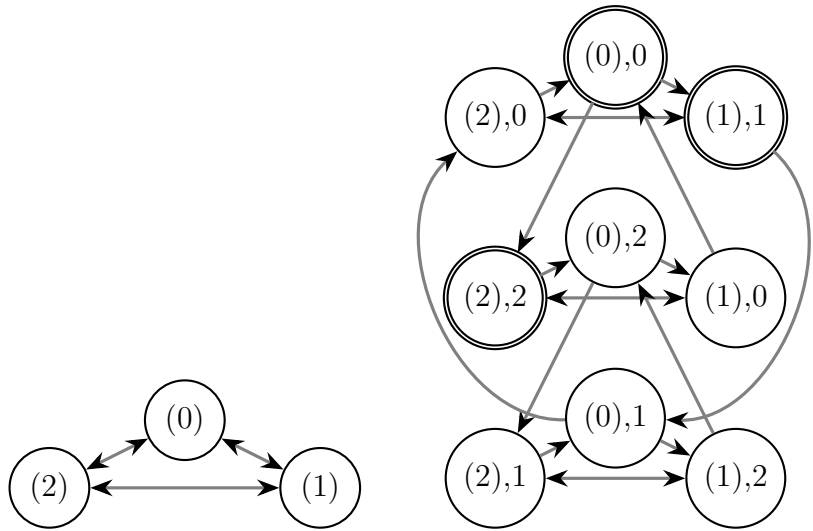

Let us order the vertices of $D_\times$ as
\[ ((0), 0), ((1), 1), ((2), 0), ((0), 2), ((1), 0), ((2), 2),
((0), 1), ((1), 2), ((2), 1). \]
Then we get
\[
M_\times = \left[
\begin{array}{ccccccccc}
 0 & 1 & 0 & 0 & 0 & 1 & 0 & 0 & 0 \\
 0 & 0 & 1 & 0 & 0 & 0 & 1 & 0 & 0 \\
 1 & 1 & 0 & 0 & 0 & 0 & 0 & 0 & 0 \\
 0 & 0 & 0 & 0 & 1 & 0 & 0 & 0 & 1 \\
 1 & 0 & 0 & 0 & 0 & 1 & 0 & 0 & 0 \\
 0 & 0 & 0 & 1 & 1 & 0 & 0 & 0 & 0 \\
 0 & 0 & 1 & 0 & 0 & 0 & 0 & 1 & 0 \\
 0 & 0 & 0 & 1 & 0 & 0 & 0 & 0 & 1 \\
 0 & 0 & 0 & 0 & 0 & 0 & 1 & 1 & 0 \\
\end{array}
\right],
\psi_\times = \left[
\begin{array}{c}
 1 \\
 1 \\
 0 \\
 0 \\
 0 \\
 1 \\
 0 \\
 0 \\
 0 \\
\end{array}
\right],
\phi_0 = \left[
\begin{array}{c}
 1 \\
 0 \\
 1 \\
 0 \\
 1 \\
 0 \\
 0 \\
 0 \\
 0 \\
\end{array}
\right],
\]
\[ \psi_\times^\top M_\times^{3-1} \phi_0 = 6. \]
So the number of Carlitz $3$-compositions of $0$ in $\mathbb{Z}_3$ is $6$.
\end{example}

\begin{remark} \label{rem:randpaths}
The following procedure generates a walk in $D_\times$ of length $m-\sigma+1$,
where all such walks are equally probable:
\begin{enumerate}
\item Pick a start vertex $v_1$ weighted by the number of $(m-\sigma+1)$-walks
from that vertex to a finish vertex.
\item
Given the current vertex $v_i$, select an out-neighbor where such neighbors are
weighted by the number of $(m-\sigma+1-i)$-walks from the neighbor vertex to a
finish vertex.
\end{enumerate}

Naturally, using the correspondence between walks and compositions, this gives
a method of random generation for locally restricted compositions.
Figure \ref{fig:carlitz100compZ3} shows an example with Carlitz
compositions.
Other examples of the method are found throughout this section.
\qedhere

\begin{figure}
\centering
\includegraphics[width=5.5in]{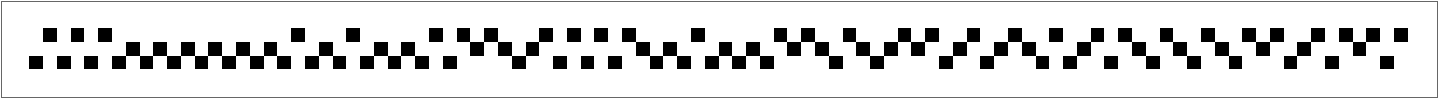}
\includegraphics[width=5.5in]{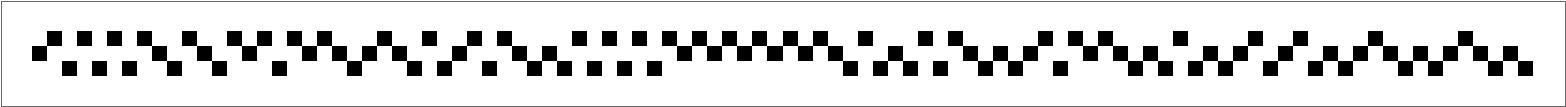}

\caption{Uniform-randomly generated Carlitz $100$-compositions of $0$ (above)
  and $1$ (below) over $\mathbb{Z}_3$.
  (The vertical axis represents the value of a part, i.e.\ the below
  composition starts $(1,2,0,2,\ldots)$.)
  \label{fig:carlitz100compZ3}}
\end{figure}
\end{remark}

If $\graphf{D}_\times$ is strongly connected and aperiodic, then we can obtain
a highly-precise asymptotic expression for
$p_s(m; D, \Psi, \Phi)$, $m \to \infty$,
via Proposition \ref{prop:dtimescount} and the Perron-Frobenius theorem.
(A digraph is \emph{aperiodic} if the set of all cycle lengths has no common
divisor besides $1$.)
We now give some general facts about the strong connectedness of
$\graphf{D}_\times$.


If $\graphf{D}$ is not strongly connected then certainly $\graphf{D}_\times$
is not strongly connected either.
However, if $\graphf{D}$ decomposes into disconnected strong components, then
naturally we are able to simply count with each component separately and add.
In the following, \emph{we assume $\graphf{D}$ is strongly connected} (and
nonempty).

Unfortunately, if $\graphf{D}$ \emph{is} strongly connected,
$\graphf{D}_\times$ is not necessarily strongly connected.
Say $\graphf{D}$ is the digraph given in Figure~\ref{fig:not_strong}, over
$\mathbb{Z}_4$ with span $\sigma=2$.
In $\graphf{D}_\times$, there is a path from $((3,0),3)$ to $((1,3),3)$, but
there is no path from $((1,3),0)$ to $((1,3),3)$.

\begin{figure}
\vspace{0.5cm}
\centering
\begin{tikzpicture}[scale=0.9]
\begin{scope}[every node/.style={circle,thick,draw}]
    \node (A) at (0,0) {$(1,3)$};
    \node (B) at (-2,1) {$(0,1)$};
    \node (C) at (-2,-1) {$(3,0)$};
    \node (D) at (2,1) {$(3,2)$};
    \node (E) at (4,0) {$(2,2)$};
    \node (F) at (2,-1) {$(2,1)$};
\end{scope}

\begin{scope}[>={Stealth[black]},
              every node/.style={fill=white,circle},
              every edge/.style={draw=gray,very thick}]
    \path [->] (A) edge (C);
    \path [->] (C) edge (B);
    \path [->] (B) edge (A);
    \path [->] (A) edge (D);
    \path [->] (D) edge (E);
    \path [->] (E) edge (F);
    \path [->] (F) edge (A);

\end{scope}
\end{tikzpicture}

\caption{A digraph $\graphf{D}$ with vertices in $\mathbb{Z}_4^2$.
\label{fig:not_strong}}
\end{figure}


If the entirety of $\graphf{D}_\times$ is not strongly connected then we would
hope it is simply a disjoint union of strong components.
This is not true for general finite semigroups $S$.
For example, if there is $s^* \in S$ satisfying
\[\forall s \in S: s^* + s = s + s^* = s^*,\]
then walks in the digraph $\graphf{D}_\times$ will get \enquote{stuck} at $s^*$
and some weakly connected vertices will not be strongly connected.
We do obtain this desideratum, however, if $S$ is a group, as we show
eventually below.
\emph{In the following we assume that $S=G$ is a group.}

\begin{definition}
Let $D_{B}$ be an arbitrary digraph, referred to as the \emph{base digraph}.
Let $G$ be a finite group, and let $\alpha: E(D_B) \to G$ map arcs of $D_B$ to
group elements.
Together, $D_B$ and $\alpha$ are known as a \emph{voltage graph}.
We define the \emph{derived digraph} $D_\alpha$ such that
$V(D_\alpha) = V(D_B) \times G$ and $((u, a), (v, b))$ is an arc if and only if
$(u,v) \in E(D_B)$ and $a + \alpha(u,v) = b$.
\end{definition}

The digraphs $\graphf{D}_\times$
directly give derived digraphs in the sense of voltage graphs, specifically
\enquote{right derived ordinary voltage graphs},
if we associate the group element $\graphf{u}(\sigma)$ to all incoming arcs
to $\graphf{u}$ in the base digraph $\graphf{D}$.

\begin{notation}
\label{not:nei}
If $v$ is a vertex in a digraph with arc relation $E$, we use the following
notations:
\begin{align*}
  N^-(v) &= \{ u: (u,v) \in E \}\\
  N^-[v] &= N^-(v) \cup \{v\}\\
  N^+(v) &= \{ u: (v,u) \in E \}\\
  N^+[v] &= N^+(v) \cup \{ v \}\\
  N(v) &= N^-(v) \cup N^+(v)\\
  N[v] &= N^-[v] \cup N^+[v].
\end{align*}
\end{notation}

\begin{remark}
Let $(V_1, E_1)$ and $(V_2, E_2)$ be graphs.
Then $(V_2, E_2)$ is a \emph{covering graph} of $(V_1, E_1)$ if there is a
surjection $f: V_2 \to V_1$ such that for each $v \in V_2$, the restriction
$f_{|N[v]}$ is a bijection.
In that case, $f$ is called a \emph{covering map}.
We note that derived graphs can be seen as a covering graphs of the base graph,
but directed.
The book \cite{gross1987topological} provides a basic introduction to covering
graphs in Chapter 2.
Covering graphs are more generally known as covering spaces in topology.
\end{remark}

\begin{notation}
\label{not:arrow}
For vertices $u$ and $v$, we use the notation $u \too v$ to denote that there
exists a directed walk from $u$ to $v$.
\end{notation}

\begin{lemma} \label{lem:strong}
The derived digraph $\graphf{D}_\times$ is a disjoint union of strong
components.
\end{lemma}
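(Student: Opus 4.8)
The plan is to reduce the statement to the \emph{symmetry} of the reachability relation $\too$ on $V(D_\times)$ and then to exploit the finiteness of $G$. First I would observe that it suffices to show that $\too$ is symmetric: if $x \too y$ always forces $y \too x$, then any arc joining two strong components of $D_\times$ would make those components coincide, so no arc can run between distinct strong components and $D_\times$ is exactly the disjoint union of its strong components. To analyze $\too$ I would pass to the voltage description recorded just before the lemma, where the arc $(u,v)$ of the base digraph carries voltage $\alpha(u,v) = v(\sigma)$. For a walk $w_0, \ldots, w_k$ in $D$ call $g = \alpha(w_0,w_1) + \cdots + \alpha(w_{k-1},w_k) \in G$ its \emph{total voltage}. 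A walk in $D_\times$ starting at $(u,a)$ is precisely the lift of a walk in $D$ starting at $u$, and it terminates at $(v, a + g)$, where $v$ is the endpoint of the underlying walk and $g$ its total voltage. Hence $(u,a) \too (v,b)$ in $D_\times$ if and only if some walk from $u$ to $v$ in $D$ has total voltage $g$ with $a + g = b$.

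With this dictionary, symmetry of $\too$ reduces to the claim: whenever there is a walk from $u$ to $v$ in $D$ with total voltage $g$, there is also a walk from $v$ to $u$ with total voltage $-g$. Granting the claim, $(u,a)\too(v,b)$ gives $b = a+g$, and the reverse walk of voltage $-g$ lifts to a walk from $(v,b)$ ending at $(u, b + (-g)) = (u,a)$, so $(v,b) \too (u,a)$. To prove the claim I would invoke the standing assumption that $D$ is strongly connected: choose any walk $W'$ from $v$ back to $u$, of total voltage $h$. Writing $W$ for the given walk (voltage $g$) and $WW'$ for the concatenation, the closed walk $WW'$ based at $u$ has total voltage $g+h$. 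Here finiteness of $G$ enters decisively: letting $d$ be the order of $g+h$, so that $d(g+h) = 0$, the walk $W'(WW')^{d-1}$ runs from $v$ to $u$ and has total voltage
\[
h + (d-1)(g+h) = h + \bigl(-(g+h)\bigr) = h + (-h - g) = -g,
\]
which is exactly the reverse walk demanded by the claim.

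The main obstacle is careful bookkeeping in the possibly non-abelian group $G$: voltages accumulate on the right along a walk, so one must respect the order of summation and use $-(g+h) = -h - g$ when simplifying, verifying the displayed identity in that order rather than assuming commutativity. Conceptually, this is exactly the step where the group hypothesis is indispensable, and it is worth flagging as such: finiteness forces every element of $G$ to have finite order, so a directed closed walk can be traversed repeatedly to realize an inverse voltage without ever traversing an arc backwards. For a general finite semigroup this mechanism breaks down — the absorbing element $s^\ast$ discussed above cannot be undone — and the conclusion genuinely fails, which is why the reduction to groups is essential.
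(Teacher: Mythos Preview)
Your proof is correct and follows essentially the same approach as the paper: both arguments exploit strong connectedness of $D$ to close up a walk $u\to v\to u$ in the base, then iterate that closed walk using finiteness of $G$ to force the accumulated group element to vanish, yielding $(v,b)\too(u,a)$. Your voltage-graph framing and the explicit walk $W'(WW')^{d-1}$ with $d$ the order of $g+h$ is slightly more direct than the paper's pigeonhole phrasing, and your handling of the non-abelian bookkeeping (in particular $-(g+h)=-h-g$) is more explicit, but the underlying idea is identical.
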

\begin{proof}
Select a vertex $(\graphf{u}, a)$, and take another
vertex $(\graphf{v},b)$ such that there is a path $(\graphf{u}, a) \too
(\graphf{v}, b)$ in $\graphf{D}_\times$.
Since $\graphf{D}$ is strongly
connected, there is a path $(\graphf{v}, b) \too
(\graphf{u},c)$ in $D_\times$ for some $c \in G$.
This implies that $(\graphf{u}, a) \too (\graphf{u},c)$.
We are done if we can show that $(\graphf{u}, c) \too (\graphf{u},a)$.

Since there is a path $(\graphf{u}, a) \too (\graphf{u}, c)$,
we know that for
any positive integer $j$, there is a path $(\graphf{u}, a) \too
(\graphf{u}, a + j(- a + c))$, which is found by
repeating the path in $\graphf{D}$.
In a finite digraph we will eventually get $g>j>0$ with $a + j(-
a + c) = a + g(- a + c)$, thus
$j(- a + c) = g(- a + c)$ and
$(g-j)(- a + c) = 0$.
We conclude that
\begin{align*}
(\graphf{u}, a) &\too (\graphf{u}, a + (- a + c)) \\
  &\too \cdots \\
  &\too (\graphf{u}, a + (g-j)(- a + c)) =
    (\graphf{u}, a).
  \qedhere
\end{align*}
\end{proof}

\begin{lemma} \label{lem:auto}
For each $\graphf{v} \in V(\graphf{D})$ and $a,b \in G$, there is a digraph
automorphism $f$ on $\graphf{D}_\times$ with $f(\graphf{v}, a) = (\graphf{v},
b)$.
In particular, the strong components of $\graphf{D}_\times$ are isomorphic.
\end{lemma}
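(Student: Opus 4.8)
The plan is to exhibit an explicit group of automorphisms of $D_\times$ that act on the group coordinate by left translation. For $g \in G$, define $f_g \colon V(D_\times) \to V(D_\times)$ by $f_g(\mathbf{u}, s) = (\mathbf{u}, g + s)$. First I would check that each $f_g$ is a digraph automorphism. By definition $((\mathbf{u}, s), (\mathbf{v}, t))$ is an arc of $D_\times$ exactly when $(\mathbf{u}, \mathbf{v}) \in E(D)$ and $s + \mathbf{v}(\sigma) = t$. Applying $f_g$ to both endpoints and using associativity, $(g + s) + \mathbf{v}(\sigma) = g + (s + \mathbf{v}(\sigma)) = g + t$, so $f_g$ carries arcs to arcs; since $f_g \circ f_{-g} = f_0 = \mathrm{id}$, it is a bijection with inverse $f_{-g}$, hence an automorphism. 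The key point, and the reason left translation (rather than right) is the correct choice, is that the voltage $\mathbf{v}(\sigma)$ is added on the right, so a left translation commutes past it without requiring $G$ to be abelian.

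Given $\mathbf{v}$ and $a, b \in G$, the first assertion then follows immediately by taking $g = b + (-a)$, so that $f_g(\mathbf{v}, a) = (\mathbf{v}, g + a) = (\mathbf{v}, b)$; solvability for $g$ is exactly the group axioms.

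For the \enquote{in particular} clause I would argue as follows. By Lemma~\ref{lem:strong}, $D_\times$ is a disjoint union of its strong components, so any two weakly connected vertices lie in a common component, and any digraph automorphism permutes the strong components, restricting to an isomorphism on each. It therefore suffices to produce, for any two strong components $C_1, C_2$, some $f_g$ mapping $C_1$ onto $C_2$. To do this I would first show that every strong component meets every fiber $\{\mathbf{v}\} \times G$: starting from any vertex $(\mathbf{u}, a) \in C$ and using that $D$ is strongly connected, lift a walk in $D$ from $\mathbf{u}$ to $\mathbf{v}$ to a walk in $D_\times$ out of $(\mathbf{u}, a)$; its endpoint $(\mathbf{v}, c)$ satisfies $(\mathbf{u}, a) \too (\mathbf{v}, c)$ and hence lies in $C$. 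Picking any $\mathbf{v}$, choose $(\mathbf{v}, a_1) \in C_1$ and $(\mathbf{v}, a_2) \in C_2$ and set $g = a_2 + (-a_1)$; then $f_g(\mathbf{v}, a_1) = (\mathbf{v}, a_2)$, so $f_g$ sends $C_1$ onto $C_2$ and restricts to the desired isomorphism.

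The routine verifications (arc preservation and the group-theoretic solving for $g$) are immediate; the only step requiring care is the \enquote{in particular} part, where one must combine Lemma~\ref{lem:strong} with strong connectedness of $D$ to guarantee that distinct components share a fiber, so that a single translation can carry one onto the other. That bookkeeping is the main thing to get right, but it is not deep.
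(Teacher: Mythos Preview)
Your proof is correct and follows essentially the same approach as the paper: both use the left-translation automorphism $f(\mathbf{u},c)=(\mathbf{u},(b+(-a))+c)$, verify arc preservation via associativity, and deduce isomorphism of components from the fact (via strong connectedness of $D$) that every component meets the fiber over any fixed $\mathbf{v}$. Your write-up is in fact more explicit than the paper's on the \enquote{in particular} step and on why left rather than right translation is needed.
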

\begin{proof}
Let $f:V(\graphf{D})\times G \to V(\graphf{D}) \times G$ be defined
$f(\graphf{v}, c) = (\graphf{v}, b-a+c)$.
We have $f(\graphf{v},a) = (\graphf{v}, b)$, and clearly $f$ is a bijection.
Take an arc from $(\graphf{u}, c)$ to $(\graphf{w}, d)$.
Then $c + \graphf{w}_\sigma = d$, so $b-a+c + \graphf{w}_\sigma = b-a+d$,
so there is also an arc from
$f(\graphf{u}, c)$ to $f(\graphf{w}, d)$.
This automorphism is mentioned in \cite[\S 2.2.1]{gross1987topological}.

The second claim follows since every strong component contains a vertex
$(\graphf{v}, c)$ for some $c \in G$, which follows from the strong
connectedness of $\graphf{D}$.
\end{proof}

Aperiodicity of $D_\times$ does not follow from aperiodicity of $D$, as shown
in Example \ref{exa:notaperiodic}, so it must be verified separately.

\begin{example} \label{exa:notaperiodic}
The condition of aperiodicity of $\graphf{D}_\times$ cannot be transfered from
$\graphf{D}$.
For example, if $a \in G$ has order at least $3$ and if $E(D) = \{(a,a)\}$ then
$D_\times$ is periodic.
Figure \ref{fig:apD} shows a less trivial counterexample digraph $\graphf{D}$.
\qedhere


\begin{figure}
\centering
\begin{tikzpicture}[scale=0.9]
\begin{scope}[every node/.style={circle,thick,draw}]
    \node (A) at (-1,0) {$(3,1)$};
    \node (B) at (2,0) {$(1,3)$};
    \node (C) at (4,1) {$(3,2)$};
    \node (D) at (4,-1) {$(2,1)$};
\end{scope}

\begin{scope}[>={Stealth[black]},
              every node/.style={fill=white,circle},
              every edge/.style={draw=gray,very thick}]
    \path [<->] (A) edge (B);
    \path [->] (B) edge (C);
    \path [->] (C) edge (D);
    \path [->] (D) edge (B);

\end{scope}
\end{tikzpicture}
\vspace{0.5cm}

\centering
\resizebox{5.5in}{!}{
\begin{tikzpicture}[scale=0.9]
\begin{scope}[every node/.style={circle,thick,draw}]
    \node (A) at (-1.5,0) {$((3,1),1)$};
    \node[double] (B) at (2,0) {$((1,3),0)$};
    \node (C) at (4,2) {$((3,2),2)$};
    \node (D) at (4,-2) {$((2,1),1)$};

    \node (E) at (12.5,0) {$((3,1),3)$};
    \node (F) at (9,0) {$((1,3),2)$};
    \node[double] (G) at (7,2) {$((2,1),3)$};
    \node (H) at (7,-2) {$((3,2),0)$};
\end{scope}

\begin{scope}[>={Stealth[black]},
              every node/.style={fill=white,circle},
              every edge/.style={draw=gray,very thick}]
    \path [<->] (A) edge (B);
    \path [->] (B) edge (C);
    \path [->] (C) edge (G);
    \path [->] (D) edge (B);
    \path [->] (G) edge (F);
    \path [->] (F) edge (H);
    \path [->] (H) edge (D);
    \path [<->] (F) edge (E);

\end{scope}
\end{tikzpicture}
}

\caption{An aperiodic strongly connected digraph $\graphf{D}$ (above) with
  vertices in $\mathbb{Z}_4^2$ such that $\graphf{D}_\times$ (one component
  shown below) has period $2$.
  Examples with connected $\graphf{D}_\times$ exist as well, such as
  the above $\graphf{D}$ over $\mathbb{Z}_8$ with $7$ replacing $3$ and $3$
  replacing $2$.
\label{fig:apD}}
\end{figure}
\end{example}

\begin{notation}
\label{not:bigoh}
For real sequences $f(n), g(n)$, the notation $f(n) = O(g(n))$ means
there is $c>0$ such that for all sufficiently large $n$ we have
$|f(n)| \leq c g(n)$.
The notation $f(n) = \Theta(g(n))$ means there are $c,d > 0$ such that for
all sufficiently large $n$ we have
$|f(n)| \leq c g(n)$ and $|f(n)| \geq d g(n)$.
\end{notation}

The following basic result applies the Perron-Frobenius theorem to asymptotic
counting.

\begin{proposition} \label{pro:tmm}
Let $M$ be a nonzero $n \times n$ adjacency matrix of a strongly connected and
aperiodic digraph.
Then if $\alpha, \beta \in \mathbb{R}^n$, we have
\[ \alpha^\top M^m \beta =
  (\alpha \cdot v_\lambda) (u_\lambda \cdot \beta) \lambda^m
  (1 + O(\theta^m)), \qquad m \to \infty, \]
where $\lambda \geq 1$ is the largest-magnitude eigenvalue of $M$,
$v_\lambda$ is a positive $\lambda$-eigenvector of $M$, $u_\lambda$ is a
positive $\lambda$-eigenvector of $M^\top$ such that
$v_\lambda \cdot u_\lambda = 1$, and $0 \leq \theta < 1$.
\end{proposition}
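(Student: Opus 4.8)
The plan is to reduce the claim to the Perron--Frobenius theorem for primitive nonnegative matrices together with a rank-one spectral decomposition of $M^m$. First I would observe that $M$, being the adjacency matrix of a strongly connected digraph, is an irreducible nonnegative (in fact nonnegative integer) matrix, and that aperiodicity---the gcd of all cycle lengths being $1$---is precisely the additional condition making $M$ \emph{primitive}, i.e.\ $M^N > 0$ entrywise for some $N$. This is the standard combinatorial characterization of primitivity, which I would cite rather than reprove.

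With primitivity in hand, Perron--Frobenius supplies a dominant eigenvalue $\lambda = \rho(M) > 0$ that is a \emph{simple} root of the characteristic polynomial and strictly exceeds in modulus every other eigenvalue; moreover $\lambda$ admits strictly positive right and left eigenvectors $v_\lambda, u_\lambda$, each unique up to scaling, which I normalize so that $v_\lambda \cdot u_\lambda = 1$. That $\lambda \geq 1$ follows because a nonzero strongly connected digraph contains a directed cycle, so $[M^{m}]_{ii} \geq 1$ for infinitely many $m$; since these diagonal entries grow like $\lambda^m (v_\lambda)_i (u_\lambda)_i$ up to lower-order terms, $\lambda$ cannot be smaller than $1$. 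Equivalently, a primitive nonnegative integer matrix is not nilpotent, so $\rho(M) \geq 1$.

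Next I would exploit the simplicity of $\lambda$ to split the iteration. The Riesz spectral projection onto the $\lambda$-eigenspace is the rank-one matrix $P = v_\lambda u_\lambda^\top$, satisfying $P^2 = P$ and $MP = PM = \lambda P$. Writing $M = \lambda P + M(I-P)$ and using $P\,M(I-P) = M(I-P)\,P = 0$, the cross terms vanish and I obtain
\[ M^m = \lambda^m P + \big(M(I-P)\big)^m . \]
The operator $M(I-P)$ carries exactly the non-dominant spectrum, so its spectral radius is some $\rho' < \lambda$. Hence $\big\|(M(I-P))^m\big\| = O\!\big(m^{d}(\rho')^m\big)$, where $d$ bounds the sizes of the Jordan blocks attached to the subdominant eigenvalues. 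Choosing any $\theta$ with $\rho'/\lambda < \theta < 1$ absorbs the polynomial factor $m^{d}$, yielding $\big\|(M(I-P))^m\big\| = O\!\big((\lambda\theta)^m\big)$.

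Pairing with $\alpha$ and $\beta$ then gives
\[ \alpha^\top M^m \beta = (\alpha\cdot v_\lambda)(u_\lambda\cdot\beta)\,\lambda^m + O\!\big((\lambda\theta)^m\big), \]
and dividing through by the leading term produces the stated $(1+O(\theta^m))$ form whenever $(\alpha\cdot v_\lambda)(u_\lambda\cdot\beta)\neq 0$; in every application here $\alpha,\beta$ are nonzero nonnegative indicator vectors while $v_\lambda,u_\lambda$ are strictly positive, so this coefficient is positive and no degeneracy arises. The one step demanding care is the error bound: the subdominant eigenvalues may have nontrivial Jordan structure, so I must pass from a bare $(\rho')^m$ to $(\lambda\theta)^m$ by sacrificing a sliver of the spectral gap to swallow the polynomial factor---this is where the freedom in choosing $\theta<1$ is used, and the place I would be most careful.
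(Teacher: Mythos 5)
Your proof is correct and takes essentially the same approach as the paper's: Perron--Frobenius for the simple dominant eigenvalue with positive left/right eigenvectors, a spectral splitting of $M^m$ into the rank-one dominant part $\lambda^m v_\lambda u_\lambda^\top$ plus a remainder of spectral radius strictly below $\lambda$, then pairing with $\alpha,\beta$. The only difference is cosmetic---you use the Riesz projector $P = v_\lambda u_\lambda^\top$ where the paper writes out the full Jordan decomposition---and if anything your treatment of the subdominant Jordan blocks (absorbing the polynomial factor $m^d$ by enlarging $\theta$ within the spectral gap) is more careful than the paper's bare claim that the entries of $B^m$ are $O(r^m)$.
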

\begin{proof}
By \cite[Proposition 2.4]{MR0423039}, any largest-magnitude eigenvalue
$\lambda$ of $M$ satisfies $|\lambda| \geq 1$.
We use a few other facts from linear algebra covered in e.g.\
\cite{lanotes, meyer2000matrix}.
By the Perron-Frobenius theorem, we conclude there is a unique
largest-magnitude eigenvalue $\lambda > 0$ and $\lambda$ has
multiplicity $1$.
Furthermore, $M$ has Jordan decomposition
\[
M =
\begin{bmatrix}
    \vert &  & \\
    v_{\lambda}   & * &  \\
    \vert & &
\end{bmatrix}
\left[\begin{array}{@{}c|c@{}}
  \lambda
  & \bigzero \\
\hline
  \bigzero &
  \begin{matrix}
   &  & \\
   & B & \\
   &  & 
  \end{matrix}
\end{array}\right]
\begin{bmatrix}
    \text{---} \hspace{-0.2cm} & u_\lambda & \hspace{-0.2cm} \text{---} \\
     & * & \\
     &   &
\end{bmatrix},
\]
where $v_\lambda$ is a positive $\lambda$-eigenvector of $M$,
$u_\lambda$ is a positive $\lambda$-eigenvector of $M^\top$,
and $B$ is a block-diagonal matrix with spectral radius $0 \leq r < \lambda$.
The fact that $v_\lambda \cdot u_\lambda = 1$ follows once we note that the
first and last matrices in a Jordan decomposition are inverses.
Taking powers, we have
\[
M^m =
\begin{bmatrix}
    \vert &  & \\
    v_{\lambda}   & * &  \\
    \vert & &
\end{bmatrix}
\left[\begin{array}{@{}c|c@{}}
  \lambda^m
  & \bigzero \\
\hline
  \bigzero &
  \begin{matrix}
   &  & \\
   & B^m & \\
   &  & 
  \end{matrix}
\end{array}\right]
\begin{bmatrix}
    \text{---} \hspace{-0.2cm} & u_\lambda & \hspace{-0.2cm} \text{---} \\
     & * & \\
     &   &
\end{bmatrix},
\]
where entries of $B^m$ are $O(r^m)$.
The result is now immediate with $\theta = r/\lambda$.
\end{proof}

The essential idea of
Proposition \ref{pro:tmm} is quite classical, see e.g.\ \cite[Corollary
V.1]{ac}.

It is sometimes useful to know more about the growth rate of the number of
walks.

\begin{proposition} \label{pro:2vert}
Let $M$ be an  $n \times n$ adjacency matrix of a strongly connected and
aperiodic digraph.
If $n \geq 2$ then all entries of $M^m$ are $\Theta(B^m)$, where $B>1$.
\end{proposition}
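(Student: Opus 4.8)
The plan is to read the entrywise asymptotics directly off Proposition~\ref{pro:tmm} and then upgrade the eigenvalue bound $\lambda \geq 1$ to the strict inequality $\lambda > 1$. Applying Proposition~\ref{pro:tmm} with $\alpha = e_i$ and $\beta = e_j$ (the standard basis vectors) gives
\[
  [M^m]_{i,j} = (v_\lambda)_i\,(u_\lambda)_j\,\lambda^m\bigl(1 + O(\theta^m)\bigr),
  \qquad m \to \infty,
\]
since $e_i \cdot v_\lambda = (v_\lambda)_i$ and $u_\lambda \cdot e_j = (u_\lambda)_j$. Because $v_\lambda$ and $u_\lambda$ are \emph{positive} eigenvectors, the leading coefficient $(v_\lambda)_i (u_\lambda)_j$ is strictly positive, so each fixed entry satisfies $[M^m]_{i,j} = \Theta(\lambda^m)$; as there are only finitely many entries the implied constants may be taken uniform. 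Thus the statement holds with $B = \lambda$, provided we can show $\lambda > 1$.

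The only real work, and the step I expect to be the main obstacle, is this strict inequality, since Proposition~\ref{pro:tmm} supplies only $\lambda \geq 1$. I would invoke the standard row-sum bounds for an irreducible nonnegative matrix: writing $r_i = \sum_j [M]_{i,j}$ for the $i$th row sum (here the out-degree of vertex $i$), one has $\min_i r_i \leq \lambda \leq \max_i r_i$, with equality on either side if and only if all row sums coincide \cite{meyer2000matrix, lanotes}. Strong connectedness forces every vertex to have at least one out-arc, so $r_i \geq 1$ for all $i$, which recovers $\lambda \geq 1$.

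Now suppose, for contradiction, that $\lambda = 1$. Then $\lambda = \min_i r_i = 1$, and the equality case of the lower bound forces all row sums to be equal, hence $r_i = 1$ for every $i$; that is, every vertex has out-degree exactly $1$. Together with strong connectedness this means $M$ is the adjacency matrix of a single directed cycle on all $n$ vertices, whose closed walks all have length a multiple of $n$, so its period is $n \geq 2$, contradicting aperiodicity. Hence $\lambda > 1$, completing the proof. (Should the equality-case lemma be deemed too heavy to cite, one can argue directly instead: aperiodicity with strong connectedness yields primitivity, so $M^N > 0$ entrywise for some $N$, and once the digraph is not a single cycle there is a vertex of out-degree $\geq 2$; this forces the minimum number of length-$m$ walks leaving a vertex to grow geometrically in $m$, which is incompatible with $\lambda = 1$.)
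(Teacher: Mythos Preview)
Your proof is correct and reaches the same conclusion $B=\lambda>1$, but the route to the strict inequality differs from the paper's. The paper argues combinatorially: it picks a vertex $v_1$, observes that aperiodicity together with $n\geq 2$ forces two distinct closed walks $C_1,C_2$ through $v_1$, and then freely concatenates segments of a common length $\ell=\lcm(c_1,c_2)$ to exhibit at least $2^{\alpha}$ closed walks of length $\alpha\ell$, forcing the Perron eigenvalue above $2^{1/\ell}>1$. You instead invoke the row-sum bounds $\min_i r_i\leq\lambda\leq\max_i r_i$ with their equality case, reduce $\lambda=1$ to the out-degree-one case, and kill that by noting a strongly connected functional digraph is a single $n$-cycle with period $n\geq 2$. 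Your argument is arguably tidier and pins down exactly which digraphs are excluded; the paper's argument is more self-contained (no appeal to the equality case of the Collatz--Wielandt bounds) and produces an explicit lower bound $\lambda\geq 2^{1/\ell}$. Either is fine here.
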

\begin{proof}
Let $v_1$ be a vertex in the digraph.
Since the digraph is aperiodic, there are two distinct cycles $C_1$ and $C_2$
starting from $v_1$; let their lengths be $c_1, c_2$.
Let $\ell = \lcm(c_1, c_2)$.
Construct a walk of length $\alpha \ell$ by choosing $\alpha$ segments of
length $\ell$ where each segment is either $C_1$ repeated or $C_2$ repeated.
Then the number of walks of length $\alpha \ell$ is at least
$(2^{1/\ell})^{\alpha \ell}$.
By Proposition \ref{pro:tmm} the number of walks of length $m$ is $\Theta(B^m)$
so we must have $B > 1$.
\end{proof}

\begin{definition}
A de Bruijn subgraph $D$ is \emph{regular} if $D$ is strongly connected,
contains at least $2$ vertices, and its derived digraph $D_\times$ is
aperiodic.
\end{definition}

\begin{notation}
\label{not:asympt}
If two real sequences satisfy $\lim_{n \to \infty} f(n)/g(n) = 1$,
we write $f(n) \sim g(n)$ and say $f(n)$ is asymptotic to $g(n)$.
\end{notation}

\begin{proposition} \label{prop:count}
Suppose $D$ is regular and $p(m; D, \Psi, \Phi) \sim A \cdot B^m$.
Then either $p_s(m; D, \Psi, \Phi) = 0$ or
\[p_s(m; D, \Psi, \Phi) = C_s \cdot B^m(1 + O(\theta^m)), \qquad m \to \infty\]
where $C_s > 0$ can be computed from $\graphf{D}_\times$ and
$0 \leq \theta < 1$.
\end{proposition}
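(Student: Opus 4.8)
The plan is to exploit the block structure of $M_\times$ coming from Lemma \ref{lem:strong} together with the isomorphism of strong components from Lemma \ref{lem:auto}, and then apply Proposition \ref{pro:tmm} componentwise. By Lemma \ref{lem:strong}, $D_\times$ is a disjoint union of strong components, so after reordering the vertices $M_\times = \bigoplus_i M_i$ is block diagonal with $M_i$ the adjacency matrix of the $i$-th component; consequently $M_\times^{m} = \bigoplus_i M_i^m$ and $p_s(m; D, \Psi, \Phi) = \psi_\times^\top M_\times^{m-\sigma}\phi_s = \sum_i (\psi_\times|_i)^\top M_i^{m-\sigma}(\phi_s|_i)$, where $|_i$ denotes restriction to the coordinates of component $i$. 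By Lemma \ref{lem:auto} all the $M_i$ are isomorphic, hence share a single Perron eigenvalue $\lambda$, the same subdominant eigenvalue ratio $\theta$, and the same set of cycle lengths; since $D$ is regular, $D_\times$ is aperiodic, and because the components are isomorphic each $M_i$ is itself strongly connected and aperiodic. This lets me apply Proposition \ref{pro:tmm} to each block with a common $\lambda$ and a common $\theta$.

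Next I would evaluate each block contribution. By Proposition \ref{pro:tmm}, $(\psi_\times|_i)^\top M_i^{m-\sigma}(\phi_s|_i) = (\psi_\times|_i \cdot v_\lambda^{(i)})(u_\lambda^{(i)} \cdot \phi_s|_i)\,\lambda^{m-\sigma}(1 + O(\theta^m))$, where $v_\lambda^{(i)}, u_\lambda^{(i)}$ are strictly positive eigenvectors. Because these eigenvectors are positive and $\psi_\times|_i, \phi_s|_i$ are $0/1$ indicator vectors, the coefficient $c_i = (\psi_\times|_i \cdot v_\lambda^{(i)})(u_\lambda^{(i)} \cdot \phi_s|_i)$ is nonnegative, and it is strictly positive exactly when component $i$ contains both a start vertex (so $\psi_\times|_i \neq 0$) and a finish vertex for $s$ (so $\phi_s|_i \neq 0$). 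When $c_i = 0$ one of the indicator restrictions vanishes identically, so that block contributes \emph{exactly} $0$; summing the finitely many blocks gives $p_s(m; D, \Psi, \Phi) = \big(\sum_i c_i\big)\lambda^{m-\sigma}(1 + O(\theta^m))$, the shared $\theta$ allowing the error terms to be merged.

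It remains to identify $\lambda$ with $B$ and to settle the dichotomy. Summing over the finite group, $p(m; D, \Psi, \Phi) = \sum_{s} p_s(m; D, \Psi, \Phi) = \big(\sum_{s}\sum_i c_i\big)\lambda^{m-\sigma}(1 + O(\theta^m))$; since $D$ is regular, each component has at least $|V(D)| \geq 2$ vertices, so Proposition \ref{pro:2vert} gives $\Theta(B^m)$ growth with $B > 1$ and $A > 0$, and comparing exponential rates with the hypothesis $p(m; D, \Psi, \Phi) \sim A B^m$ forces $\lambda = B$. Then $p_s(m; D, \Psi, \Phi) = C_s B^{m}(1 + O(\theta^m))$ with $C_s = B^{-\sigma}\sum_i c_i$, which is computable from $D_\times$ via its strong-component decomposition and the Perron data of any one block. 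Finally, since every walk of $D_\times$ stays inside a single strong component, no walk from $\Psi_\times$ to $\Phi_s$ exists precisely when no component simultaneously meets $\Psi_\times$ and $\Phi_s$, i.e.\ when every $c_i = 0$; in that case $p_s \equiv 0$, and otherwise some $c_i > 0$ and $C_s > 0$, yielding the claimed dichotomy.

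The main obstacle is the positivity and dichotomy bookkeeping: one must argue carefully that a vanishing leading coefficient $c_i$ forces an exact zero contribution rather than a merely lower-order term — which uses strict positivity of the Perron eigenvectors to rule out cancellation — and that the disjointness of the components makes \enquote{no walk from a start vertex to a finish-$s$ vertex} equivalent to \enquote{no single component meets both $\Psi_\times$ and $\Phi_s$}, so the $C_s = 0$ case is genuinely the identically-zero count rather than subexponential decay.
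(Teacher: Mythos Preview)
Your proposal is correct and follows essentially the same approach as the paper: decompose $D_\times$ into isomorphic strong components via Lemmas \ref{lem:strong} and \ref{lem:auto}, apply Proposition \ref{pro:tmm} blockwise to obtain a common Perron eigenvalue and error ratio, and read off the dichotomy from whether any component meets both $\Psi_\times$ and $\Phi_s$. Your write-up is more explicit than the paper's terse proof about the block-diagonal bookkeeping, the identification $\lambda = B$ via summing over $s$, and the use of Proposition \ref{pro:2vert} to guarantee $B>1$, but these are elaborations of the same argument rather than a different route.
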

\begin{proof}
Since the strong components of $D_\times$ are isomorphic by
Lemma \ref{lem:strong} and Lemma \ref{lem:auto}, they each have
the same adjacency matrix and the same eigenvalues.
The only difference between a composition of $s$ and an arbitrary
composition is the allowed finish vertices.
Thus by Proposition \ref{prop:dtimescount} and Proposition \ref{pro:tmm} we
conclude
$p_s(m; D, \Psi, \Phi) = C_s \cdot B^m(1 + O(\theta^m))$
where $C_s = 0$ only if $p_s(m; D, \Psi, \Phi) = 0$.
The latter case occurs if there is no strong component of $D_\times$ containing
vertices from both $\Psi_\times$ and $\Phi_s$.
\end{proof}

The asymptotics of $p_s(m; D, \Psi, \Phi)$ are now established.
However, in some cases we can usefully simplify the constants involved.

\begin{definition}
If $A$ is an $m \times n$ matrix and $B$ is a $p \times q$ matrix, then
the \emph{Kronecker product} $A \otimes B$ is the $mp \times nq$ matrix
$C$ such that $[C]_{p(r-1)+v, q(s-1) + w} = [A]_{r,s} [B]_{v,w}$.
Visually,
\[
A \otimes B =
  \begin{bmatrix}
  [A]_{1,1} {B} & \cdots &
  [A]_{1,n}{B} \\ \vdots & \ddots & \vdots \\ [A]_{m,1} {B} & \cdots &
  [A]_{m,n} {B}
  \end{bmatrix}.
\]
\end{definition}

Basic properties of the Kronecker product are discussed in \cite[Chapter
4]{hornjohnson}.
We quote a couple of relevant facts.

\begin{proposition}[Lemma 4.2.10 in \cite{hornjohnson}] \label{pro:horn1}
Let $F$ be a field.
Let $A \in F^{m \times n}, B \in F^{p \times q}, C \in F^{n \times k},
D \in F^{q \times r}$.
Then $(A \otimes B)(C \otimes D) = AC \otimes BD$.
\end{proposition}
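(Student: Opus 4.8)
The plan is to exploit the block structure of the Kronecker product and reduce the identity to ordinary block matrix multiplication. First I would observe that the definition of $A \otimes B$ can be read blockwise: partitioning the $mp \times nq$ matrix $A \otimes B$ into an $m \times n$ array of $p \times q$ blocks, the $(r,s)$ block is exactly the scalar multiple $[A]_{r,s} B$. This is just a restatement of the entry formula $[A \otimes B]_{p(r-1)+v,\, q(s-1)+w} = [A]_{r,s}[B]_{v,w}$, since within the $(r,s)$ block the $(v,w)$ entry is $[A]_{r,s}[B]_{v,w}$. Likewise, partitioning $C \otimes D$, which is $nq \times kr$, into an $n \times k$ array of $q \times r$ blocks, its $(s,t)$ block is $[C]_{s,t} D$.

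Next I would check that these two block partitions are conformable for block multiplication: the first matrix is partitioned with inner block width $q$ and has $n$ block-columns, while the second is partitioned with inner block height $q$ and has $n$ block-rows, so the blockwise product is defined and each block-block product $(p \times q)(q \times r)$ makes sense. Hence the $(r,t)$ block of $(A \otimes B)(C \otimes D)$ is the sum over $s$ of the products of corresponding blocks:
\[
\sum_{s=1}^{n} \big([A]_{r,s} B\big)\big([C]_{s,t} D\big).
\]
Since $[C]_{s,t}$ is a scalar and $F$ is a field, it commutes with $B$, so each summand equals $[A]_{r,s}[C]_{s,t}\, BD$, and pulling $BD$ out of the sum gives $\big(\sum_{s} [A]_{r,s}[C]_{s,t}\big) BD = [AC]_{r,t}\, BD$. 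By the blockwise reading of the first paragraph, now applied to $AC \otimes BD$, this is precisely the $(r,t)$ block of $AC \otimes BD$. As $r$ and $t$ were arbitrary, the two matrices agree block by block, and hence as matrices.

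The only real work is bookkeeping: confirming that the index arithmetic $p(r-1)+v$, $q(s-1)+w$ in the definition genuinely corresponds to the claimed block array, so that invoking block multiplication is legitimate, and verifying that the scalar entries of $A$ and $C$ factor cleanly past the matrices $B$ and $D$. I expect this passage to block multiplication to be the main, though entirely routine, obstacle; once it is granted the identity is immediate. An alternative, purely entrywise, proof would instead expand a single entry of $(A \otimes B)(C \otimes D)$, where the row sits in block-row $r$ at within-block position $v$ and the column sits in block-column $t$ at within-block position $x$, as the double sum $\sum_{s,w} [A]_{r,s}[B]_{v,w}[C]_{s,t}[D]_{w,x}$, factor it as $\big(\sum_s [A]_{r,s}[C]_{s,t}\big)\big(\sum_w [B]_{v,w}[D]_{w,x}\big)$, and recognize the two factors as $[AC]_{r,t}$ and $[BD]_{v,x}$; this avoids invoking block multiplication but demands more careful management of the four summation indices.
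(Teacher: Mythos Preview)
Your proof is correct and is the standard textbook argument. The paper itself does not supply a proof of this proposition: it is quoted verbatim as Lemma~4.2.10 of \cite{hornjohnson} and invoked as a black box, so there is nothing in the paper to compare against beyond the citation. Your block-multiplication approach (and the entrywise alternative you sketch) is exactly the kind of argument one finds in that reference.
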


\begin{proposition}[Equation 4.2.8 in \cite{hornjohnson}] \label{pro:horn2}
Let $F$ be a field.
We have
\[ A \otimes (B + C) = A \otimes B + A \otimes C \]
for all $A \in F^{m \times n}$ and $B,C \in F^{p \times q}$.
\end{proposition}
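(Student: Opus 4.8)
The plan is to prove the identity entrywise, directly from the definition of the Kronecker product given above. First I would observe that both sides are well-formed matrices of the same shape: since $B$ and $C$ are both $p \times q$, so is $B + C$, and hence $A \otimes (B+C)$, $A \otimes B$, and $A \otimes C$ are each $mp \times nq$ matrices. Thus it suffices to show that corresponding entries agree.

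The one point requiring care is the indexing scheme. I would note that every row index $i \in \{1, \ldots, mp\}$ admits a unique representation $i = p(r-1) + v$ with $r \in \{1, \ldots, m\}$ and $v \in \{1, \ldots, p\}$ (take $r-1$ and $v-1$ to be the quotient and remainder of $i-1$ upon division by $p$), and similarly every column index $j \in \{1, \ldots, nq\}$ is uniquely $j = q(s-1) + w$ with $s \in \{1, \ldots, n\}$ and $w \in \{1, \ldots, q\}$. This lets me address an arbitrary entry of each of the three matrices in exactly the form used by the definition, so that checking agreement at all $(i,j)$ reduces to checking agreement at all $(p(r-1)+v,\, q(s-1)+w)$.

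With the indexing fixed, I would compute the left-hand entry using the definition together with the fact that matrix addition is entrywise, obtaining
\[
[A \otimes (B+C)]_{p(r-1)+v,\, q(s-1)+w}
  = [A]_{r,s}\,[B+C]_{v,w}
  = [A]_{r,s}\bigl([B]_{v,w} + [C]_{v,w}\bigr).
\]
On the right-hand side, addition is again entrywise and each summand expands by the definition to give
\[
[A \otimes B + A \otimes C]_{p(r-1)+v,\, q(s-1)+w}
  = [A]_{r,s}\,[B]_{v,w} + [A]_{r,s}\,[C]_{v,w}.
\]
The two expressions coincide by distributivity of multiplication over addition in the field $F$, which finishes the argument. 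I do not anticipate any genuine obstacle: the proof is a single application of field distributivity, and the only step demanding a moment's attention is the uniqueness of the mixed-radix decomposition of the indices, which legitimizes the block-by-block comparison and is a routine division-with-remainder observation.
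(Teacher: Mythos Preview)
Your proof is correct; the entrywise verification via the mixed-radix index decomposition is exactly the standard argument, and the only nontrivial step (uniqueness of that decomposition) you handle cleanly. The paper itself gives no proof of this proposition at all---it simply quotes the identity from \cite{hornjohnson}---so there is nothing to compare against beyond noting that your direct computation is the expected one.
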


\begin{notation}
\label{not:iverson}
For a logical statement $\phi$, the notation $[\phi]$ stands for $1$ if $\phi$
is true and $0$ otherwise.
\end{notation}

We next establish the structure of derived digraphs $D_\times$ in terms of
the Kronecker product.

\begin{lemma} \label{lem:permmatrix}
Say $|V(D)| = \alpha$ and fix a vertex ordering $v_1, \ldots, v_\alpha$.
Let $M$ be the adjacency matrix of $\graphf{D}$ with respect to this ordering.
Also fix an ordering on $G = \{a_1, a_2, \ldots, a_\beta\}$ where $a_1=0$.
Finally, define a vertex ordering on $\graphf{D}_\times$ as
\[
(v_1, a_1), \ldots, (v_\alpha, a_1), (v_1, a_2), \ldots, (v_\alpha, a_2),
\ldots, (v_1, a_\beta), \ldots, (v_\alpha, a_\beta).
\]
Let $M_\times$ be the adjacency matrix of $\graphf{D}_\times$ with respect to
this ordering.
For each $a \in G$, define the $\alpha \times \alpha$ matrix $M_a$ and
$\beta \times \beta$ matrix $P_a$ such that
\begin{align*}
[M_a]_{i,j} &= [v_j(\sigma) = a, (v_i,v_j) \in E(D)], \\
[P_a]_{i,j} &= [a_i + a = a_j].
\end{align*}

Then
$M_\times = \sum_{a \in G} P_a \otimes M_a$ and
$M = \sum_{a \in G} M_a$.
\end{lemma}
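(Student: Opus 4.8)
The plan is to verify both identities by direct blockwise comparison, since everything in sight is an explicit $0/1$ matrix. I would dispatch the simpler claim $M = \sum_{a \in G} M_a$ first: for a fixed pair $(v_i, v_j)$ we have $[M]_{i,j} = [(v_i, v_j) \in E(D)]$, whereas $\sum_{a} [M_a]_{i,j} = \sum_{a} [v_j(\sigma) = a,\ (v_i, v_j) \in E(D)]$. Whenever $(v_i, v_j) \in E(D)$, the quantity $v_j(\sigma)$ is one specific group element, so exactly one indicator $[v_j(\sigma) = a]$ fires; and when $(v_i, v_j) \notin E(D)$ every term vanishes. Hence the sum equals $[(v_i, v_j) \in E(D)] = [M]_{i,j}$, establishing $M = \sum_{a} M_a$.

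For the main identity I would exploit the block structure induced by the chosen vertex ordering. Grouping the vertices of $D_\times$ by their $G$-coordinate, the ordering $(v_1, a_1), \ldots, (v_\alpha, a_1), (v_1, a_2), \ldots$ partitions $V(D_\times)$ into $\beta$ consecutive blocks of size $\alpha$, where block $r$ is the ``$a_r$-copy'' $\{(v_1, a_r), \ldots, (v_\alpha, a_r)\}$; concretely $(v_v, a_r)$ sits at index $\alpha(r-1)+v$. Thus $M_\times$ acquires a natural $\beta \times \beta$ array of $\alpha \times \alpha$ blocks, which is exactly the block layout of a Kronecker product $P \otimes M'$ under the paper's convention $[A \otimes B]_{p(r-1)+v,\,q(s-1)+w} = [A]_{r,s}[B]_{v,w}$, with the first factor indexing group elements and the second indexing $D$-vertices. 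I would then compute the $(r,s)$ block of each side. On the left, the $(v, w)$ entry of the $(r,s)$ block of $M_\times$ is the indicator of an arc $(v_v, a_r) \to (v_w, a_s)$ in $D_\times$, which by the definition of the derived digraph is $[(v_v, v_w) \in E(D)]\,[a_r + v_w(\sigma) = a_s]$. On the right, the $(r,s)$ block of $\sum_a P_a \otimes M_a$ is $\sum_{a \in G} [P_a]_{r,s}\, M_a = \sum_{a \in G} [a_r + a = a_s]\, M_a$.

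The crux—and the only place where any care is needed—is seeing that these two expressions coincide, which is again a single-term selection argument, now routed through the group structure. Because $G$ is a group, the equation $a_r + a = a_s$ has exactly one solution $a^* = -a_r + a_s$, so $\sum_{a} [a_r + a = a_s]\, M_a = M_{a^*}$, whose $(v,w)$ entry is $[v_w(\sigma) = a^*,\ (v_v, v_w) \in E(D)]$. Comparing with the left side, the factor $[a_r + v_w(\sigma) = a_s]$ holds precisely when $v_w(\sigma) = a^*$, so the two $(v,w)$ entries agree for all $r,s,v,w$; since the blocks match, $M_\times = \sum_{a \in G} P_a \otimes M_a$. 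The main obstacle I anticipate is purely bookkeeping: keeping the vertex ordering on $D_\times$ aligned with the Kronecker indexing $\alpha(r-1)+v$, so that ``block $r$, entry $(v,w)$'' unambiguously denotes the arc $(v_v, a_r) \to (v_w, a_s)$. Once that dictionary is fixed, both identities reduce to the observation that each relevant sum of indicators has exactly one nonzero term.
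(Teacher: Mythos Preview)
Your proof is correct and follows essentially the same approach as the paper: both arguments verify each identity entry by entry (equivalently, block by block), reducing to the observation that in each sum of indicators exactly one term is nonzero. The paper writes this out with explicit indices $i + \alpha(j-1)$ rather than speaking of the $(r,s)$ block, but the computation is identical.
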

\begin{proof}
We have
\[
[M_\times]_{i + \alpha(j-1),  k + \alpha(l-1)} =
  [(v_i, v_k) \in E(D), a_j + v_k(\sigma) = a_l]
\]
and
\begin{align*}
\left[ \sum_{a \in G} P_a \otimes M_a \right]_{i + \alpha(j-1),  k + \alpha(l-1)}
&= \sum_{a \in G} [P_a]_{j,l} [M_a]_{i,k} \\
&= \sum_{a \in G} [a_j + a = a_l][v_k(\sigma) = a, (v_i, v_k) \in E(D)] \\
&= [(v_i, v_k) \in E(D), a_j + v_k(\sigma) = a_l].
\end{align*}

Clearly
\[ [M]_{i,j} = [(v_i, v_j) \in E(D)] = \sum_{a \in G} [v_j(\sigma) =
  a, (v_i, v_j) \in E(D)] = \sum_{a \in G} [M_a]_{i,j}. \qedhere \]
\end{proof}

\begin{theorem} \label{thm:equal}
Assume
\begin{itemize}
\item for some $\graphf{v} \in V(\graphf{D})$ we have that for all $a \in G$
  there is a legal composition starting and ending with $\graphf{v}$ with total
  $a$, and
\item for some $\graphf{u} \in V(\graphf{D})$ the set
  $$\left\{ m : \exists \text{ a walk } x =
  (\graphf{u}, \graphf{v}, \ldots, \graphf{w},\graphf{u})
  \text{ of length } m+1, \sum x = \sum \graphf{u} \right\},$$
  has a GCD of $1$, where $\sum x$ is the total of the composition
  corresponding to $x$.
\end{itemize}
Assume $p(m; D, \Psi, \Phi) \sim A \cdot B^m$.
Then
\[
p_a(m; D, \Psi, \Phi) =
  \frac{A}{|G|} \cdot B^m(1 + O(\theta^m)), \qquad m \to \infty,\, 0 \leq \theta
  < 1.
\]
\end{theorem}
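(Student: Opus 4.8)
The plan is to recognize that the two hypotheses are precisely what make $D$ regular, so that the transfer-matrix asymptotics of Proposition~\ref{pro:tmm} apply to $M_\times$ directly, and then to show that the resulting leading constant does not see the group element. First I would argue that the first bullet forces $\graphf{D}_\times$ to be strongly connected: it says that for one base vertex $\graphf{v}$ the copies $(\graphf{v},c)$ are mutually reachable for every shift, and since Lemma~\ref{lem:strong} makes reachability symmetric while Lemma~\ref{lem:auto} shows every strong component meets the fiber over $\graphf{v}$, only one strong component can remain. The second bullet says the closed-walk lengths through the canonical start vertex $(\graphf{u},\sum\graphf{u})$ of $\graphf{D}_\times$ have gcd $1$; once $\graphf{D}_\times$ is strongly connected its period equals this gcd, so $\graphf{D}_\times$ is aperiodic and $M_\times$ is primitive. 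Proposition~\ref{pro:tmm} then yields
\[
p_a(m;D,\Psi,\Phi)=\psi_\times^\top M_\times^{m-\sigma}\phi_a
  =(\psi_\times\cdot v_\lambda)(u_\lambda\cdot\phi_a)\,B^{m-\sigma}(1+O(\theta^m)),
\]
where $B=\lambda$ is the Perron eigenvalue (matching the growth rate in the hypothesis) and $u_\lambda$ is the positive left eigenvector.

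The heart of the argument, and the step I expect to be the main obstacle, is to show that $u_\lambda\cdot\phi_a$ is independent of $a$, i.e.\ that $u_\lambda$ is constant along each group fiber $\{(\graphf{v},c):c\in G\}$. Here I would exploit the decomposition $M_\times=\sum_{a\in G}P_a\otimes M_a$ from Lemma~\ref{lem:permmatrix}. Since $[P_a]_{i,j}=[a_i+a=a_j]$ is a permutation matrix with $P_a^\top=P_{-a}$, we have $P_a^\top\mathbf 1=\mathbf 1$ for the all-ones vector $\mathbf 1\in\mathbb{R}^{|G|}$. Letting $w$ be a positive left Perron eigenvector of $M=\sum_a M_a$ (which exists because $\graphf{D}$ is strongly connected, hence $M$ irreducible), Proposition~\ref{pro:horn1} and Proposition~\ref{pro:horn2} give
\[
M_\times^\top(\mathbf 1\otimes w)
  =\sum_{a\in G}(P_a^\top\mathbf 1)\otimes(M_a^\top w)
  =\mathbf 1\otimes\Big(\sum_{a\in G}M_a^\top w\Big)
  =\mathbf 1\otimes(M^\top w)=\lambda_D(\mathbf 1\otimes w),
\]
so $\mathbf 1\otimes w$ is a strictly positive eigenvector of $M_\times^\top$. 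The delicate point is that $\graphf{D}_\times$ can be aperiodic even when $M$ is periodic, so I cannot simply invoke primitivity of $M$; instead I use primitivity of $M_\times$, for which the only eigenvalue admitting a positive eigenvector is the Perron eigenvalue, with that eigenvector unique up to scale. Hence $\lambda=\lambda_D$ and $u_\lambda$ is a scalar multiple of $\mathbf 1\otimes w$. In the vertex ordering of Lemma~\ref{lem:permmatrix} this says the coordinate of $u_\lambda$ indexed by $(\graphf{v},a)$ is the same for every $a$, so $u_\lambda\cdot\phi_a=\sum_{\graphf{v}\in\Phi}[u_\lambda]_{(\graphf{v},a)}$ does not depend on $a$.

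It follows that each constant $C_a:=(\psi_\times\cdot v_\lambda)(u_\lambda\cdot\phi_a)B^{-\sigma}$ equals a common value $C$, giving $p_a(m;D,\Psi,\Phi)=C\cdot B^m(1+O(\theta^m))$ for every $a\in G$. To finish I would sum over the group: because omitting the subscript removes the total constraint, $\sum_{a\in G}p_a(m;D,\Psi,\Phi)=p(m;D,\Psi,\Phi)\sim A\cdot B^m$, whereas the left-hand side is $\sim|G|\,C\,B^m$. Matching leading constants yields $C=A/|G|$, which is exactly the claimed asymptotic. The remaining work — verifying regularity from the two hypotheses and carrying out this summation — is routine once the fiber-constancy of $u_\lambda$ is in hand; the genuine content lies entirely in identifying the left Perron eigenvector through the Kronecker/voltage structure.
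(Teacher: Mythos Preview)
Your proof is correct and follows essentially the same route as the paper: deduce strong connectedness and aperiodicity of $D_\times$ from the two bullets, then use the Kronecker decomposition $M_\times=\sum_a P_a\otimes M_a$ of Lemma~\ref{lem:permmatrix} to show that $\mathbf 1\otimes w$ is a positive left eigenvector of $M_\times$, whence the Perron left eigenvector is constant along group fibers and $u_\lambda\cdot\phi_a$ is independent of $a$. You are somewhat more explicit than the paper in spelling out the left-eigenvector calculation, in justifying why the eigenvalue of $M$ coincides with that of $M_\times$ via the positivity characterization of the Perron eigenvector, and in the final summation $\sum_a p_a = p \sim A B^m$ that pins down $C=A/|G|$; but these are refinements of exposition rather than a different argument.
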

\begin{proof}
From the first condition we know there is a single strong component by
Lemma \ref{lem:strong},
i.e.\
$\graphf{D}_\times$ is strongly connected.
The second condition ensures that this component is aperiodic.
This allows us to conclude that the Perron-Frobenius theorem applies directly
to $D_\times$.

Let $M, M_\times, M_a, P_a$ be as in
Lemma \ref{lem:permmatrix}.

Let $\lambda>0$ be the dominant eigenvalue of $M$,
let $v_\lambda$ be an associated positive eigenvector, and let $u_\lambda$ be
an associated positive left eigenvector (eigenvector of $M^\top$).
Let $\xi \in \mathbb{R}^\beta$ be the all-$1$ vector $\rvect{1, 1, \cdots, 1}$.

We claim that $\xi \otimes v_\lambda$ is an eigenvector of $M_\times$ with
eigenvalue $\lambda$.
First,
by Proposition \ref{pro:horn1},
$(P_a \otimes M_a)(\xi \otimes v_\lambda) = P_a \xi \otimes M_a v_\lambda$.
Since $P_a$ is a permutation matrix, we have $P_a \xi = \xi$.
Thus
\begin{align*}
M_\times (\xi \otimes v_\lambda)
  &= \left(\sum_{a \in G} P_a \otimes M_a\right) (\xi \otimes v_\lambda) \\
  &= \sum_{a \in G} (P_a \otimes M_a) (\xi \otimes v_\lambda) \\
  &= \sum_{a \in G} P_a \xi \otimes M_a v_\lambda \\
  &= \sum_{a \in G} \xi \otimes M_a v_\lambda.
\end{align*}
By Proposition \ref{pro:horn2},
$\sum_{a \in G} \xi \otimes M_a v_\lambda =
\xi \otimes \sum_{a \in G}M_a v_\lambda$.
We conclude
\begin{align*}
M_\times (\xi \otimes v_\lambda) &= \xi \otimes \sum_{a \in G} M_a v_\lambda \\
  &= \xi \otimes \left(\sum_{a \in G} M_a\right) v_\lambda \\
  &= \xi \otimes M v_\lambda \\
  &= \xi \otimes \lambda v_\lambda \\
  &= \lambda \xi \otimes v_\lambda.
\end{align*}
Similarly we have that $\xi \otimes u_\lambda$ is a left eigenvector for
$M_\times$ with eigenvalue $\lambda$.

By Proposition \ref{pro:tmm}, we know that
\[
p_a(m; D, \Psi, \Phi) = \psi_\times^\top M_\times^{m-\sigma} \phi_a
  = C_a \lambda^m(1 + O(\theta^m)),
\]
where
$C_a = c (\psi_\times \cdot (\xi \otimes v_\lambda)) ((\xi \otimes
u_\lambda) \cdot \phi_a)$ for
some fixed scaling factor $c>0$.

Suppose $a = a_i$;
then $\phi_a = e_i \otimes \phi$.
Thus
\[
(\xi \otimes u_\lambda) \cdot \phi_a
= (\xi \otimes u_\lambda) \cdot (e_i \otimes \phi)
= (\xi \cdot e_i) \otimes (u_\lambda \cdot \phi) = u_\lambda \cdot \phi.\]
Since $u_\lambda \cdot \phi$ does not depend on $a$, the proof is now complete.
\end{proof}

An alternative proof approach involves using Lemma \ref{lem:auto} and the fact
that automorphisms of $M_\times$ correspond to permutation matrices $P$ such
that $P M_\times P^{-1} = M_\times$.

\begin{corollary}
Assume the conditions of Theorem \ref{thm:equal}.
Construct a probability space from $\mathcal{P}(m; D, \Psi, \Phi)$ and
the uniform probability measure.
Then for $a \in G$, let $\mathbb{P}_m(a)$ be the probability that an element
drawn randomly from
$\mathcal{P}(m; D, \Psi, \Phi)$ has total $a$.
We have for any $a \in G$
\[ \mathbb{P}_m(a) \to \frac{1}{|G|}, \qquad m \to \infty, \]
or in other words, $\mathbb{P}_m$ converges strongly to the uniform measure
on $G$.
\end{corollary}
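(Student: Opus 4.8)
The plan is to realize $\mathbb{P}_m(a)$ as a ratio of counts and then feed both numerator and denominator through Theorem \ref{thm:equal}. By the definition of the uniform measure on the finite set $\mathcal{P}(m; D, \Psi, \Phi)$, we have
\[ \mathbb{P}_m(a) = \frac{p_a(m; D, \Psi, \Phi)}{p(m; D, \Psi, \Phi)}. \]
The key structural observation is that every composition counted by $\mathcal{P}(m; D, \Psi, \Phi)$ has a single well-defined total lying in $G$, so the family $\{\mathcal{P}_a(m; D, \Psi, \Phi)\}_{a \in G}$ partitions $\mathcal{P}(m; D, \Psi, \Phi)$. Hence $p(m; D, \Psi, \Phi) = \sum_{a \in G} p_a(m; D, \Psi, \Phi)$, and the displayed ratio is a genuine probability summing to $1$ over $a \in G$.

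Next I would substitute the asymptotics. The conclusion of Theorem \ref{thm:equal} gives, uniformly in $a$,
\[ p_a(m; D, \Psi, \Phi) = \frac{A}{|G|} \, B^m (1 + O(\theta^m)), \qquad 0 \leq \theta < 1, \]
while the standing hypothesis $p(m; D, \Psi, \Phi) \sim A \cdot B^m$ controls the denominator; note that $A > 0$, so the count is eventually positive and no division by zero arises for large $m$. Dividing,
\[ \mathbb{P}_m(a) = \frac{1}{|G|} \cdot \frac{1 + O(\theta^m)}{1 + o(1)} \longrightarrow \frac{1}{|G|}, \qquad m \to \infty. \]

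Finally, because $G$ is finite, pointwise convergence of the probability masses $\mathbb{P}_m(a) \to 1/|G|$ is already equivalent to convergence in total variation to the uniform measure, which is the asserted strong convergence: summing the finitely many deviations $\sum_{a \in G} |\mathbb{P}_m(a) - 1/|G||$ yields the total-variation bound, and each summand tends to $0$. There is essentially no obstacle here, since all of the analytic work is carried by Theorem \ref{thm:equal}; the only points to verify are the bookkeeping that the totals partition the counted set (so the denominator really is the full count) and the harmless observation that finiteness of $G$ upgrades pointwise mass convergence to the claimed strong convergence.
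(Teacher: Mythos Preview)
Your proposal is correct and follows the same approach as the paper, which simply records the corollary as ``Direct from Theorem \ref{thm:equal}.'' Your write-up supplies the routine details (the ratio formula, the partition of $\mathcal{P}(m;D,\Psi,\Phi)$ by total, and the finiteness-of-$G$ remark on strong convergence) that the paper leaves implicit.
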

\begin{proof}
Direct from Theorem \ref{thm:equal}.
\end{proof}

\begin{example}
We show a case where the strong connectedness condition in Theorem
\ref{thm:equal} is required.
Let $D$ be the digraph given in Figure \ref{fig:equalfail}, where
$G = \mathbb{Z}_2$.
Assume $\Psi = \Phi = V(D)$.

\begin{figure}
\centering

\begin{tikzpicture}[scale=3.5]
{
\small
\begin{scope}[every node/.style={circle,thick,draw}]
    \node (A) at (-.8,0) {(0,0,0,0)};
    \node (B) at (0,0) {(0,0,0,1)};
    \node (C) at (0.5,-0.5) {(0,0,1,1)};
    \node (D) at (0,-1) {(0,1,1,0)};
    \node (E) at (-.8,-1) {(1,1,0,0)};
    \node (F) at (-1.3,-0.5) {(1,0,0,0)};
\end{scope}
}

\begin{scope}[>={Stealth[black]},
              every node/.style={fill=white,circle},
              every edge/.style={draw=gray,very thick}]
    \path [->] (A) edge (B);
    \path [->] (B) edge (C);
    \path [->] (C) edge (D);
    \path [->] (D) edge (E);
    \path [->] (E) edge (F);
    \path [->] (F) edge (A);
    \path (A) edge [loop left] (A);

\end{scope}
\end{tikzpicture}

\vspace{1.2cm}

\begin{tikzpicture}[scale=4]
{
\small
\begin{scope}[every node/.style={circle,thick,draw}]
    \node (A) [double] at (-.8,0) {((0,0,0,0),0)};
    \node (B) [double] at (0,0) {((0,0,0,1),1)};
    \node (C) [double] at (0.5,-0.5) {((0,0,1,1),0)};
    \node (D) [double] at (0,-1) {((0,1,1,0),0)};
    \node (E) [double] at (-.8,-1) {((1,1,0,0),0)};
    \node (F) at (-1.3,-0.5) {((1,0,0,0),0)};
\end{scope}
}

\begin{scope}[>={Stealth[black]},
              every node/.style={fill=white,circle},
              every edge/.style={draw=gray,very thick}]
    \path [->] (A) edge (B);
    \path [->] (B) edge (C);
    \path [->] (C) edge (D);
    \path [->] (D) edge (E);
    \path [->] (E) edge (F);
    \path [->] (F) edge (A);
    \path (A) edge [loop left] (A);

\end{scope}
\end{tikzpicture}

\vspace{.3cm}

\begin{tikzpicture}[scale=4]
{
\small
\begin{scope}[every node/.style={circle,thick,draw}]
    \node (A) at (-.8,0) {((0,0,0,0),1)};
    \node (B) at (0,0) {((0,0,0,1),0)};
    \node (C) at (0.5,-0.5) {((0,0,1,1),1)};
    \node (D) at (0,-1) {((0,1,1,0),1)};
    \node (E) at (-.8,-1) {((1,1,0,0),1)};
    \node (F) [double] at (-1.3,-0.5) {((1,0,0,0),1)};
\end{scope}
}

\begin{scope}[>={Stealth[black]},
              every node/.style={fill=white,circle},
              every edge/.style={draw=gray,very thick}]
    \path [->] (A) edge (B);
    \path [->] (B) edge (C);
    \path [->] (C) edge (D);
    \path [->] (D) edge (E);
    \path [->] (E) edge (F);
    \path [->] (F) edge (A);
    \path (A) edge [loop left] (A);

\end{scope}
\end{tikzpicture}

\caption{A base digraph $D$ (above) and derived digraph $D_\times$ with $2$
  strong components (below).
  The vertices of $D$ are $4$-tuples over $\mathbb{Z}_2$.
\label{fig:equalfail}}
\end{figure}
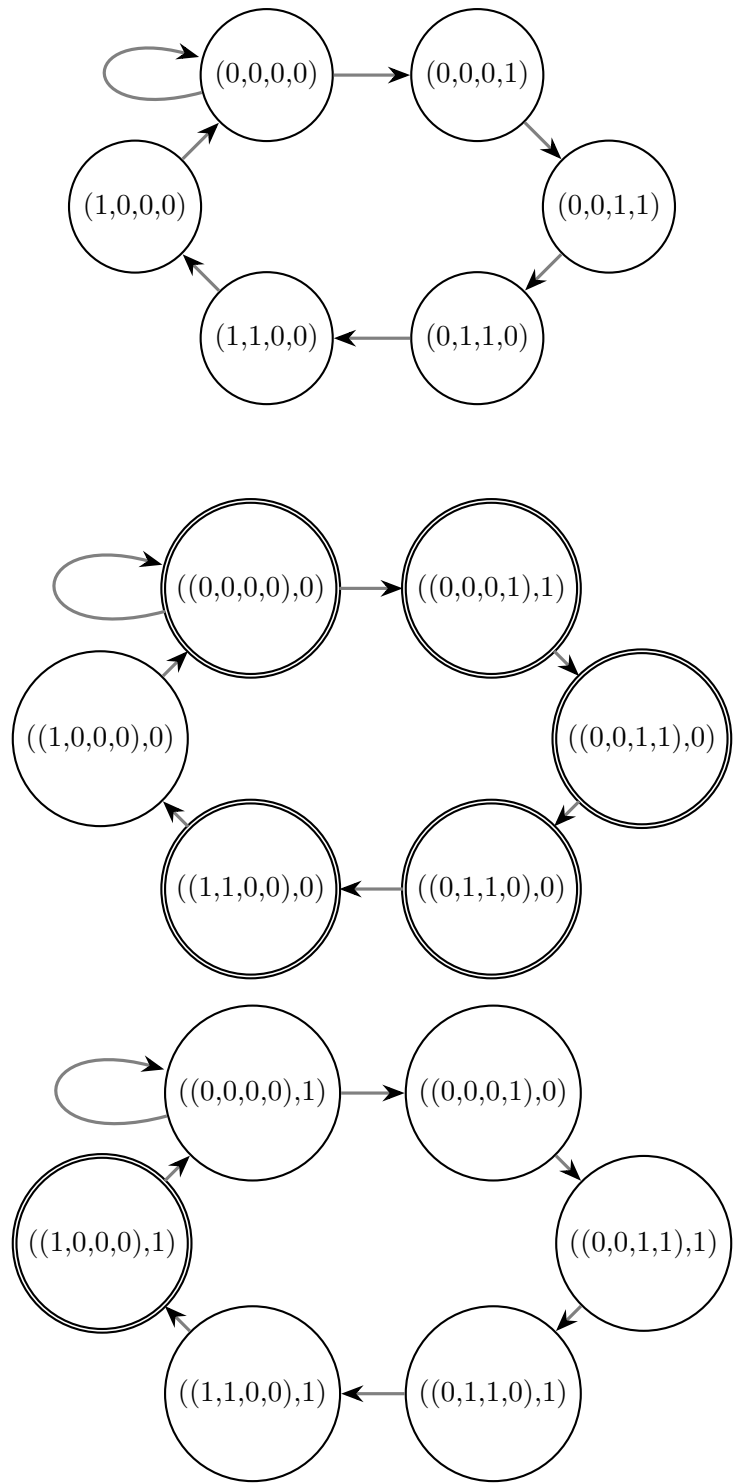

Let $M^{\langle 1 \rangle}, M^{\langle 2 \rangle}$ be adjacency matrices of the
two strong components of $D_\times$, under a particular vertex ordering.
We have
\[
M^{\langle 1 \rangle} = M^{\langle 2 \rangle} = \left[
\begin{array}{cccccc}
 1 & 1 & 0 & 0 & 0 & 0 \\
 0 & 0 & 1 & 0 & 0 & 0 \\
 0 & 0 & 0 & 1 & 0 & 0 \\
 0 & 0 & 0 & 0 & 1 & 0 \\
 0 & 0 & 0 & 0 & 0 & 1 \\
 1 & 0 & 0 & 0 & 0 & 0 \\
\end{array}
\right].
\]
From a Jordan decomposition, we get a left eigenvalue
\[u_\lambda \doteq
  \rvect{0.368841, 0.286991, 0.223305, 0.173751, 0.135194, 0.105193},\]
and a right eigenvector
\[v_\lambda \doteq \rvect{1.2852, 0.366538, 0.471074, 0.605423, 0.77809, 1.0},\]
corresponding to the dominant eigenvalue $\lambda \doteq 1.2852$.
Also
\[ \psi^{\langle 1 \rangle}_\times = \rvect{1,1,1,1,1,0}^\top,\,\,
 \phi^{\langle 1 \rangle}_0 = \rvect{1,0,1,1,1,1}^\top,\]
\[\phi^{\langle 1 \rangle}_1 = \rvect{0,1,0,0,0,0}^\top, \]
and
\[ \psi^{\langle 2 \rangle}_\times = \rvect{0,0,0,0,0,1}^\top,\,\,
\phi^{\langle 2 \rangle}_0 = \rvect{0,1,0,0,0,0}^\top, \]
\[ \phi^{\langle 2 \rangle}_1 = \rvect{1,0,1,1,1,1}^\top. \]
We compute
\begin{align*}
p_0(m; D) &= {\psi^{\langle 1 \rangle}_\times}^\top
  (M^{\langle 1 \rangle})^{m-4}
  \phi^{\langle 1 \rangle}_0 +
{\psi^{\langle 2 \rangle}_\times}^\top
  (M^{\langle 2 \rangle})^{m-4}
  \phi^{\langle 2 \rangle}_0 \\
&\sim \lambda^{m-4} ( ({\psi^{\langle 1 \rangle}_\times} \cdot v_\lambda)
  (u_\lambda \cdot \phi^{\langle 1 \rangle}_0) +
  ({\psi^{\langle 2 \rangle}_\times} \cdot
  v_\lambda) (u_\lambda \cdot \phi^{\langle 2 \rangle}_0)) \\
&\doteq \lambda^{m-4} (3.50632 \cdot 1.00628 + 1.0 \cdot 0.286991) \\
&= 3.81533 \cdot \lambda^{m-4}
\end{align*}
and
\begin{align*}
p_1(m; D) &= {\psi^{\langle 1 \rangle}_\times}^\top
  (M^{\langle 1 \rangle})^{m-4}
  \phi^{\langle 1 \rangle}_1 +
{\psi^{\langle 2 \rangle}_\times}^\top
  (M^{\langle 2 \rangle})^{m-4}
  \phi^{\langle 2 \rangle}_1 \\
&\sim \lambda^{m-4} ( (\psi^{\langle 1 \rangle}_\times \cdot v_\lambda)
  (u_\lambda \cdot \phi^{\langle 1 \rangle}_1) +
  (\psi^{\langle 2 \rangle}_\times \cdot
  v_\lambda) (u_\lambda \cdot \phi^{\langle 2 \rangle}_1)) \\
&\doteq \lambda^{m-4} (3.50632 \cdot 0.286991 + 1.0 \cdot 1.00628) \\
&= 2.01256 \cdot \lambda^{m-4}.
\end{align*}
So indeed the conclusion of Theorem \ref{thm:equal} does not hold.
\end{example}


\begin{lemma} \label{lem:abelian}
Assume $G$ is abelian.
Define a $\graphf{D}_\times$-automorphism
$f$ by $f(\graphf{v}, b) = (\graphf{v}, a+b)$ for some $a \in G$.
If $f$ maps any vertex to its own strong component, then $f$ maps
all vertices to their own strong component.
\end{lemma}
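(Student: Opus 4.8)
The plan is to view $f$ as one of the shift automorphisms produced by Lemma \ref{lem:auto} and to let the whole group $G$ act on the set of strong components of $\graphf{D}_\times$ by such shifts; commutativity of $G$ will then force the shift by $a$ to fix every component as soon as it fixes one. Throughout I write $g_{a'}$ for the map $g_{a'}(\graphf{v}, c) = (\graphf{v}, a' + c)$, so that the automorphism $f$ of the statement is exactly $g_a$.

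First I would record the easy structural facts. By (the proof of) Lemma \ref{lem:auto}, each $g_{a'}$ is a digraph automorphism of $\graphf{D}_\times$, and clearly $g_{a'} g_{a''} = g_{a' + a''}$, so the $g_{a'}$ form a copy of $G$ acting on $\graphf{D}_\times$. Any digraph automorphism carries strong components to strong components, so each $g_{a'}$ induces a permutation of the set of strong components. The hypothesis says there is a vertex $x$ with $f(x)$ in the strong component $S$ of $x$; since $f(S)$ is a strong component containing $f(x) \in S$, and distinct strong components are disjoint by Lemma \ref{lem:strong}, we conclude $f(S) = S$, i.e.\ $g_a$ fixes the component $S$ setwise.

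Next I would establish that the shifts act transitively on strong components. Fix a single $\graphf{v} \in V(\graphf{D})$. As noted in the proof of Lemma \ref{lem:auto} (using strong connectedness of $\graphf{D}$ together with Lemma \ref{lem:strong}), every strong component contains a vertex of the form $(\graphf{v}, c)$. Given components $S_1 \ni (\graphf{v}, c_1)$ and $S_2 \ni (\graphf{v}, c_2)$, the automorphism $g_{c_2 - c_1}$ sends $(\graphf{v}, c_1)$ to $(\graphf{v}, c_2)$, and hence maps $S_1$ onto $S_2$. Thus the action is transitive.

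Finally I would combine these. Let $T$ be any strong component and choose $a'$ with $g_{a'}(S) = T$. Using that $G$ is abelian, so that $g_a$ and $g_{a'}$ commute,
\[ g_a(T) = g_a g_{a'}(S) = g_{a'} g_a(S) = g_{a'}(S) = T. \]
Hence $g_a = f$ fixes every strong component setwise, and so each vertex $x$ has $f(x)$ in the same strong component as $x$, which is precisely the desired conclusion. I expect the genuine crux to be the commutation step $g_a g_{a'} = g_{a'} g_a$: this is the one place abelianness is used, and without it the shifts need not commute, so fixing a single component would not propagate to the rest.
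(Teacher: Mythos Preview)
Your proof is correct and is essentially the paper's argument, rephrased in the language of a $G$-action on the set of strong components: the paper works with a single vertex $(\graphf{u},c)$, finds $(\graphf{u},d)$ in the hypothesized component, and transports the path $(\graphf{u},d)\too(\graphf{u},a+d)$ via the shift $g_{c-d}$, using abelianness in the coordinate computation $c-d+a+d=a+c$; your commutation $g_a g_{a'}=g_{a'}g_a$ is the same step in group-theoretic dress. One small remark: the disjointness of strong components is automatic (they are equivalence classes), so you do not actually need Lemma~\ref{lem:strong} at that point.
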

\begin{proof}
Let $(\graphf{u}, c), (\graphf{v}, b)$ be arbitrary vertices and
say $(\graphf{v}, b) \too (\graphf{v}, a + b)$ in $\graphf{D}_\times$.
We seek to show that
$(\graphf{u}, c) \too (\graphf{u}, a + c)$.

There is some $(\graphf{u}, d)$ in the same strong component as
$(\graphf{v}, b)$ and so $(\graphf{u}, d) \too (\graphf{u}, a + d)$.

The automorphism $g(\graphf{w}, r) = (\graphf{w}, c-d+r)$ maps
$(\graphf{u}, d)$ to $(\graphf{u}, c)$ and
$(\graphf{u}, a+d)$ to $(\graphf{u}, c-d+a+d) = (\graphf{u}, a+c)$.

Thus $(\graphf{u}, d) \too (\graphf{u}, a+d)$ implies
$g(\graphf{u}, d) = (\graphf{u}, c)
  \too g(\graphf{u}, a+d) = (\graphf{u}, a+c)$.
This is illustrated in Figure \ref{fig:abeliandiag}. \qedhere

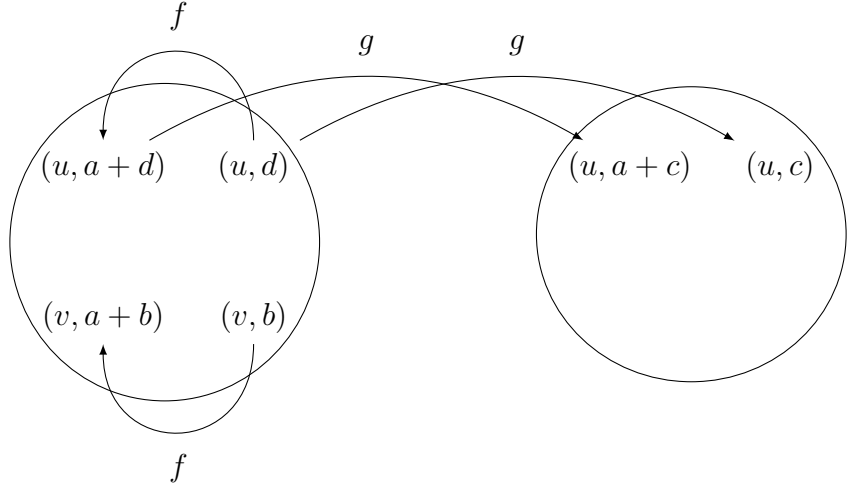
\begin{figure}
\centering
\begin{tikzpicture}[
  every node/.style={on grid},
  every fit/.style={draw,ellipse,text width=75pt},
  >=latex
]

\node [] (a) {$(u, a + d)$};
\node [below =2cm of a] (c) {$(v, a + b)$};
\node [right =2cm of a] (d) {$(u, d)$};
\node [right =2cm of c] (e) {$(v, b)$};

\node[right=7cm of a] (x) {$(u, a + c)$};
\node[right=2cm of x] (y) {$(u,c)$};
\node[below =2cm of x] (z) {};
\node[below =2cm of y] (zz) {};

\draw[->, looseness=2] (e) to [out=-90,in=-90] node[label=below:$f$] {} (c);
\draw[->, looseness=2] (d) to [out=90,in=90] node[label=above:$f$] {} (a);
\draw[->] (a) to [out=30,in=150] node[label=above:$g$] {} (x);
\draw[->] (d) to [out=30,in=150] node[label=above:$g$] {} (y);

\begin{pgfonlayer}{background}
\node[fit= (a) (d) (c) ] {};
\node[fit= (x) (y) (z) ] {};
\end{pgfonlayer}
\end{tikzpicture}
\caption{Strong components in the proof of Lemma \ref{lem:abelian}.
\label{fig:abeliandiag}}
\end{figure}
\end{proof}

The following is a useful characterization of strong connectedness of
$\graphf{D}_\times$ for abelian $G$.

\begin{lemma} \label{lem:addany}
Assume $G$ is abelian.
Let $A$ be a generating set for $G$, i.e.\ $\langle A \rangle = G$.
If for all $a_i \in A$ there is a vertex $(\graphf{v}, 0) \in
V(\graphf{D}_\times)$ such that $(\graphf{v}, 0) \too (\graphf{v}, a_i)$ in
$\graphf{D}_\times$, then $\graphf{D}_\times$ is strongly connected.
\end{lemma}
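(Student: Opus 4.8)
The plan is to leverage the two preceding results — Lemma \ref{lem:abelian}, which says that a translation automorphism propagating into its own strong component at one vertex does so at every vertex, and the standing assumption that $\graphf{D}$ is strongly connected — together with the fact that $A$ generates $G$.

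First I would reinterpret the hypothesis through Lemma \ref{lem:abelian}. For each $a_i \in A$, consider the automorphism $f_{a_i}(\graphf{w}, c) = (\graphf{w}, c + a_i)$. The assumption $(\graphf{v}, 0) \too (\graphf{v}, a_i)$ means, by Lemma \ref{lem:strong} (which guarantees $\graphf{D}_\times$ is a disjoint union of strong components), that $(\graphf{v}, 0)$ and $(\graphf{v}, a_i) = f_{a_i}(\graphf{v}, 0)$ lie in the same strong component. So $f_{a_i}$ maps at least one vertex into its own strong component, and Lemma \ref{lem:abelian} upgrades this to
\[ (\graphf{w}, c) \too (\graphf{w}, c + a_i) \quad \text{for all } (\graphf{w}, c) \in V(\graphf{D}_\times) \text{ and all } a_i \in A. \]

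Next I would pass from generators to arbitrary group elements. Since $A$ generates the finite abelian group $G$, every $g \in G$ can be written as a finite sum $g = a_{i_1} + \cdots + a_{i_k}$ of elements of $A$ (no inverses are needed, since in a finite group each $-a$ is itself a positive multiple of $a$). Concatenating the one-step reachabilities above along such a sum gives $(\graphf{w}, c) \too (\graphf{w}, c + g)$ for every vertex $(\graphf{w}, c)$ and every $g \in G$; in particular, fixing any base vertex $\graphf{w}$, all the vertices $\{(\graphf{w}, c) : c \in G\}$ sit in a single strong component. Finally I would merge across different base vertices using strong connectedness of $\graphf{D}$: given vertices $(\graphf{u}, a)$ and $(\graphf{w}, b)$, lift a directed walk from $\graphf{u}$ to $\graphf{w}$ in $\graphf{D}$ to its unique covering walk in $\graphf{D}_\times$ starting at $(\graphf{u}, a)$, which terminates at some $(\graphf{w}, c)$; composing with $(\graphf{w}, c) \too (\graphf{w}, b)$ yields $(\graphf{u}, a) \too (\graphf{w}, b)$, and the symmetric argument supplies the return walk. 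Hence every pair of vertices is mutually reachable, so $\graphf{D}_\times$ is strongly connected.

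There is no serious obstacle once Lemma \ref{lem:abelian} is available; the only point that genuinely requires care is the middle reduction, where finiteness of $G$ is precisely what allows each element to be expressed as a sum of generators \emph{without} inverses, so that the one-directional reachability $(\graphf{w}, c) \too (\graphf{w}, c + a_i)$ can be chained directly rather than needing a matching backward walk at each step.
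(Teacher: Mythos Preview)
Your proposal is correct and follows essentially the same route as the paper: both use Lemma~\ref{lem:abelian} to upgrade the single-vertex hypothesis to ``translation by each $a_i$ preserves strong components everywhere,'' then compose these translations over a nonnegative-coefficient expression for an arbitrary $g\in G$ to place all $(\graphf{w},c)$ with fixed $\graphf{w}$ in one component. The paper stops there (implicitly using that every component meets every fibre over a fixed $\graphf{w}$, as noted in the proof of Lemma~\ref{lem:auto}), whereas you spell out the final merge across base vertices via the lift of a $\graphf{D}$-walk; this extra detail is harmless and arguably clearer.
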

\begin{proof}
We show that for any $\graphf{u} \in V(\graphf{D})$ and $r \in G$, the vertices
$(\graphf{u}, 0)$ and $(\graphf{u}, r)$ are in the same strong component
of $\graphf{D}_\times$.

Say $r = {j(1)}a_1 + \cdots + {j(p)}a_p$ for $a_i \in A, j(i) \in
\mathbb{Z}_{\geq 0}$.
We know from Lemma \ref{lem:abelian} that the $\graphf{D}_\times$-automorphism
$f_j(\graphf{v},s) = (\graphf{v}, a_j + s)$
maps strong components to themselves.
Thus the composition $f_r = f_1^{j(1)} \circ \cdots \circ f_p^{j(p)}$ also
maps strong components to themselves.
We conclude that $(\graphf{u}, 0)$ and
$f_r(\graphf{u}, 0) = (\graphf{u}, j(1)a_1 + \cdots + j(p)a_p) =
(\graphf{u}, r)$ belong to the same strong component.
\end{proof}

We now consider some examples of $\graphf{D}$.

We generalize Carlitz compositions as follows.
A sequence $x \in \SEQ_m(G)$ is a \emph{$d$-Carlitz composition} if
every subword $x(i), \ldots, x(i+d)$ contains no repeated part.
Thus Carlitz compositions are 1-Carlitz.
We note that unlike for integer compositions, we generally allow the identity
element as a part.
We also note that this definition is consistent with \cite{abelian} but
different from Definition 4.33 in \cite[p.\ 115]{cofc}.
Words with no equal adjacent letters are also called Smirnov words as in
\cite[Example III.24]{ac}.

\begin{lemma} \label{lem:carlitzD}
There is a de Bruijn subgraph $D$ with span $\sigma = d+1$ representing
$d$-Carlitz compositions such that $D_\times$ is strongly connected and
aperiodic, provided $|G| \geq d+2$.
\end{lemma}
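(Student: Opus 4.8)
The plan is to take $D$ to be the sub-digraph of the $(d+1)$-dimensional de Bruijn graph on $G$ induced by those vertices $(v_1,\ldots,v_{d+1})$ whose entries are pairwise distinct. Since consecutive windows of a walk overlap in $d$ positions, the compositions legal according to $D$ are exactly the sequences in which every $d+1$ consecutive entries are distinct, i.e.\ the $d$-Carlitz compositions, as required. First I would check that $D$ is strongly connected: because $|G|\ge d+2$, at every step at least $|G|-d\ge 2$ letters may legally be appended, and this slack suffices to steer from any window $\mathbf u$ to any window $\mathbf v$ (append the entries of $\mathbf v$ in order, routing through one intermediate generic window when a direct append would collide with a surviving entry of $\mathbf u$). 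I would also record that $D$ carries, through any window $\mathbf v$, a rotation cycle of length $d+1$ (append $v_1,\ldots,v_{d+1}$) and a cycle of length $d+2$ (append $v_1,\ldots,v_{d+1},g$ for some $g\notin\{v_1,\ldots,v_{d+1}\}$); as $\gcd(d+1,d+2)=1$ this makes $D$ aperiodic and will be reused below.

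By Lemma \ref{lem:strong} and Lemma \ref{lem:auto}, $D_\times$ is a disjoint union of mutually isomorphic strong components, each meeting $\{\mathbf v\}\times G$; hence $D_\times$ is strongly connected iff every group element is reached from some $(\mathbf v,c)$, equivalently iff the set $S_{\mathbf v}$ of append-sums (voltages) of closed walks based at $\mathbf v$ is all of $G$. This is exactly the first hypothesis of Theorem \ref{thm:equal}. The structural fact driving everything is that $S_{\mathbf v}$ is a subgroup: concatenating closed walks adds their append-sums, so $S_{\mathbf v}$ is a submonoid of the finite group $G$, hence a subgroup. The problem thus reduces to showing that cycle voltages generate $G$.

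The main obstacle is this last step, and I would split it along the abelianization. For commutators: given distinct $x,y$, embed them as an adjacent pair in a $(d+2)$-cycle on distinct elements $x_0,\dots,x_{d+1}$ (possible since $|G|\ge d+2$); swapping $x,y$ produces a second $(d+2)$-cycle, and because $S_{\mathbf v}$ is a subgroup the corresponding based closed-walk voltages $P,P'\in S_{\mathbf v}$ satisfy $P\,(P')^{-1}\in S_{\mathbf v}$, which is a conjugate of the commutator $[x,y]$. Letting $x,y$ range over all pairs forces $[G,G]\subseteq S_{\mathbf v}$. Passing to $G^{\mathrm{ab}}=G/[G,G]$, the rotation cycle on a $(d+1)$-subset $T$ has voltage $\sum_{t\in T}t$; comparing a $T$ that contains $x$ but not $0$ with the subset where $x$ is replaced by $0$ shows $\bar x\in\overline{S_{\mathbf v}}$ for every $x$, so $\overline{S_{\mathbf v}}=G^{\mathrm{ab}}$. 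Combined with $[G,G]\subseteq S_{\mathbf v}$ this yields $S_{\mathbf v}=G$, i.e.\ $D_\times$ is strongly connected. The delicate point throughout is non-commutativity: voltages are ordered products and directed walks cannot be reversed, so it is the subgroup property of $S_{\mathbf v}$ — not edge reversal — that must be invoked to extract differences and commutators.

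Finally I would verify aperiodicity by producing two closed walks in $D$ of coprime length and append-sum $0$, which by the second hypothesis of Theorem \ref{thm:equal} makes $D_\times$ aperiodic. The engine is again $\gcd(d+1,d+2)=1$: the length-$(d+1)$ rotations and length-$(d+2)$ cycles give voltage-$0$ walks of length $d+1$ and $d+2$ whenever the underlying subsets are zero-sum, while in general repeating each cycle until its voltage vanishes, combined with the voltage-cancelling detours now afforded by $S_{\mathbf v}=G$, yields the required coprime pair. For $|G|=d+2$ this is already explicit in the abelian case: omitting $\Sigma G=\sum_{g\in G}g$ gives a zero-sum $(d+1)$-subset (length $d+1$), and either the full $(d+2)$-cycle is itself zero-sum, or $\Sigma G$ is the unique involution, whence its square has length $2(d+2)$ and a parity check ($d+1$ odd) settles coprimality. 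I expect the computation $S_{\mathbf v}=G$ to be the real work; the aperiodicity step is a short, if slightly case-laden, coprime-length construction.
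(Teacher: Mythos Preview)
Your overall strategy --- proving $D_\times$ connected by showing the voltage submonoid $S_{\mathbf v}$ is all of $G$, split into $[G,G]$ and $G^{\mathrm ab}$ --- is a genuinely different route from the paper's, which simply writes down explicit walks. Your abelianization step is fine: transporting any cycle to $\mathbf v$ replaces its voltage by a conjugate, which is invisible in $G^{\mathrm ab}$, so $\overline{S_{\mathbf v}}=G^{\mathrm ab}$ follows exactly as you say. The commutator step, however, has a real gap. The two $(d+2)$-cycles obtained by swapping the adjacent pair $x,y$ in $(x_0,\dots,x_{d+1})$ are \emph{vertex-disjoint} in $D$ (every length-$(d+1)$ window sees both $x$ and $y$, and the swap changes their relative position). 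Hence to place both in $S_{\mathbf v}$ you must conjugate by return-path voltages $h_1,h_2$ that you cannot choose to be equal; what actually lands in $S_{\mathbf v}$ is $h_1^{-1}g_1h_1$ and $h_2^{-1}g_2h_2$, and their quotient is not a conjugate of $[x,y]$ in general. Since you have no reason for $S_{\mathbf v}$ to be normal, ``some conjugate of every commutator lies in $S_{\mathbf v}$'' does not give $[G,G]\subseteq S_{\mathbf v}$ either. So the non-abelian half of your argument does not close.

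The paper's argument sidesteps all of this by direct construction: given $s\in G$, it walks from $(a_1,\dots,a_{d+1})$ so as to append a single $s$ (detouring first to a window not containing $s$ when $s\in\{a_1,\dots,a_{d+1}\}$), and kills every other contribution by repeating rotation cycles of length $d+1$ until their voltage reaches the identity. For aperiodicity it uses the same idea with the element $0$: at a window avoiding $0$, the $n$-fold rotation has length $n(d+1)$ and voltage $0$, while prefixing a single step that appends $0$ before returning gives length $n(d+1)+1$ and voltage $0$. Two closed walks of consecutive lengths and voltage $0$ are immediate, which is considerably cleaner than your coprime-length scheme (repeating a length-$(d+1)$ cycle and a length-$(d+2)$ cycle to voltage $0$ yields lengths $n(d+1)$ and $m(d+2)$ with no control over $\gcd$). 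If you want to rescue your approach, the place to work is the commutator step: you would need, at a \emph{fixed} base vertex, two closed walks whose voltages differ by an honest conjugate of $[x,y]$, and the swapped $(d+2)$-cycles do not provide that.
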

\begin{proof}
Take as vertex set for $\graphf{D}$ all $(d+1)$-tuples of distinct elements
of $G$.
The allowed start and finish vertices are all of $V(D)$.
The strong connectedness of $\graphf{D}$ is established in (the proof of)
\cite[Corollary 2]{abelian};
we include the argument here for completeness.
Let $u, v \in V(D)$ be distinct.
Let $w$ be a vertex such that
$w = (w(1), \ldots, w(j), v(j+1), \ldots, v(d+1))$, and assume
$u \too w$.
Clearly this is possible if $j=d+1$.
If this is true for some $j \leq d+1$
we seek to show that there is a vertex
$y = (y(1), \ldots, y(j-1), v(j), \ldots, v(d+1))$ such that
$w \too y$ and thus $u \too y$.
If $v(j) \not\in \{w(1), \ldots, w(j)\}$ then
$w \too y = (w(1), \ldots, w(j-1), v(j), \ldots, v(d+1))$.
If $v(j) \in \{w(1), \ldots, w(j)\}$, assume $w(r) = v(j)$.
Let $a\in G$ be an element not found
in $\{w(1), \ldots, w(j), v(j), \ldots, v(d+1)\}$.
Then
$w \too (w(1), \ldots, w(r-1), a, w(r+1), \ldots, w(j), v(j+1), \ldots, v(d+1))
\too y = (w(1), \ldots, w(r-1), a, w(r+1), \ldots, w(j-1), v(j), \ldots, v(d+1))$.
By induction, we conclude that $u \too w$ in the case $j=0$, i.e.\ $u \too v$.

To show strong connectedness of $\graphf{D}_\times$, we fix a vertex $(a_1,
\ldots, a_{d+1}) \in V(\graphf{D})$ and for any $s \in G$ exhibit a walk
from $((a_1, \ldots, a_{d+1}), 0)$ to $((a_1, \ldots, a_{d+1}), s)$.
Let $n$ be the order of $\Sigma a = a_1 +  \cdots +  a_{d+1}$.
We consider two cases.

Case 1: $s \not\in \{a_1, \ldots, a_{d+1}\}$.
The first step is to $(a_2, \ldots, a_{d+1}, s)$.
Follow this by the $(d+1)$-step path to $(a_1, \ldots, a_{d+1})$
Take the $(d+1)$-step path back to $(a_1, \ldots, a_{d+1})$ exactly $n-1$ times.
The total of this walk is
\[ s + \Sigma a + (n-1)\Sigma a = s, \]
thus
$((a_1, \ldots, a_{d+1}), 0) \too ((a_1, \ldots, a_{d+1}), s)$
in $\graphf{D}_\times$.

Case 2: $s = a_j$ for some $1 \leq j \leq d+1$.
Let $b$ represent some element of $G$ not in $\{a_1, \ldots, a_{d+1}\}$.
Begin with the $(d+1)$-step path to
$(b, a_1, \ldots, a_{j-1}, a_{j+1}, a_{d+1})$.
Let $n_1$ be the order of $b + a_1 + \cdots + a_{j-1} + a_{j+1} + a_{d+1}$.
Follow the $(d+1)$-step path back to
$(b, a_1, \ldots, a_{j-1}, a_{j+1}, a_{d+1})$ exactly $n_1 - 1$ times.
Traverse one arc to
$(a_1, \ldots, a_{j-1}, a_{j+1}, a_{d+1}, s)$,
then follow the $(d+1)$-step path to
$(a_1, \ldots, a_{j-1}, a_{j+1}, a_{d+1}, b)$.
Let $n_2$ be the order of $a_1 + \ldots + a_{j-1} + a_{j+1} + a_{d+1} + b$.
Take the $(d+1)$-step path back to $(a_1, \ldots, a_{j-1}, a_{j+1}, a_{d+1}, b)$
exactly $n_2-1$ times.
Finally take the $(d+1)$-step path to $(a_1, \ldots, a_{d+1})$
and cycle $(a_1, \ldots, a_{d+1})$ the suitable number of times.
The total of this walk is $0 + s + 0 + 0 = s$ so
$((a_1, \ldots, a_{d+1}), 0) \too ((a_1, \ldots, a_{d+1}), s)$
in $\graphf{D}_\times$.

We now turn to aperiodicity.
Take a $\graphf{D}$-vertex $\graphf{u} = (a_1, \ldots, a_{d+1})$ that does not
contain the part $0$.
We give two closed walks starting from $\graphf{u}$ with total $0$ and lengths
that differ by $1$.

Let $n$ be the order of $a_1 + \cdots + a_{d+1}$.
The first walk repeats the $(d+1)$-step cycle back to $u$ exactly $n$ times.
The second walk first takes a step to
$(\graphf{u}_2, \ldots, \graphf{u}_{d+1}, 0)$ followed by the
$(d+1)$-step path to $\graphf{u}$.
Then we cycle back to $\graphf{u}$ exactly $n-1$ times.
\end{proof}

\begin{notation}
\label{not:fact}
We denote the falling factorial $n(n-1)\cdots (n-k+1)$ by
$n^{\underline{k}}$.
\end{notation}

\begin{proposition} \label{prop:carlitz}
The number of $d$-Carlitz $m$-compositions of $s \in G$ over a finite group $G$
is
\[
  \frac{1}{|G|} |G|^{\underline{d}} (|G| - d)^{m-d}(1 + O(\theta^m)),
  \qquad m \to \infty, 0 \leq \theta < 1,
\]
provided $|G| \geq d + 2$.
\end{proposition}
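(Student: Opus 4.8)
The plan is to recognize this proposition as a direct application of Theorem~\ref{thm:equal} to the concrete de Bruijn subgraph supplied by Lemma~\ref{lem:carlitzD}. That lemma already furnishes, for $|G| \geq d+2$, a span-$(d+1)$ de Bruijn subgraph $D$ whose legal compositions are exactly the $d$-Carlitz compositions, and it establishes that $\graphf{D}_\times$ is strongly connected and aperiodic. These two facts are precisely what the two hypotheses of Theorem~\ref{thm:equal} amount to: strong connectedness of $\graphf{D}_\times$ is equivalent to the existence of a vertex $\graphf{v}$ admitting a legal closed composition of every total $a \in G$ (take a path $(\graphf{v},0) \too (\graphf{v},a)$), and aperiodicity of $\graphf{D}_\times$ is equivalent to the stated GCD-one condition on the lengths of closed walks whose composition total equals $\Sigma\graphf{u}$ (such walks are exactly the cycles of $\graphf{D}_\times$ through a fixed vertex). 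So the first thing I would do is cite Lemma~\ref{lem:carlitzD}, take $\Psi = \Phi = V(D)$, and note that its conclusions discharge the hypotheses of Theorem~\ref{thm:equal}.

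The substantive step is then to compute the unrestricted count $p(m; D, \Psi, \Phi)$ of all $d$-Carlitz $m$-compositions (ignoring the total) in the form $A \cdot B^m$. With $\Psi = \Phi = V(D)$ this is just the number of walks of the appropriate length in $D$. Here $V(D)$ consists of all $(d+1)$-tuples of distinct group elements, so there are $|G|^{\underline{d+1}}$ choices for the starting vertex; and from any vertex $(a_1, \ldots, a_{d+1})$ each out-neighbor has the form $(a_2, \ldots, a_{d+1}, b)$ with $b \notin \{a_2, \ldots, a_{d+1}\}$, giving a uniform out-degree of $|G|-d$. A composition of length $m$ corresponds to a walk on $m-d$ vertices, so I would conclude
\[
p(m; D, \Psi, \Phi) = |G|^{\underline{d+1}} (|G|-d)^{m-d-1}, \qquad m \geq d+1,
\]
exactly. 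This identifies $B = |G|-d$ and $A = |G|^{\underline{d+1}} (|G|-d)^{-(d+1)}$, so that $p(m;D,\Psi,\Phi)\sim A\cdot B^m$ as required; in particular $B \geq 2 > 1$ since $|G| \geq d+2$, so the growth is genuinely geometric.

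Finally I would feed these values into Theorem~\ref{thm:equal}, which yields
\[
p_a(m; D, \Psi, \Phi) = \frac{A}{|G|} B^m (1 + O(\theta^m)), \qquad m \to \infty,
\]
and then simplify the constant using $|G|^{\underline{d+1}} = |G|^{\underline{d}}(|G|-d)$:
\[
\frac{A}{|G|} B^m = \frac{1}{|G|}\,|G|^{\underline{d+1}}(|G|-d)^{m-d-1} = \frac{1}{|G|}\,|G|^{\underline{d}}(|G|-d)^{m-d},
\]
which is exactly the claimed expression (with $s = a$).

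Since the heavy lifting---the strong connectedness and aperiodicity of $\graphf{D}_\times$, and the general transfer from $p$ to $p_a$---is already carried out in Lemma~\ref{lem:carlitzD} and Theorem~\ref{thm:equal}, I do not expect a genuine obstacle here. The only points demanding care are the bookkeeping in the walk count (the shift between walk length $m-d$ and composition length $m$, and hence the exponent $m-d-1$) and the final algebraic simplification; getting the falling-factorial shift right is what makes the two forms of the constant coincide.
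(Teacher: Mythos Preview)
Your proposal is correct and follows essentially the same approach as the paper: cite Lemma~\ref{lem:carlitzD} to verify the hypotheses of Theorem~\ref{thm:equal}, compute $p(m;D,\Psi,\Phi)$ directly from the uniform out-degree $|G|-d$ and vertex count $|G|^{\underline{d+1}}$, and simplify. Your write-up is in fact more careful than the paper's about the walk-length bookkeeping and about spelling out why Lemma~\ref{lem:carlitzD}'s conclusions match the two hypotheses of Theorem~\ref{thm:equal}.
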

\begin{proof}
With Lemma \ref{lem:carlitzD}
we conclude that the conditions of Theorem \ref{thm:equal} are satisfied.

In $\graphf{D}$ each vertex has an out-degree of $|G| - d$.
This allows us to count walks in $\graphf{D}$ directly.
We have $V(\graphf{D}) = |G|^{\underline{d + 1}}$.
Thus the number of $m$-compositions represented by $\graphf{D}$ is
$|G|^{\underline{d+1}}(|G| -d)^{m-d-1} =
|G|^{\underline{d}}(|G| -d)^{m-d}$.
We conclude by applying Theorem \ref{thm:equal}.
\end{proof}

Figure \ref{fig:2carlitz0} shows randomly generated 2-Carlitz 100-compositions
over $\mathbb{Z}_5$.
Table \ref{tab:carS3} gives counts for Carlitz $m$-compositions of $a$ over
$S_3$.

\begin{figure}
\centering
\includegraphics[width=5.5in]{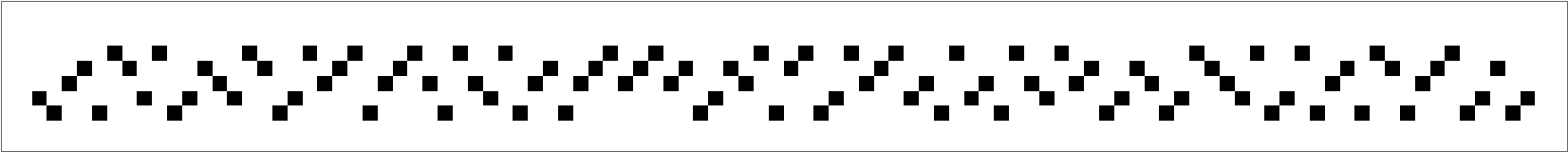}
\includegraphics[width=5.5in]{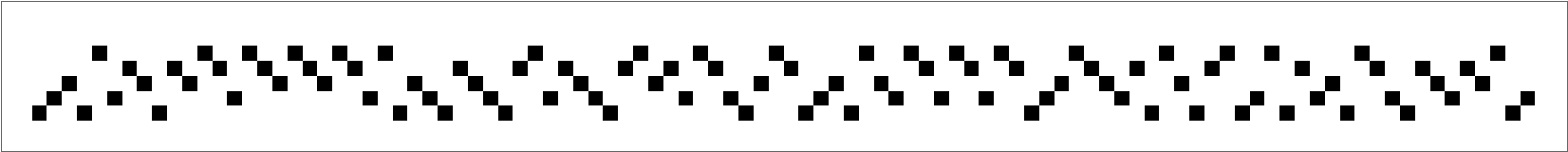}

\caption{Uniform-randomly generated 2-Carlitz $100$-compositions of $0$ (above)
  and $1$ (below) over $\mathbb{Z}_5$.
  \label{fig:2carlitz0}}
\end{figure}

\begin{table}
\centering
\begin{tabular}[c]{|c|rrr|}
\hline
\diagbox{$m$}{$a$} & $id$ & $(123)$ & $(12)$\\ \hline
3 &27 & 24 & 25 \\
4 &134 & 128 & 120 \\
5 &613 & 631 & 625 \\
6 &3096 & 3102 & 3150 \\
7 &15667 & 15604 & 15625 \\
8 &78224 & 78263 & 78000 \\
9 &390513 & 390681 & 390625 \\
10&1952696 & 1952402 & 1953750 \\
11&9765817 & 9765529 & 9765625 \\
12&48830424 & 48831663 & 48825000 \\
13&244140763 & 244140556 & 244140625 \\
14&1220690096 & 1220686202 & 1220718750 \\
15&6103512717 & 6103517079 & 6103515625 \\
16&30517650374 & 30517659188 & 30517500000 \\\hline
\end{tabular}
\caption{Counts of Carlitz $m$-compositions of some $a$ over $S_3$.}
\label{tab:carS3}
\end{table}

As in \cite{abelian} we say an $m$-composition $x$ is \emph{locally $d$-Mullen}
if no nonempty subword of $x$ of length at most $d$ has total $0$.

\begin{proposition}
The number of locally $d$-Mullen $m$-compositions of $a \in G$ over a finite
group $G$ is
\[
  \frac{1}{|G|} (|G|-1)^{\underline{d-1}}(|G|-d)^{m-d+1} (1 + O(\theta^m)),
  \qquad m \to \infty, 0 \leq \theta < 1,
\]
provided $|G| \geq d + 2$.

\end{proposition}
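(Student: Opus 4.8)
The plan is to mirror the treatment of $d$-Carlitz compositions in Lemma~\ref{lem:carlitzD} and Proposition~\ref{prop:carlitz}: build a de Bruijn subgraph whose legal compositions are exactly the locally $d$-Mullen ones, check the hypotheses of Theorem~\ref{thm:equal}, count the walks in the base digraph directly, and read off the asymptotics.

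First I would take span $\sigma = d$ and let $D$ have as vertices those $d$-tuples $(x_1,\dots,x_d)$ over $G$ in which no nonempty contiguous subword sums to the identity, with all vertices allowed as starts and finishes. The key simplification is the prefix-sum reformulation: setting $S_0 = 0$ and $S_k = x_1 + \cdots + x_k$, a contiguous subword $x_i + \cdots + x_j$ equals $0$ exactly when $S_{i-1} = S_j$, so a window is legal if and only if $S_0,\dots,S_d$ are pairwise distinct. Since $S_0 = 0$ is fixed, this makes $|V(D)| = (|G|-1)^{\underline{d}}$ transparent, and sliding the window and appending $x_{d+1}$ legally amounts to choosing the new prefix sum $S_{d+1} = S_d + x_{d+1}$ outside $\{S_1,\dots,S_d\}$, so every vertex has out-degree exactly $|G|-d$. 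As each window of $d$ consecutive parts appears along any walk, walks in $D$ correspond precisely to locally $d$-Mullen compositions, and counting them directly gives $p(m; D, \Psi, \Phi) = (|G|-1)^{\underline{d}}\,(|G|-d)^{m-d}$, which is already of the form $A \cdot B^m$ with $B = |G|-d$.

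The main obstacle is verifying the two hypotheses of Theorem~\ref{thm:equal}, equivalently that $D_\times$ is strongly connected and aperiodic. The cleanest route exploits the same prefix-sum map at the level of the derived digraph: sending a vertex $(v, s)$ of $D_\times$ (window $v$ together with running total $s$) to the global prefix-sum tuple $(P_{m-d},\dots,P_m)$ ending in $P_m = s$ is a bijection onto the $(d+1)$-tuples of distinct group elements, and it carries an arc of $D_\times$ (append a part, update the total) exactly to the de Bruijn-type slide $(b_0,\dots,b_d)\mapsto(b_1,\dots,b_{d+1})$ with $b_{d+1}\notin\{b_1,\dots,b_d\}$ (care with left/right conventions is needed here when $G$ is nonabelian). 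In other words $D_\times$ is isomorphic to the $d$-Carlitz base digraph on distinct $(d+1)$-tuples. Strong connectedness then follows immediately from the connectedness argument already given for that digraph in Lemma~\ref{lem:carlitzD}, which is where $|G| \geq d+2$ is used. For aperiodicity I would exhibit, at any fixed vertex, a closed walk of length $d+1$ (cycle the window's entries around) and one of length $d+2$ (first append a fresh element $c$ not in the window, which exists since $|G|\geq d+2$, then cycle back); as $\gcd(d+1,d+2)=1$ this forces aperiodicity. Either this identification, or a direct verification of the two bullet conditions (producing closed walks at a chosen window realizing every total for the first, and two closed walks of coprime lengths with the prescribed total for the second, exactly as at the end of the proof of Lemma~\ref{lem:carlitzD}), discharges the hypotheses.

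With the hypotheses in hand, Theorem~\ref{thm:equal} yields $p_a(m; D, \Psi, \Phi) = \frac{A}{|G|}B^m(1 + O(\theta^m))$ with $A = (|G|-1)^{\underline{d}}(|G|-d)^{-d}$ and $B = |G|-d$. The final step is the routine simplification $(|G|-1)^{\underline{d}} = (|G|-1)^{\underline{d-1}}(|G|-d)$, which turns $\frac{1}{|G|}(|G|-1)^{\underline{d}}(|G|-d)^{m-d}$ into the claimed $\frac{1}{|G|}(|G|-1)^{\underline{d-1}}(|G|-d)^{m-d+1}$, completing the proof.
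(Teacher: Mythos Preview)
Your proposal is correct and follows essentially the same approach as the paper. In particular, the paper's proof also sets up the span-$d$ Mullen digraph $D^{\langle 1\rangle}$ and the span-$(d{+}1)$ $d$-Carlitz digraph $D^{\langle 2\rangle}$, and exhibits the explicit isomorphism
\[
f\bigl((v_1,\dots,v_d),\,a\bigr)=\Bigl(a-\textstyle\sum_{j=1}^d v_j,\;a-\sum_{j=2}^d v_j,\;\dots,\;a-v_d,\;a\Bigr)
\]
from $D^{\langle 1\rangle}_\times$ onto $D^{\langle 2\rangle}$; this is precisely your global prefix-sum map $(v,s)\mapsto (P_{m-d},\dots,P_m)$, and your caution about left/right conventions in the nonabelian case is exactly the point hidden behind the paper's phrase ``a computation gives us that $f$ is a graph isomorphism.'' The paper then invokes Lemma~\ref{lem:carlitzD} for strong connectedness and aperiodicity of $D^{\langle 2\rangle}$ (your closed walks of lengths $d{+}1$ and $d{+}2$ reproduce the same aperiodicity argument), counts the Mullen walks directly as $(|G|-1)^{\underline{d-1}}(|G|-d)^{m-d+1}$, and applies Theorem~\ref{thm:equal}.
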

\begin{proof}
Let $D^{\langle 1 \rangle}$ be the digraph of span $d$ representing
locally $(d+1)$-Mullen compositions, and let $D^{\langle 1 \rangle}_\times$ be
the derived digraph.
Let $D^{\langle 2 \rangle}$ be the digraph of span $d+1$ representing
$d$-Carlitz compositions.
We note that
$|V(D^{\langle 1 \rangle}_\times)| = |V(D^{\langle 2 \rangle})|
= |G|^{\underline{d+1}}$,
recalling that the part $0$ is never allowed in a locally Mullen composition.
Define a function
$f: V(D^{\langle 1 \rangle}_\times) \to V(D^{\langle 2 \rangle})$
as follows:
\[
f((v_1, \ldots, v_d), a) =
\left(a- \sum_{j=1}^d v_j, a-\sum_{j=2}^d v_j, \ldots, a-v_d, a\right).
\]
A computation gives us that $f$ is a graph isomorphism from
$D^{\langle 1 \rangle}_\times$ to $D^{\langle 2 \rangle}$.
Thus the strong connectedness and aperiodicity of $D^{\langle 2 \rangle}$
established in Lemma \ref{lem:carlitzD} hold for $D^{\langle 1 \rangle}_\times$
as well and Theorem \ref{thm:equal} applies.

A part $x(i)$ in a locally $d$-Mullen compositions must not take the value
$0, -x(i-1), -x(i-1) -x(i-2)$, etc.\ and these values are distinct since
\[
n>n',\ \sum_{j=1}^{n} (-x(i -j)) = \sum_{j=1}^{n'} (-x(i -j))
\implies \sum_{j= n'+1}^n(-x(i-j)) = 0.
\]
The number of locally $d$-Mullen $m$-compositions with any total is then
$(|G|-1)^{\underline{d-1}}(|G|-d)^{m-d+1}$ and Theorem $\ref{thm:equal}$
gives the result.
\end{proof}

\begin{proposition}
The number of $m$-compositions of $s \in G$ over a finite group $G$
such that the sum of any $d+1$ consecutive parts is not $0$ is
\[ |G|^{d-1} (|G|-1)^{m-d} (1 + O(\theta^m)), \qquad m \to \infty,
0 \leq \theta < 1, \]
provided $d \leq |G| -2$.
\end{proposition}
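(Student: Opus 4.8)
The plan is to exhibit the de Bruijn subgraph representing these compositions, count its walks directly, and then invoke Theorem \ref{thm:equal}. Take span $\sigma = d+1$ and let $\graphf{D}$ be the subgraph of the $(d+1)$-dimensional de Bruijn graph on $G$ whose vertices are the tuples $\graphf{v} = (v_1, \ldots, v_{d+1})$ with $v_1 + \cdots + v_{d+1} \neq 0$, with all vertices allowed as starts and finishes. A legal composition according to $\graphf{D}$ is precisely one in which every $d+1$ consecutive parts have nonzero sum, so $\mathcal{P}_s(m; D, \Psi, \Phi)$ is the family we want. Since the only forbidden continuation of a window $(v_1, \ldots, v_{d+1})$ is the single part $-(v_2 + \cdots + v_{d+1})$, every vertex of $\graphf{D}$ has out-degree exactly $|G|-1$, and $|V(\graphf{D})| = |G|^{d+1} - |G|^d = |G|^d(|G|-1)$. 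Hence the number of $m$-compositions of all totals is $|V(\graphf{D})|\,(|G|-1)^{m-d-1} = |G|^d(|G|-1)^{m-d}$, so $p(m; D, \Psi, \Phi) \sim A\cdot B^m$ with $B = |G|-1$.

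A clarifying reformulation, valid even when $G$ is nonabelian, comes from the partial sums $S(0) = 0$, $S(i) = x(1) + \cdots + x(i)$: since $x(i) + \cdots + x(i+d) = -S(i-1) + S(i+d)$, the restriction is equivalent to $S(k) \neq S(k-d-1)$ for all $k \geq d+1$. The constraint therefore couples only positions that are $d+1$ apart, so the partial-sum sequence splits into $d+1$ independent \enquote{Smirnov} strands (one per residue class mod $d+1$) in which consecutive entries must differ; the strand through the fixed value $S(0)=0$ contributes $|G|-1$ choices per step while each of the other $d$ strands contributes a free first entry and $|G|-1$ per subsequent step. This recovers the count $|G|^d(|G|-1)^{m-d}$ and makes the role of the hypothesis $d \leq |G|-2$ visible.

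It remains to verify the two hypotheses of Theorem \ref{thm:equal}, i.e.\ that $\graphf{D}_\times$ is strongly connected and aperiodic; this is the technical heart, and I would model it on Lemma \ref{lem:carlitzD}. For strong connectedness I would fix a base vertex $\graphf{v}$ avoiding the coincidences $0 \in \{v_i\}$ and $v_i = \Sigma \graphf{v}$ (possible since $|G| \geq d+2$ leaves room), and show that the appended totals of closed walks at $\graphf{v}$ realize every element of $G$: the plain $(d+1)$-cycle contributes $\Sigma \graphf{v}$, while inserting a single extra part $b$ before resynchronizing to $\graphf{v}$ contributes $b + \Sigma \graphf{v}$ for all but finitely many $b$. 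These appended totals form a subgroup of $G$ (closed under concatenation, hence under inverses by finiteness), and since it contains $\Sigma \graphf{v}$ together with $b + \Sigma \graphf{v}$ for cofinitely many $b$, it must be all of $G$; for abelian $G$ one may instead cite Lemma \ref{lem:addany} directly. For aperiodicity I would produce two closed walks at $(\graphf{v}, \Sigma \graphf{v})$ whose lengths differ by $1$, exactly as in Lemma \ref{lem:carlitzD}: going $n$ times around the $(d+1)$-cycle (where $n$ is the order of $\Sigma \graphf{v}$) has length $n(d+1)$ and appended total $0$, while first inserting a $0$ and then resynchronizing before repeating the cycle $n-1$ times has length $n(d+1)+1$ and the same appended total $0$.

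The main obstacle is ensuring that all intermediate windows in these insertion/resynchronization walks again have nonzero sum; each such window contains a $0$ (or the inserted $b$) together with all but one of $v_1, \ldots, v_{d+1}$, so its sum is of the form $\Sigma \graphf{v}$ with one part deleted, and the hypothesis $d \leq |G|-2$ is what guarantees a choice of $\graphf{v}$ (and of $b$) keeping every such sum away from $0$. Once strong connectedness and aperiodicity are established, Proposition \ref{prop:dtimescount}, Proposition \ref{pro:tmm}, and Theorem \ref{thm:equal} apply, yielding $p_s(m; D, \Psi, \Phi) = \frac{A}{|G|} B^m(1 + O(\theta^m)) = |G|^{d-1}(|G|-1)^{m-d}(1 + O(\theta^m))$.
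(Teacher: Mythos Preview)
Your overall plan coincides with the paper's: set up the span-$(d+1)$ de Bruijn subgraph $D$, observe the constant out-degree $|G|-1$ to count all walks, verify the two hypotheses of Theorem~\ref{thm:equal}, and divide by $|G|$. The partial-sum reformulation (splitting $S(0),S(1),\ldots$ into $d+1$ independent Smirnov strands) is a nice observation that the paper does not make; it gives a clean explanation of the exact count $|G|^d(|G|-1)^{m-d}$ and of why the problem is tractable. Your aperiodicity argument (choose $\graphf{v}$ with $v_i \neq \Sigma\graphf{v}$ for all $i$, then compare the $n(d+1)$-cycle with the same cycle preceded by an inserted $0$) is exactly what the paper does.

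There is, however, a genuine gap in your strong-connectedness sketch. You argue that the set $H$ of appended totals of closed walks at $\graphf{v}$ is a subgroup, and that the single-insertion walk $v_1\cdots v_{d+1}\,b\,v_1\cdots v_{d+1}$ puts $b+\Sigma\graphf{v}$ into $H$ for every $b$ outside a set of at most $d+1$ forbidden values, hence $H=G$. But a proper subgroup of $G$ has index at least $2$, so its complement has size at least $|G|/2$; your argument therefore only forces $H=G$ when $d+1<|G|/2$, which is strictly stronger than the stated hypothesis $d\leq|G|-2$. Concretely, for $G=\mathbb{Z}_4$, $d=2$, $\graphf{v}=(1,2,3)$, the only admissible insertion is $b=2$, and the walks you list generate only the subgroup $\{0,2\}$. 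The full $H$ is indeed all of $G$ here, but establishing this requires closed walks more elaborate than a single insertion. The paper sidesteps the subgroup idea entirely: for an \emph{arbitrary} target $s$ and an \emph{arbitrary} base vertex $\graphf{u}=(a_1,\ldots,a_{d+1})$, it chooses two auxiliary parts $b$ (subject to two inequalities) and $b'$ (subject to $d+1$ inequalities), and builds an explicit walk
\[
\graphf{u}\ \too\ (a_1,\ldots,a_d,b)\ \too\ \cdots\ \too\ (b',a_2,\ldots,a_{d+1})\ \too\ \cdots\ \too\ \graphf{u}
\]
that appends exactly $s$, cancelling the unwanted contributions by cycling each intermediate vertex its order many times. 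This is less elegant than your subgroup picture but works for the full range $d\leq|G|-2$. If you want to rescue the subgroup route, you would need either a more careful choice of $\graphf{v}$ (e.g.\ one for which the forbidden values coincide) or a second family of closed walks to enlarge the generating set.
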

\begin{proof}
Define the appropriate $D$ so that $V(D)$ contains all $(d+1)$-tuples of
vertices that do not sum to $0$.
The strong connectedness of $\graphf{D}$ is established in (the proof of)
\cite[Corollary 2]{abelian};
we include the argument here for completeness.
Let $u, v \in V(D)$ be distinct.
Let $w$ be a vertex such that
$w = (w(1), \ldots, w(j), v(j+1), \ldots, v(d+1))$, and assume
$u \too w$.
Clearly this is possible if $j=d+1$.
If this is true for some $j \leq d+1$
we seek to show that there is a vertex
$y = (y(1), \ldots, y(j-1), v(j), \ldots, v(d+1))$ such that
$w \too y$ and thus $u \too y$.
Let $a \in G$ satisfy
\begin{align*}
  w(1) + \cdots + w(j-2) + a + w(j) + v(j+1) + \cdots + v(d+1) &\neq 0,\\
  w(1) + \cdots + w(j-2) + a + v(j) + \cdots + v(d+1) &\neq 0.
\end{align*}
Then
$w \too (w(1), \ldots, w(j-2), a, w(j), v(j+1), \ldots, v(d+1))
\too y = (w(1), \ldots, w(j-2), a, v(j), \ldots, v(d+1))$.
By induction, we conclude that $u \too w$ in the case $j=0$, i.e.\ $u \too v$.

We turn to strong connectedness of $D_\times$.
Let $\graphf{u} = (a_1, \ldots, a_{d+1})$ be an arbitrary vertex in
$V(\graphf{D})$, and let $s$ be an element of $G$.
We seek a path (or a walk) from
$(\graphf{u}, 0)$ to $(\graphf{u}, s)$ in $\graphf{D}_\times$.

Let $b \in G$ satisfy the system
\begin{align*}
a_1 + \cdots + a_d + b &\neq 0 \\
a_2 + \cdots + a_d + b + s &\neq 0.
\end{align*}
This gives at least $|G| - 2$ possible values for $b$.

Let $b' \in G$ satisfy the system
\begin{align*}
a_j + \cdots + a_d + b + s + b' + a_2 + \cdots + a_{j-2} &\neq 0,
  \qquad 3 \leq j \leq d + 1 \\
s + b' + a_2 + \cdots + a_d &\neq 0 \\
b' + a_2 + \cdots + a_{d+1} &\neq 0.
\end{align*}
This gives at least $|G| -d -1$ possible values for $b'$.


Starting from $u$,
we take a $(d+1)$-step walk to $(a_1, \ldots, a_d, b)$.
Let $n_1$ be the order of $a_1 + \cdots + a_d + b$.
We cycle back to $(a_1, \ldots, a_d, b)$ exactly $n_1-1$ times.
Now we take one step by appending $s$.
Then we take a $(d+1)$-step walk to $b', a_2, \ldots, a_{d+1}$ and cycle that
vertex the appropriate number of times.
Finally walk to and cycle $a_1, \ldots, a_{d+1}$.
The total of this walk is $0 + s + 0 + 0 = s$.
We conclude that $\graphf{D}_\times$ is strongly connected.

To establish aperiodicity,
let $\graphf{u} = (a_1, \ldots, a_{d+1})$ be a $\graphf{D}$-vertex satisfying
the following.
Set $a_{d}$ so that $a_1 + \cdots + a_{d} \neq 0$.
Set $a_{d+1}$ so that for $i=1,\ldots,d$ we have $\Sigma a - a_i \neq 0$.
Thus for $i=1,\ldots,d+1$ we have $\Sigma a - a_i \neq 0$.
There are at least $|G| - d$ possible values for $a_{d+1}$.
Then we may take the same approach as in the proof of Proposition
\ref{prop:carlitz} where we consider two cycles from $\graphf{u}$, one with
an extra $0$ inserted.

We have $|V(\graphf{D})| = |G|^d (|G| - 1)$ and each vertex has out-degree
$|G| - 1$.
Thus there are
$|G|^d (|G| - 1) (|G| - 1)^{m-d-1} = |G|^d (|G|-1)^{m-d}$
walks in $\graphf{D}$ defining an $m$-composition.
Applying Theorem \ref{thm:equal} gives the result.
\end{proof}

Figure \ref{fig:noadj0} shows uniformly-randomly generated
100-compositions over $\mathbb{Z}_5$ such that no part may be followed by
its (additive) inverse.

\begin{figure}
\centering
\includegraphics[width=5.5in]{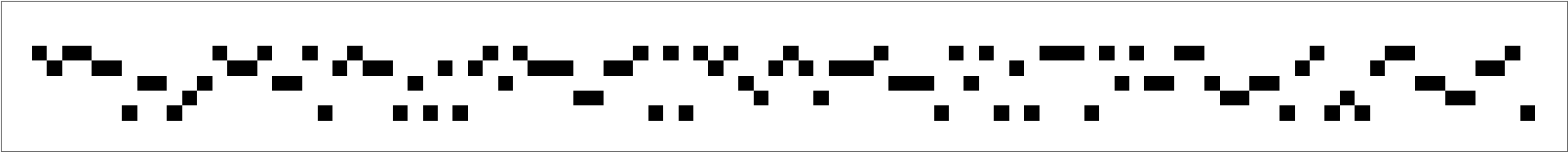}
\includegraphics[width=5.5in]{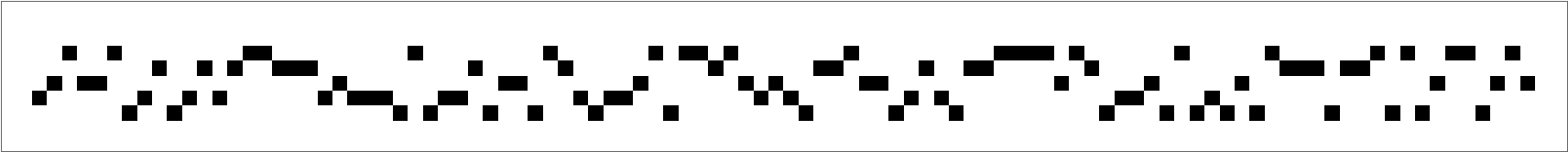}

\caption{Uniform-randomly generated $100$-compositions of $0$ (above)
  and $1$ (below) over $\mathbb{Z}_5$ with $x(i) \neq -x(i+1)$.
  \label{fig:noadj0}}
\end{figure}

\begin{proposition}
Let $p_a(m)$ be the number of $m$-compositions of $a \in G$ such that
the sum of any $d+1$ consecutive parts is not $0$.
Then for $a \neq 0, b\neq 0$, we have $p_a(m) = p_b(m)$.
If $m$ is not a multiple of $d+1$, then $p_0(m) = p_a(m)$.
\end{proposition}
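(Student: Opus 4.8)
The plan is to pass from a composition to its sequence of partial sums, which converts the \emph{window} constraint into a much more transparent condition. Given an $m$-composition $x = (x(1), \ldots, x(m))$ of $a$, define $S_0 = 0$ and $S_j = x(1) + \cdots + x(j)$ for $1 \le j \le m$ (for non-abelian $G$ the sum is taken in the fixed order, so $S_j = S_{j-1} + x(j)$). The map $x \mapsto (S_0, \ldots, S_m)$ is a bijection between $m$-compositions of $a$ and sequences $(S_0, \ldots, S_m)$ over $G$ with $S_0 = 0$ and $S_m = a$, with inverse $x(j) = -S_{j-1} + S_j$. Since the total of the window $x(i) + \cdots + x(i+d)$ equals $-S_{i-1} + S_{i+d}$, this window is $0$ if and only if $S_{i+d} = S_{i-1}$. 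Hence the constraint \enquote{every $d+1$ consecutive parts have nonzero total} is exactly $S_j \ne S_{j-(d+1)}$ for all $d+1 \le j \le m$.

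The crucial observation is that this reformulated constraint couples $S_j$ and $S_{j'}$ only when $j \equiv j' \pmod{d+1}$. Partitioning $\{0, 1, \ldots, m\}$ into its residue classes modulo $d+1$, the partial sums split into $d+1$ independent \enquote{chains}, and within each chain the only requirement is that consecutive entries be distinct --- i.e.\ each chain is a walk in the complete graph $K_{|G|}$ on vertex set $G$. The number of valid configurations therefore factors as a product over the chains. A chain with no prescribed endpoint contributes $|G|(|G|-1)^{\ell-1}$ (where $\ell$ is the number of indices in that class), and a chain with exactly one prescribed endpoint contributes $(|G|-1)^{\ell-1}$, both independent of the value of $a$. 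The only place where $a$ can enter is through the prescribed value $S_m = a$, which lies in the chain of index $m$; the value $S_0 = 0$ enters only through the chain of index $0$.

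The proof then splits on whether these two special indices share a chain. If $d+1 \nmid m$, then $m \not\equiv 0 \pmod{d+1}$, so the chain containing index $m$ has $S_m = a$ as its sole prescribed endpoint and contributes $(|G|-1)^{\ell-1}$ regardless of $a$; every other chain is likewise independent of $a$, so $p_a(m)$ does not depend on $a$ at all, giving $p_0(m) = p_a(m)$. If instead $d+1 \mid m$, then indices $0$ and $m$ lie in the same chain, which now has both endpoints prescribed ($S_0 = 0$ and $S_m = a$); the number of consecutive-distinct sequences between two prescribed endpoints is a walk count in $K_{|G|}$, namely $[(J-I)^{\ell-1}]_{0,a}$ with $J$ the all-ones matrix, and from the spectrum of $J - I$ (eigenvalue $|G|-1$ on the all-ones vector, $-1$ on its complement) this depends on the endpoints only through whether they are equal. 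Since $0 \ne a$ and $0 \ne b$ for any nonzero $a, b$, this chain contributes the same amount for all nonzero totals, and all other chains are independent of the total, yielding $p_a(m) = p_b(m)$.

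The step I expect to require the most care is checking that the reformulation and the chain-decoupling go through verbatim for non-abelian $G$: here \enquote{total} means the ordered product, the inverse of the bijection is $x(j) = -S_{j-1} + S_j$ in the group, and one must confirm that the window equalling the identity is still equivalent to $S_{i+d} = S_{i-1}$. Once that is settled, the complete-graph count $[(J-I)^{\ell-1}]_{0,a}$ makes no reference to the group operation (only to equality of labels), so the \enquote{equal vs.\ unequal endpoints} dichotomy --- the heart of the argument --- is automatic, and both claimed identities follow uniformly in $G$.
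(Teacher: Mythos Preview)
Your proof is correct and follows essentially the same approach as the paper: pass to partial sums, split into residue classes modulo $d+1$, and observe that each chain carries only a Carlitz (consecutive-distinct) condition. The only cosmetic difference is the final step --- the paper exhibits an explicit bijection by applying a letter-swap permutation of $G$ to a single chain, whereas you invoke the vertex-transitivity of $K_{|G|}$ via the walk count $[(J-I)^{\ell-1}]_{0,a}$; both arguments encode the same symmetry.
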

\begin{proof}
Let $x = (x(1), \ldots, x(m))$ be an $m$-composition.
Let $y(i) = \sum_{n=1}^i x(n)$.
Clearly $x$ uniquely determines $y$ and vice versa.
Also, $x$ has total $a$ if and only if $y(m) = a$.

Let $y^{\langle j \rangle}(i) = y((i-1)(d+1) + j-1)$ for $j \in [d+1]$.
Then $x$ satisfies the condition if and only if each $y^{\langle j \rangle}$ is
Carlitz and $y^{\langle d+1 \rangle}(1) \neq 0$.

First assume $m$ is not a multiple of $d+1$, so $y(m)$ is the last part of
some $y^{\langle j \rangle}, j \neq d+1$.
Let $\pi: G \to G$ be defined $\pi(b) = b$ for all $b \not\in \{0, a\}$,
and $\pi(a) = 0, \pi(0) = a$.
Then if we apply $\pi$ to $y^{\langle j \rangle}$ within $y$ and take
differences, we get a new $x'$ which satisfies the condition and has total $0$.
Thus $p_0(m) = p_b(m)$.

Second, if $m$ is a multiple of $d+1$, the previous $\pi$ does not work
since it may change whether $y^{\langle d+1 \rangle}(1) \neq 0$.
However, if we take some bijective $\pi':G \to G$ which fixes $0$ and swaps two
nonzero elements $a$ and $b$, and apply it to $y^{\langle d+1 \rangle}$
in $y$ we conclude $p_a(m) = p_b(m)$.
\end{proof}

Table \ref{tab:noadj0} gives counts for $m$-compositions of some $a$ over
the quaternion group $Q_3$ such that no part may be followed by its inverse.

\begin{table}
\centering
\begin{tabular}[c]{|c|rrr|}
\hline
\diagbox{$m$}{$a$} & $id$ & $(1854)(2763)$ & (1256)(3478)\\ \hline
2  & 0             & 8             & 8             \\
3  & 49            & 49            & 49            \\
4  & 392           & 336           & 336           \\
5  & 2401          & 2401          & 2401          \\
6  & 16464         & 16856         & 16856         \\
7  & 117649        & 117649        & 117649        \\
8  & 825944        & 823200        & 823200        \\
9  & 5764801       & 5764801       & 5764801       \\
10 & 40336800      & 40356008      & 40356008      \\
11 & 282475249     & 282475249     & 282475249     \\
12 & 1977444392    & 1977309936    & 1977309936    \\
13 & 13841287201   & 13841287201   & 13841287201   \\
14 & 96888186864   & 96889128056   & 96889128056   \\
15 & 678223072849  & 678223072849  & 678223072849  \\
16 & 4747567274744 & 4747560686400 & 4747560686400 \\ \hline
\end{tabular}
\caption{Counts of $m$-compositions of $a$ with no part followed by
  its inverse, over $Q_8$ (written as a subgroup of $S_8$).}
\label{tab:noadj0}
\end{table}

\begin{example}
We examine restrictions where all parts are simply required to lie in a
fixed set $\Xi$.
We assume without loss of generality that the subset $\Xi$ generates $G$.
When working with permutation groups, the meaning of \enquote{composition} as
in \enquote{integer composition} is actually the same as in \enquote{functional
composition}.

If $\Xi = G$ then the number of compositions of $a$ is always $|G|^{m-1}$
since the first $m-1$ parts are arbitrary and the last part is uniquely
determined.
However if $\Xi \subset G$ this is no longer the case.

The digraph $\graphf{D}$ with vertex set $\Xi$ is clearly strongly connected,
and it is straightforward to see that $D_\times$ is strongly connected as
well.

For any cycle with final arc labeled $a$ in the Cayley graph constructed from
$\Xi$, there is a cycle of equal length at $((a), 0)$ in $D_\times$. This
implies that $D_\times$ is aperiodic if and only if the Cayley graph is aperiodic.

One way to ensure an aperiodic Cayley graph is to include $0 \in \Xi$.
In general Cayley graphs are not aperiodic e.g.\
only an even number of transposition permutations can equal the identity
since the identity is an even permutation.
\end{example}


We turn to the problem of counting compositions with $r > 0$ occurrences of
a pattern.
In preparation, we quote the fundamental fact of rational generating function
asymptotics which is applied a few times in the remainder.

\begin{theorem}[Theorem IV.9 in \cite{ac}] \label{thm:ratgf}
If $f(z)$ is a rational function that is analytic at $0$ and has poles
$\alpha_1, \alpha_2, \ldots, \alpha_m$, then there exist $m$ polynomials
$\Pi_j(x)$ such that for sufficiently large $n$ we have
$[z^n]f(z) = \sum_{j=1}^m \Pi_j(n) \alpha_j^{-n}$ where the degree of $\Pi_j$
is the order of the pole of $f$ at $\alpha_j$, minus one.
\end{theorem}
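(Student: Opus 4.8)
The plan is to reduce the problem to a partial fraction decomposition of $f$ and then to extract coefficients from each of the resulting simple terms explicitly, keeping careful track of which base $\alpha_j^{-n}$ each term contributes to.

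First I would write $f(z) = P(z)/Q(z)$ in lowest terms, where each root $\alpha_j$ of $Q$ has multiplicity $r_j$, the order of the pole of $f$ at $\alpha_j$. By the partial fraction theorem over $\mathbb{C}$,
\[
f(z) = S(z) + \sum_{j=1}^m \sum_{k=1}^{r_j} \frac{c_{j,k}}{(z-\alpha_j)^k},
\]
where $S(z)$ is a polynomial (nonzero only when $\deg P \geq \deg Q$) and where each leading coefficient $c_{j,r_j}$ is nonzero precisely because $\alpha_j$ is a pole of order exactly $r_j$. Since $S$ is a polynomial, $[z^n] S(z) = 0$ once $n > \deg S$, so the polynomial part contributes nothing for sufficiently large $n$ and may be discarded.

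Next I would extract coefficients from each simple term. Writing $\frac{1}{(z-\alpha)^k} = \frac{(-1)^k}{\alpha^k}(1 - z/\alpha)^{-k}$ and expanding with the generalized binomial series gives
\[
[z^n] \frac{1}{(z-\alpha)^k} = \frac{(-1)^k}{\alpha^k} \binom{n+k-1}{k-1}\, \alpha^{-n}.
\]
The key observation is that $\binom{n+k-1}{k-1} = \frac{(n+k-1)(n+k-2)\cdots(n+1)}{(k-1)!}$ is a polynomial in $n$ of degree exactly $k-1$ with positive leading coefficient. Collecting all terms that share the common base $\alpha_j^{-n}$, I would define
\[
\Pi_j(n) = \sum_{k=1}^{r_j} c_{j,k}\, \frac{(-1)^k}{\alpha_j^{k}} \binom{n+k-1}{k-1},
\]
so that $[z^n] f(z) = \sum_{j=1}^m \Pi_j(n)\, \alpha_j^{-n}$ for all sufficiently large $n$.

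Finally I would verify the degree claim, which is where the one genuine subtlety lies. Each $\Pi_j$ is a sum of polynomials in $n$ of degrees $0, 1, \ldots, r_j - 1$, and the unique top-degree contribution comes from the $k = r_j$ summand. Its leading coefficient is a nonzero multiple of $c_{j,r_j}$, so no cancellation among the lower-degree terms can reduce the degree; hence $\deg \Pi_j = r_j - 1$ exactly. The main thing to be careful about is precisely this non-cancellation step: one must confirm that the highest-order pole contributes a genuinely nonvanishing leading term, which is exactly the hypothesis that $\alpha_j$ is a pole of order $r_j$ (so $c_{j,r_j} \neq 0$). I would also note that since the $\alpha_j$ are distinct complex numbers, the geometric factors $\alpha_j^{-n}$ are distinct, so the grouping into the stated form is unambiguous and the representation is the one asserted.
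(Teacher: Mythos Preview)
Your proof is correct and follows the standard partial fraction argument. Note, however, that the paper does not actually prove this theorem: it is quoted verbatim as Theorem~IV.9 from \cite{ac} and used as a black box, so there is no ``paper's own proof'' to compare against. Your argument is essentially the one given in the cited reference, so nothing further is needed.
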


\begin{theorem} \label{thm:pathsr}
Let $\bar{D}$ be a de Bruijn graph.
Let $U \subset V(\bar{D})$ and $\Psi, \Phi \subseteq V(\bar{D})$ all be
nonempty and suppose $\graphf{D} = \bar{D} - U$ is regular with strongly
connected derived digraph $D_\times$.

Let $\mu$ be the minimum number of occurrences of $U$ (as subwords) in a
composition in $\mathcal{P}(\bar{D}, \Psi, \Phi)$ that has at least $1$
occurrence of $V(D)$.
Assume that for all sufficiently large values of $m$ there exist compositions
in $\mathcal{P}(m; \bar{D}, V(D), V(D))$ with exactly $1$ occurrence of $U$,
and that $p(m; D, V(D), V(D)) \sim A \cdot B^m$.

If $r \geq \max(\mu, 1), \mu \geq 0$ then the number of $m$-compositions of $a
\in G$ starting in $\Psi$ and finishing in $\Phi$ with exactly $r$ occurrences
of $U$ is
\[
p_a(m, r; D, \Psi, \Phi)
= m^{r-\mu} A_{r,\mu} \cdot B^{m}(1 + O(m^{-1})), A_{r,\mu}>0,B>1, \qquad m \to
\infty.
\]
\end{theorem}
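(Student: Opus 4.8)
The plan is to mark each occurrence of $U$ with a catalytic variable and read the asymptotics off the pole structure of the resulting rational generating function. Working in the derived digraph of $\bar D$, order its vertices so that the lift $V(D)\times G$ comes first and $U\times G$ second, and split the adjacency matrix by the block of the arc's head as $\bar M(u) = M_0 + u M_1$, where $M_1$ collects exactly the arcs entering $U\times G$. By Proposition \ref{prop:dtimescount} the bivariate generating function $\bar G(z,u) = \psi_\times^\top(I - z\bar M(u))^{-1}\phi_a$ is then rational in $z$ and $u$ (up to a polynomial correction for short compositions), and the resolvent expansion in $u$ gives
\[
G_r(z) := [u^r]\bar G(z,u) = z^r\,\psi_\times^\top R\,(M_1 R)^r\,\phi_a, \qquad R := (I - zM_0)^{-1}.
\]
Because $M_0$ is block lower-triangular with a zero $U$-column block, its nonzero spectrum equals that of the adjacency matrix $M_{D_\times}$ of $D_\times$, and the $(D,D)$ block of $R$ is $(I - zM_{D_\times})^{-1}$. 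Since $D$ is regular, Proposition \ref{pro:tmm} and the Perron--Frobenius theorem give that this resolvent has a single dominant pole, simple, at $z_0 = 1/B$ with rank-one residue proportional to $v_\lambda u_\lambda^\top$; that $B>1$ is Proposition \ref{pro:2vert}.

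The heart of the argument is to show that the pole of $G_r$ at $z_0$ has order exactly $r-\mu+1$. Each of the $r+1$ factors $R$ contributes at most a simple pole, so the order is at most $r+1$; the real content is that at least $\mu$ of the factors $M_1$ fail to raise it. Combinatorially, a walk with exactly $r$ visits to $U$ decomposes into $D$-segments separated by those visits, and a resolvent factor contributes a pole only when its $D$-segment is free to lengthen, i.e.\ when the associated $U$-visit is flanked on both sides by independently growing $D$-walks. A visit pinned against the boundary supplied by $\Psi$ or $\Phi$ provides only reachability, not growth. By the very definition of $\mu$ as the least number of $U$-visits in a $\Psi$-to-$\Phi$ composition that reaches $V(D)$ at all, at least $\mu$ visits are forced into this boundary-overhead role, leaving at most $r-\mu$ internal visits and hence at most $r-\mu+1$ freely growing $D$-segments. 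For the matching lower bound I would build a witness of order $r-\mu+1$ by splicing a fixed $\mu$-minimal boundary-connecting walk with $r-\mu$ copies of the ``one $U$-occurrence inside a growing $D$-walk'' block guaranteed by the standing hypothesis on $\mathcal{P}(m;\bar D, V(D), V(D))$, using strong connectedness of $D_\times$ to join them; since all the walk counts are nonnegative and the blocks are genuinely present for every large $m$, the top Laurent coefficient cannot cancel, giving $A_{r,\mu}>0$.

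With the order pinned to $r-\mu+1$ at the unique dominant pole $z_0$, I would finish by Theorem \ref{thm:ratgf}: the contribution of $z_0$ is $\Pi(m)B^m$ with $\deg\Pi = r-\mu$, while every other pole is the reciprocal of a strictly smaller-modulus eigenvalue of $M_{D_\times}$ (aperiodicity) and contributes only $O(\theta^m)$. Hence $[z^m]G_r(z) = A_{r,\mu}m^{r-\mu}B^m(1 + O(m^{-1}))$, the relative $O(m^{-1})$ coming from the degree-$(r-\mu-1)$ term of $\Pi$ and absorbing the exponentially small remainder. To see that $A_{r,\mu}$ is independent of the target $a$, I would invoke the Kronecker decomposition $M_{D_\times} = \sum_{a\in G} P_a\otimes M_a$ of Lemma \ref{lem:permmatrix}: its dominant right and left eigenvectors are $\xi\otimes v_\lambda$ and $\xi\otimes u_\lambda$ with $\xi$ the all-ones vector on $G$, exactly as in the proof of Theorem \ref{thm:equal}, and the translation automorphisms of Lemma \ref{lem:auto} render the whole sandwich $\psi_\times^\top R(M_1R)^r\phi_a$ equivariant in $a$. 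Writing $\phi_a = e_i\otimes\phi$, the only $a$-dependent contraction collapses to $(\xi\otimes u_\lambda)\cdot(e_i\otimes\phi) = u_\lambda\cdot\phi$, which does not depend on $a$.

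The step I expect to be the genuine obstacle is the exact pole order, specifically the upper bound that at most $r-\mu+1$ resolvent factors can be simultaneously singular. Converting the extremal meaning of $\mu$ (minimum boundary overhead needed to reach $V(D)$) into the vanishing of the top $\mu$ Laurent coefficients of $z^r\psi_\times^\top R(M_1R)^r\phi_a$ at $z_0$, while ruling out accidental cancellation in the surviving order-$(r-\mu+1)$ term, is the delicate part; the hypothesis furnishing compositions with exactly one occurrence of $U$ for all large $m$ is precisely what I would lean on to exclude such degeneracy and to secure $A_{r,\mu}>0$.
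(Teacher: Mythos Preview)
Your approach is genuinely different from the paper's. The paper does not introduce a catalytic variable or analyse the resolvent; instead it decomposes an $m$-composition with $r$ occurrences directly as
\[
x = \bar v_\Psi\, y_1\, d_1\, y_2\, d_2\cdots y_n\, d_n\, y_{n+1}\, \bar v_\Phi,
\]
where $\bar v_\Psi,\bar v_\Phi$ are boundary (semi-)detours, the $d_i$ are maximal occurrence segments of $U$, and the $y_i$ are nonempty $U$-free pieces. It then \emph{fixes} the finitely many possible choices of $\bar v_\Psi,\bar v_\Phi,d_1,\dots,d_n$ and the segment-totals $a_1,\dots,a_{n+1}\in G$, counts the $y_i$ by multiplying the $n{+}1$ rational generating functions from Theorem~\ref{thm:equal} (each with a simple pole at $1/B$ whose residue is independent of the target in $G$), and reads off $m^n B^m$ from the order-$(n{+}1)$ pole. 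The maximum $n$ is $r-\mu$, achieved by taking each $d_i$ to be a single-occurrence detour (the hypothesis on $\mathcal P(m;\bar D,V(D),V(D))$) and letting the boundary pieces carry the minimal $\mu$ occurrences; summing over the finite data gives the result. Your resolvent expansion $z^r R(M_1R)^r$ is an algebraic repackaging of exactly this segment decomposition, which is why unpacking your pole-order upper bound inevitably leads back to the paper's combinatorics.

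Two technical points. First, since $M_1$ marks arc \emph{heads}, your $G_r(z)$ does not register an occurrence at the initial vertex; when $\Psi\cap U\neq\emptyset$ you are off by one and must split $\psi_\times$ accordingly. Second, your $a$-independence sketch is close but not quite as stated: the translation automorphisms $P_c\otimes I$ commute with $R$, $M_1$, and the residue $S$, but they do \emph{not} fix $\psi_\times$, so equivariance of the full sandwich fails. What does work is that the rank-one residue has the form $\tilde v\,\tilde u^\top$ with $\tilde u$ in the $\xi\otimes(\cdot)$ subspace (Theorem~\ref{thm:equal}); once any one $R$-factor in the leading Laurent term is singular, everything to its right lies in that $P_c\otimes I$-invariant subspace, and the final contraction with $\phi_a=e_i\otimes\phi$ collapses to $u_\lambda\cdot\phi$ as you wrote. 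The paper gets the same conclusion more cheaply by applying Theorem~\ref{thm:equal} separately to each $y_i$.

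Your self-identified obstacle---turning the extremal definition of $\mu$ into the vanishing of the top $\mu$ Laurent coefficients---is genuine, and the clean way to resolve it is precisely the paper's finite decomposition: classifying which $R$-factors are forced to contribute only their regular part amounts to identifying the boundary semi-detours, after which the count is a finite sum of products of $n{+}1$ simple-pole functions with $n\le r-\mu$.
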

\begin{proof}
Define an occurrence segment as a composition $w$ of length at least $\sigma$
where every part in $w$ is involved in an occurrence of $U$.
A detour in $\bar{D}$ is an occurrence segment $w=(w(1), \ldots, w(\ell))$
where there is an arc from $V(D)$ to $(w(1), \ldots, w(\sigma))$ and an
arc from $(w(\ell - \sigma + 1), \ldots, w(\ell))$ to $V(D)$.
The occurrence segment $w$ gives a left semi-detour if
there is an arc from $V({D})$ to $(w(1), \ldots, w(\sigma))$ and $w$ gives a
right semi-detour if there is an arc from $(w(\ell-\sigma + 1), \ldots,
w(\ell))$ to $V({D})$.

Fix elements of $v_{\Psi} \in \Psi$ and $v_{\Phi} \in \Phi$ as start and finish
segments.
If $v_{\Psi} \not\in U$, set $\bar{v_{\Psi}} = v_{\Psi}$, and
if $v_{\Psi} \in U$, set $\bar{v_{\Psi}}$ to some right semi-detour with
$v_{\Psi}$ at the beginning.
If $v_{\Phi} \not\in U$, set $\bar{v_{\Phi}} = v_{\Phi}$ and
if $v_{\Phi} \in U$, set $\bar{v_{\Phi}}$ to a left semi-detour with $v_{\Phi}$
at the end.
Fix a further sequence of detours $d_1, \ldots, d_n$ so that the total number
of occurrences of $U$ in all (semi-)detours is $r$.
For $m$ sufficiently large, an $m$-composition with $r$ occurrences of $U$
has the form
\[ x = \bar{v_{\Psi}} {y_1} {d_1} {y_2} d_2
  \cdots y_n {d_n} {y_{n+1}} \bar{v_{\Phi}},
\]
where each $y_i$ is a non-empty composition such that no parts of $y_i$ are
involved in an occurrence of $U$ in $x$.
We further fix $a_1, \ldots, a_{n+1} \in G$ such that
$\Sigma y_i = a_i$ implies $\Sigma x = a$.
Let the total length of the $y_i$ be $m - \delta$.

Given all of the fixed objects, the $y_i$ are subject to
start and finish constraints, totals, and a total length $m - \delta$.
Let $c^{\langle i \rangle}(z)$ be the generating function counting possible
$y_i$ where $z$ marks length.
Then
\[c^{\langle i \rangle}(z) =
  z^\sigma \psi_i^\top \left(\sum_{j \geq 0} M_\times^j z^j \right)
\phi_i + P_i(z), \]
where $M_\times$ is the adjacency matrix of $D_\times$ and $\psi_i$ and
$\phi_i$ are the appropriate start and finish vectors.
The term $P_i(z)$ is a polynomial which counts the appropriate $m$-compositions
with $m < \sigma$.
The number of sequences $y_1, \ldots, y_{n+1}$ is then
\[ [z^{m-\delta}] \prod_{i=1}^{n+1} c^{\langle i \rangle}(z). \]
By Theorem \ref{thm:equal} and Theorem \ref{thm:ratgf},
\[ \prod_{i=1}^{n+1} c^{\langle i \rangle}(z) =
A' \frac{1}{(1-Bz)^{n+1}} + O((1-Bz)^{-n}), \qquad z \to 1, \]
and
\[
[z^{m-\delta}] \prod_{i=1}^{n+1} c^{\langle i \rangle}(z) =
m^{n} A'' \cdot B^m(1 + O(m^{-1})), \qquad m \to \infty.
\]
There is a finite set of possible values for the objects we fixed and from the
assumptions we know $n$ attains the value $r - \mu$, so we conclude the result.
\end{proof}

\begin{notation}
\label{not:dist}
For a sequence of random variables $X_0, X_1, \ldots, $ we write $X_n
\Rightarrow X_0$ to denote that the sequence converges in distribution to $X_0$.
\end{notation}

\begin{proposition}\label{prop:markovvisits}
Let $D$ be a strongly connected, aperiodic digraph with at least $2$ vertices.
Let $A \subseteq V(D) \times V(D)$ be a nonempty set of allowed start-finish
vertex pairs.
Let $\Xi \subset V(D)$ be a nonempty set of designated vertices such that
there are arbitrarily long walks in $D- \Xi$ and/or $D[\Xi]$,
e.g.\ $D- \Xi$ is strongly connected.
Let $X_m$ be the number of vertices of $\Xi$ in a uniform random walk of length
$m$ in $D$ where the initial and final vertices are found as a pair in $A$.
Then $E(X_m) \sim c_1 m$, $\Var(X_m) \sim c_2 m$ where $c_1, c_2 > 0$ do not
depend on $A$, and
\[ \frac{X_m - E(X_m)}{\sqrt{\Var(X_m)}} \Rightarrow N(0,1). \]
\end{proposition}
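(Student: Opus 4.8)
The plan is to run the transfer-matrix method with a marking variable and then appeal to a quasi-powers central limit theorem, reading off the mean, variance, and asymptotic normality from the Perron eigenvalue of a perturbed adjacency matrix. Fix an ordering of $V(D)$, let $M$ be the adjacency matrix of $D$, and define the marked matrix $M(u)$ by
\[
 [M(u)]_{i,j} = [(v_i, v_j) \in E(D)]\, u^{[v_j \in \Xi]},
\]
so that each arc whose head lies in $\Xi$ carries a factor $u$. Then $e_s^\top M(u)^m e_t$ is the polynomial in $u$ enumerating length-$m$ walks from $v_s$ to $v_t$ by their number of $\Xi$-vertices among $v_1, \ldots, v_m$; the initial vertex is not marked, but this contributes only a bounded factor $u^{[v_0 \in \Xi]}$ that is irrelevant to the limit law. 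Summing numerator and denominator over the pairs in $A$, the probability generating function of $X_m$ is, up to that bounded boundary factor,
\[
 E[u^{X_m}] = \frac{\sum_{(s,t) \in A} e_s^\top M(u)^m e_t}{\sum_{(s,t) \in A} e_s^\top M^m e_t}.
\]

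Next I would control the numerator by Perron--Frobenius and analytic perturbation. For $u > 0$ the matrix $M(u)$ is nonnegative with the same support as $M$, hence irreducible (as $D$ is strongly connected) and primitive (as $D$ is aperiodic), so it has a simple positive dominant eigenvalue $\lambda(u)$ with $\lambda(1) = \lambda$. Simplicity makes $\lambda(u)$ analytic in a complex neighborhood of $u = 1$, and there it remains the unique eigenvalue of largest modulus, so the argument of Proposition \ref{pro:tmm} applies with $M(u)$ in place of $M$ and yields
\[
 e_s^\top M(u)^m e_t = C_{s,t}(u)\, \lambda(u)^m \bigl(1 + O(\theta^m)\bigr)
\]
with $C_{s,t}$ analytic and $0 \le \theta < 1$, uniformly for $u$ near $1$. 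Summing over $A$ gives $E[u^{X_m}] = \tfrac{C(u)}{C(1)}\bigl(\lambda(u)/\lambda\bigr)^m(1 + O(\theta^m))$ uniformly near $u=1$, which is exactly the quasi-powers shape. Writing $\Lambda(\beta) = \log \lambda(e^\beta)$ for the associated cumulant generating function, the quasi-powers theorem \cite[Theorem IX.8]{ac} gives, provided $\Lambda''(0) > 0$,
\[
 E(X_m) \sim c_1 m, \quad \Var(X_m) \sim c_2 m, \quad c_1 = \Lambda'(0), \quad c_2 = \Lambda''(0),
\]
together with the stated convergence to $N(0,1)$. Since $\lambda(u)$ is determined by $M(u)$ alone, i.e.\ by $D$ and $\Xi$ and not by $A$, the constants $c_1$ and $c_2$ do not depend on $A$; and $c_1 = \Lambda'(0) > 0$ because a nonempty $\Xi$ and strong connectedness force $\lambda'(1) > 0$ by strict monotonicity of the Perron eigenvalue in the entries.

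The main obstacle is the variability condition $c_2 = \Lambda''(0) > 0$, and this is exactly where the hypothesis on long walks enters. Here I would invoke the standard fact that $\Lambda''(0)$, the asymptotic variance of the additive functional with arc increments $[v_j \in \Xi]$, vanishes if and only if these increments are cohomologous to a constant, i.e.\ there are a function $h$ on $V(D)$ and a constant $\rho$ with $[v_j \in \Xi] = \rho + h_j - h_i$ for every arc $(v_i, v_j)$. Telescoping this identity around any closed walk shows that, in the degenerate case, every cycle of $D$ has exactly a $\rho$-fraction of its vertices in $\Xi$. But an arbitrarily long walk in $D - \Xi$, closed into a cycle using the strong connectedness of $D$, produces cycles whose $\Xi$-fraction tends to $0$, forcing $\rho = 0$, while a cycle through any single vertex of $\Xi$ (which exists since $\Xi \neq \emptyset$) has positive $\Xi$-fraction, forcing $\rho > 0$ --- a contradiction. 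Symmetrically, an arbitrarily long walk in $D[\Xi]$ yields cycles of $\Xi$-fraction tending to $1$, forcing $\rho = 1$, while a cycle through a vertex outside $\Xi$ (which exists since $\Xi \subset V(D)$ is strict) forces $\rho < 1$. Either hypothesis therefore excludes cohomology and gives $c_2 > 0$. The remaining routine points --- the uniformity of the quasi-powers estimate for complex $u$, obtained by perturbing the simple dominant eigenvalue, and the negligibility of the unmarked initial vertex --- I would dispatch by the standard continuity arguments.
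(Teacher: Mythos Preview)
Your proof is correct and follows the same overall strategy as the paper: set up the transfer matrix with a marking variable and invoke a black-box central limit theorem of quasi-powers type. The paper cites Theorem~1 of \cite{matrix} directly, whereas you route through analytic perturbation of the Perron eigenvalue and Theorem~IX.8 of \cite{ac}; these are interchangeable packagings of the same result.

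The one genuine difference is in how the nondegeneracy (variance) condition is verified. The paper checks the hypothesis of \cite{matrix} by an explicit construction: pick $v\notin\Xi$ on a cycle of $D-\Xi$, take a long closed walk $W$ at $v$ inside $D-\Xi$, and then use aperiodicity of $D$ to find a second closed walk at $v$ of the \emph{same length} that does visit $\Xi$; two equal-length closed walks with different $\Xi$-counts is exactly what the cited theorem needs. Your argument instead invokes the cohomology characterization $\Lambda''(0)=0\iff [v_j\in\Xi]=\rho+h_j-h_i$ on every arc, and then rules this out by a density argument on cycle $\Xi$-fractions. Both are valid; the paper's route is more elementary and makes visible why aperiodicity is being used, while yours is cleaner once the cohomology lemma is granted and does not explicitly call on aperiodicity at the nondegeneracy step (it is still needed for primitivity of $M(u)$). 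Either verification would slot into the other write-up without difficulty.
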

\begin{proof}
Say $V(D) = n$ and fix an ordering $v_1, \ldots, v_n$ on $V(D)$.
We use $u$ as an indeterminate to mark occurrences of $\Xi$.
Let $C$ be an $n \times n$ matrix where
\[[C]_{i,j} = [(v_i, v_j) \in A](u[v_i \in \Xi] + [v_i \not\in \Xi]),\]
and let $T$ be an $n \times n$ matrix where
\[ [T]_{i,j} = [(v_i, v_j) \in E(D)](u[v_j \in \Xi] + [v_j \not\in \Xi]). \]
The matrix $T$ is known as the transfer matrix.
Then $[u^r] \sum_{i,j} [C]_{i,j} [T^{m-1}]_{i,j}$ is the number of walks of
length $m$ in $D$ with $r$ occurrences of $\Xi$ with start and finish vertices
allowed by $A$.

Theorem 1 in \cite{matrix} establishes limiting distributions for secondary
parameters in the context of the transfer matrix method.
It can be applied to obtain the result if we verify that there is a
vertex $v \in V(D)$ and positive integer $k$ such that there are
walks from $v$ to $v$ of length $s$ with differing numbers of terms in $\Xi$.
Assume WLOG that there are arbitrarily long walks in $D - \Xi$ and
suppose $v \not\in \Xi$.
Let $W$ be a sufficiently long walk from $v$ to $v$ in $D - \Xi$.
There are walks from $v$ to $v$ which visit $\Xi$, and by aperiodicity of $D$
such a walk exists of the exact length of $W$, so we are finished.
\end{proof}

Clearly if $\Xi = \emptyset$ or $\Xi = V(D)$ the number of occurrences of $\Xi$
is trivial.
We note that de Bruijn graphs and their derived digraphs are always aperiodic
and strongly connected and there are always arbitrarily long walks at the
vertices with loops.

We quote a helpful theorem on convergence in distribution.
\begin{theorem}[Slutsky]
\label{thm:slut}
Assume $X_n$ and $Y_n$ are random variables for $n \geq 1$.
Also assume that $X_n$ converges in distribution to a random variable $X$ and
$Y_n \Rightarrow c$ where $c \in \mathbb{R}$.
Then $X_n + Y_n \Rightarrow X + c$ and $X_n Y_n \Rightarrow X \cdot c$.
\end{theorem}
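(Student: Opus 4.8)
The plan is to deduce both statements at once from the continuous mapping theorem, by first upgrading the two separate convergences to a single joint convergence $(X_n, Y_n) \Rightarrow (X, c)$ in $\mathbb{R}^2$ and then composing with the everywhere-continuous maps $(x,y) \mapsto x + y$ and $(x,y) \mapsto xy$. Because these maps are continuous on all of $\mathbb{R}^2$, the continuous mapping theorem yields $X_n + Y_n \Rightarrow X + c$ and $X_n Y_n \Rightarrow X \cdot c$ immediately, so essentially the entire content lies in establishing the joint convergence.

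The first reduction I would make is the standard fact that convergence in distribution to a \emph{constant} coincides with convergence in probability: since $Y_n \Rightarrow c$ and the distribution function of the constant $c$ is continuous at every point except $c$ itself, the points $c \pm \epsilon$ are continuity points, whence $\mathbb{P}(|Y_n - c| \ge \epsilon) \to 0$ for each $\epsilon > 0$. With this in hand I would prove the joint convergence by testing against bounded Lipschitz functions, which suffices to characterize weak convergence. Fix such a $g : \mathbb{R}^2 \to \mathbb{R}$; it is uniformly continuous, so given $\epsilon$ choose $\delta$ with $|y - c| < \delta \implies |g(x,y) - g(x,c)| < \epsilon$ for all $x$. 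Splitting on the event $\{|Y_n - c| < \delta\}$ gives
\[
\bigl| E\,g(X_n, Y_n) - E\,g(X_n, c) \bigr|
\le \epsilon + 2 \|g\|_\infty\, \mathbb{P}(|Y_n - c| \ge \delta) \to \epsilon,
\]
while $x \mapsto g(x, c)$ is bounded and continuous, so $E\,g(X_n, c) \to E\,g(X, c)$ because $X_n \Rightarrow X$. Letting $\epsilon \to 0$ shows $E\,g(X_n, Y_n) \to E\,g(X, c)$, which is exactly $(X_n, Y_n) \Rightarrow (X, c)$.

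The one subtlety to watch is the multiplication map: the product $(x,y) \mapsto xy$ is unbounded, but the continuous mapping theorem requires only continuity on a set of full measure under the limit law, and here the map is continuous on all of $\mathbb{R}^2$, so no tightness or moment control is needed. Had I instead taken the elementary route of writing $X_n Y_n = c X_n + X_n(Y_n - c)$ and arguing that $X_n(Y_n - c) \to 0$ in probability, I would have needed the tightness of $\{X_n\}$ implied by $X_n \Rightarrow X$; the joint-convergence argument sidesteps this. I therefore expect the main --- indeed the only nontrivial --- step to be the passage from the two marginal convergences to the joint convergence $(X_n, Y_n) \Rightarrow (X, c)$, which is precisely where the hypothesis that one of the limits is a constant gets used.
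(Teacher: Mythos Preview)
Your argument is correct and is in fact one of the standard proofs of Slutsky's theorem: reduce $Y_n \Rightarrow c$ to $Y_n \to c$ in probability, upgrade to joint convergence $(X_n,Y_n) \Rightarrow (X,c)$ via bounded Lipschitz test functions, and apply the continuous mapping theorem to the everywhere-continuous maps $(x,y)\mapsto x+y$ and $(x,y)\mapsto xy$. The subtleties you flag (continuity of the product map, tightness in the alternative decomposition) are handled correctly.

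As for comparison with the paper: the paper does not prove this result at all --- its ``proof'' consists solely of a citation to \cite[Theorem 11.4]{gut2013probability}. So you have supplied genuinely more than the paper does here. Your proof is self-contained and would be perfectly appropriate to include; the paper simply treats Slutsky's theorem as a quoted background fact rather than something to be established within the document.
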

\begin{proof}
See \cite[Theorem 11.4]{gut2013probability}.
\end{proof}

\begin{notation}
\label{not:deriv}
If $f(z,\ldots)$ is a power series, we use $D_z$ to denote the derivative
$(d/dz)f(z, \ldots)$.
\end{notation}

\begin{example}
We look at compositions over $\mathbb{Z}_2$ and keep track of occurrences of $U
= \{00, 11\}$.
The relevant de Bruijn graph $D$ has span $\sigma = 2$.
For the following we fix a particular ordering $v_i$ on the vertices of
$D_\times$.
We use the indeterminates $u$ and $z$ to mark length and total.
We define start vector
\[ \psi = z^2 \rvect{u, u, 0, 0, 0, 0, 1, 1}^\top, \]
where $\psi(i)$ is $0$ for non-start vertices, $z^\sigma u$ for start vertices
corresponding to $U$, and $z^\sigma$ otherwise.
The finish vector for compositions of $0$ is
\[ \phi_0 = \rvect{1,1,1,1,0,0,0,0}^\top. \]
The matrix $C$ from Proposition \ref{prop:markovvisits} is then
$\psi \phi_0^\top$.
The transfer matrix is
\[
T = \left[
\begin{array}{cccccccc}
 u & 0 & 0 & 0 & 0 & 0 & 1 & 0 \\
 0 & 0 & 0 & 1 & 0 & u & 0 & 0 \\
 0 & 0 & 0 & 1 & 0 & u & 0 & 0 \\
 u & 0 & 0 & 0 & 0 & 0 & 1 & 0 \\
 0 & 0 & 1 & 0 & u & 0 & 0 & 0 \\
 0 & u & 0 & 0 & 0 & 0 & 0 & 1 \\
 0 & u & 0 & 0 & 0 & 0 & 0 & 1 \\
 0 & 0 & 1 & 0 & u & 0 & 0 & 0 \\
\end{array}
\right],
\]
where $T_{i,j} = 0$ if $(v_i, v_j) \not\in E(D_\times)$, $T_{i,j} = u$ if
$(v_i, v_j) \in E(D_\times)$ and $v_j$ corresponds to $U$, and $T_{i,j} = 1$
otherwise.
We define
\[ P(z,u) = z^{\sigma - 1} \psi^\top (I - z T)^{-1} \phi_0, \]
getting that $[z^m u^r]P(z,u)$ is the number of $m$-compositions of $0$
over $\mathbb{Z}_2$ with $r$ occurrences of $U$.

Let $X^{\langle 0 \rangle}_m$ be the number of occurrences of $U$ in
a uniform-random $m$-composition of $0$.
We have
\[
E(X^{\langle 0 \rangle}_m) =
  \frac{[z^m]D_u P(z,u) |_{u=1}}{[z^m]P(z,1)} = \frac{1}{2}m + O(1),
\]
and
\begin{align*}
\Var(X^{\langle 0 \rangle}_m) =&
  \frac{[z^m]D_u^2 P(z,u) |_{u=1}}{[z^m]P(z,1)}
  + \frac{[z^m]D_u P(z,u) |_{u=1}}{[z^m]P(z,1)}
  - \left(  \frac{[z^m]D_u P(z,u) |_{u=1}}{[z^m]P(z,1)} \right)^2\\
  =& \frac{1}{4}m + O(1).
\end{align*}
So Proposition \ref{prop:markovvisits} (and an application of Theorem
\ref{thm:slut}) entail
\[
\frac{X_m^{\langle 0 \rangle} - \frac{1}{2}m}{
  \frac{1}{2}\sqrt{m}} \Rightarrow N(0,1). \qedhere
\]
\end{example}

Asymptotic joint distributions and local limit phenomena are derivable, under
conditions, based on \cite{matrix} and/or \cite{bertoni2003number}.
One can also analyze additional parameters (longest runs, etc.)
in analogy to the existing local restriction theory.
However in these matters as in Proposition \ref{prop:markovvisits}
one expects to get results identical to those
for words (disregarding the total) since arbitrary start and finish
requirements do not affect asymptotic distributions.

\begin{lemma} \label{lem:patternsconnected}
Let the greatest letter in a subword pattern $\tau$ be $j^*$.
Assume $k \geq 2$ and $k \geq j^*$.
If $\tau$ has length $p+1 \geq 2$ and $\tau$ is not $1^p 2$ and its
symmetries ($1 2^p, 2^p 1$, and $2 1^p$), there is
a strongly connected de Bruijn subgraph $D$ with span $\sigma \geq p$ whose
walks represent words over $[k]$ that avoid $\tau$.
\end{lemma}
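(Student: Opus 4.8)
\emph{Construction and reduction.} The plan is to take span $\sigma=p$ and let $D$ be the subgraph of the $p$-dimensional de Bruijn graph on $[k]$ obtained by deleting every arc $(u_1,\dots,u_p)\too(u_2,\dots,u_{p+1})$ whose window $(u_1,\dots,u_{p+1})$ is order isomorphic to $\tau$. Every $p$-tuple is kept (words shorter than $p$ avoid $\tau$ vacuously), and by construction a walk in $D$ spells exactly a $\tau$-avoiding word, so only strong connectivity remains. I would first record the reversal symmetry: reversing a walk turns $\tau$-avoidance into avoidance of the reversal $\tau^{R}$, and since $\tau^{R}$ has the same length $p+1$, the same greatest letter, and the excluded family $\{1^{p}2,\,12^{p},\,2^{p}1,\,21^{p}\}$ is closed under reversal, it suffices to prove the one-sided statement that every vertex has a $\tau$-avoiding walk to a single (reversal-stable, or reversal-connected) hub $h$. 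Applying this to $\tau$ and to $\tau^{R}$ and un-reversing then yields $u\too h$ and $h\too v$ for all $u,v$. Throughout I would split on whether $\tau$ has two equal adjacent letters, since order isomorphism preserves adjacent equalities; note that for $p\ge 2$ all four excluded patterns \emph{do} have an adjacent repeat, so the exclusion hypothesis only bites in the repeat case, while for $p=1$ the only surviving pattern is $\tau=11$, for which $D$ is the complete loopless digraph and is trivially strongly connected.

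\emph{No adjacent repeat in $\tau$.} Here every window containing two equal adjacent letters differs from $\tau$. From an arbitrary $u=(u_1,\dots,u_p)$ I would append the letter $u_p$ exactly $p$ times; each window then ends in the block $u_p u_p$, hence is safe, and we arrive at the constant tuple $(u_p,\dots,u_p)$. A single monotone block $(c,\dots,c)\too(c,\dots,c,c')\too\cdots\too(c',\dots,c')$ keeps a run of length $\ge2$ inside every window when $p\ge2$, so these windows are safe as well, and this drives any constant tuple to the fixed hub $h=(1,\dots,1)$. Thus every vertex reaches $h$.

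\emph{An adjacent repeat in $\tau$.} Now every window with \emph{no} two equal adjacent letters is automatically safe, so I would route through the set $\Sigma$ of adjacency-distinct $p$-tuples. Inside $\Sigma$ all arcs survive, and $\Sigma$ is strongly connected (for $k\ge3$ by the Smirnov/Carlitz argument, cf.\ Lemma~\ref{lem:carlitzD} with $d=1$; for $k=2$ it is the two alternating tuples forming a $2$-cycle), so it is enough to walk from an arbitrary $u$ into $\Sigma$. The elementary move is: at the current vertex $(c_1,\dots,c_p)$, append a letter $d$ that is simultaneously safe ($(c_1,\dots,c_p,d)\not\sim\tau$) and different from $c_p$; after $p$ such moves the last $p$ letters are pairwise distinct, so we have entered $\Sigma$. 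A short counting argument shows such a $d$ always exists \emph{unless} $(c_1,\dots,c_p)$ is constant and every $d\ne c_p$ completes $\tau$, which forces $\tau\in\{1^{p}2,2^{p}1\}$ and is excluded.

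\emph{Main obstacle.} The delicate point, and where I expect the real work, is this escape for a small alphabet, above all $k=2$: there the lone non-repeating letter can itself complete $\tau$ at certain repeat-heavy vertices (for instance $\tau=2112$ at $(2,1,1)$), so the one-step move can fail and $\Sigma$ is not reachable in a single step. My plan is a two-step escape through a constant tuple: the exclusion $\tau\notin\{1^{p}2,12^{p},2^{p}1,21^{p}\}$ guarantees that every constant tuple has full out-degree and full in-degree, because the only windows of the shapes $(a^{p},b)$ and $(b,a^{p})$ have order types among $1^{p+1},1^{p}2,2^{p}1,12^{p},21^{p}$, none of which is $\tau$ (the case $\tau=1^{p+1}$ being handled separately, where only self-loops are removed). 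Hence from a repeat-heavy vertex one can first slide to a constant tuple and then leave it on a freshly chosen letter, breaking the repeat and entering $\Sigma$. Making this escape uniform over all vertices---equivalently, checking directly that $D$ has no proper nonempty closed vertex set---is the crux, and the exclusion of the four patterns is precisely the hypothesis preventing a constant tuple from becoming an inescapable sink or source; I would organize the final write-up around this dichotomy and then invoke the reversal reduction to conclude.
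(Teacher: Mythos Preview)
Your reversal reduction and the ``no adjacent repeat'' case are clean and correct. The gap is in the adjacent-repeat case, and you have essentially flagged it yourself. Two concrete problems. First, your ``short counting argument'' claim---that the basic move fails only at constant vertices, forcing $\tau\in\{1^p2,2^p1\}$---is contradicted by your own example $\tau=2112$ at $(2,1,1)$, so that claim must be withdrawn. Second, and more seriously, the escape via constant tuples is not a proof as stated: full in/out-degree at $a^p$ tells you that one step into or out of a constant is safe, but it gives you neither a walk \emph{from} an arbitrary stuck vertex \emph{to} a constant, nor a walk from a constant \emph{into} $\Sigma$ (for $p\ge3$ one step out of $a^p$ lands at $a^{p-1}b\notin\Sigma$, and the basic move can fail again there). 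For instance with $\tau=1121$, $k=2$: from $(1,1,1)$ you step to $(1,1,2)$, the basic move fails, your recipe slides back toward a constant, and you have not shown this process terminates in $\Sigma$. The missing lemma is roughly: whenever the basic move fails at $(c_1,\dots,c_p)$ one necessarily has $\tau(p)\ne\tau(p+1)$, so appending $c_p$ is safe and one can slide to $c_p^{\,p}$; but you must then iterate and prove termination, which is its own case analysis.

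The paper bypasses $\Sigma$ entirely with a different case split, on the number $j^*$ of distinct letters in $\tau$. Using the complement symmetry $j\mapsto j^*+1-j$ one may assume $\tau(1)>1$; then any $(p{+}1)$-window whose first entry is the letter $1$ is automatically safe (its first entry is a minimum, while $\tau(1)$ is not), so the concatenation $1^p\,x$ is allowed for every vertex $x$, giving $1^p\too x$. The return $x\too 1^p$ is a short explicit list of cases on the last letter of $\tau$: direct via $x\,1^p$ if $\tau(p{+}1)>1$; via $x\,k^p\,1^p$ if $\tau(p{+}1)=1$ and $\tau$ is not monotone; via $x\,(k1)^a\,1^{a-1}$ if $\tau=2^a1^b$ with $a,b\ge2$. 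The exclusion hypothesis is invoked exactly here. This is shorter than routing through the Smirnov set because the hub $1^p$ is reached in one or two straight segments rather than by an escape procedure whose termination still needs proof.
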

\begin{proof}
The patterns $1^p 2$ do not satisfy this because $1^{p}$ and $2^{p}$ are
both allowed but there is no allowed sequence of the form
${1^{p}} w 2^{p}$ where $w$ is some word.

Let $\tau = (\tau(1), \ldots, \tau(\sigma + 1))$.
Let $D$ be the de Bruijn subgraph of span $\sigma$ representing $k$-ary words
avoiding $\tau$.
Let $x = (x(1), \ldots, x(\sigma))$ and $y = (y(1), \ldots, y(\sigma))$ be vertices
of $D$.
We proceed by cases, establishing either that
$x \too y$ and $y \too x$ or $1^{\sigma} \too x$ and $x \too 1^{\sigma}$.

Case 1: $j^*=1$.
If $x(\sigma) \neq y(1)$, then the concatenation $x y$ is allowed.
Otherwise, take $c \neq x(\sigma) = y(1)$ and then $x c y$ is
allowed.

Case 2: $j^* \geq 3$.
Assume WLOG $\tau(1) > 1$.
Then ${1^{\sigma}} x$ is always allowed.
If $\tau(\sigma) > 1$ then $x 1^{\sigma}$ is allowed too.
Otherwise $\tau(\sigma) = 1$ and $x k^{\sigma} 1^{\sigma}$ is allowed.

Case 3: $j^*=2$.
Assume WLOG $\tau(1) = 2$.
Again ${1^{\sigma}} x$ is allowed.
If $\tau(\sigma) = 2$ then $x 1^{\sigma}$ is allowed too.
If $\tau(\sigma) = 1$ and $\tau$ is not monotonic then $x k^{\sigma}
1^{\sigma}$ is allowed.
Finally, if $\tau = 2^p 1^q$ with $p,q>1$ then
$x (k1)^p 1^{p-1}$ is allowed.

This shows that a satisfactory digraph exists with span $\sigma$.
It is now easily seen that a digraph with greater span would also be strongly
connected.
\end{proof}

\begin{lemma} \label{lem:subworddx}
Let $G$ be a totally ordered finite group and let $\tau$ be a subword pattern
of length at least $2$
other than $1^p 2$ and its symmetries ($1 2^p, 2^p 1$, and $2 1^p$).
If $j^*$ is the greatest letter in $\tau$, assume $|G| \geq \max(3, j^*)$.
The de Bruijn subgraph $\graphf{D}$ with span $\sigma = |\tau|$
representing compositions over $G$ avoiding $\tau$ is such that
$\graphf{D}_\times$ is strongly connected and aperiodic.
\end{lemma}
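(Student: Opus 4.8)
The statement packages exactly the two hypotheses of Theorem~\ref{thm:equal} for the subword-avoidance digraph, so the plan is to establish strong connectedness and aperiodicity of $\graphf{D}_\times$ directly. First I would record that, because the span is $\sigma=|\tau|$, the restriction imposed by $\tau$ is purely on vertices: a vertex (a length-$\sigma$ window) is deleted exactly when it is order isomorphic to $\tau$, and every de Bruijn overlap arc between surviving vertices is present. This maximal arc-freedom is what makes the constructions below go through. Strong connectedness of $\graphf{D}$ itself is immediate from Lemma~\ref{lem:patternsconnected} applied with $k=|G|$ (the hypotheses $k\geq 2$ and $k\geq j^*$ follow from $|G|\geq\max(3,j^*)$), using the remark there that increasing the span preserves strong connectedness. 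By Lemma~\ref{lem:strong}, $\graphf{D}_\times$ is a disjoint union of strong components, and by Lemma~\ref{lem:auto} these components are isomorphic; hence it suffices to find a single base vertex whose fibre lies in one component.

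For strong connectedness of $\graphf{D}_\times$ I would fix a convenient ``hub'' vertex $\graphf{v}$ and consider the set $T_{\graphf{v}}$ of totals of closed walks at $\graphf{v}$. Concatenating closed walks adds their totals, so $T_{\graphf{v}}$ is nonempty and closed under the group operation, hence a subgroup of $G$; thus $(\graphf{v},a)\too(\graphf{v},a+t)$ in $\graphf{D}_\times$ for every $t\in T_{\graphf{v}}$, and it is enough to prove $T_{\graphf{v}}=G$. To realize an arbitrary total $s\in G$ I would use the order trick of Lemma~\ref{lem:carlitzD}: append a short segment that deposits the desired contribution, walk back to $\graphf{v}$, and then traverse a fixed cycle at $\graphf{v}$ exactly $\mathrm{ord}(\Sigma \graphf{v})$ times so that all contributions except the intended $s$ vanish. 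The only thing to check is that the single letters appended never complete a copy of $\tau$, which is where $|G|\geq\max(3,j^*)$ enters. I expect to choose $\graphf{v}$ according to the shape of $\tau$ exactly as in the case split of Lemma~\ref{lem:patternsconnected} (the sub-cases $j^*=1$, $j^*=2$, and $j^*\geq 3$, with $1^\sigma$ or $k^\sigma$ serving as the hub), since that proof already exhibits, for each admissible $\tau$, windows into which an arbitrary letter can be inserted without creating $\tau$.

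For aperiodicity I would exhibit two closed walks at one vertex whose lengths differ by $1$ but whose totals agree, again following the aperiodicity construction in Lemma~\ref{lem:carlitzD}. Choosing a vertex $\graphf{u}$ that avoids $\tau$ even after a single letter is inserted at its front, I compare the walk that cycles $\graphf{u}$ exactly $n=\mathrm{ord}(\Sigma\graphf{u})$ times (total $0$) with the walk that first inserts that one extra letter, returns to $\graphf{u}$, and then cycles $n-1$ times (also total $0$); the two lengths differ by $1$. Lifting to $\graphf{D}_\times$ these become closed walks at a common vertex $(\graphf{u},a)$ of coprime lengths, so the single strong component is aperiodic and the Perron--Frobenius theory applies.

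The main obstacle is the pattern-shape bookkeeping in the two constructions: I must guarantee that each single-letter deposit or insertion lands in a ``free'' slot relative to the surrounding window without forming $\tau$, using only the $\max(3,j^*)$ available letters. This is precisely the point at which the excluded family $1^p2,\,12^p,\,2^p1,\,21^p$ fails --- there a run cannot be extended in the needed direction --- so the admissibility hypothesis on $\tau$ is exactly what licenses the case analysis, while the voltage (total-tracking) layer is handled uniformly by the order trick independently of these combinatorial cases.
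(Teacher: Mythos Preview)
Your high-level strategy matches the paper's: show $T_{\graphf{v}}=G$ at a hub vertex for strong connectedness, and exhibit closed walks of coprime length and equal total for aperiodicity. The execution, however, differs. Rather than the $j^*$-based case split of Lemma~\ref{lem:patternsconnected} with an extremal-letter hub, the paper takes the group identity as its pivot letter and uses $\graphf{v}=0^\sigma$ as the hub whenever $\tau\neq 1^\sigma$. Because $\Sigma\,0^\sigma=0$, the order trick is mostly unnecessary: for generic $\tau$ the single composition $0^\sigma a\,0^\sigma$ already witnesses $(0^\sigma,0)\too(0^\sigma,a)$, and the only structural case split is whether $\tau=1^\sigma$, whether $\tau=i^p j\,i^q$ with $p,q\geq 1$, or neither. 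For aperiodicity the paper simply observes that $(0^\sigma,0)$ carries a loop whenever $\tau\neq 1^\sigma$, dispatching all but the constant pattern in one line; only $\tau=1^\sigma$ requires the two-walk construction you sketch (and there the inserted letter must be $0$ for the totals to match, a point your sketch leaves implicit). Your plan is sound, but carrying an extremal-letter hub forces the order trick in every case and multiplies exactly the pattern-shape bookkeeping you flag as the obstacle; the paper's choice of the identity $0$ as pivot is what dissolves it.
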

\begin{proof}



Let $a,b \in G$ be distinct and both nonzero.

We show strong connectedness of $D_\times$.
Let $c = -a$.
If $\tau = 1^\sigma$,
then the composition
$0^{\sigma -1}a0b0c0^{\sigma -1}a$
exhibits a path
$(0^{\sigma -1}a, 0) \too (0^{\sigma -1}a, b)$.
Otherwise, if $\tau \neq i^p j i^q$ where $p, q \geq 1$ and $i \neq
j$, then $0^\sigma a 0^\sigma$ is allowed and therefore
$(0^{\sigma}, 0) \too (0^{\sigma}, a)$ in $\graphf{D}_\times$.
Finally, if $\tau = i^p j i^q$, let $n$ be the order of $\sigma a$.
Then $0^{\sigma} a^{\sigma n} a 0^{\sigma}$ shows
$(0^{\sigma}, 0) \too (0^{\sigma}, a)$.

We show aperiodicity of $D_\times$.
The vertex $(0^{\sigma}, 0)$ exists in $\graphf{D}_\times$ and has a loop if and only if
$\tau \neq 1^\sigma$.
For the pattern $1^\sigma$ where $\sigma \geq 3$, the two sequences
$b^{\sigma-1} 0 b^{\sigma-1} 0$ and $b^{\sigma-1} 00 b^{\sigma-1}0$ are allowed
and correspond to walks $(b^{\sigma-1}0, 0) \too (b^{\sigma-1}0,
(\sigma-1)b)$ with lengths differing by $1$.
Lastly, if $\tau = 11$,
the compositions $abab$ and $ab0ab$ represent paths from $((a,b), 0)$ to
$((a, b), a+b)$ with lengths differing by $1$.
\end{proof}

\begin{theorem} \label{thm:pathsubword}
Let $G$ be a finite group with a total order and let $\tau$ be a subword
pattern of length at least $2$ not $1^p 2$ or its symmetries ($1 2^p, 2^p 1$,
and $2 1^p$).
If $j^*$ is the greatest letter in $\tau$, assume $|G| \geq \max(3, j^*)$.
The number of $m$-compositions of $a \in G$ containing $r$ occurrences of
$\tau$ is
\[ A_r m^r B^m (1 + O(m^{-1})), \qquad A_r>0, B>1,m \to \infty. \]
If $X_m^{\langle a \rangle}$ is the number of occurrences of $\tau$ in a
uniform random $m$-composition of $a \in G$ then
\[ \frac{X_m^{\langle a \rangle} - {E}(X_m^{\langle a \rangle})}{
  \sqrt{\Var(X_m^{\langle a \rangle})}} \Rightarrow N(0,1). \]
\end{theorem}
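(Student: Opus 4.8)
The plan is to treat the counting claim by splitting off the avoidance case $r=0$ and reducing $r\ge 1$ to Theorem~\ref{thm:pathsr}, and to obtain the central limit theorem from Proposition~\ref{prop:markovvisits} after lifting to the derived digraph. Throughout, write $\sigma=|\tau|$, let $\bar D$ be the full span-$\sigma$ de Bruijn graph on $G$, and let $U\subset V(\bar D)$ be the set of $\sigma$-tuples order isomorphic to $\tau$; these are precisely the pattern instances, so the occurrences of $\tau$ in a composition are exactly its windows lying in $U$. Then $D=\bar D-U$ is the de Bruijn subgraph of $\tau$-avoiding compositions, and Lemma~\ref{lem:subworddx} tells us $D_\times$ is strongly connected and aperiodic. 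Projecting walks from $D_\times$ to $D$ shows $D$ is strongly connected, and since every cycle of $D_\times$ projects to a closed walk in $D$ of the same length, the gcd of cycle lengths in $D$ divides $1$, so $D$ is aperiodic as well; with $|G|\ge 3$ and $\sigma\ge 2$ there are at least two surviving vertices, so $D$ is regular. Propositions~\ref{pro:tmm} and~\ref{pro:2vert} then give $p(m;D,V(D),V(D))\sim A\cdot B^m$ with $B>1$.

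For the counting claim with $r=0$, Lemma~\ref{lem:subworddx} supplies the hypotheses of Theorem~\ref{thm:equal}, which yields $p_a(m)=\tfrac{A}{|G|}B^m(1+O(\theta^m))$, i.e.\ the asserted form with $A_0=A/|G|$ and $m^0=1$ (absorbing $O(\theta^m)$ into $O(m^{-1})$). For $r\ge 1$ I would apply Theorem~\ref{thm:pathsr} to $\bar D$, $U$, and $\Psi=\Phi=V(\bar D)$. Regularity of $D$, strong connectedness of $D_\times$, and $p(m;D)\sim A\cdot B^m$ are already in hand, and the minimum-occurrence constant is $\mu=0$, since any $\tau$-avoiding composition of length $\ge\sigma$ has a window in $V(D)$ and no occurrence of $U$. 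With $\mu=0$ and $r\ge 1$, Theorem~\ref{thm:pathsr} returns $p_a(m,r)=m^{r}A_{r,0}B^m(1+O(m^{-1}))$, and setting $A_r:=A_{r,0}$ completes the first part.

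The step I expect to be the main obstacle is the remaining hypothesis of Theorem~\ref{thm:pathsr}: that for all large $m$ there is a composition of $a$ with exactly one occurrence of $\tau$. I would realize $\tau$ as a concrete $\sigma$-tuple, enter and leave it through good ($V(D)$) windows, and flank it by long $\tau$-avoiding walks, choosing the boundary letters — possible because $|G|\ge\max(3,j^*)$ — so that the planted window is the only bad one, with the total steered to $a$ using the strong connectedness of $D_\times$. Checking that no extra occurrence of $\tau$ is created across the two splice points is the delicate case analysis, entirely parallel to the boundary arguments already carried out in Lemma~\ref{lem:subworddx}.

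For the normality claim I would pass to the derived digraph $\bar D_\times$ and apply Proposition~\ref{prop:markovvisits} with designated set $\Xi=U_\times=U\times G$. A uniform random $m$-composition of $a$ is a uniform random walk in $\bar D_\times$ starting in $\Psi_\times$ and finishing in $\Phi_a$, and $X_m^{\langle a\rangle}$ is exactly its number of visits to $U_\times$. Here $\bar D_\times$ is strongly connected and aperiodic: it has a loop at each $(0^\sigma,g)$ and every total is reachable from $0^\sigma$, so Lemma~\ref{lem:strong} leaves a single component; moreover $\bar D_\times-U_\times=D_\times$ is strongly connected by Lemma~\ref{lem:subworddx}, furnishing arbitrarily long walks off $U_\times$. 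Proposition~\ref{prop:markovvisits}, applied with walk length $m-\sigma+1$, then gives $E(X_m^{\langle a\rangle})\sim c_1 m$ and $\Var(X_m^{\langle a\rangle})\sim c_2 m$ with $c_1,c_2>0$ not depending on the start--finish set and hence not on $a$, together with the claimed convergence of the normalized variable to $N(0,1)$.
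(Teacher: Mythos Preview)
Your approach is essentially the paper's: invoke Lemma~\ref{lem:subworddx} for the structure of $D$ and $D_\times$, apply Theorem~\ref{thm:pathsr} with $\mu=0$ for the counting asymptotics, and use Proposition~\ref{prop:markovvisits} on $\bar D_\times$ for the CLT. Your separate treatment of $r=0$ via Theorem~\ref{thm:equal} is actually more explicit than the paper, which only covers $r\ge 1$ through Theorem~\ref{thm:pathsr} and leaves $r=0$ implicit.

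The one step you leave as a sketch is the single-occurrence hypothesis of Theorem~\ref{thm:pathsr}, and here you make your life harder than necessary: that hypothesis asks only for compositions in $\mathcal{P}(m;\bar D,V(D),V(D))$ with exactly one occurrence of $U$, with \emph{no} constraint on the total, so there is no need to steer the sum to a prescribed $a$ via $D_\times$. The paper dispatches this with two short explicit constructions rather than a boundary case analysis: writing $a$ for the minimal element of $G$ and taking some $b>a$, for $\tau=1^\sigma$ the word $\cdots abab\,a^\sigma\,baba\cdots$ has a unique occurrence, and for non-constant $\tau$, if $y$ is an instance of $\tau$ using the smallest possible letter values then $a\cdots a\,y\,a\cdots a$ works. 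The exclusion of $1^p2$ and its symmetries is exactly what prevents a second occurrence at the splice points in the latter construction.
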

\begin{proof}
To satisfy the requirements of Theorem \ref{thm:pathsr} we must show that
there are arbitrarily long compositions with a single occurrence of $\tau$.
Let $a$ be the minimal element of $G$ and suppose $b > a$.
If $\tau = 1^p$, then $\cdots abab a^p baba \cdots$ is such a composition.
If $\tau \neq q^p$, let $y$ be an occurrence of $\tau$ with minimal-valued
parts; then
$a\cdots a y a \cdots a$ is such a composition.
We can now apply Lemma \ref{lem:subworddx}, Theorem \ref{thm:pathsr},
Proposition \ref{pro:2vert}, and Proposition \ref{prop:markovvisits} to
conclude the result.
\end{proof}

We are immediately able to modify results where they are available for words
containing subwords patterns.

\begin{proposition}
Let $G$ be a totally ordered group with $|G| = k$, e.g.\ $\mathbb{Z}_k$ where
$0 < 1 < \cdots < k-1$.
Define
\[
C(y) = [q^r]\frac{1}{1 - y - \sum_{p=3}^k \sum_{j=0}^{p-3}
  \binom{p-3}{j} \binom{k}{p+j} y^{p+j}(q-1)^{p-2}},
\]
as in \cite[p.\ 112]{cofc}.
Let $\rho > 0$ be the radius of convergence of $C(y)$, and let
$A_r = \lim_{y \to \rho} ((1-y/\rho)^{r+1} C(y))$.

The number of $m$-compositions of $a \in G$ containing $r$ occurrences of the
subword pattern $123$ is
\[ \frac{1}{k} A_r m^{r} \left( \frac{1}{\rho} \right)^m (1 + O(m^{-1})),
  \qquad m \to \infty. \]
\end{proposition}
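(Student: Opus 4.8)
The plan is to treat this as a matter of \emph{identifying the constants} in an asymptotic form that is already available, rather than re-running the whole transfer-matrix argument. The subword pattern $123$ has length $3 \geq 2$, is not $1^p 2$ or any of its symmetries, and has greatest letter $j^* = 3$, so with $|G| = k \geq \max(3, j^*) = 3$ the hypotheses of Theorem \ref{thm:pathsubword} (via Lemma \ref{lem:subworddx}) are met. Hence the number of $m$-compositions of $a \in G$ with exactly $r$ occurrences of $123$ is already known to be of the shape $A_r' m^r B^m(1 + O(m^{-1}))$ with constants $A_r', B$ that do \emph{not} depend on $a$ --- the $a$-independence being precisely what the proof of Theorem \ref{thm:pathsr} extracts from Theorem \ref{thm:equal} applied to the inter-detour segments $y_i$. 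So the only work is to express $B$ and $A_r'$ in closed form through the word-counting generating function $C(y)$ of \cite{cofc}.

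Next I would pass to the total over the group. A composition over $G$ is just a word over the $k$-letter totally ordered alphabet $G$, so $\sum_{a \in G} p_a(m, r)$ is exactly the number of $k$-ary words of length $m$ containing $r$ occurrences of the subword pattern $123$, which is the quantity enumerated by $C(y)$ in \cite[p.\ 112]{cofc}. Because the per-element counts are asymptotically independent of $a$, each of the $k$ summands equals $\tfrac{1}{k}$ of this total up to relative error $O(m^{-1})$; this is the origin of the $\tfrac{1}{k}$ prefactor in the statement, and it reduces the problem to finding the asymptotics of the single total word count.

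I would then obtain those asymptotics from $C(y)$ by singularity analysis. Since $C(y)$ is rational, Theorem \ref{thm:ratgf} applies: its coefficients grow like $\Pi(m)(1/\rho)^m$, governed by the dominant pole at the radius of convergence $\rho$. The very definition $A_r = \lim_{y \to \rho}(1 - y/\rho)^{r+1} C(y)$ records that this pole has order exactly $r+1$, so $\Pi$ has degree $r$ and the leading behaviour is a constant multiple of $m^r (1/\rho)^m$ determined by $A_r$. Matching this against the shape supplied by Theorem \ref{thm:pathsubword} identifies $B = 1/\rho$ and pins down the leading constant, and dividing by $k$ produces the claimed $\tfrac{1}{k} A_r m^r (1/\rho)^m(1 + O(m^{-1}))$.

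The step I expect to be the main obstacle is the careful analytic bookkeeping at $\rho$: one must verify that $\rho$ is the \emph{unique} dominant singularity of $C(y)$ --- which I would argue from the positivity of the underlying word count together with the aperiodicity and strong connectedness of the de Bruijn structure enumerating those words --- and that the pole order is exactly $r+1$ so that $A_r > 0$ and $\deg \Pi = r$ genuinely. One then reconciles the leading constant emerging from the rational-coefficient asymptotics with the normalization in which $A_r$ is defined in \cite{cofc}. All sub-dominant contributions, namely the lower-degree terms of $\Pi$, any poles of larger modulus, and the $O(\theta^m)$ error from the equal-splitting across $G$, are absorbed into the single relative error $O(m^{-1})$.
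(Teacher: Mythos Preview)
Your proposal is correct and follows essentially the same route as the paper's own proof: invoke Theorem~\ref{thm:pathsubword} to obtain the $a$-independent asymptotic form, identify the word count $\sum_a p_a(m,r) = [y^m]C(y)$ via \cite[Theorem~4.30]{cofc}, and read off the constants from the dominant pole of the rational function $C(y)$. The paper's proof is simply the two-line version of what you have written out; your extra discussion of uniqueness of the dominant singularity and the exact pole order is reasonable care but is implicitly subsumed in the paper's appeal to Theorem~\ref{thm:pathsubword}, which already guarantees the $m^r B^m$ shape.
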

\begin{proof}
Theorem 4.30 in \cite{cofc} states that $[y^m]C(y)$ is the number of
$m$-compositions with any total containing $r$ occurrences of $123$.
The result then follows from Theorem \ref{thm:pathsubword}.
\end{proof}

Table \ref{tab:132path} shows counts of $m$-compositions of $a$ avoiding
$\tau=132$ over $\mathbb{Z}_5$.
Figure \ref{fig:132path} gives randomly selected compositions avoiding $132$
over $\mathbb{Z}_5$.
Figure \ref{fig:121path} gives the same for compositions avoiding $121$.

\begin{table}
\centering
\begin{tabular}[c]{|c|rrr|}
\hline
\diagbox{$m$}{$a$} & $0$ & $1$ & $2$\\ \hline
2  &  5 & 5 & 5 \\
3  &23 & 23 & 23 \\
4  &105 & 105 & 105 \\
5  &478 & 477 & 477 \\
6  &2171 & 2171 & 2171 \\
7  &9869 & 9868 & 9868 \\
8  &44861 & 44861 & 44861 \\
9  &203930 & 203930 & 203930 \\
10 &927032 & 927032 & 927033 \\
11 &4214147 & 4214147 & 4214147 \\
12 &19156861 & 19156861 & 19156865 \\
13 &87084158 & 87084158 & 87084158 \\
14 &395871195 & 395871195 & 395871198 \\
15 &1799569607 & 1799569609 & 1799569610 \\
16 &8180566793 & 8180566793 & 8180566793 \\\hline
\end{tabular}
\caption{Counts of $m$-compositions of some $a$ avoiding $132$ over
  $\mathbb{Z}_5$.}
\label{tab:132path}
\end{table}

\begin{figure}
\centering
\includegraphics[width=5.5in]{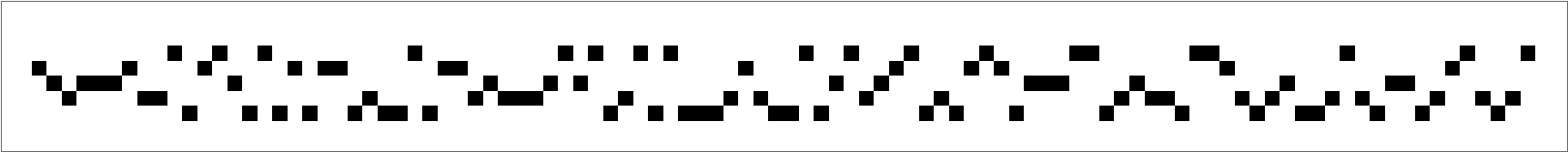}
\includegraphics[width=5.5in]{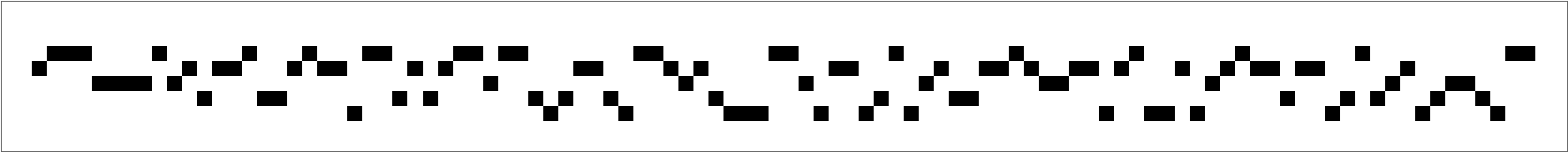}

\caption{Uniform-randomly generated $100$-compositions of $0$ (above)
  and $1$ (below) over $\mathbb{Z}_5$ which avoid $132$.
  \label{fig:132path}}
\end{figure}

\begin{figure}
\centering
\includegraphics[width=5.5in]{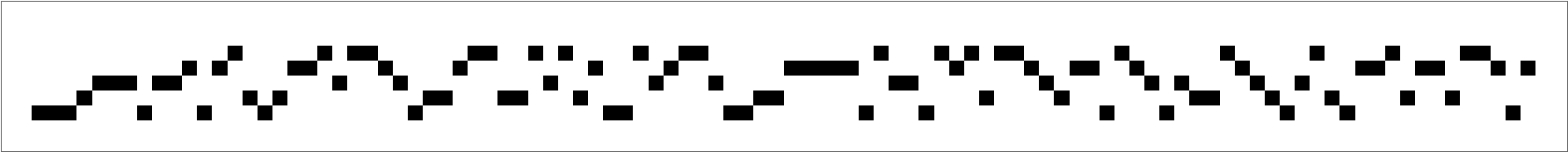}
\includegraphics[width=5.5in]{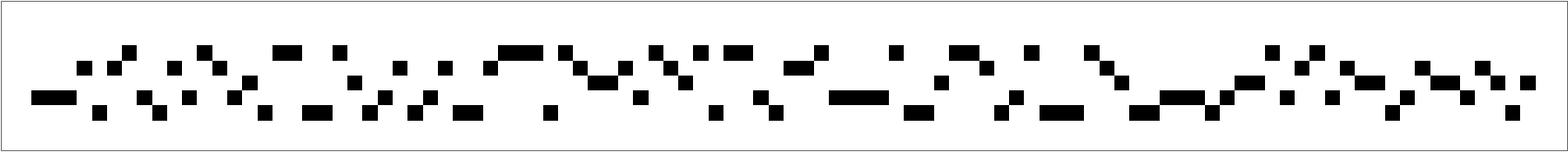}

\caption{Uniform-randomly generated $100$-compositions of $0$ (above)
  and $1$ (below) over $\mathbb{Z}_5$ which avoid $121$.
  \label{fig:121path}}
\end{figure}

\subsection{Note on minimization of transfer matrices} \label{sec:transfer}

Given a transfer matrix (adjacency matrix) $T$, we may compute counting
sequences by raising $T$ to a power, and if $T$ has a simple enough structure,
we may even be able to extract a closed form expression for the counting
sequence, or learn other information.
A simple transfer matrix corresponds to a simple digraph $D$.
Locally restricted compositions over a finite set constitute a regular
language, so equivalently we may say we are interested in simple finite
automata.
In those terms, a question arises: Given a finite automaton $A$, when is it
possible to find a smaller automaton $A'$ such that $A$ and $A'$ are equivalent
for counting purposes?

\begin{definition}
As in \cite{ravikumar2004weak}, two deterministic finite automata
(DFAs) $A, A'$ are \emph{weakly equivalent} if for each integer $m \geq 0$,
the automata $A$ and $A'$ accept the same number of words of length $m$.
\end{definition}

Our question is largely answered by an algorithm given in
\cite[\S ~4.2]{ravikumar2004weak}, which we refer to as the
Ravikumar-Eisman algorithm.
The algorithm is given a DFA $A$ and returns a weakly equivalent DFA $A'$
with the same number or fewer states.
While the Ravikumar-Eisman algorithm is not guaranteed to find the smallest
such $A'$, it is shown to be practically useful and no better technique is
currently available.
Roughly speaking, the Ravikumar-Eisman algorithm works by finding states which
are equivalent in the weak sense (there are equal numbers of accepted words of
length $m$ starting at each state for each $m$);
these states are then merged.

\begin{example}
Figure~\ref{fig:carlitz3wordsnaive} shows a naive automaton for Carlitz/Smirnov
words on the alphabet $\{a,b,c\}$.
In fact this automaton is minimal in the usual sense of number of states.
However, there is a length-preserving bijection between $3$-ary Carlitz words
and the language accepted by the automaton in
Figure~\ref{fig:carlitz3wordsminimal}, which is returned by the
Ravikumar-Eisman algorithm.

\begin{figure}
\centering  \includegraphics[width=32em]{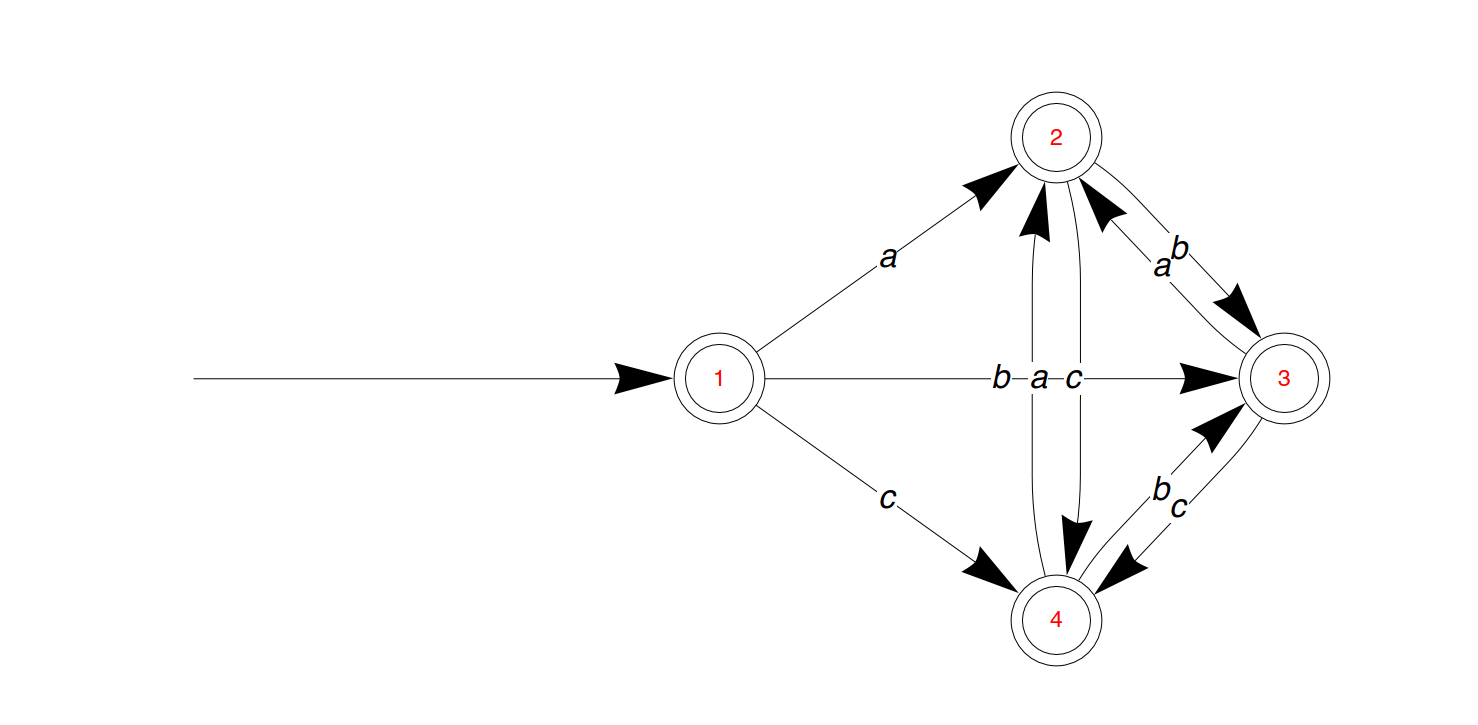}
\caption{A DFA that accepts $3$-ary Carlitz words over $\{a,b,c\}$.
\label{fig:carlitz3wordsnaive}}
\end{figure}

\begin{figure}
\centering  \includegraphics[width=30em]{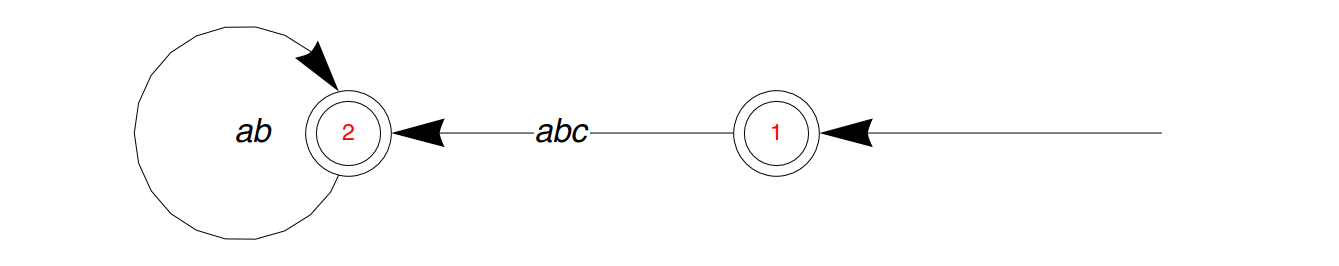}
\caption{A DFA weakly equivalent to one that accepts $3$-ary Carlitz words.
\label{fig:carlitz3wordsminimal}}
\end{figure}

For general $k$-ary Carlitz words, we still need only a $2$-state DFA
$A'$ rather than the naive $k+1$ states.
Suppose we number the start state of $A'$ as $1$ and the other state as $2$.
Let $f_i(m), i=1,2$ be the number of $m$-words accepted by $A'$ if state $i$
were the start state.
Either by converting to a regular grammar or using the transfer matrix
method we get
$f_2(m) = (k-1)^m, f_1(m) = k f_2(m-1)$ which allows us to conclude (the
obvious) $f_1(m) = k(k-1)^{m-1}$.
\end{example}

If we generalize $k$-ary Carlitz words to avoidance of the subword pattern
$1^p$, a naive automaton $A$ with $k^p + O(1)$ states has weakly equivalent
automaton $A'$ with $p + O(1)$ states including, for each
$1 \leq i < p$, a
state representing words ending with a run of length $i$.
Similar phenomena are seen for other subword patterns, with the general
theme that simpler patterns have simpler automata.


We can also consider the following refinement of weak equivalence for
multivariate counting.

\begin{definition}
Take two DFAs $A$ and $A'$ that recognize a language over a $k$-ary alphabet.
Then $A$ and $A'$ are
\emph{completely weakly equivalent} if for all
$j_1, \ldots, j_k$, the DFAs $A$ and $A'$ accept an equal number of words
with $j_i$ letters $i$, for $1 \leq i \leq k$.
\end{definition}

There is a brute-force algorithm for exact minimization of DFAs according to
complete weak equivalence:
Given a DFA $A$, enumerate all smaller DFAs $A'$ in ascending order by size.
Extract the multivariate rational generating functions for $A$ and $A'$ where
there is an indeterminate symbol marking each letter, and compare by
subtracting and testing for $0$.

A straightforward generalization of the faster Ravikumar-Eisman algorithm to
the multivariate problem depends on generalizing
Lemmas A1 and A2 in \cite{stearns1985equivalence} from sequences of real
numbers to sequences of real polynomials.
We give this generalization.

\begin{lemma}
Let $\Xi$ be a finite nonempty index set.
For all $\xi \in \Xi$ let
$A_\xi: \mathbb{Z}_{\geq 0} \to \mathbb{R}[x_1, \ldots, x_k]$ satisfy
\[ A_\xi(n+1) = \sum_{t \in \Xi} c_{\xi, t} A_t(n), \qquad n
  \geq 0 \]
where $c_{\xi,t} \in \mathbb{R}[x_1, \ldots, x_k]$.
Then each $A_\xi$ satisfies a linear difference (recurrence) equation of degree
$|\Xi|$ or less with coefficients in $\mathbb{R}[x_1, \ldots, x_k]$.
\end{lemma}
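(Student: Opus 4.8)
The plan is to package the coupled system into a single matrix recurrence over the commutative ring $R = \mathbb{R}[x_1, \ldots, x_k]$ and then extract a scalar recurrence for each coordinate by way of the Cayley--Hamilton theorem. Write $d = |\Xi|$ and fix an ordering $\Xi = \{\xi_1, \ldots, \xi_d\}$. Let $C$ be the $d \times d$ matrix over $R$ with entries $[C]_{\xi,t} = c_{\xi,t}$, and let $A(n)$ be the $R$-valued column vector whose $\xi$-th entry is $A_\xi(n)$. The hypothesis $A_\xi(n+1) = \sum_{t} c_{\xi,t} A_t(n)$ is exactly the statement $A(n+1) = C\,A(n)$, so by an immediate induction $A(n) = C^n A(0)$ for all $n \geq 0$.

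First I would form the characteristic polynomial $p(\lambda) = \det(\lambda I - C) \in R[\lambda]$, which is monic of degree $d$, say
\[ p(\lambda) = \lambda^d + b_{d-1}\lambda^{d-1} + \cdots + b_1 \lambda + b_0, \qquad b_i \in R. \]
Cayley--Hamilton then yields $p(C) = 0$ as an identity of $d \times d$ matrices over $R$. Applying the matrix $p(C)$ to the vector $A(n) = C^n A(0)$ and using that $C^n$ commutes with $p(C)$ gives $p(C) A(n) = C^n p(C) A(0) = 0$, that is,
\[ A(n+d) + b_{d-1} A(n+d-1) + \cdots + b_1 A(n+1) + b_0 A(n) = 0, \]
where I have used $C^{j} A(n) = A(n+j)$. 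Reading off the $\xi$-th coordinate for each $\xi \in \Xi$ produces
\[ A_\xi(n+d) + b_{d-1} A_\xi(n+d-1) + \cdots + b_1 A_\xi(n+1) + b_0 A_\xi(n) = 0, \]
a linear difference equation of degree $d = |\Xi|$ whose coefficients $1, b_{d-1}, \ldots, b_0$ all lie in $\mathbb{R}[x_1, \ldots, x_k]$. Since the leading coefficient equals $1$ this is a genuine recurrence of the advertised degree, and any shorter recurrence satisfied by an individual coordinate would only improve the bound, which accounts for the ``$|\Xi|$ or less'' phrasing.

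The one point requiring care --- and the substance of generalizing Lemmas A1 and A2 of \cite{stearns1985equivalence} from real sequences to polynomial-valued sequences --- is that Cayley--Hamilton must be invoked over $R = \mathbb{R}[x_1, \ldots, x_k]$, which is not a field. I would therefore avoid any diagonalization or vector-space argument valid only over $\mathbb{R}$, and instead use the version of Cayley--Hamilton that holds over an arbitrary commutative ring: the adjugate identity $\mathrm{adj}(\lambda I - C)\,(\lambda I - C) = p(\lambda)\, I$ is an identity in the matrix ring of $R[\lambda]$, and the usual formal substitution of $C$ for $\lambda$ is legitimate precisely because the entries of $C$ commute with one another in $R$. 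Everything else in the argument is purely formal and relies only on the ring structure of $R$, so the passage from scalars to polynomials introduces no further difficulty.
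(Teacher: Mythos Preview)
Your proof is correct. The approach is essentially the same as the paper's---both extract the scalar recurrence from the characteristic polynomial of the transition matrix $C$---but there is a small technical difference worth noting. The paper simply says to rerun the original Stearns--Hunt argument with the linear algebra carried out over the \emph{field} $\mathbb{R}(x_1,\ldots,x_k)$ in place of $\mathbb{R}$; since the coefficients of $\det(\lambda I - C)$ are polynomial expressions in the entries of $C$, the resulting recurrence coefficients automatically land back in $\mathbb{R}[x_1,\ldots,x_k]$. You instead stay inside the ring $R=\mathbb{R}[x_1,\ldots,x_k]$ throughout and invoke the commutative-ring form of Cayley--Hamilton via the adjugate identity. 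Your route is slightly more self-contained (no passage to a fraction field, no after-the-fact check that denominators do not appear), while the paper's route has the advantage of reducing literally to the cited lemma with one word changed. Either way the content is the same.
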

\begin{proof}
The proof of Lemma A1 in \cite{stearns1985equivalence} requires no
modification to prove this result, except that linear algebra is done over
$\mathbb{R}(x_1, \ldots, x_k)$ rather than $\mathbb{R}$.
\end{proof}

\begin{lemma}
Let $A, B:\mathbb{Z}_{\geq 0} \to \mathbb{R}[x_1, \ldots, x_k]$
be sequences satisfying linear difference
equations of degrees $a$ and $b$ with coefficients in
$\mathbb{R}[x_1, \ldots, x_k]$.
If for $0 \leq n \leq a + b$ we have $A(n) = B(n)$ then
the sequences $A(n)$ and $B(n)$ are identical for all $n$.
\end{lemma}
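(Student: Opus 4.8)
The plan is to reduce to the classical statement over a field by passing to the fraction field $K=\mathbb{R}(x_1,\ldots,x_k)$ of $R=\mathbb{R}[x_1,\ldots,x_k]$. Since $R$ embeds in $K$, two elements of $R$ are equal if and only if they are equal in $K$, so it suffices to prove $A(n)=B(n)$ as elements of $K$ for every $n$. I would write $E$ for the shift operator $(Ef)(n)=f(n+1)$ acting on $K$-valued sequences, and encode the hypothesis as $p(E)A=0$ and $q(E)B=0$, where $p,q\in K[t]$ are the characteristic polynomials of the two recurrences. Because the leading coefficient of each difference equation is a nonzero element of $R$ and hence a unit of $K$, I may normalize $p$ and $q$ to be monic, of degrees $a$ and $b$ respectively.

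Next I would form the product $r=pq\in K[t]$, which is monic of degree $a+b$. The coefficients of the recurrences are constants in $R$, so the operators $p(E)$ and $q(E)$ commute; therefore $r(E)A=q(E)\,p(E)A=q(E)0=0$ and likewise $r(E)B=p(E)\,q(E)B=0$. Hence the difference $C:=A-B$ satisfies $r(E)C=0$; writing $r(t)=t^{a+b}+\sum_{i=0}^{a+b-1}r_i t^i$ with $r_i\in K$, this is the monic recurrence $C(n+a+b)=-\sum_{i=0}^{a+b-1}r_i\,C(n+i)$, valid for all $n\ge 0$ and expressing each term as a $K$-linear combination of the preceding $a+b$ terms.

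Finally I would propagate forward. The hypothesis supplies $C(n)=0$ for $0\le n\le a+b$, in particular the $a+b$ consecutive zeros $C(0)=\cdots=C(a+b-1)=0$. Substituting these into the degree-$(a+b)$ recurrence gives $C(a+b)=0$, then $C(a+b+1)=0$, and by induction $C(n)=0$ for all $n$. Thus $A(n)=B(n)$ in $K$, and therefore in $R$, for every $n$.

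The content beyond the classical real-coefficient version of Lemma A2 in \cite{stearns1985equivalence} lies entirely in the first step: polynomial coefficients cannot in general be inverted, which is exactly why I work over $K$, where each nonzero leading coefficient becomes a unit and the recurrences can be put in monic form. So the \enquote{hard part} is really only this conceptual shift; the two points to check carefully are that each difference equation is assumed to hold for all $n\ge 0$ (so the operator identities $r(E)A=r(E)B=0$ are genuine) and that \enquote{degree $a$} guarantees a nonzero leading coefficient, legitimizing the passage to monic form. This mirrors the modification—doing the linear algebra over $\mathbb{R}(x_1,\ldots,x_k)$ rather than $\mathbb{R}$—already used for the preceding lemma.
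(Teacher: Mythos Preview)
Your proof is correct and follows essentially the same approach as the paper: the paper's own proof simply says ``As above for \cite[Lemma A2]{stearns1985equivalence},'' i.e.\ repeat the classical argument but do the linear algebra over $\mathbb{R}(x_1,\ldots,x_k)$ rather than $\mathbb{R}$, which is precisely what you carry out in detail.
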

\begin{proof}
As above for \cite[Lemma A2]{stearns1985equivalence}.
\end{proof}

\subsection{Note on weighted trees} \label{sec:trees}



The number of \emph{unweighted} binary plane trees avoiding certain local
structures is found in \cite{rowland2010pattern}.
In that paper, \S 5 gives an algorithm to compute a system of algebraic
equations specifying the relevant generating function.
An extension to ternary and $m$-ary trees is in \cite[\S 3]{gabriel2012pattern}.
Enumeration of unweighted trees by number of local occurrences (not just
avoidance) is done in \cite{chyzak2008distribution}.
The paper \cite{dairyko2012non} considers global pattern avoidance,
still in unweighted trees.


Locally restricted trees weighted by a finite group are those that avoid
subgraphs from a fixed set $\Xi$ of weighted trees, where the set $\Xi$ has
a maximum size.
We specifically consider rooted trees where there is a directed edge from
parent to child.
Plane trees correspond, for example, to the family of trees where the
vertex set is $[m]$, parents are less than children, and all vertices at a
given depth form a contiguous interval of integers.
For non-abelian groups, there must be a stipulated
ordering on tree vertices such as depth-first search,
in order to define the total of a tree.
We require that the order be recursive, in the sense that the total of the
tree must be a sum made from the weight of the root and the sums of subtrees
rooted at children of the root.
Variations on trees with similar enumerative properties include
functional graphs, directed acyclic graphs, and cactus graphs.

\begin{example}
Let us consider a tree $T$ with weights from $\mathbb{Z}_2$ where the first $2$
levels are of the form

\begin{center}
\begin{forest}
[0
  [0
  ]
    [0
    ]
]
\end{forest}.
\end{center}

If $T$ avoids parent-child-grandchild paths with the same weight on all $3$
vertices, sibling subtrees are independent but no level-$3$ vertex of $T$ may
have the weight $0$.
\end{example}

An alternative visual representation of weighted trees for groups with
some total order uses a color behind vertices where darker means greater.
This plotting technique works well for larger trees because of higher
visibility, although it is less precise.
An example is shown in Figure \ref{fig:bwtree}.

\begin{figure}
\centering
\begin{forest}
[2
  [3
  ]
  [2 [
    2
      [3] [2]
  ]
  ]
]
\end{forest} \hspace{3cm}
\includegraphics[width=9em]{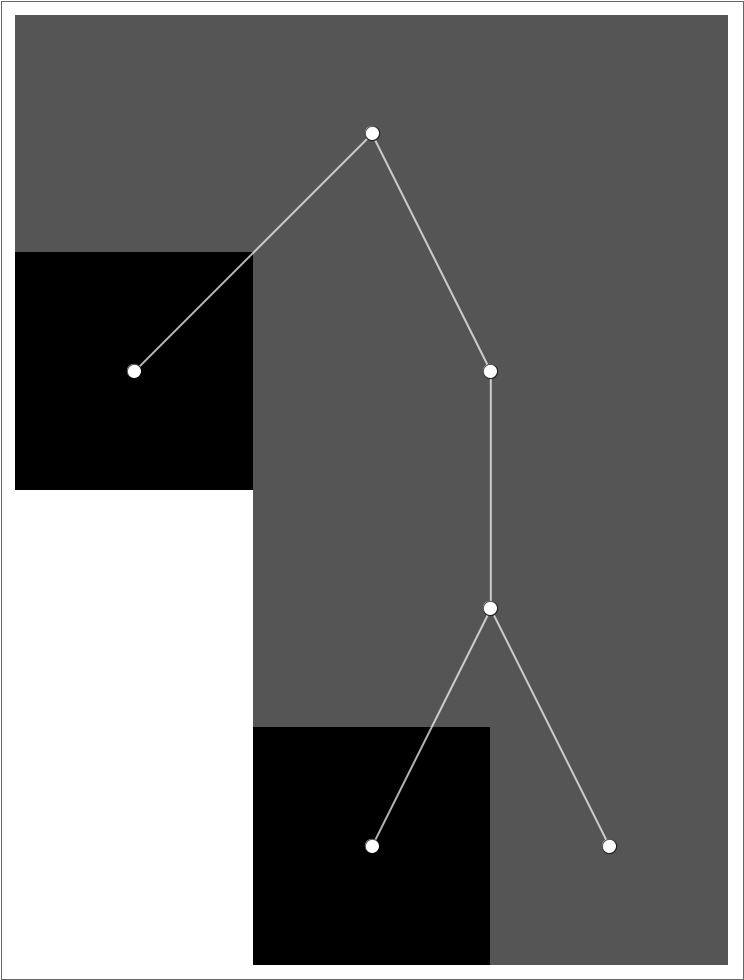}
\caption{A tree weighted by $\mathbb{Z}_4$ shown in two plotting styles.
\label{fig:bwtree}}
\end{figure}

It is straightforward (albeit a little cumbersome) to show that
the generating function $T_a(z)$, counting locally restricted trees with total
$a \in G$, where $z$ marks number of vertices, is expressible in terms of a
system of algebraic equations.
This is, of course, common in tree enumeration as in examples in
\cite{rowland2010pattern}, \cite[\S I.5]{ac}, \cite{drmotabook}, et cetera.
The theory of coefficients of algebraic functions in, e.g.\ \cite[\S
VII.6.1]{ac} may be applied, under conditions, to derive the usual
$[z^m]T_a(z) \sim A_a m^{-1/2} B^m, A_a>0, B>1$.
The works \cite{chyzak2008distribution,drmotabook} conduct analysis of
the number of pattern occurrences in uniform
random unweighted trees and
show convergence in distribution to the standard normal after normalization;
we expect that their method applies similarly in the present context.

Finally, we note that computing expansions of multivariate algebraic series
is possible using software packages available for computer algebra systems.
Newton iteration is a relatively efficient option.
The package Genfunlib \cite{genfunlib} for Mathematica implements Newton
iteration as the command \texttt{CoefsByNewton}, but only for single equations,
so there is a preliminary step of eliminating all but one component of the
original system.
An example expansion follows of the solution to
$f(z, u) = u + z(f(z,u)^2 + f(z,u))$.
\begin{verbatim}
In[1]:= CoefsByNewton[
  f[z, u] == u + z (f[z, u]^2 + f[z, u]),
  f[z, u], {z, 0, 5}]

Out[1]= u + (u+u^2)z + (u+3 u^2+2 u^3)z^2
  + (u+6 u^2+10 u^3+5 u^4)z^3 +(u+10 u^2+30 u^3+35 u^4+14 u^5)z^4
  + (u+15 u^2+70 u^3+140 u^4+126 u^5+42 u^6)z^5 + O[z]^6
\end{verbatim}

\section{Locally cyclically restricted compositions}

\begin{remark}
In the language of \S \ref{sec:intro},
cyclically restricted compositions are
weighted directed
cycles where we track the total weight.
Weighted undirected cycles may be counted in a similar manner.
Sets of cycles, i.e.\ $2$-regular graphs, are also closely related.
\end{remark}

\subsection{Compositions over a finite group} \label{sec:cycliccomps}

In \S \ref{sec:localpaths} we represented compositions by walks on any
de Bruijn subgraph $D$ over $\SEQ_\sigma(G)$.
Within the current section and \S \ref{sec:circcomps} we slightly specialize
the possibilities for $D$ as follows.
If $\sigma \geq 1$ is the span, let $\bar{D}$ be the $\sigma$-dimensional
de Bruijn graph on $G$, let $U \subset V(\bar{D})$, and let $D = \bar{D} - U$.
Note that given such a digraph $D$, the set $U$ is uniquely determined.

An $m$-composition $x$ is \emph{locally cyclically restricted} according to $D$
if
\[(x(1), \ldots, x(m), x(1), \ldots, x(\sigma -1))\]
avoids $U$ as subwords.
A number of observations about this definition should be made.
First, $m$-compositions where $m < \sigma$ do not correspond to walks over
$D$ but may or may not be cyclically restricted according to $D$.
Second, for $m < \sigma$, we are technically departing from the isomorphic
nature of pattern occurrences in the language of \S \ref{sec:intro}, and
really this corresponds to homomorphic pattern occurrences.
We do not remark on this point further.

Let $\mathcal{C}_a(m; D)$ be the set of all $m$-compositions of $a$ that are
cyclically restricted according to $D$, and define
\[
c_a(m; D) = |\mathcal{C}_a(m; D)|,\qquad
C_a(z; D) = \sum_{m \geq 0}c_a(m; D)z^m.
\]

\begin{lemma} \label{lem:cyclespaths}
For $v \in V(D)$, let $\Sigma' v = v(1) + \cdots + v(\sigma -1)$.
For $m \geq \sigma$ we have
\[
c_a(m; D) = \sum_{\substack{v \in V(D) \\ u \in N^-(v)}}
  p_{a + \Sigma' v}(m + \sigma - 1; D, \{v\}, \{u\}),
\]
for $a \in G$.
\end{lemma}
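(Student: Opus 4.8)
The plan is to prove the identity by exhibiting a bijection between the locally cyclically restricted $m$-compositions of $a$ counted on the left and the walks counted on the right. The guiding observation is that, for $m \geq \sigma$, a locally cyclically restricted $m$-composition is the same data as a \emph{closed} walk of length $m$ in $D$, and the right-hand side counts such closed walks after cutting them open at their distinguished starting vertex.

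First I would make the closed-walk picture precise. Given a cyclically restricted $x=(x(1),\ldots,x(m))$, form the windows $W_i=(x(i),\ldots,x(i+\sigma-1))$ with indices read cyclically in $\{1,\ldots,m\}$. These are exactly the $m$ contiguous length-$\sigma$ subwords of the wrapped sequence $(x(1),\ldots,x(m),x(1),\ldots,x(\sigma-1))$, so the cyclic restriction is precisely the statement that every $W_i$ lies in $V(D)$; since consecutive windows overlap in the de Bruijn manner, we get a closed walk $W_1 \too W_2 \too \cdots \too W_m \too W_1$ in the induced subgraph $D$. I would then cut this cycle at $W_1$ to obtain the open walk $W_1,\ldots,W_m$ with endpoints $v := W_1$ and $u := W_m$, recording the single omitted arc $W_m \to W_1$ by the condition $u \in N^-(v)$.

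Next I would identify the composition attached to this open walk. Under the walk-to-composition correspondence of \S\ref{sec:localpaths}, the walk $W_1,\ldots,W_m$ (which has $m$ vertices) yields the $(m+\sigma-1)$-composition $W_1 {}^\frown (W_2(\sigma),\ldots,W_m(\sigma))$, and a direct index chase using $W_i(\sigma)=x(i+\sigma-1)$ (cyclically) shows this equals the wrapped sequence $(x(1),\ldots,x(m),x(1),\ldots,x(\sigma-1))$. Its total is therefore $a + (x(1)+\cdots+x(\sigma-1)) = a + \Sigma' v$, matching the subscript on the right. Thus each cyclically restricted $x$ contributes exactly one walk counted by $p_{a+\Sigma' v}(m+\sigma-1; D, \{v\}, \{u\})$, with the pair $(v,u)$ uniquely determined by $x$, so there is no double counting across the outer sum.

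Finally I would establish surjectivity by reconstruction: from a legal walk from $v$ to $u$ with $u\in N^-(v)$, let $y$ be its associated $(m+\sigma-1)$-composition and set $x(i)=y(i)$ for $1\le i\le m$. The step I expect to be the main obstacle to state cleanly is the verification that the closure condition $u\in N^-(v)$, unwound through the de Bruijn arc relation $(W_m(2),\ldots,W_m(\sigma))=(W_1(1),\ldots,W_1(\sigma-1))$, is \emph{exactly} equivalent to the consistency equations $y(m+j)=y(j)$ for $1\le j\le \sigma-1$; these are what guarantee that the wrapped sequence of $x$ coincides with $y$ and hence avoids $U$, so $x$ is genuinely cyclically restricted with total $a$, and that this map inverts the forward construction. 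Since $v$, $u$, and the walk are each recovered from $x$, summing $p_{a+\Sigma' v}(m+\sigma-1; D, \{v\}, \{u\})$ over all $v\in V(D)$ and $u\in N^-(v)$ counts exactly the cyclically restricted $m$-compositions of $a$, giving $c_a(m;D)$.
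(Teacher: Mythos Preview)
Your proposal is correct and takes essentially the same approach as the paper: both identify a cyclically restricted $m$-composition with a closed walk of length $m$ in $D$, cut it open at the first window $v=W_1$, and read off the resulting $(m+\sigma-1)$-composition with total $a+\Sigma' v$. The paper phrases this in terms of a walk $w_1,\ldots,w_m$ in $D_\times$ with $w_1=(v,\Sigma v)$ and an arc from $w_m$ to $(v,a+\Sigma v)$, whereas you work directly in $D$ and compute the total by hand; these are the same bijection, and your version is in fact more explicit about the inverse and the consistency check $y(m+j)=y(j)$.
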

\begin{proof}
If $m \geq \sigma$, consider a walk $w_1, \ldots, w_{m}$ in $D_\times$,
where
$w_1 = (v, \sum v)$ and $(w_{m}, (v, a + \sum v)) \in E(D_\times)$.
Let $(x(1), \ldots, x(m+\sigma-1))$ be the composition represented by the walk.
Then $(x(1), \ldots, x(m))$ is precisely an $m$-composition of $a$ which is
{cyclically restricted} according to $D$.
\end{proof}

\begin{proposition}
Fix an ordering on $V(D_\times)$ and let $M_\times$ be the adjacency matrix
of $D_\times$.
For $(v, a) \in V(D_\times)$, let
$\xi_{v,a} \in \mathbb{R}^{|V(D_\times)|}$
be the indicator vector for vertex $(v, a)$.
Then for $m \geq \sigma$,
\[
c_a(m; D) =
  \sum_{\substack{v \in V(D) \\ u \in N^-(v)}}
  (\xi_{v, \Sigma v})^\top M_\times^{m-1} \xi_{u, a + \Sigma' v}.
\]
\end{proposition}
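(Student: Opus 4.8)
The plan is to combine Lemma \ref{lem:cyclespaths}, which already rewrites $c_a(m; D)$ as a sum of path counts $p_{a+\Sigma' v}(m+\sigma-1; D, \{v\}, \{u\})$, with the transfer-matrix formula of Proposition \ref{prop:dtimescount}, which evaluates each such path count as a bilinear form in a power of $M_\times$. Since the hypothesis $m \geq \sigma$ is exactly the range required by both the lemma and the proposition, there are no boundary cases to handle separately, and the argument reduces to a direct substitution followed by an identification of vectors.

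First I would fix a single term of the sum, indexed by a vertex $v \in V(D)$ together with an in-neighbor $u \in N^-(v)$, and apply Proposition \ref{prop:dtimescount} with start set $\Psi = \{v\}$, finish set $\Phi = \{u\}$, target total $s = a + \Sigma' v$, and length parameter $m + \sigma - 1$ playing the role of the generic length. The matrix exponent then collapses, since $(m+\sigma-1) - \sigma = m-1$, which already yields the factor $M_\times^{m-1}$ appearing in the statement.

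The remaining work is to match the start and finish indicator vectors produced by Proposition \ref{prop:dtimescount} with the $\xi$ vectors in the claim. By the definition of $\Psi_\times$ given just before that proposition, a singleton start set $\Psi = \{v\}$ contributes the unique derived vertex $(v, \Sigma v)$ whose second coordinate equals the total of $v$; hence the start vector is $\psi_\times = \xi_{v, \Sigma v}$. Symmetrically, for the finish set $\Phi = \{u\}$ with target total $s = a + \Sigma' v$, the definition of $\Phi_s$ picks out exactly the vertex $(u, a + \Sigma' v)$, so the finish vector is $\phi_s = \xi_{u, a + \Sigma' v}$. Substituting these two identifications into $\psi_\times^\top M_\times^{m-1} \phi_s$ and summing over all pairs $(v,u)$ with $u \in N^-(v)$ reproduces the asserted formula.

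The only point I expect to require care is the bookkeeping of the second coordinates: one must verify that each singleton start and finish set selects precisely one derived vertex, forced respectively by the constraint $\sum \graphf{v} = s$ in the definition of $\Psi_\times$ and by the fixed total in the definition of $\Phi_s$, so that the two vectors are genuinely the single-vertex indicators $\xi_{v,\Sigma v}$ and $\xi_{u, a+\Sigma' v}$ rather than sums of several such indicators. Once this is confirmed, everything else is immediate.
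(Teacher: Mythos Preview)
Your proposal is correct and takes exactly the same approach as the paper, whose proof consists of the single sentence ``This follows from Lemma \ref{lem:cyclespaths} and Proposition \ref{prop:dtimescount}.'' You have supplied the details of the substitution carefully, including the exponent computation $(m+\sigma-1)-\sigma=m-1$ and the identification of the singleton start and finish indicator vectors, all of which are correct.
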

\begin{proof}
This follows from Lemma \ref{lem:cyclespaths} and Proposition
\ref{prop:dtimescount}.
\end{proof}

\begin{proposition} \label{pro:cyclicasympt}
Assume $D$ is regular.
We have either $c_a(m; D) = 0$ or
\[ c_a(m; D) = A_a \cdot B^m(1 + O(\theta^m)), \qquad m \to \infty \]
for $A_a, B >0, 0 \leq \theta < 1$.
\end{proposition}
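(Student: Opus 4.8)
The plan is to reduce the claim to a finite sum of applications of the transfer-matrix asymptotics in Proposition~\ref{pro:tmm}, exploiting the block structure of $D_\times$. First I would invoke the explicit formula established in the preceding proposition, namely
\[
c_a(m; D) = \sum_{\substack{v \in V(D) \\ u \in N^-(v)}}
  (\xi_{v, \Sigma v})^\top M_\times^{m-1} \xi_{u, a + \Sigma' v},
\qquad m \geq \sigma,
\]
which writes $c_a(m;D)$ as a fixed finite sum of bilinear forms in powers of $M_\times$ paired against indicator vectors.

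Next I would record the structure of $D_\times$ coming from regularity. By Lemma~\ref{lem:strong}, $D_\times$ is a disjoint union of strong components, and by Lemma~\ref{lem:auto} these components are mutually isomorphic; in particular they share a single cycle-length set, so aperiodicity of $D_\times$ (part of regularity) passes to each component individually. Since $D$ is strongly connected with at least two vertices it contains a closed walk of positive length, which lifts (after going around a number of times equal to the order of its net voltage) to a closed walk in $D_\times$; hence each component carries a closed walk and so has a nonzero adjacency matrix. Thus every strong component is strongly connected, aperiodic, and nonzero, so Proposition~\ref{pro:tmm} applies to each, and, being isomorphic, all components share one dominant eigenvalue $B$ and one subdominant ratio $\theta \in [0,1)$.

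Then I would evaluate each summand using the block-diagonal form of $M_\times^{m-1}$: because $D_\times$ is a disjoint union, the entry $[M_\times^{m-1}]_{i,j}$ vanishes whenever $i$ and $j$ lie in different components. Hence a term $(\xi_{v, \Sigma v})^\top M_\times^{m-1} \xi_{u, a + \Sigma' v}$ is zero unless $(v, \Sigma v)$ and $(u, a + \Sigma' v)$ lie in a common component, in which case it equals the corresponding bilinear form for the adjacency matrix of that component. Applying Proposition~\ref{pro:tmm} there, and using that the restricted indicator vectors are nonnegative and nonzero while the Perron eigenvectors are strictly positive, each nonzero term equals a strictly positive constant times $B^{m-1}(1 + O(\theta^m))$.

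Finally I would sum the finitely many terms, collecting the strictly positive coefficients of the nonzero terms into a single constant $A_a \geq 0$ (absorbing the factor $B^{-1}$) and taking the common $\theta$, to obtain $c_a(m; D) = A_a \cdot B^m(1 + O(\theta^m))$. If no pair $(v,u)$ places both vertices in one component, every term vanishes and $c_a(m; D) = 0$ for all $m \geq \sigma$, the first alternative; otherwise $A_a > 0$, giving the second. The main thing to get right is the bookkeeping that makes a single pair $(B, \theta)$ legitimate across all summands---this rests entirely on the isomorphism of the strong components from Lemma~\ref{lem:auto}---together with the positivity argument yielding the clean dichotomy for $A_a$; once those are in place the remaining manipulations are routine.
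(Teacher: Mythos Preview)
Your proposal is correct and follows essentially the same approach as the paper. The paper's proof simply cites Lemma~\ref{lem:cyclespaths} and Proposition~\ref{prop:count}; you have unrolled the content of Proposition~\ref{prop:count} (which itself rests on Lemmas~\ref{lem:strong}, \ref{lem:auto} and Proposition~\ref{pro:tmm}) and applied it directly to the matrix formula, so your argument is a more detailed version of the same reasoning rather than a different route.
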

\begin{proof}
This follows from Lemma \ref{lem:cyclespaths} and Proposition \ref{prop:count}.
\end{proof}

\begin{proposition} \label{pro:cyclesequal}
Assume $D_\times$ is strongly connected and aperiodic.
Then
$c_a(m; D) = A \cdot B^m(1 + O(\theta^m))$
where $A$ does not depend on $a \in G$.
\end{proposition}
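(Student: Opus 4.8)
The plan is to begin from the matrix identity for $c_a(m; D)$ in the immediately preceding proposition,
\[
c_a(m; D) = \sum_{\substack{v \in V(D)\\ u \in N^-(v)}} (\xi_{v, \Sigma v})^\top M_\times^{m-1}\, \xi_{u, a + \Sigma' v},
\]
and to estimate each of the finitely many terms via Proposition \ref{pro:tmm}. The only place the group element $a$ enters is the finish coordinate $a + \Sigma' v$ of the indicator vector $\xi_{u, a + \Sigma' v}$, so the whole task reduces to showing this dependence is asymptotically invisible.

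Because $M_\times$ is, by hypothesis, the adjacency matrix of a strongly connected, aperiodic digraph, Proposition \ref{pro:tmm} applies. Let $\lambda$ be its dominant eigenvalue and let $V_\lambda, U_\lambda$ denote its positive right and left Perron eigenvectors, normalized so $V_\lambda \cdot U_\lambda = 1$. Each summand then becomes
\[
(\xi_{v, \Sigma v})^\top M_\times^{m-1}\, \xi_{u, a + \Sigma' v} = [V_\lambda]_{(v,\Sigma v)}\,[U_\lambda]_{(u,\,a+\Sigma' v)}\,\lambda^{m-1}\bigl(1 + O(\theta^m)\bigr),
\]
since the dot products with $V_\lambda, U_\lambda$ merely read off coordinates. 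As there are a fixed, finite number of pairs $(v,u)$ and all share the same $0 \le \theta < 1$ furnished by Proposition \ref{pro:tmm} for $M_\times$, the error terms combine into a single $O(\theta^m)$ that factors out of the sum.

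The crux is to establish that the Perron coordinate $[U_\lambda]_{(u,c)}$ is independent of the group coordinate $c$; this is what annihilates the $a$-dependence. This fiber-constancy is exactly what the proof of Theorem \ref{thm:equal} yields: with $M = \sum_{a \in G} M_a$ the base matrix and $v_\lambda, u_\lambda$ its Perron eigenvectors (Lemma \ref{lem:permmatrix}), the positive vectors $\xi \otimes v_\lambda$ and $\xi \otimes u_\lambda$, where $\xi$ is the all-ones vector over $G$, are $\lambda$-eigenvectors of $M_\times$, and by the one-dimensionality of the Perron eigenspace they are proportional to $V_\lambda$ and $U_\lambda$. The block structure of the Kronecker product then forces the coordinate at $(w,c)$ to equal a scalar multiple of $[v_\lambda]_w$ (respectively $[u_\lambda]_w$), with no dependence on $c$. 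Equivalently, I would invoke Lemma \ref{lem:auto}: each translation $(w,c) \mapsto (w, c+g)$ is a digraph automorphism of $D_\times$, so its permutation matrix commutes with $M_\times$ and must fix the unique positive Perron eigenvector, again giving constancy along fibers. I expect this fiber-constancy to be the only real obstacle; the rest is bookkeeping.

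Granting it, write $[V_\lambda]_{(w,c)} = \nu(w)$ and $[U_\lambda]_{(w,c)} = \mu(w)$. Then $[V_\lambda]_{(v,\Sigma v)} = \nu(v)$ and $[U_\lambda]_{(u,\,a+\Sigma' v)} = \mu(u)$ are both free of $a$, whence
\[
c_a(m; D) = \lambda^{m-1}\bigl(1 + O(\theta^m)\bigr)\sum_{\substack{v \in V(D)\\ u \in N^-(v)}} \nu(v)\,\mu(u).
\]
This proves the claim with $B = \lambda$ and $A = \lambda^{-1}\sum_{v,\,u \in N^-(v)} \nu(v)\mu(u)$, both manifestly independent of $a$. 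Any global proportionality constant arising from rescaling $\xi \otimes v_\lambda, \xi \otimes u_\lambda$ to the normalization $V_\lambda \cdot U_\lambda = 1$ (the unnormalized product being $|G|(v_\lambda \cdot u_\lambda)$) is absorbed into $A$ and does not affect $a$-independence.
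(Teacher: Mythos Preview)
Your proof is correct and follows essentially the same route as the paper: the paper simply cites Lemma \ref{lem:cyclespaths} (equivalent to the matrix identity you start from) and Theorem \ref{thm:equal}, whose content is exactly the fiber-constancy of the Perron eigenvectors of $M_\times$ that you re-derive. You have unpacked what the paper leaves as a two-line black-box citation.
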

\begin{proof}
This follows from Lemma \ref{lem:cyclespaths} and Theorem \ref{thm:equal}.
\end{proof}

Let $x = (x(1), \ldots, x(m))$ and $y = (y(1), \ldots, y(\sigma))$ be
compositions.
A local \emph{cyclic occurrence} of $y$ in $x$ is an occurrence of $y$ as a
subword in $(x(1), \ldots, x(m), x(1), \ldots, x(\sigma-1))$.

\begin{theorem} \label{thm:cyclicr}
Assume $U$ is nonempty and suppose $\graphf{D} = \bar{D} - U$ is regular with
strongly connected derived digraph $D_\times$.

For $u \in V(\bar{D})$,
let $\mu(u)$ be the minimum number of occurrences of
$U$ in a composition in $\mathcal{P}(\bar{D}, \{u\}, N^{-}(u))$ with at least
$1$ occurrence of $V(D)$.
Let $\mu$ be the minimal such $\mu(u)$.
Assume for all sufficiently large values of $m$ there exist compositions
in $\mathcal{P}(m; \bar{D}, V(D), V(D))$ with exactly $1$ occurrence of $U$,
and that $p(m; D, V(D), V(D)) \sim A \cdot B^m$.

If $r \geq \max(\mu, 1), \mu \geq 0$ then the number of $m$-compositions of $a
\in G$ with exactly $r$ cyclic occurrences of $U$ is
\[
c_a(m, r; D)
= m^{r-\mu} A_{r,\mu} \cdot B^{m}(1 + O(m^{-1})), \qquad m \to \infty.
\]
\end{theorem}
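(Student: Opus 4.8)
The plan is to derive Theorem \ref{thm:cyclicr} from the path result Theorem \ref{thm:pathsr} by a cyclic decomposition that extends Lemma \ref{lem:cyclespaths} from the avoidance case to arbitrary $r$. The pivotal observation is that a cyclic occurrence of $U$ in an $m$-composition $x$ is, by definition, an occurrence of $U$ as a subword in the extended sequence $(x(1), \ldots, x(m), x(1), \ldots, x(\sigma-1))$; this sequence is a genuine $(m+\sigma-1)$-composition and hence has exactly $m$ length-$\sigma$ windows, in bijection with the $m$ cyclic positions. Consequently the number of cyclic occurrences of $U$ in $x$ equals the number of ordinary (linear) occurrences of $U$ in its extension, so no occurrence is created or lost by passing to the extension.

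First I would establish, for $m \geq \sigma$, the identity
\[
c_a(m, r; D) = \sum_{v \in V(\bar D)} p_{a + \Sigma' v}\bigl(m + \sigma - 1, r; D, \{v\}, N^-(v)\bigr),
\]
where $N^-(v)$ is taken in $\bar D$. The map underlying this sends $x$ to the extension above; its first window is $v = (x(1), \ldots, x(\sigma))$ and its last window is $u = (x(m), x(1), \ldots, x(\sigma-1))$, and the wrap-around condition is exactly the statement that the suffix of $u$ equals the prefix of $v$, i.e.\ $u \in N^-(v)$. Conversely, any $(m+\sigma-1)$-composition whose first window is $v$ and whose last window lies in $N^-(v)$ is forced to repeat its first $\sigma-1$ parts at the end, so its initial $m$ parts recover a unique cyclic composition; the total gains $\Sigma' v$ from the repeated block, which accounts for the shifted subscript. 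Grouping the finish vertices $u \in N^-(v)$ under each start vertex $v$ gives the displayed sum. (When $r=0$ the terms with $v \in U$ vanish, recovering Lemma \ref{lem:cyclespaths}.)

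Next I would apply Theorem \ref{thm:pathsr} to each summand with $\Psi = \{v\}$ and $\Phi = N^-(v)$; both are nonempty since every de Bruijn vertex has in-neighbors, and the hypotheses on $D = \bar D - U$ (regularity, strong connectedness of $D_\times$, the single-occurrence assumption, and $p(m; D, V(D), V(D)) \sim A \cdot B^m$) are precisely those assumed here and are independent of $v$. With these start/finish sets the parameter $\mu$ of Theorem \ref{thm:pathsr} becomes exactly the $\mu(v)$ of the present statement, so for $\mu(v) \le r$,
\[
p_{a+\Sigma' v}\bigl(m+\sigma-1, r; D, \{v\}, N^-(v)\bigr) = (m+\sigma-1)^{\,r-\mu(v)} A^{(v)}_{r,\mu(v)} B^{m+\sigma-1}\bigl(1 + O(m^{-1})\bigr),
\]
with $A^{(v)}_{r,\mu(v)} > 0$; while for $\mu(v) > r$ the term vanishes for all large $m$, since any $r$-occurrence composition of large length has a clean part (its occurrences touch at most $r\sigma$ parts) and would then violate minimality of $\mu(v)$. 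As $B$ and the leading power are common to all terms, I collect the dominant ones, namely those $v$ with $\mu(v) = \mu = \min_v \mu(v)$; since $r \ge \max(\mu,1)$ at least one such $v$ qualifies, the terms with $\mu(v) > \mu$ are absorbed into the $O(m^{-1})$, and absorbing $B^{\sigma-1}$ together with $(m+\sigma-1)^{r-\mu}/m^{r-\mu} \to 1$ yields
\[
c_a(m, r; D) = m^{\,r-\mu} A_{r,\mu} B^m\bigl(1 + O(m^{-1})\bigr), \qquad A_{r,\mu} = \sum_{v:\,\mu(v)=\mu} B^{\sigma-1} A^{(v)}_{r,\mu} > 0.
\]

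The hard part will be the occurrence-preservation step in the decomposition: I must verify that the length count is exact (an $(m+\sigma-1)$-composition has precisely $m$ windows) and that each $U$-occurrence straddling the seam is counted once and matches a genuine cyclic occurrence, so that the linear $r$ of Theorem \ref{thm:pathsr} agrees with the cyclic $r$. A secondary but essential point is the $a$-independence of $A_{r,\mu}$: the coefficient $A^{(v)}_{r,\mu}$ furnished by Theorem \ref{thm:pathsr} depends on $v$ only through the start/finish structure and not through the target total $a + \Sigma' v$—this is where the $|G|$-fold symmetry of Theorem \ref{thm:equal}, used inside the proof of Theorem \ref{thm:pathsr}, enters—so the sum over dominant $v$ is genuinely free of $a$, as the statement requires.
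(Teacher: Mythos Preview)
Your proposal is correct and takes essentially the same approach as the paper, which simply states that the result follows from Lemma~\ref{lem:cyclespaths} and Theorem~\ref{thm:pathsr}. You have supplied the details the paper leaves implicit---the occurrence-preserving extension of the decomposition to arbitrary $r$, the identification of the path-$\mu$ with the cyclic $\mu(v)$, the handling of vertices with $\mu(v)>r$, and the $a$-independence of the constant via Theorem~\ref{thm:equal}---all of which are sound.
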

\begin{proof}
The result follows from Lemma \ref{lem:cyclespaths} and Theorem
\ref{thm:pathsr}.
\end{proof}

\begin{theorem} \label{thm:cyclicdist}
Assume that $|G| \geq 2$ and that $U \subset \SEQ_\sigma(G)$ is non-empty.
The number of cyclic occurrences of $U$ in a uniform random
$m$-composition of $a \in G$ is asymptotically normal with mean and variance
asymptotic to those of the number of occurrences of $U$ in a uniform random
word over $G$.
\end{theorem}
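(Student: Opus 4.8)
The plan is to realize a cyclic occurrence of $U$ as a visit to a designated vertex set of the derived digraph of the \emph{full} de Bruijn graph, and then to feed this into Proposition~\ref{prop:markovvisits}, whose asymptotic mean and variance are insensitive to the start–finish constraints. Let $\bar{D}$ be the full $\sigma$-dimensional de Bruijn graph on $G$ and $\bar{D}_\times$ its derived digraph. Since I must track the total $a$, I work in $\bar{D}_\times$ and set $\Xi = \{(v,t) : v \in U,\ t \in G\}$; a length-$\sigma$ subword equal to an element of $U$ is exactly a de Bruijn vertex lying in $U$, so one visit to $\Xi$ records one occurrence. Reading Lemma~\ref{lem:cyclespaths} with $\bar{D}$ in place of $D$ (so that nothing is forbidden), a cyclic $m$-composition of $a$ corresponds bijectively to a walk in $\bar{D}_\times$ from $(v,\Sigma v)$ to a vertex $(u,a+\Sigma' v)$ with $u \in N^-(v)$, and the $m$ de Bruijn vertices it traverses are precisely the $m$ cyclic windows. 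Writing $X_m^{\langle a\rangle}$ for the number of cyclic occurrences, $X_m^{\langle a\rangle}$ is then the number of $\Xi$-visits along such a walk, and a uniform random composition of $a$ is a uniform random walk whose start–finish pair lies in
\[ A_a = \{\, ((v,\Sigma v),(u,a+\Sigma' v)) : v \in V(\bar{D}),\ u \in N^-(v) \,\}. \]

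Next I would verify the hypotheses of Proposition~\ref{prop:markovvisits} for the triple $(\bar{D}_\times,\Xi,A_a)$. The derived digraph of a de Bruijn graph is strongly connected and aperiodic with at least two vertices (as $|G| \geq 2$), and $A_a$ is nonempty with realizing walks for all large $m$ by strong connectedness. Because $U$ is a nonempty strict subset of $\SEQ_\sigma(G)$, the set $\Xi$ is a nonempty strict subset of $V(\bar{D}_\times)$. The required arbitrarily long walks inside $\bar{D}_\times[\Xi]$ or $\bar{D}_\times - \Xi$ are supplied by the constant tuple $0^\sigma$: the vertex $(0^\sigma,t)$ carries a loop in $\bar{D}_\times$ for every $t$, so if $0^\sigma \in U$ these loops lie inside $\bar{D}_\times[\Xi]$, and otherwise inside $\bar{D}_\times - \Xi$. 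Proposition~\ref{prop:markovvisits} then gives $E(X_m^{\langle a\rangle}) \sim c_1 m$, $\Var(X_m^{\langle a\rangle}) \sim c_2 m$, and
\[ \frac{X_m^{\langle a\rangle} - E(X_m^{\langle a\rangle})}{\sqrt{\Var(X_m^{\langle a\rangle})}} \Rightarrow N(0,1), \]
where $c_1,c_2 > 0$ do not depend on $A_a$ (hence not on $a$).

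It remains to match $c_1,c_2$ to the random-word case, and this identification across the two digraphs is the step I expect to be the crux. A uniform random $m$-word over $G$ is a uniform random walk on $\bar{D}$ with free start and finish, whose occurrences of $U$ are the visits to $U$. I would exploit the covering structure noted earlier: the projection $(v,t)\mapsto v$ is a (directed) covering map, so each walk on $\bar{D}$ lifts, once a starting fibre element $t_0\in G$ is chosen, to a unique walk on $\bar{D}_\times$ of the same length with the same number of $U$- (equivalently $\Xi$-) visits. Consequently the free-start–finish visit distributions on $\bar{D}_\times$ and on $\bar{D}$ coincide exactly, so the constants that Proposition~\ref{prop:markovvisits} produces on $\bar{D}_\times$ equal the word-case constants for visits to $U$ on $\bar{D}$; invoking once more the $A$-independence of $c_1,c_2$, these also govern the constrained family counted by $A_a$. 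The only leftover discrepancy is that a length-$m$ word has $m-\sigma+1$ windows while a cyclic composition has $m$, an $O(1)$ shift that does not disturb the leading order, so the mean and variance are asymptotic to those of the word count. Finally, Theorem~\ref{thm:slut} lets me replace the exact centering and scaling by their asymptotic equivalents while preserving the limit $N(0,1)$, completing the argument.
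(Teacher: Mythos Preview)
Your proof is correct and follows the same approach as the paper: apply Proposition~\ref{prop:markovvisits} to the derived digraph $\bar D_\times$ of the full de Bruijn graph, with start--finish pairs encoding the cyclic wraparound and the total $a$. The paper's proof is terse (it only names the start--finish set and invokes the proposition), whereas you carefully spell out the bijection between cyclic $m$-compositions of $a$ and length-$m$ walks in $\bar D_\times$ via Lemma~\ref{lem:cyclespaths}, verify the hypotheses of Proposition~\ref{prop:markovvisits} (including the loop at $(0^\sigma,t)$ to supply arbitrarily long walks in $\Xi$ or its complement), and give the covering-map argument to identify the constants $c_1,c_2$ with those for words on $\bar D$; these details are exactly what the paper leaves implicit.
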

\begin{proof}
This follows directly from Proposition \ref{prop:markovvisits}.
If $D_\times$ is the derived digraph of the de Bruijn graph on
$\SEQ_\sigma(G)$, the allowed start-finish pairs are all
$(u, v) \in V(D_\times)^2$ such that $(v,u) \in E(D_\times)$.
\end{proof}

We consider some examples of cyclic restrictions.

\begin{proposition}
A composition $x = (x(1), \ldots, x(m))$ is a cyclic Carlitz composition if
$(x(1), \ldots, x(m), x(1))$ is a Carlitz composition.
The number of cyclic Carlitz $m$-compositions of $a \in G$ over a finite
group $G$ is
\[
  \frac{(|G|-1)^m}{{|G|}} (1 + O(\theta^m)), \qquad m \to \infty,
  0 \leq \theta < 1,
\]
provided $|G| \geq 3$.
\end{proposition}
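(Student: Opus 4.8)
The plan is to recognize cyclic Carlitz compositions as the locally cyclically restricted compositions attached to the span-$\sigma = 2$ de Bruijn subgraph $D = \bar{D} - U$ whose removed vertex set is $U = \{(a,a) : a \in G\}$; this is exactly the base digraph representing $1$-Carlitz compositions, so the quantity $c_a(m; D)$ of this section counts precisely the cyclic Carlitz $m$-compositions of $a$.

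First I would invoke Lemma \ref{lem:carlitzD} with $d = 1$: provided $|G| \geq d + 2 = 3$, the derived digraph $D_\times$ is strongly connected and aperiodic. This is exactly the hypothesis of Proposition \ref{pro:cyclesequal}, which then yields immediately
\[ c_a(m; D) = A \cdot B^m (1 + O(\theta^m)), \qquad m \to \infty, \]
with $A, B > 0$ and $0 \le \theta < 1$, and crucially with $A$ \emph{independent} of $a \in G$.

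It remains only to pin down the constants, which I would do by summing over the group. Since $A$ does not depend on $a$, we have
\[ \sum_{a \in G} c_a(m; D) = |G| \cdot A \cdot B^m (1 + O(\theta^m)). \]
The left-hand side counts all cyclic Carlitz $m$-compositions over $G$ regardless of total; equivalently it counts closed walks of length $m$ in the loopless complete digraph on $|G|$ vertices, i.e.\ the trace of the $m$-th power of the adjacency matrix $J - I$ (all-ones minus identity). The eigenvalues of $J - I$ are $|G| - 1$ (simple) and $-1$ (with multiplicity $|G| - 1$), so this trace equals $(|G|-1)^m + (-1)^m (|G| - 1)$ — the classical count of proper $|G|$-colorings of the cycle. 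Matching the two expressions forces $B = |G| - 1$ and $|G| \cdot A = 1$, hence $A = 1/|G|$, while the subdominant term $(-1)^m(|G|-1)$ is absorbed into the error since $1/(|G|-1) \le 1/2 < 1$. This gives
\[ c_a(m; D) = \frac{(|G|-1)^m}{|G|}(1 + O(\theta^m)), \]
as claimed.

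There is no genuine obstacle here: the substantive input — the $a$-independence of the leading constant — is already supplied by Proposition \ref{pro:cyclesequal} via the strong connectedness and aperiodicity of $D_\times$ from Lemma \ref{lem:carlitzD}. The only points requiring care are confirming that the span-$2$ Carlitz digraph is exactly the one covered by that lemma (so that its hypothesis $|G| \ge 3$ matches the stated condition) and carrying out the short transfer-matrix eigenvalue computation that identifies $B$ and $A$.
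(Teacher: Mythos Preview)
Your proof is correct and follows essentially the same approach as the paper: both establish that $D_\times$ is strongly connected and aperiodic via Lemma~\ref{lem:carlitzD} (the paper cites Proposition~\ref{prop:carlitz}, whose proof invokes that lemma), apply Proposition~\ref{pro:cyclesequal} to get an $a$-independent leading constant, and then identify the constants by computing the total count $\sum_a c_a(m;D)$. The only difference is that you obtain $(|G|-1)^m + (|G|-1)(-1)^m$ via the eigenvalues of $J-I$ (equivalently the chromatic polynomial of the cycle), whereas the paper derives the same expression through a generating-function decomposition of cyclic Smirnov words; both are standard and your route is arguably cleaner.
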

\begin{proof}
First let us consider cyclic Carlitz $m$-words over $[k]$.
Assume the first letter is $k$.
A cyclic Carlitz (or Smirnov) word is then a sequence of
pairs of a single letter $k$ followed by a non-empty Carlitz word on $[k-1]$.
Let $\bar{H}_{k}(z) = kz/(1-(k-1)z)$ be the ordinary generating function for
non-empty Carlitz words on $[k]$.
Thus if $F_k(z)$ is the ordinary generating function for cyclic Carlitz
words on $[k]$, we have
\begin{align*}
  F_k(z) =& k\frac{z \bar{H}_{k-1}(z)}{1 -z \bar{H}_{k-1}(z)} \\
  =& k\frac{(k-1) z^2}{(z+1) (1 - (k-1) z)}
\end{align*}
and
\[ [z^m]F_k(z) = {(k-1)^m+k (-1)^m+(-1)^{m+1}}, \qquad m > 1. \]
The above derivation is a special case of Theorem 4 in
\cite{hadjicostas2017cyclic}.
It remains to recall from Proposition \ref{prop:carlitz} that there is a
digraph $D$ representing Carlitz compositions such that $D_\times$ is aperiodic
and strongly connected; Proposition \ref{pro:cyclesequal} applies.
\end{proof}

Let $\Xi = \{\xi_1, \xi_2, \ldots \}$ be an ordered set.
A sequence $w = (w(1), \ldots, w(m))$ over $\Xi$ is $p$-smooth if
for all $i= 1, \ldots, m-1$ if we have $w(i) = \xi_j, w(i+1) = \xi_k$ then
$|k-j| \leq p$.
Additionally, $w$ is \emph{$p$-smooth cyclic} if $(w(1), \ldots w(m), w(1))$
is $p$-smooth.
Research Direction~6.5 in \cite[p.~239]{cofc} asks for an explicit formula
for the number of $p$-smooth cyclic $k$-ary words of length $m$.

We apply Proposition \ref{pro:cyclesequal} in the case $p=1$.

\begin{proposition}
Let $\mathbb{Z}_k$ have ordering $0, 1, \ldots, k-1$.
Let
\[ C(z) = 1 + \frac{kz(1 + 3z)}{(1 + z)(1 - 3z)} -
  \frac{2(k+1) z}{(1+z)(1-3z)}
  \frac{U_{k-1}(\frac{1-z}{2z})}{U_k(\frac{1-z}{2z})} \]
be the ordinary generating function for $k$-ary $1$-smooth cyclic words
as in \cite[Exercise 6.10]{cofc} and
\cite{knopfmacher2010staircase, smooth2}
where $U_k$ is the $k$\textsuperscript{th}
Chebyshev polynomial of the second kind.

Let $\rho > 0$ be the radius of convergence of $C(z)$, and let
$A = \lim_{z \to \rho} (1-z/\rho) C(z)$.
Then the number of $1$-smooth cyclic $m$-compositions of $i \in \mathbb{Z}_k$
is asymptotic to
\[
\frac{1}{k} A \cdot \left( \frac{1}{\rho} \right)^m (1 + O(\theta^m)),
\qquad m \to \infty, 0 \leq \theta < 1.
\]
\end{proposition}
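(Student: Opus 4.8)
The plan is to follow the template of the cyclic Carlitz and subword examples above: realize $1$-smooth cyclic compositions of $i$ as closed structures counted by a derived digraph, apply Proposition~\ref{pro:cyclesequal} to obtain an asymptotic whose leading constant is independent of $i$, and then identify that constant and the growth rate by summing over $\mathbb{Z}_k$ and comparing with the supplied closed form $C(z)$. First I would set up the de Bruijn subgraph. Since $1$-smoothness constrains adjacent letters, it is a local restriction of span $\sigma=2$: take $\bar D$ to be the $2$-dimensional de Bruijn graph on $\mathbb{Z}_k$ and remove $U=\{(a,b):|a-b|>1\}$, so $D=\bar D - U$ has as vertices the feasible pairs $(a,b)$ with $|a-b|\le 1$. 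With $\sigma=2$ the wraparound $(x(1),\ldots,x(m),x(1))$ in the paper's definition of local cyclic restriction is exactly the requirement that $x$ be $1$-smooth cyclic, so $c_i(m;D)$ counts the objects in the statement and $\sum_{i\in\mathbb{Z}_k}c_i(m;D)=[z^m]C(z)$.

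Next I would verify the hypotheses of Proposition~\ref{pro:cyclesequal}, i.e.\ that $D_\times$ is strongly connected and aperiodic. Aperiodicity is immediate: the pair $(0,0)$ is feasible, so $((0,0),0)$ has a loop in $D_\times$ (an arc to itself, as $0+0=0$), and a cycle of length $1$ forces period $1$. For strong connectedness I would use that $\mathbb{Z}_k$ is abelian and invoke Lemma~\ref{lem:addany} with generating set $\{1\}$: the walk $((0,0),0)\too((0,1),1)\too((1,0),1)\too((0,0),1)$ shows $((0,0),0)\too((0,0),1)$, which is precisely the hypothesis of Lemma~\ref{lem:addany} for the generator $1$ at the vertex $(0,0)$. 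Hence $D_\times$ is a single strong component, and Proposition~\ref{pro:cyclesequal} gives $c_i(m;D)=A'\,B^m(1+O(\theta^m))$ with $A'>0$ and $B>1$ (by Proposition~\ref{pro:2vert}, since $D$ has at least two vertices for $k\ge 2$), and, decisively, $A'$ independent of $i$.

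The final step is to match constants. Summing over the $k$ elements of $\mathbb{Z}_k$ gives $[z^m]C(z)=k A'\,B^m(1+O(\theta^m))$. The function $C(z)$ is rational in $z$ (the Chebyshev ratio $U_{k-1}/U_k$ evaluated at $(1-z)/(2z)$ is a rational function of $z$), so Theorem~\ref{thm:ratgf} governs its coefficients; a clean exponential with no polynomial factor and no oscillation is possible only if $C(z)$ has a unique, simple dominant pole. Its location is therefore $\rho=1/B$ and the defining limit $A=\lim_{z\to\rho}(1-z/\rho)C(z)$ is exactly the associated residue constant $kA'$. Substituting $A'=A/k$ and $B=1/\rho$ yields $c_i(m;D)=\frac{1}{k}A\,(1/\rho)^m(1+O(\theta^m))$, as claimed.

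The main obstacle is this last identification, because $\rho$ and $A$ are defined purely through the externally supplied $C(z)$, whereas $A'$ and $B$ arise from the Perron--Frobenius analysis behind Proposition~\ref{pro:cyclesequal}. The bridge is the equidistribution assertion of Proposition~\ref{pro:cyclesequal}: it is what permits me to say that every total contributes the same leading constant, so that summation recovers exactly $k$ times the per-element asymptotic. I would take care to confirm that the dominant pole of $C(z)$ is simple, so that the limit defining $A$ is finite and nonzero; aperiodicity of $D_\times$ rules out competing singularities of equal modulus, which is what makes the observed pure-exponential growth consistent only with a simple dominant pole. A minor point worth noting is that $1$-smoothness is a set of forbidden subwords rather than a single subword pattern, so Lemma~\ref{lem:subworddx} is not directly available; the explicit loop and excursion constructed above take its place with no real difficulty.
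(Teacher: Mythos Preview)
Your proof is correct and follows essentially the same approach as the paper: set up the span-$2$ de Bruijn subgraph $D$ for $1$-smoothness, verify that $D_\times$ is strongly connected and aperiodic, and apply Proposition~\ref{pro:cyclesequal}. The only difference is that the paper establishes strong connectedness by exhibiting, for each $i$, an explicit walk $((0,0),i)\too((0,0),0)$ built from an up--down staircase $0,1,\ldots,-i,\ldots,1,0,0$ followed by a correction loop, whereas you instead show the single relation $((0,0),0)\too((0,0),1)$ and invoke Lemma~\ref{lem:addany} with the generating set $\{1\}$; your route is shorter and equally valid.
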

\begin{proof}
Let $\bar{D}$ be the de Bruijn graph on $\mathbb{Z}_k^2$,
let $U \subseteq \mathbb{Z}_k^2$ be all $(a,b)$ which are not smooth,
and let $D = \bar{D} - U$.
In the derived graph $D_\times$, for any $i \in \mathbb{Z}$, we exhibit a walk
from $((0,0), i)$ to $((0,0), 0)$ by taking the following sequence of elements
of $\mathbb{Z}_k$.
First, take $0, 1, 2, \ldots, -i-1, -i, -i-1, \ldots, 2, 1, 0, 0$.
Let $c = 1 + 2 + \cdots + -i-1$.
The sum of elements on this sequence is $2c -i$.
Let $n$ be the order of $2c$ in $\mathbb{Z}_k$.
Repeat the following $n-1$ times:
$0, 1, 2, \ldots, -i-2, -i-1, -i-1, -i-2, \ldots, 0, 0$.
The grand total of these sequences concatenated is $2c -i + (n-1)c = -i$ and
thus there is a walk in $D_\times$ starting at $((0, 0),i)$ and ending at
$((0, 0),0)$.
The digraph $D_\times$ is clearly aperiodic since there is a loop at the
vertex $((0, 0),0)$.
Thus we may apply Proposition \ref{pro:cyclesequal}.
\end{proof}

Table \ref{tab:132cyclic} shows counts of $m$-compositions of $a$ cyclically
avoiding $\tau=132$ over $\mathbb{Z}_5$.
Figure \ref{fig:132cyclic} contains uniform randomly generated compositions
over $\mathbb{Z}_5$ that cyclically avoid $132$.

\begin{table}
\centering
\begin{tabular}[c]{|c|rr|}
\hline
\diagbox{$m$}{$a$} & $0$ & $1$ \\ \hline
3  & 19         & 19 \\
4  & 85         & 85 \\
5  & 390        & 385 \\
6  & 1763       & 1763 \\
7  & 8023       & 8016 \\
8  & 36469      & 36469 \\
9  & 165790     & 165790 \\
10 & 753660     & 753660 \\
11 & 3426039    & 3426039 \\
12 & 15574231   & 15574231 \\
13 & 70798118   & 70798118 \\
14 & 321837325  & 321837325 \\
15 & 1463023035 & 1463023045 \\
16 & 6650677797 & 6650677797 \\\hline
\end{tabular}
\caption{Counts of $m$-compositions of $a$ cyclically avoiding $132$ over
  $\mathbb{Z}_5$.}
\label{tab:132cyclic}
\end{table}

\begin{figure}
\centering
\includegraphics[width=5.5in]{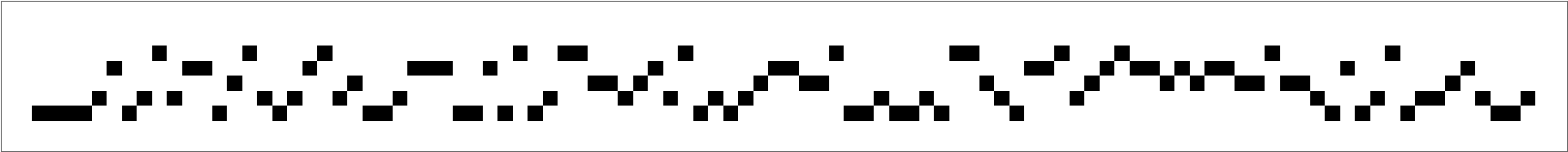}
\includegraphics[width=5.5in]{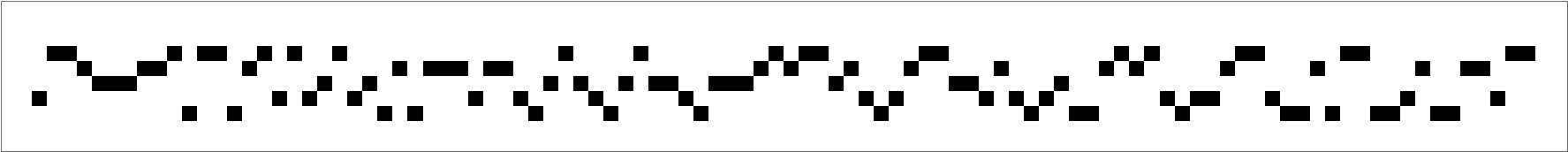}

\caption{Uniform-randomly generated $100$-compositions of $0$ (above)
  and $1$ (below) over $\mathbb{Z}_5$ which cyclically avoid $132$.
  \label{fig:132cyclic}}
\end{figure}

\begin{remark}
\emph{Wheel graphs} are
a variation on cycles with similar enumerative properties.
A wheel graph consists of a cycle $C$ with a vertex $v$ added
and (directed) edges from $v$ to each vertex in $C$.
\end{remark}

\subsection{Note on integer compositions} \label{sec:intcomps}

Let $x = (x(1), \ldots, x(m))$ be an integer composition, i.e.\
$x(i) \in \mathbb{Z}_{> 0}, 1 \leq i \leq m$.
To define locally restricted integer compositions with span
$\sigma \in \mathbb{Z}_{> 0}$ we use a \emph{local
restriction function} $R: \mathbb{Z}_{> 0}^{\sigma} \to \{0,1\}$ which
encodes the $\sigma$-tuples that are allowed as a subword inside an integer
composition.
If $\SEQ(\mathbb{Z}_{>0})$ is the set of all integer compositions of any length,
define $\bar{R}: \SEQ(\mathbb{Z}_{>0}) \to \{0,1\}$ so that
$\bar{R}(x) = 1$ if and only if
$R(x(i), x(i+1), \ldots, x(i+\sigma-1)) = 1$ for all $1 \leq i \leq m-\sigma +
1$, in which case $x$ is allowed according to $R$.
As an expedient it is also helpful to define an infinite digraph $\mathcal{D}$
with vertex set $V(\mathcal{D}) = R^{-1}(1)$ and where $(u,v) \in
E(\mathcal{D})$ if and only if $\bar{R}(u v) = 1$.
Note that a walk in $\mathcal{D}$ represents a composition obtained by
\emph{concatenating} the vertices; as such, walks do not represent all
restricted compositions, only those whose length is a multiple of $\sigma$.
In this way the infinite digraph $\mathcal{D}$ is interpreted differently
from the de Bruijn graphs used in other sections.
We assume there is some vertex ordering $V(\mathcal{D}) = \{v_1, v_2, \ldots
\}$.
We define the \emph{transfer operator} $T(z)$ formally as the infinite matrix
where
$[T(z)]_{i,j} = [(v_i, v_j) \in E(\mathcal{D})]z^{\Sigma v_i + \Sigma v_j}$.

A research direction suggested in \cite[\S 4]{hadjicostas2017cyclic} is
developing a framework for locally cyclically restricted integer
compositions.
The framework for locally restricted integer compositions in \cite{infinite}
can be used with little modification.
In this section we follow the definitions of \cite{infinite} with some
simplifications.

We say that $x$ is a composition which is \emph{cyclically} restricted by $R$
if
\[\bar{R}(x(1), \ldots, x(m), x(1), \ldots, x(\sigma)) = 1.\]
The \emph{endpoint operator} $E(z,y)$ is a formal infinite matrix defined by
\[
[E(z,y)]_{i,j} = \sum_{k \geq 1} y^{2\sigma} [T(z)]_{k,i}
  \sum_x z^{\Sigma v_j + 2\Sigma x + \Sigma v_k} y^{2\sigma + 2|x|},
\]
where the second sum ranges over compositions $x$ with length in
$\{0, \ldots, \sigma -1\}$ such that $\bar{R}({v_j} x v_k)=1$.
The endpoint operator plays the role of the start and finish vectors of
\cite{infinite}.

\newif\ifasymptotics
\asymptoticsfalse

\ifasymptotics
In this section, $\sum_{i,j \geq 1} f_{i,j}(z) =
\lim_{N \to \infty} \sum_{i = 1}^N \sum_{j=1}^N f_{i,j}(z)$.
The analogous definition is used for triple sums.
(By \cite[Theorem 4.48]{series} we have the basic fact that $\sum_{i \geq 1}
\sum_{j \geq 1} |s_{i,j}| =
  \sum_{i, j \geq 1} |s_{i,j}|$
for $s_{i,j} \in \mathbb{C}$, provided at least one side converges.)
\fi

\begin{proposition}
Let $S(z,y) = \sum_{j \geq 0} (y^{2\sigma})^j T(z)^j$.
Define
\[ C(z^2,y^2) = \sum_{i,j\geq 1} [S(z,y)]_{i,j} [E(z,y)]_{i,j}. \]
Then for $m \geq 3\sigma$, the coefficient $[z^n y^m]C(z,y)$ is the number of
integer $m$-compositions of $n$ that are cyclically restricted according to
$R$.
\end{proposition}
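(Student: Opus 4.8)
The plan is to read the matrix identity combinatorially: each monomial contributing to $\sum_{i,j\ge 1}[S(z,y)]_{i,j}[E(z,y)]_{i,j}$ should correspond to exactly one cyclically restricted composition, with $z$ recording \emph{twice} its sum and $y$ \emph{twice} its length. First I would unfold the definitions. A summand of $[S(z,y)]_{i,j}=\sum_{p\ge 0}(y^{2\sigma})^p[T(z)^p]_{i,j}$ is a walk $v_i=v_{a_0},v_{a_1},\dots,v_{a_p}=v_j$ in $\mathcal D$, i.e.\ a linearly $R$-admissible concatenation of $p+1$ blocks of length $\sigma$; a summand of $[E(z,y)]_{i,j}$ supplies an index $k$ with an arc $(v_k,v_i)$ (from the factor $[T(z)]_{k,i}$) together with a short \emph{bridge} composition $x$ of length $<\sigma$ satisfying $\bar R(v_j\,x\,v_k)=1$. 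Reading these around a cycle yields the linear word $w=v_i v_{a_1}\cdots v_{a_{p-1}} v_j\, x\, v_k$, with total length $m=(p+2)\sigma+|x|$ and sum $n=\Sigma v_i+\cdots+\Sigma v_j+\Sigma x+\Sigma v_k$.

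The first technical step is the weight bookkeeping. Because each arc weight in $T(z)$ charges $z^{\Sigma(\cdot)}$ to \emph{both} its endpoints, a direct computation shows the $z$-exponent of $[S]_{i,j}[E]_{i,j}$ on such a summand is $2n$ and the $y$-exponent is $2m$; in particular every exponent is even, which is exactly what legitimizes the notation $C(z^2,y^2)$ and lets me read off $[z^ny^m]C(z,y)$ as the number of configurations (a walk, a bridge, and an endpoint index $k$) with sum $n$ and length $m$. Since for fixed $(n,m)$ there are only finitely many compositions of $n$ into $m$ positive parts, only finitely many configurations contribute to any coefficient, so the infinite sums converge as formal power series and the matrix operations are well defined.

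Next I would verify that $w$ is genuinely cyclically restricted and that the correspondence is a bijection onto cyclically restricted $m$-compositions of $n$ for $m\ge 3\sigma$. For admissibility I check that every length-$\sigma$ cyclic window of $w$ lies inside a region already guaranteed legal: internal windows fall inside a single block or across a walk arc; windows meeting the bridge fall inside $v_j\,x\,v_k$ and are controlled by $\bar R(v_j x v_k)=1$; and windows crossing the wrap $v_k\mid v_i$ fall inside $v_k v_i$ and are controlled by the arc $(v_k,v_i)$. The geometric point, using $|x|<\sigma$ and $m$ large, is that no length-$\sigma$ window meets both the bridge and the wrap, as the full block $v_k$ of length $\sigma$ separates them. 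For injectivity and surjectivity I would parse a given cyclically restricted $w$ of length $m\ge 3\sigma$ deterministically: the number of blocks is $\lfloor m/\sigma\rfloor=p+2$ and the bridge length is $m\bmod\sigma$, so the first $p+1$ blocks, the bridge, and the final block are forced, recovering $i,j,k,x$ uniquely; the walk, bridge, and wrap conditions then follow from $\bar R(w)=1$ and the cyclic wrap hypothesis. Combining this bijection with the weight computation gives the claim.

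The main obstacle I anticipate is the window-covering equivalence: proving that the three imposed conditions (the walk arcs, the single bridge relation $\bar R(v_j x v_k)=1$, and the wrap arc $(v_k,v_i)$) are jointly equivalent to full cyclic $R$-admissibility of $w$. This is where the block-length-$\sigma$ geometry and the lower bound on $m$ enter, ruling out windows that straddle both the bridge and the wrap and guaranteeing the parse is unambiguous; the accompanying evenness-of-exponents check is routine but must be carried out carefully to justify the $C(z^2,y^2)$ substitution.
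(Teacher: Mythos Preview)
Your proposal is correct and follows essentially the same approach as the paper: expand $\sum_{i,j}[S]_{i,j}[E]_{i,j}$ term by term, interpret each summand as a closed walk $v_k\to v_i\to\cdots\to v_j$ together with a short bridge $x$ closing $v_j x v_k$, and then read off a linear composition from this cyclic data. The only cosmetic difference is where you cut the cycle: the paper writes the resulting composition as $v_k v_i\cdots v_j x$ (with the wrap returning to the initial $v_k$), whereas you write it as $v_i\cdots v_j x v_k$ (with the wrap returning to $v_i$); since cyclically restricted compositions are closed under rotation, both parsings give bijections and the argument goes through identically. Your version is in fact more explicit than the paper's about the window-covering verification and the doubling of exponents that justifies the $C(z^2,y^2)$ notation.
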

\begin{proof}
We have
\begin{align*}
&\sum_{i,j\geq 1} [S(z,y)]_{i,j} [E(z,y)]_{i,j} \\
&=\sum_{k,i,j\geq 1} y^\sigma z^{\Sigma v_k} y^{2\sigma} [T(z)]_{k,i}
  [S(z,y)]_{i,j} y^\sigma z^{\Sigma v_j} \sum_x z^{2\Sigma x} y^{2|x|}.
\end{align*}
Consider a term
\[
y^\sigma z^{\Sigma v_k} y^{2\sigma} [T(z)]_{k,i}
  [S(z,y)]_{i,j} y^\sigma z^{\Sigma v_j} \sum_x z^{2\Sigma x} y^{2|x|}.
\]
This is the generating function for restricted compositions of the form
\[{v_k} {v_i} w {v_j} x v_k,\]
where $w$ is a concatenation of vertices in $V(\mathcal{D})$,
with $x^2$ marking total sum and $y^2$ marking length, and such that the
final $v_k$ does not count.
Summing over all $i,j,k$ enumerates cyclically restricted $m$-compositions
where $m \geq 3\sigma$.
\end{proof}

\ifasymptotics
We now wish to extract asymptotic information from $C(z,y)$.
The technicalities that arise come almost entirely from the transfer operator
rather than the endpoint operator, and this analysis is available in
\cite{infinite} which uses advanced tools from functional analysis that
generalize finite dimensional matrix theory.
Here we simply provide the required minor adaptations of \cite{infinite}.
The use of $C(z^2,y^2)$ rather than $C(z,y)$ is a technical requirement
for some manipulations of the infinite matrices.


\begin{definition} \label{def:regular}
We say a local restriction function $R$ is \emph{regular} if the following
hold.
\begin{enumerate}
\item The infinite digraph $\mathcal{D}$ contains at least two vertices,
  is strongly connected, and is aperiodic.
\item There is $p \in \mathbb{Z}_{>0}$ and (possibly equal) vertices $u, v$ such that
  \[ \gcd\{m-n: m,n \in \Xi\} = 1, \]
  where
  \[ \Xi = \left\{
    \Sigma x_1 + \cdots + \Sigma x_{p-1} :
    u, x_1, \ldots, x_{p-1}, v \text{ is a walk in } \mathcal{D}
  \right\}. \]
\end{enumerate}
\end{definition}


\begin{definition}
Let $\Omega \subseteq \mathbb{C}$ be a domain.
Then $\mathcal{M}(\Omega)$ is the set of infinite matrices $M(z)$ such that
each entry $[M(z)]_{i,j}$ is holomorphic in $\Omega$ and such that for every
compact $K \subseteq \Omega$ there exists $c >0$ with
\[ \sum_{i,j \geq 1} |[M(z)]_{i,j}|^2 \leq c, \qquad \forall z \in K. \]
\end{definition}

\begin{proposition} \label{prop:step1}
Let $R$ be a regular local restriction function, and let $T(z)$ and $E(z, y)$
be the associated transfer and endpoint operators.
Let $S(z,1) = \sum_{j \geq 0} T(z)^j$.
Then $S(z, 1)$ has radius of convergence $r$ satisfying $0<r<1$.
There exists a domain $\Omega \supseteq \{ z: |z| \leq r, z \neq \pm r\}$
such that
$S(z,1), E(z,1) \in \mathcal{M}(\Omega)$.
\textbf{*False!* see emails w/ R. Canfield}
\end{proposition}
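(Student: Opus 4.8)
The plan is to treat $T(z)$ as a Hilbert--Schmidt operator on $\ell^2(V(\mathcal{D}))$ and to analyze $S(z,1)=(I-T(z))^{-1}$ through the spectral radius of $T(z)$. The foundational estimate is that $T(z)$ is Hilbert--Schmidt for every $|z|<1$: the number of allowed $\sigma$-tuples $v$ with $\Sigma v = n$ is at most the number of compositions of $n$ into $\sigma$ positive parts, namely $\binom{n-1}{\sigma-1}$, so $\sum_i |z|^{2\Sigma v_i}\le\sum_{n\ge\sigma}\binom{n-1}{\sigma-1}|z|^{2n}$ converges for $|z|<1$. Hence
\[
\sum_{i,j}\bigl|[T(z)]_{i,j}\bigr|^2 \le \Bigl(\sum_i |z|^{2\Sigma v_i}\Bigr)^2 < \infty,
\]
so $T(z)\in\mathcal{M}(\Omega)$ for any $\Omega$ inside the open unit disk, and $T(z)$ is compact there. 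The series $S(z,1)=\sum_{j\ge 0}T(z)^j$ then converges in operator norm precisely when $\rho(T(z))<1$.

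Next I would pin down $r$. For real $z\in(0,1)$ the entries of $T(z)$ are nonnegative, and the strong connectedness of $\mathcal{D}$ (part of regularity, Definition \ref{def:regular}) makes $T(z)$ irreducible; by the Krein--Rutman theorem the dominant eigenvalue $\lambda(z)=\rho(T(z))$ is simple, positive, and strictly increasing in $z$. As $z\to 0^+$ we have $\lambda(z)\to 0$, giving $r>0$; and because $\mathcal{D}$ has at least two vertices and supports two distinct cycles, the number of walks of bounded total grows exponentially (as in Proposition \ref{pro:2vert}), forcing $\lambda(z)$ past $1$ before $z$ reaches $1$. Thus there is a unique $r\in(0,1)$ with $\lambda(r)=1$, which is the radius of convergence of $S(z,1)$.

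For the domain $\Omega$ I would first explain why both $\pm r$ must be excluded. Writing $D$ for the diagonal operator with $[D]_{i,i}=(-1)^{\Sigma v_i}$, one checks $D^2=I$ and $DT(z)D=T(-z)$, so $T(-z)$ is similar to $T(z)$ and $\rho(T(-z))=\rho(T(z))$; hence $z=-r$ is a singularity of $S$ exactly when $z=r$ is. For the remaining boundary points $z=re^{i\theta}$, $\theta\ne 0,\pi$, the aperiodicity/GCD hypothesis of Definition \ref{def:regular} should force the peripheral spectrum of $T(z)$ off the value $1$, so $\rho(T(z))<1$ strictly and $(I-T(z))^{-1}$ is holomorphic there; taking $\Omega$ to be an open neighbourhood of $\{|z|\le r\}\setminus\{\pm r\}$ on which this holds gives the claimed domain. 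The operator $E(z,1)$ is then handled by direct estimation: its entries are bounded by $C|z|^{\Sigma v_i+\Sigma v_j}$ (the inner $k$- and $x$-sums converging for $|z|<1$), so $E(z,1)\in\mathcal{M}(\Omega)$ throughout the unit disk, with no boundary subtlety.

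The hard part will be the claim $S(z,1)\in\mathcal{M}(\Omega)$ itself, and here I expect a genuine obstruction rather than a routine estimate. The difficulty is that $S(z,1)=\sum_{j\ge 0}T(z)^j$ contains the $j=0$ term $T(z)^0=I$, and the identity is \emph{not} Hilbert--Schmidt, since $\sum_{i,j}|[I]_{i,j}|^2=\sum_i 1=\infty$. Because every diagonal entry satisfies $[S(z,1)]_{i,i}\ge 1$, we get $\sum_{i,j}|[S(z,1)]_{i,j}|^2\ge\sum_i 1=\infty$, so $S(z,1)$ can never lie in $\mathcal{M}(\Omega)$ as literally asserted. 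The natural repair is to state the Hilbert--Schmidt property for $S(z,1)-I=T(z)S(z,1)$ (or for $T(z)S(z,1)T(z)$), a product of the Hilbert--Schmidt $T(z)$ with a bounded operator, which does stay in $\mathcal{M}(\Omega)$; since $S$ only ever enters $C(z^2,y^2)$ paired with $E$-factors that already carry such $T(z)$ weights, the correction is harmless downstream. A secondary and more delicate point is the strict peripheral bound $\rho(T(re^{i\theta}))<1$ for an infinite-dimensional positive operator, where the aperiodicity hypothesis must do real work and the finite-dimensional Perron--Frobenius intuition is not automatic.
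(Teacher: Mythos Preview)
Your analysis is sharper than the paper's own treatment. The paper's proof of this proposition is essentially a one-line deferral: it observes that $\sum_{i,j}[E(z,1)]_{i,j}=\sum_n n^{O(1)}z^n$ and then asserts that everything else ``follows directly from (the proofs of) Theorems 1 and 2 in \cite{infinite},'' with $\Omega$ taken inside $\{z:\spr(T(z))<1\}$. There is no independent spectral analysis, no discussion of the $\pm r$ symmetry, and no verification that $S(z,1)$ actually lands in $\mathcal{M}(\Omega)$.

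You have gone further and found the reason the statement carries the author's own annotation \textbf{*False!*}: the $j=0$ term of $S(z,1)=\sum_{j\ge 0}T(z)^j$ is the identity on an infinite-dimensional $\ell^2$ space, so $\sum_i|[S(z,1)]_{i,i}|^2\ge\sum_i 1=\infty$ and $S(z,1)\notin\mathcal{M}(\Omega)$ for any $\Omega$. This is exactly the obstruction, and your diagnosis matches what the author evidently learned after the fact (the parenthetical reference to correspondence with Canfield). Your proposed repair --- replacing $S$ by $S-I=T(z)S(z,1)$ or by $T(z)S(z,1)T(z)$, which are Hilbert--Schmidt as products of a Hilbert--Schmidt operator with a bounded one --- is the right kind of fix, and your observation that the downstream use of $S$ in $C(z^2,y^2)=\sum_{i,j}[S]_{i,j}[E]_{i,j}$ already pairs $S$ with factors carrying $z^{\Sigma v_i+\Sigma v_j}$ weights is the correct reason the repair is harmless.

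The remainder of your outline (Hilbert--Schmidt bound on $T(z)$, Krein--Rutman for the simple dominant eigenvalue on $(0,1)$, the conjugation $DT(z)D=T(-z)$ forcing the $-r$ singularity, and the aperiodicity argument for $|z|=r$, $z\ne\pm r$) is the substance that \cite{infinite} supplies and that the paper merely cites; your reconstruction of it is accurate in spirit, with the caveat you already flag that the peripheral-spectrum step in infinite dimensions needs the specific machinery of that reference rather than finite Perron--Frobenius.
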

\begin{proof}
We observe that $\sum_{i,j} [E(z,1)]_{i,j} = \sum_n n^{O(1)} z^n$;
the remainder of the proof follows directly from (the proofs of) Theorems 1 and
2 in \cite{infinite}.
Specifically, $\Omega$ can be taken to be a subset of $\{z : \spr(T(z)) < 1\}$,
where $\spr(T(z))$ is the spectral radius of $T(z)$.
\end{proof}
\begin{proposition} \label{prop:step2}
Let $\Omega \subseteq \mathbb{C}$ be a domain.
If $S(z), E(z) \in \mathcal{M}(\Omega)$, then
$\sum_{i,j \geq 1} [S(z)]_{i,j} [E(z)]_{i,j}$ is holomorphic on $\Omega$.
\end{proposition}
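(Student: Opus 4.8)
The plan is to realize the sum as a locally uniform limit of holomorphic functions and then invoke a normal-families argument. Write $g_{i,j}(z) = [S(z)]_{i,j}\,[E(z)]_{i,j}$ and $f(z) = \sum_{i,j \geq 1} g_{i,j}(z)$, and let $f_N(z) = \sum_{1 \leq i,j \leq N} g_{i,j}(z)$ denote the square partial sums. Since each entry $[S(z)]_{i,j}$ and $[E(z)]_{i,j}$ is holomorphic on $\Omega$ by the definition of $\mathcal{M}(\Omega)$, each $g_{i,j}$ is holomorphic, and hence so is every finite partial sum $f_N$. The goal is to show $f_N \to f$ in a way that forces $f$ to be holomorphic.

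The key estimate is Cauchy--Schwarz on the index set. Fix a compact $K \subseteq \Omega$ and let $c_S, c_E > 0$ be the constants from the definition of $\mathcal{M}(\Omega)$, so that $\sum_{i,j}|[S(z)]_{i,j}|^2 \leq c_S$ and $\sum_{i,j}|[E(z)]_{i,j}|^2 \leq c_E$ for all $z \in K$. For any finite index set $F$ and any $z \in K$,
\[
\sum_{(i,j) \in F} |g_{i,j}(z)| \leq \Bigl(\sum_{(i,j)\in F}|[S(z)]_{i,j}|^2\Bigr)^{1/2}\Bigl(\sum_{(i,j)\in F}|[E(z)]_{i,j}|^2\Bigr)^{1/2} \leq \sqrt{c_S\, c_E}.
\]
Letting $F$ exhaust $\mathbb{Z}_{>0}^2$ shows the double series converges absolutely at every $z \in \Omega$, so $f$ is well defined and $f_N(z) \to f(z)$ pointwise. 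The same bound, applied to $F = \{1,\dots,N\}^2$, gives $|f_N(z)| \leq \sqrt{c_S\,c_E}$ for all $N$ and all $z \in K$; thus the family $\{f_N\}$ is locally uniformly bounded on $\Omega$.

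The main obstacle is that one cannot prove $f_N \to f$ \emph{locally uniformly} by the obvious route of bounding the tail $f - f_N = \sum_{(i,j)\notin\{1,\dots,N\}^2} g_{i,j}$, because the definition of $\mathcal{M}(\Omega)$ supplies only a uniform $\ell^2$ \emph{bound} on $K$, not uniform $\ell^2$ \emph{tail decay}; Cauchy--Schwarz bounds the tail by $\sqrt{c_S\,c_E}$ but gives no decay in $N$. The resolution is to use boundedness rather than tail convergence: by the Vitali--Porter theorem, a locally uniformly bounded sequence of holomorphic functions that converges pointwise on a set with an accumulation point in the domain (here on all of $\Omega$) converges locally uniformly to a holomorphic limit. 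Combined with the two properties established above---local uniform boundedness and pointwise convergence to $f$---this yields that $f$ is holomorphic on $\Omega$, as claimed. (Equivalently, one could argue via Morera: continuity of $f$ follows once local uniform convergence is known, and for any triangle $\Delta \subset \Omega$ the dominated convergence theorem applied on the compact contour $\partial\Delta$, with dominating constant $\sqrt{c_S\,c_E}$, lets one interchange sum and integral, so $\oint_{\partial\Delta} f = \sum_{i,j}\oint_{\partial\Delta} g_{i,j} = 0$ by Cauchy's theorem.)
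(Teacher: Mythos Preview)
Your proposal is correct and takes essentially the same approach as the paper: establish local uniform boundedness of the square partial sums via Cauchy--Schwarz, obtain pointwise convergence, and then pass to a holomorphic limit by a normal-families argument. The only cosmetic difference is that you cite the Vitali--Porter theorem directly, whereas the paper reconstructs it from Montel's theorem together with the subsequence principle (every subsequence has a uniformly convergent further subsequence with the same pointwise limit, hence the full sequence converges locally uniformly).
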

\begin{proof}
This follows from \cite[Proposition 4 (b)]{infinite} if we suitably interpret
$S(z)$ and $E(z)$ as vectors and $\sum_{i,j \geq 1} [S(z)]_{i,j} [E(z)]_{i,j}$
as their dot product.
The proof is omitted in \cite{infinite} so we include a direct proof here for
good measure.

We use $c(z)$ to denote the sum
$\sum_{i,j \geq 1} [S(z)]_{i,j} [E(z)]_{i,j}$.
Let $K \subset \Omega$ be compact, and let
$c_1, c_2 >0$ satisfy
\[ \sum_{i,j \geq 1} |[S(z)]_{i,j}|^2 \leq c_1,\,\,\,
  \sum_{i,j \geq 1} |[E(z)]_{i,j}|^2 \leq c_2 \]
for all $z \in K$.
Then by the Cauchy-Schwarz inequality,
\begin{equation*} \label{eq:compact}
\left( \sum_{i,j \geq 1} |[S(z)]_{i,j} [E(z)]_{i,j}|\right)^2 \leq c_1 c_2,
  \qquad \forall z \in K.
\end{equation*}
This implies that the partial sums
$\sum_{i=1}^N \sum_{j=1}^N [S(z)]_{i,j} [E(z)]_{i,j}$ of
$c(z)$ are uniformly bounded on compact $K$ since
\begin{align*}
\left| \sum_{i=1}^N \sum_{j=1}^N [S(z)]_{i,j} [E(z)]_{i,j} \right| &\leq
  \sum_{i=1}^N \sum_{j=1}^N |[S(z)]_{i,j} [E(z)]_{i,j}| \\
  &\leq \sum_{i,j \geq 1} |[S(z)]_{i,j} [E(z)]_{i,j}| \\
  &\leq \sqrt{c_1 c_2}.
\end{align*}
In particular, we have pointwise absolute convergence of $c(z)$ on $\Omega$.

By \cite[Theorem 10.28]{rudin2} it suffices to show that $c(z)$ converges
uniformly on compact subsets of $\Omega$.
If $\mathcal{F}$ is the set of all partial sums of $c(z)$, then
\cite[Theorem 14.6]{rudin2} implies that since $\mathcal{F}$ is uniformly
bounded on compact subsets of $\Omega$, every sequence in $\mathcal{F}$ has
a subsequence that converges uniformly on compact subsets of $\Omega$.
Let $s^{\langle n \rangle}$ be a sequence in $\mathcal{F}$ that converges
pointwise to $c(z)$.
For a given compact $K$, every subsequence of $s^{\langle n \rangle}_{|K}$ that
converges uniformly
must converge to the same limit, namely $c_{|K}(z)$.
Therefore $s^{\langle n \rangle}_{|K}$ itself converges uniformly.
\end{proof}

\begin{proposition} \label{prop:step3}
Assume $R$ is a regular local restriction function with transfer operator
$T(z)$, endpoint operator $E(z,y)$, and generating function $C(z,y)$.
Let $S(z,1) = \sum_j T(z)^j$ be an infinite matrix with radius of convergence
$0 < r < 1$.
If $C(z^2, 1) = \sum_{i,j} [S(z,1)]_{i,j} [E(z,1)]_{i,j}$ is holomorphic for
$|z| \leq r$ with possible exceptions $z = \pm r$, then
$C(z, 1)$ has radius of convergence $\sqrt{r}$ and is holomorphic for $|z| =
\sqrt{r}$ except for a simple pole at $z = \sqrt{r}$.
\end{proposition}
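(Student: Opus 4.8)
The plan is to transfer the singularity structure of $S(z,1)$ through the bilinear sum that defines $C(z^2,1)$, and then through the substitution $w=z^2$. The key simplification is that the coefficients of $C(z,1)=\sum_n a_n z^n$ are nonnegative, since they count cyclically restricted compositions. By Pringsheim's theorem the radius of convergence of $C(z,1)$ is therefore itself a singularity lying on the positive real axis, and $C(z,1)$ is analytic throughout the open disk of that radius. Thus the whole problem reduces to locating the dominant (positive real) singularity and identifying its type, and the hypothesis that $C(z^2,1)$ is holomorphic on $|z|\le r$ apart from $z=\pm r$ already excludes any competing singularity once the correspondence between the two functions is made precise.

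First I would make the change of variables explicit. Writing $C(w,1)$ for the series in its own variable, the defining identity $C(z^2,1)=\sum_{i,j}[S(z,1)]_{i,j}[E(z,1)]_{i,j}$ says that $w\mapsto C(w,1)$ and $z\mapsto C(z^2,1)$ are linked by $w=z^2$, a map that is locally biholomorphic away from the origin. By Proposition~\ref{prop:step2} together with the hypothesis, $C(z^2,1)$ is holomorphic for $|z|<r$ and on $|z|=r$ except at $\pm r$, so $C(z^2,1)$ has radius of convergence exactly $r$ in $z$ (the real positive singularity furnished by Pringsheim sits at $z=r$). Pushing this through $w=z^2$ carries the singularity at $z=r$ to the corresponding dominant singularity of $C(w,1)$, which yields the asserted radius of convergence $\sqrt{r}$.

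The heart of the argument is to show this dominant singularity is a \emph{simple pole}. Here I would use the formal representation $S(z,1)=(I-T(z))^{-1}$ together with the Perron--Frobenius theory for the infinite nonnegative transfer operators $T(z)$ developed in \cite{infinite}. The radius $r$ is precisely the value at which $\spr(T(r))=1$ (Proposition~\ref{prop:step1}), and because $\mathcal{D}$ is strongly connected and aperiodic, $T(z)$ has a simple, isolated dominant eigenvalue $\lambda(z)$ that is analytic near $z=r$ with $\lambda(r)=1$; monotonicity of the entries in $z$ forces $\lambda'(r)\neq 0$. Consequently, near $z=r$ the operator $S(z,1)$ has a leading contribution proportional to $1/(1-\lambda(z))\sim \mathrm{const}/(z-r)$, a simple pole with rank-one residue. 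Pairing this against $E(z,1)$, which lies in $\mathcal{M}(\Omega)$ and is therefore analytic at $z=r$, and performing the bilinear summation (justified uniformly by the $\mathcal{M}(\Omega)$ bounds and Cauchy--Schwarz exactly as in the proof of Proposition~\ref{prop:step2}), the simple pole survives; positivity of every quantity in play guarantees the residue does not vanish, so no cancellation can remove the pole or raise its order. Since $w=z^2$ is conformal near $z=r$, a simple pole at $z=r$ maps to a simple pole of $C(z,1)$ at $z=\sqrt{r}$, completing the argument.

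The step I expect to be the main obstacle is this simple-pole claim in the infinite-dimensional setting: rigorously justifying the analytic perturbation of the dominant eigenvalue of $T(z)$, the exact order-one vanishing of $1-\lambda(z)$ at $z=r$, and the non-vanishing of the residue after summing the bilinear form against $E(z,1)$. All of this rests on the functional-analytic machinery behind Theorems~1 and~2 of \cite{infinite} (the generalization of finite matrix theory to these operators), and care is needed in interchanging the infinite sum with the limit $z\to r$, which is where the uniform $\mathcal{M}(\Omega)$ estimates do the essential work.
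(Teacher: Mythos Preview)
The paper does not supply its own argument here; the entire proof reads ``See Theorems 1--3 and their proofs in \cite{infinite}.'' Your proposal is essentially a reconstruction of what those theorems establish: Perron--Frobenius theory for the infinite nonnegative transfer operator, analytic perturbation of the simple dominant eigenvalue $\lambda(z)$ near the critical point with $\lambda'(r)\neq 0$, and propagation of the resulting simple pole of $(I-T(z))^{-1}$ through the bilinear pairing with $E(z,1)$. At the level of strategy you and the paper agree; the paper simply outsources the functional-analytic details to the reference rather than sketching them as you do.

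There is, however, a concrete slip in your change-of-variables step. If $C(z^2,1)$, viewed as a function of $z$, has its dominant singularity at $z=r$, then under $w=z^2$ the singularity of $C(w,1)$ sits at $w=r^2$, not at $w=\sqrt{r}$. Equivalently: if $C(w,1)$ has radius of convergence $\rho$ in $w$, then $C(z^2,1)$ has radius $\sqrt{\rho}$ in $z$, so $\sqrt{\rho}=r$ forces $\rho=r^2$. Your text asserts $\sqrt{r}$, which matches the proposition's stated conclusion but is not what your own substitution produces. (The surrounding block in the paper carries an author's warning and sits inside a disabled conditional, so the stated value may itself be in error; regardless, as written your substitution step does not close.)
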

\begin{proof}
See Theorems 1--3 and their proofs in \cite{infinite}.
\end{proof}

\begin{proposition} \label{prop:intcomp}
If $R$ is a regular local restriction function with generating function
$C(z,y)$ enumerating locally cyclically restricted integer compositions, then
\[ [z^n]C(z,1) = A \cdot B^n (1 + O(\theta^n)), \qquad n \to \infty, A > 0, B >
  1, 0 \leq \theta < 1. \]
\end{proposition}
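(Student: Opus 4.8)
The plan is to read off the analytic structure of $C(z,1)$ by chaining Propositions \ref{prop:step1}, \ref{prop:step2}, and \ref{prop:step3}, and then to extract the coefficient asymptotics by meromorphic singularity analysis. First I would invoke Proposition \ref{prop:step1} to fix a domain $\Omega \supseteq \{z : |z| \le r,\ z \ne \pm r\}$, with $0 < r < 1$, on which both $S(z,1)$ and $E(z,1)$ lie in $\mathcal{M}(\Omega)$. Applying Proposition \ref{prop:step2} with these two matrices then shows that
\[ C(z^2,1) = \sum_{i,j \ge 1} [S(z,1)]_{i,j}\,[E(z,1)]_{i,j} \]
is holomorphic on $\Omega$, hence holomorphic for $|z| \le r$ with the only possible exceptions at $z = \pm r$. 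This is exactly the hypothesis of Proposition \ref{prop:step3}, which I would apply to conclude that $C(z,1)$ has radius of convergence $\sqrt{r}$ and is holomorphic at every point of the circle $|z| = \sqrt{r}$ except for a simple pole at the positive real point $z = \sqrt{r}$.

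With this structure in hand the asymptotics are standard. Since $C(z,1)$ is holomorphic at each point of the circle $|z| = \sqrt{r}$ other than $\sqrt{r}$, and since $(z - \sqrt{r})\,C(z,1)$ extends holomorphically across $\sqrt{r}$, a compactness argument on the circle yields a radius $R > \sqrt{r}$ for which $C(z,1)$ is meromorphic on $\{|z| < R\}$ with a unique pole there, the simple one at $z = \sqrt{r}$. Subtracting the principal part of this pole leaves a function holomorphic on $\{|z| < R\}$, whose coefficients are $O((R')^{-n})$ for any $\sqrt{r} < R' < R$; equivalently I would cite the meromorphic expansion theorem (Theorem IV.10 in \cite{ac}). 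This gives
\[ [z^n] C(z,1) = A \cdot B^n\,(1 + O(\theta^n)), \qquad n \to \infty, \]
with $B = 1/\sqrt{r}$ and $\theta = \sqrt{r}/R' < 1$. Because $0 < r < 1$ we have $B = 1/\sqrt{r} > 1$, as required.

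It remains to verify $A > 0$. The coefficients $[z^n] C(z,1)$ count locally cyclically restricted integer compositions and so are nonnegative, while the simple pole at $z = \sqrt{r}$ forces $A \ne 0$; hence $A > 0$. (Equivalently, Pringsheim's theorem places the dominant singularity on the positive real axis, where the residue of the simple pole produces a positive leading constant.) I expect the only genuinely delicate point in my own argument — essentially all of the hard functional analysis being absorbed into Propositions \ref{prop:step1}--\ref{prop:step3} and the cited results of \cite{infinite} — to be the passage from holomorphy on the boundary circle minus one point to meromorphy on a strictly larger disc, since it is precisely this continuation, rather than mere holomorphy up to the circle, that is responsible for the exponentially small error term $O(\theta^n)$ in place of a weaker $o(1)$ correction.
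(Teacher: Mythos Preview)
Your proposal is correct and follows essentially the same approach as the paper: chain Propositions \ref{prop:step1}--\ref{prop:step3} to establish that $C(z,1)$ is meromorphic on a disc of radius exceeding $\sqrt{r}$ with a unique simple pole at $\sqrt{r}$, then invoke \cite[Theorem IV.10]{ac}. You in fact supply more detail than the paper does, making explicit the compactness argument that promotes boundary holomorphy to meromorphy on a larger disc and spelling out why $A>0$.
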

\begin{proof}
This is direct from Propositions \ref{prop:step1}--\ref{prop:step3} and an
application of meromorphic generating function asymptotics.
Since $C(z, 1)$ has a single pole on $|z| =r$, there is $\epsilon >0$
such that $C(z, 1)$ is meromorphic on $|z| \leq r + \epsilon$ and we may
employ \cite[Theorem IV.10]{ac}.
\end{proof}
\fi

We consider some examples.

\ifasymptotics
\begin{lemma} \label{lem:compsubwordgraph}
There is a regular local restriction function associated with integer
compositions avoiding a non-trivial (length $\geq 2$) subword pattern $\tau$,
as long as $\tau$ is not $1 2^{\sigma-1}$ or its symmetries ($1 2^p, 2^p 1$,
and $2 1^p$).
\end{lemma}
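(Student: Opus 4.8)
The plan is to let $R$ be the avoidance condition for $\tau$ with span $\sigma=|\tau|$, so that $R^{-1}(1)$ consists of the $\sigma$-tuples of positive integers that are not order isomorphic to $\tau$ and $\mathcal{D}$ is the associated concatenation digraph on these tuples. Since $R^{-1}(1)$ is infinite, the first clause of Definition~\ref{def:regular} (at least two vertices) is immediate, and the real content is strong connectedness and aperiodicity of $\mathcal{D}$ together with the sum-gcd condition. A convenient feature here is that every arc of $\mathcal{D}$ glues two length-$\sigma$ blocks, so \emph{any} walk in $\mathcal{D}$ automatically represents a $\tau$-avoiding composition whose length is a multiple of $\sigma$; consequently I only need to exhibit, between arbitrary allowed blocks $u$ and $v$, a sequence of allowed blocks $u,b_1,\dots,b_{\ell-1},v$ in which every adjacent concatenation avoids $\tau$.

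For strong connectedness I would mirror the case analysis in Lemma~\ref{lem:patternsconnected}, using that over $\mathbb{Z}_{>0}$ there are unboundedly large ``fresh'' values that cannot enter an occurrence of $\tau$ alongside the bounded entries of $u$ and $v$. Writing $j^{*}$ for the largest letter of $\tau$ and splitting on whether the extreme letters of $\tau$ sit only at its ends, one finds in each surviving case a connector run — a low block $1^{\sigma}$, a high constant block $N^{\sigma}$ with $N$ large, or a short chain such as $N^{\sigma}1^{\sigma}$ — that may be prepended or appended to any allowed block; routing $u\to(\text{connector})\to v$ then gives the desired block sequence. The one adaptation relative to Lemma~\ref{lem:patternsconnected} is that $\mathcal{D}$ forces us to splice whole length-$\sigma$ blocks rather than single letters (to keep the total length divisible by $\sigma$), so the single-letter insertions used there must be replaced by block-length connectors; the excluded family $\{1^{\sigma-1}2,\,2\,1^{\sigma-1},\,1\,2^{\sigma-1},\,2^{\sigma-1}1\}$ is exactly the obstruction identified in Lemma~\ref{lem:patternsconnected}, where a long low run cannot be joined to a long high run without producing $\tau$.

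Aperiodicity of $\mathcal{D}$ I would get from a single loop, which forces the gcd of cycle lengths to be $1$. If $\tau\neq 1^{\sigma}$ a constant block $w=c^{\sigma}$ satisfies $\bar R(ww)=1$, so $(w,w)\in E(\mathcal{D})$; if $\tau=1^{\sigma}$ a strictly increasing block $w$ has no constant length-$\sigma$ window in $ww$, so again $(w,w)\in E(\mathcal{D})$. For the second clause of Definition~\ref{def:regular} I would take $u=v$ to be such a looped vertex $w$ and $p=2$, and compare the interior totals of the closed walks $w,x,w$: the choice $x=w$ contributes $\Sigma w$ to $\Xi$, while increasing one entry of a valid interior block by $1$ — which the unbounded range of integer parts permits while keeping every window $\tau$-avoiding — contributes $\Sigma w+1$. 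Hence $\Xi$ contains two integers differing by $1$ and $\gcd\{m-n:m,n\in\Xi\}=1$, as required; this uses the same integer-slack mechanism as the aperiodicity step.

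I expect the main obstacle to be the strong-connectedness case analysis: verifying that for every $\tau$ outside the excluded family some block-length connector exists, and that all the boundary windows created when splicing $u$, the connector(s), and $v$ genuinely avoid $\tau$. This is where the hypothesis on $\tau$ is used in full and where the bookkeeping must be checked against the analogous arguments of Lemmas~\ref{lem:patternsconnected} and~\ref{lem:subworddx}. A secondary (but routine) point is the placement of the unit increment in the sum-gcd step, which must be chosen so as not to recreate $\tau$ at a junction — for single-peak patterns $1^{a}21^{b}$ one may need to bump an entry at the very end of a block or to enlarge $p$ — but the availability of arbitrarily large parts makes such a choice possible in every case.
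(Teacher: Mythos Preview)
Your proposal is correct and follows the paper's approach: both take span $\sigma=|\tau|$, obtain aperiodicity of $\mathcal{D}$ from a loop, derive strong connectedness from the case analysis underlying Lemma~\ref{lem:patternsconnected}, and verify the sum-gcd condition by exhibiting two equal-length walks whose interior totals differ by one.

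The differences are only in execution. For strong connectedness the paper does not redo the case analysis at the block level as you propose; it simply cites Lemmas~\ref{lem:patternsconnected} and~\ref{lem:subworddx} to conclude that the de~Bruijn subgraph (span $\sigma$, overlap $\sigma-1$) is strongly connected and aperiodic, so between any two allowed $\sigma$-blocks there exist $\tau$-avoiding compositions of every sufficiently large length---in particular of length a multiple of $\sigma$, which is exactly a walk in $\mathcal{D}$. This is slicker than reworking the connector argument with whole blocks. For condition~2 the paper bypasses your ``bump one entry by~$1$'' heuristic and writes down explicit walks in three cases: for $\tau=1^\sigma$ it uses $p=2$ with $u=v=1^{\sigma-1}2$ and middle blocks $3^{\sigma-1}4$ versus $3^{\sigma-1}5$; for $\tau=1^p2^q$ with $p,q\ge 2$ it uses $p=3$ with $1^\sigma,\,23\cdots\sigma(\sigma{+}1),\,1^\sigma$ versus $1^\sigma,\,23\cdots\sigma(\sigma{+}2),\,1^\sigma$; and for the remaining $\tau$ it uses $p=3$ with $1^\sigma,\,1^{\sigma-1}2,\,2^\sigma,\,1^\sigma$ versus $1^\sigma,\,1^{\sigma-2}22,\,2^\sigma,\,1^\sigma$. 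Going straight to $p=3$ here is exactly the ``enlarge $p$'' fix you anticipate, and it is genuinely needed: for instance with $\tau=121$ and $w=1^3$ there is no admissible middle block of sum~$4$, so your $p=2$ bump fails in that case, as you suspected.
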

\begin{proof}
We verify the conditions in Definition \ref{def:regular} for the corresponding
local restriction function $R$ of span $\sigma = |\tau|$.

We consider condition 1.
Aperiodicity is particularly easy:
unless $\tau = 1^\sigma$, there is a vertex $1^\sigma$ with a loop.
If $\tau = 1^\sigma$, the vertex $1^{\lfloor \sigma/2 \rfloor}
2^{\lceil \sigma/2 \rceil}$ has a loop.
Strong connectedness follows from
Lemmas \ref{lem:patternsconnected} and \ref{lem:subworddx}.

For condition 2, we look at the following cases.
For $\tau = 1^{\sigma}$, take $v_1 = v_2 = 1^{\sigma-1}2$.
Two walks of equal length with sum
differing by $1$ are given by
$1^{\sigma-1}2, 3^{\sigma-1}4, 1^{\sigma-1}2$ and
$1^{\sigma-1}2, 3^{\sigma-1}5, 1^{\sigma-1}2$.
For $\tau = 1^p 2^q$ with $p,q \geq 2$, we take the walks
$1^\sigma, 23 \cdots \sigma(\sigma + 1), 1^\sigma$ and
$1^\sigma, 23 \cdots \sigma (\sigma + 2), 1^\sigma$.
For other $\tau$, two suitable walks are given by
$1^\sigma, 1^{\sigma - 1}2, 2^\sigma, 1^\sigma$ and
$1^\sigma, 1^{\sigma -2} 22, 2^\sigma, 1^\sigma$.
\end{proof}
\fi

\ifasymptotics
\begin{example}
By Corollary 5 in \cite{hadjicostas2017cyclic}, the generating function
for cyclic Carlitz compositions with $z$ marking the total is
\[
C(z) =
\frac{\sum _{n=1}^{\infty } \frac{z^n}{\left(z^n+1\right)^2}}{1-
  \sum _{n=1}^{\infty } \frac{z^n}{z^n+1}}
  +\sum _{n=1}^{\infty } \frac{z^{2 n}}{z^n+1}.
\]
Proposition \ref{prop:intcomp} implies the following.
Let $\rho > 0$ be the radius of convergence of $C(z)$, and let
$A = \lim_{z \to \rho} (1-z/\rho)C(z)$.
We have $[z^n]C(z) = A \cdot \left( \frac{1}{\rho} \right)^n (1 + O(\theta^n))$.
\end{example}
\fi

Research Direction~4.4 in \cite{cofc} begins as follows.
\enquote{We say that a sequence (composition, word, partition) $s_1\cdots s_m$
cyclically avoids a subword $\tau = \tau_1 \cdots \tau_\ell$ if $s_1 \cdots s_m
s_1 \cdots s_{\ell - 1}$ avoids $\tau$.
For example, the composition $33412$ avoids the subword $123$, but does not
cyclically avoid $123$ (since $3341233$ contains $123$).}
The problem is to find the generating function for the number of compositions
of $n$ that cyclically avoid a subword pattern of length $k$.
\ifasymptotics
Lemma \ref{lem:compsubwordgraph} implies that we usually get a regular
local restriction function for subword pattern avoidance.
\fi
We consider the patterns $122$ and $321$.

\begin{example}
Compositions cyclically avoiding $122$ over $[k]$ take the following form.
Either there is no part $k$, the composition only contains $k$, or
there is at least one $k$ and and least one other part.
In this third case, the subwords between any parts $k$ are nonempty
$122$-avoiding integer compositions over $[k-1]$ and so is the composition
obtained by concatenating the subword after the final $k$ and the subword
before the first $k$.

Let $C_{k}(z,u)$ be the generating function for nonempty cyclic $122$-avoiding
compositions where $z$ marks total and $u$ marks length, and let $P_k(z,u)$
be the generating function for nonempty $122$-avoiding compositions.
The above reasoning yields
\[ C_k(z,u) = C_{k-1}(z,u) + \frac{uz^k}{1-uz^k} +
  uz^k \frac{1}{1-P_{k-1}(z,u)u z^k} (u D_u + 1)P_{k-1}(z,u), \]
for $k \geq 2$.

The generating function $P_k(z,u)$ is given in \cite[Theorem 4.35]{cofc} as
\[
P_k(z,u) = \left(
1- \sum_{j=1}^k z^j u \prod_{i=j+1}^{k}(1-z^{2i}u^2)
\right)^{-1} -1.
\]
Let $C(z) = \lim_{k \to \infty}C_k(z,1)$.
\ifasymptotics
Since $\tau = 122$ does not correspond to a strongly connected digraph
$\mathcal{D}$, Proposition \ref{prop:intcomp} does not apply to $C(z)$.
\fi
The coefficients $[z^n]C(z)$ for $n=1, \ldots, 10$ are
$1, 2, 4, 8, 13, 28, 52, 101, 196, 383$.
\end{example}

\begin{example}
For $\tau = 321$, we consider two counting sequences.
Let $\caret 21$ be the pattern $21$ except that it only counts if it appears
at the beginning of a composition.
We count compositions over $[k]$ that avoid both $321$ and $\caret 21$.
Such a composition either has no parts $k$ or has at least one $k$.
In the latter case,
say the composition can be written $\sigma_1 k \sigma'$, where
$\sigma_1$ is a composition on $[k-1]$ and avoids $\{321, \caret 21\}$, and
$\sigma'$ is a composition on $[k]$ avoiding $\{321, \caret 21\}$.
If the composition $\sigma_1$ is empty then either $\sigma'$ is empty or $k
\sigma' = kk \sigma''$ where $\sigma''$ is a composition on $[k]$ avoiding
$\{321, \caret 21\}$.
This method proceeds similarly to the proof of Lemma 4.29 in \cite{cofc}.
Let $\bar{P}_k(z,u)$ be the generating function for compositions
avoiding $\{321, \caret 21\}$ where $z$ marks total and $u$ marks length.
This gives
\[
\bar{P}_k(z,u) = \bar{P}_{k-1}(z,u) +
  (\bar{P}_{k-1}(z,u) -1) u z^k \bar{P}_{k}(z,u) + u z^k
  + u^2 z^{2k} \bar{P}_{k}(z,u),
\]
for $k \geq 2$.
Now we go back to compositions cyclically avoiding just $321$.
Case 1: The composition has no part $k$.
Case 2: There are at least $2$ parts $k$.
Such a composition can be written $\sigma_1 k \sigma' k \sigma_2$, where
$\sigma'$ avoids $\{321, \caret 21\}$ and $\sigma_2 \sigma_1$ is a composition
over $[k-1]$ avoiding $\{321,\caret 21\}$
Case 3: There is $1$ part $k$.
Then the composition is $\sigma_1 k \sigma_2$ where $\sigma_2 \sigma_1$ is a
composition over $[k-1]$ avoiding $\{321,\caret 21\}$.
If $C_k(z,u)$ is the generating function for compositions cyclically avoiding
$321$, we have
\begin{align*}
C_k(z,u) =& C_{k-1}(z,u)
  + u z^k \bar{P}_k(z,u) u z^k (u D_u + 1)\bar{P}_{k-1}(z, u) \\
  &\qquad + u z^k (u D_u + 1)\bar{P}_{k-1}(z, u),
\end{align*}
for $k \geq 2$.
\ifasymptotics
If we let $C(z) = \lim_{k \to \infty}C_k(z,1)$ then
again, Proposition \ref{prop:intcomp} and Lemma \ref{lem:compsubwordgraph}
imply $[z^n]C(z) \sim A \cdot B^n$ with $A, B>0$ determined by $C(z)$.
\fi
\end{example}

\ifasymptotics
\else
The paper \cite{infinite} obtains asymptotics for locally restricted integer
compositions using advanced tools from functional analysis that generalize
finite dimensional matrix theory.
We expect that analogous results hold for locally cyclically restricted integer
compositions.
\fi

%
%

The method of random generation given in Remark \ref{rem:randpaths} achieves
an exact uniform distribution but for compositions over an infinite set
such as $\mathbb{Z}_{>0}$ its performance becomes poor.
Instead we employ a Markov chain Monte Carlo (MCMC) method inspired by the
article \cite{madras2010random} which concerns pattern-avoiding permutations.

The method is as follows.
Let $\tau$ be a permutation pattern, i.e.\ where no letters are repeated,
and assume the length of $\tau$ is at least $3$.
Let $n, m> 0$ be fixed, where $n$ represents a total and $m$ represents
a length.
(The length $m$ can itself be randomly chosen first using exact counting.)
Assume $X_0$ is an $m$-composition of $n$ with at most $2$ distinct
part sizes.
Given $X_h$, $h \geq 0$, we generate $X_{h+1}$ as follows.
Let $j,k$ be independently selected uniformly at random from $[m]$.
If $X_h(j) = 1$ or $j=k$, then $X_{h+1} = X_h$.
Otherwise, let $Y$ be the following composition.
We have $Y(j) = X_h(j) -1$, $Y(k) = X_h(k) + 1$, and
$Y(i) = X_h(i)$ for $i \neq j,k$.
If $Y$ avoids $\tau$, then $X_{h+1} = Y$, otherwise $X_{h+1} = X_h$.
A composition avoiding $123$ generated by this procedure is shown in
Figure \ref{fig:123intm100n300}.

\begin{proposition}
The limiting distribution of the Markov chain $X_h$ is uniform over
$m$-compositions of $n$ that avoid $\tau$.
\end{proposition}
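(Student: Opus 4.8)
The plan is to regard $(X_h)_{h \ge 0}$ as a time-homogeneous Markov chain on the finite set $\mathcal{A}$ of all $m$-compositions of $n$ that avoid $\tau$, and to invoke the fundamental convergence theorem for finite chains: an irreducible, aperiodic chain on a finite state space has a unique stationary distribution, and the distribution of $X_h$ converges to it for every initial state. It therefore suffices to verify that the uniform distribution on $\mathcal{A}$ is stationary and that the chain is aperiodic and irreducible on $\mathcal{A}$. Note first that $X_0 \in \mathcal{A}$: a composition of length $m \ge 3$ with at most two distinct part sizes has at most two distinct values, so no subsequence realizes the three-or-more distinct values of a permutation pattern of length $\ge 3$, and hence it avoids $\tau$.

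First I would establish stationarity of the uniform measure by showing the transition kernel is symmetric on $\mathcal{A}$. Fix distinct $X, Y \in \mathcal{A}$. A transition $X \to Y$ occurs precisely when the chosen ordered pair $(j,k)$ satisfies $j \ne k$ and $Y(j) = X(j) - 1$, $Y(k) = X(k) + 1$, with the remaining coordinates unchanged and $Y$ avoiding $\tau$. Since $Y$ then differs from $X$ in exactly the two positions $j$ and $k$, the pair $(j,k)$ is recovered uniquely from $(X,Y)$, so $P(X,Y) = \tfrac{1}{m^2}[Y \in \mathcal{A}]$. The reverse move employs the pair $(k,j)$, which is legal because $Y(k) = X(k)+1 \ge 2$, giving $P(Y,X) = \tfrac{1}{m^2}[X \in \mathcal{A}]$. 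As $X, Y \in \mathcal{A}$ both hold, $P(X,Y) = P(Y,X) = 1/m^2$. Thus $P$ restricted to $\mathcal{A}$ is symmetric, detailed balance with the uniform measure holds, and the uniform measure is stationary. Aperiodicity is immediate: selecting $j = k$ (probability $1/m$) leaves the state unchanged, so every state carries a self-loop.

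The hard part is irreducibility, that is, connectivity of the graph on $\mathcal{A}$ whose edges are the admissible unit transfers. I would prove this by connecting every $X \in \mathcal{A}$, through moves that stay inside $\mathcal{A}$, to a single canonical target $R$ having at most two distinct part sizes, such as $R = (n-m+1, 1, \ldots, 1)$; since $R$ avoids $\tau$ for the reason above and the moves are reversible, connectivity to $R$ yields connectivity of all of $\mathcal{A}$. To build such a path I would argue by a monovariant: assign to each composition a statistic (for instance a sorting measure toward the shape of $R$, or the number of coordinates whose value differs from the corresponding value in $R$) and show that from any non-canonical $X \in \mathcal{A}$ there is always a single unit transfer that both strictly decreases the statistic and keeps the composition in $\mathcal{A}$.

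The crux, and the step needing the most care, is precisely this last existence claim: that a $\tau$-preserving, progress-making move is always available at a non-canonical state. The length-$\ge 3$ hypothesis on $\tau$ is what I expect to supply the slack, since an occurrence of $\tau$ requires three positions in a fixed relative order, so moving a single unit between two carefully chosen coordinates can be shown not to create any new occurrence while still pushing the composition toward $R$. Making this selection uniformly in $X$ — handling the cases where the natural target coordinate would create an occurrence by routing the unit through an intermediate coordinate — is the combinatorial heart of the argument; once it is in place, the three ingredients combine through the fundamental theorem of finite Markov chains to give the uniform limiting distribution.
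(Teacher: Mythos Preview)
Your framework matches the paper's exactly: symmetry of the kernel gives the uniform stationary law, the self-loop from $j=k$ gives aperiodicity, and irreducibility is the only substantive step. Your treatment of the first two points is correct and complete, and your observation that $X_0\in\mathcal{A}$ because a permutation pattern of length $\ge 3$ needs at least three distinct values is exactly the reason the paper restricts to such $\tau$.

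The gap is in irreducibility. You correctly flag that the existence of a $\tau$-preserving, progress-making move is the crux, but you do not supply one; ``routing the unit through an intermediate coordinate'' is a hope, not an argument, and there is no monovariant on offer for which a greedy step toward $R=(n-m+1,1,\ldots,1)$ is guaranteed to stay inside $\mathcal{A}$. The paper closes this gap with a specific move that always works: transfer one unit from a position carrying the current \emph{maximum} value to a position carrying the current \emph{minimum} value, with ties broken using the structure of $\tau$. Writing $K=|\tau|$, if the maximum is not unique one takes the leftmost maximum when $K$ precedes $K{-}1$ in $\tau$ and the rightmost when $K{-}1$ precedes $K$; dually, if the minimum is not unique one takes the leftmost minimum when $1$ precedes $2$ in $\tau$ and the rightmost otherwise. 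The reason this preserves avoidance is that any putative new occurrence would have to use the decremented position in the role of $K{-}1$ alongside an undecremented maximum in the role of $K$ (or the incremented position in the role of $2$ alongside an unincremented minimum in the role of $1$), and the tie-breaking forces their left--right order to contradict the corresponding order in $\tau$. Iterating this max-to-min transfer drives the number of distinct part sizes down to at most two, at which point one connects to $X_0$. That explicit move, and its $\tau$-dependent tie-breaking, is the missing ingredient in your argument.
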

\begin{proof}
By the theory of Markov chains
\cite[Ex.\ 8.20]{billingsley2008probability},
it suffices to show that $X_h$ is aperiodic, irreducible, and has symmetric
transition probabilities.
Let $p(x,y)$ be the transition probability from a composition $x$ to $y$.
Aperiodicity is clear since $p(x, x) > 0$.
For symmetry, if $x \neq y$ are compositions with $p(x,y) > 0$,
then $p(x,y) = p(y,x) = 1/m^2$.
With symmetry established, irreducibility requires that for any $x$ there is
a sequence of transitions with nonzero probability that lead from $x$ to, say,
$X_0$.
We construct such a sequence.
Repeat the following until there are at most $2$ distinct part
sizes, at which point reaching $X(0)$ is clearly possible.
Let $y$ be the current composition and let $K = |\tau|$.
Let $j$ be the index of the maximum part in $y$; if this is not unique,
take the least such index if $(K, K-1)$ is a subsequence of $\tau$ and take
the greatest such index if $(K-1, K)$ is a subsequence of $\tau$.
Let $k$ be the index of the minimum part in $y$; if this is not unique,
take the least such index if $(1,2)$ is a subsequence of $\tau$, and take
the greatest such index if $(2,1)$ is a subsequence of $\tau$.
Decrement $y(j)$ and increment $y(k)$.
\end{proof}

\begin{figure}
\centering
\includegraphics[width=5.5in]{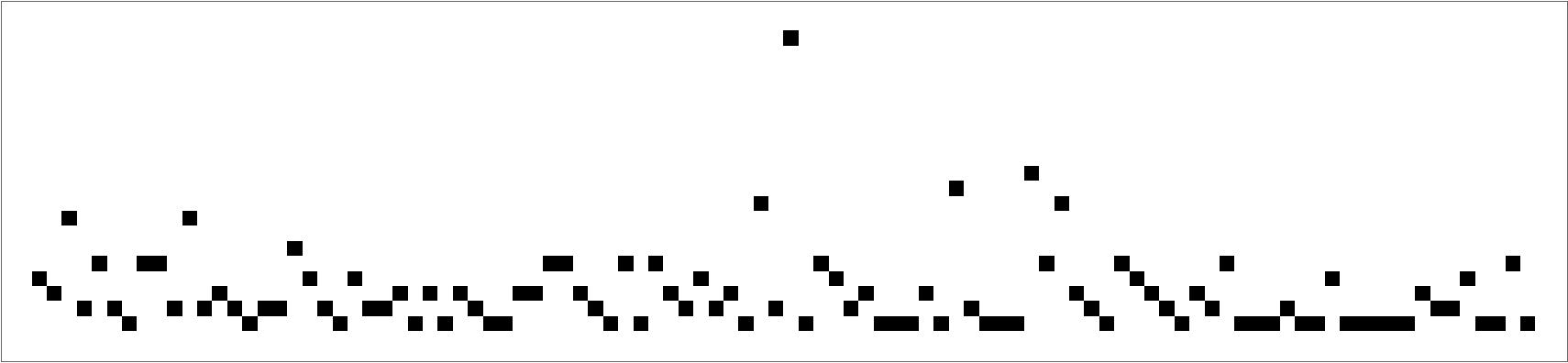}

\caption{
Integer composition of 300 avoiding the subword pattern $123$ generated by
10,000,000 iterations of an MCMC method.
  \label{fig:123intm100n300}}
\end{figure}

\section{Locally restricted compositions with symmetry}

Here we consider locally restricted compositions with symmetry, which
corresponds to local patterns in unlabeled weighted digraphs, in the language
of \S \ref{sec:intro}.
In this section, groups $G$ are assumed to be \emph{abelian}, since the order
of the parts in a composition is no longer well defined.

Although we do not directly invoke it here, general counting with symmetry
typically involves Burnside's lemma.
\begin{lemma}[Burnside]
\label{lem:burnside}
The number of orbits of a permutation group \( S \) on a set \( X \) is
\[ | X / S | = \frac{1}{|S|} \sum_{s \in S} \fix(s), \]
where \( \fix(s) \) is the number of fixed points of \( s \).
\end{lemma}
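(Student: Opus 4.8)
The plan is to prove Burnside's lemma by the standard double-counting argument applied to the incidence set of fixed pairs, combined with the orbit-stabilizer relation. First I would introduce the set
\[ F = \{ (s, x) \in S \times X : s \cdot x = x \} \]
and count it in two ways. Summing over $s \in S$ immediately gives $|F| = \sum_{s \in S} \fix(s)$, since for a fixed $s$ the inner count is exactly the number of points it fixes. Summing instead over $x \in X$ gives $|F| = \sum_{x \in X} |\mathrm{Stab}(x)|$, where $\mathrm{Stab}(x) = \{ s \in S : s \cdot x = x\}$ is the stabilizer of $x$. Equating the two expressions yields the key identity $\sum_{s \in S}\fix(s) = \sum_{x \in X}|\mathrm{Stab}(x)|$.

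Next I would convert the right-hand side into a statement about orbits. By the orbit-stabilizer theorem, for each $x$ the orbit $Sx$ satisfies $|Sx| \cdot |\mathrm{Stab}(x)| = |S|$, so that $|\mathrm{Stab}(x)| = |S|/|Sx|$. Substituting gives
\[ \sum_{x \in X} |\mathrm{Stab}(x)| = |S| \sum_{x \in X} \frac{1}{|Sx|}. \]
The remaining step is to observe that $\sum_{x \in X} 1/|Sx|$ collapses to the number of orbits: grouping the summation by orbit, each orbit $O$ contributes $\sum_{x \in O} 1/|O| = 1$, so the total over all orbits equals $|X/S|$. Combining the identities gives $\sum_{s \in S}\fix(s) = |S|\cdot|X/S|$, and dividing by $|S|$ establishes the claim.

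The argument is entirely elementary, and the only genuine input is the orbit-stabilizer theorem. I expect the sole (minor) obstacle to be making the orbit-grouping step precise, namely invoking that the orbits partition $X$ so that summing $1/|Sx|$ over each block contributes exactly $1$ per orbit. No analytic machinery from the earlier sections is required; the proof is purely combinatorial and group-theoretic, which is why the lemma is quoted here merely as a standard tool for counting with symmetry.
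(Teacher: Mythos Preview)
Your proof is correct and is the standard double-counting argument for Burnside's lemma. The paper does not supply its own proof of this statement; it merely quotes the lemma as a classical tool and refers the reader to \cite[\S 6]{hp} for background, so there is nothing to compare against.
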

Further background may be found in \cite[\S 6]{hp}.

\subsection{Circular compositions} \label{sec:circcomps}

As in \S \ref{sec:cycliccomps}, here $G$ is a finite group and $\bar{D}$
is a $\sigma$-dimensional de Bruijn graph over $G$.
We speak of digraphs $D = \bar{D} - U$ for some
$U \subset V(\bar{D})$.

\begin{lemma} \label{lem:uu}
Assume $x$ is a composition and $x = u \cdots u = u^d$.
If $u$ is a subword containing $r$ cyclic occurrences of $U$, then $x$ contains
$dr$ cyclic occurrences of $U$.
\end{lemma}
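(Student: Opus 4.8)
The plan is to exhibit a content-preserving, $d$-to-one correspondence between the length-$\sigma$ cyclic windows of $x$ and those of $u$, from which the count follows at once. Write $\ell = |u|$, so that $|x| = d\ell =: m$. Recall that a cyclic occurrence of $U$ in a composition $y$ of length $n$ is a block of $\sigma$ consecutive letters read cyclically starting at one of the positions $1, \ldots, n$; equivalently, it is a subword match of some element of $U$ inside $(y(1), \ldots, y(n), y(1), \ldots, y(\sigma-1))$. Thus $u$ has exactly $\ell$ such windows and $x$ has exactly $m$, and the number of cyclic occurrences is the number of these windows whose content lies in $U$.

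The key step is periodicity. Since $x = u^d$, we have $x(j) = u(((j-1) \bmod \ell) + 1)$ for every position $j$ (read cyclically). For a position $i \in \{1, \ldots, m\}$ and an offset $0 \le t \le \sigma - 1$, the $(t+1)$-th letter of the cyclic window of $x$ at $i$ is $u(((i-1+t) \bmod \ell)+1)$, because reducing $i-1+t$ modulo $m$ and then modulo $\ell$ agrees with reducing modulo $\ell$ directly; here I use that $\ell \mid m$. That same letter is the $(t+1)$-th letter of the cyclic window of $u$ at position $p(i) := ((i-1) \bmod \ell)+1$, since $p(i)-1 \equiv i-1 \pmod \ell$. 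Hence the window of $x$ at $i$ and the window of $u$ at $p(i)$ have identical content, so one is an occurrence of $U$ if and only if the other is. This single arithmetic identity handles uniformly the windows of $x$ that straddle the boundaries between successive copies of $u$ as well as the window that wraps around the end of $x$.

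It remains to count. The map $i \mapsto p(i)$ is a surjection $\{1, \ldots, m\} \to \{1, \ldots, \ell\}$ in which every value has exactly $d$ preimages, namely $p, p+\ell, \ldots, p+(d-1)\ell$. Since this map is content-preserving on windows by the previous step,
\[
\#\{\, i : \text{the window of } x \text{ at } i \text{ lies in } U \,\}
= d \cdot \#\{\, p : \text{the window of } u \text{ at } p \text{ lies in } U \,\}
= d r,
\]
which is precisely the claim.

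I expect the only genuine obstacle to be the careful bookkeeping of the cyclic indexing, in particular ensuring that windows of $x$ crossing the seams between copies of $u$, and the final wraparound window of $x$, are all accounted for correctly. The divisibility $\ell \mid m$ is exactly what makes the nested modular reductions collapse, so once this identity is recorded the remainder is routine.
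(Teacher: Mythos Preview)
Your proof is correct and follows essentially the same idea as the paper's own proof, namely that a cyclic occurrence is determined by its starting index and that the starting indices of $x$ project $d$-to-one onto those of $u$ with identical window content. The paper compresses this into two sentences; you have carefully spelled out the modular arithmetic (in particular using $\ell \mid m$ to collapse the nested reductions) that justifies why windows of $x$ crossing seams or wrapping around agree with the corresponding cyclic windows of $u$.
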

\begin{proof}
A cyclic occurrence of $U$ in a composition is fully determined by the starting
index.
All cyclic occurrences of $U$ in $x$ must correspond to an occurrence in
some $u$, and vice versa.
\end{proof}

The \emph{circular shift} of the finite sequence $(x(1), \ldots, x(m))$ is
\[ (x(j), x(j+1), \ldots, x(m), x(1), x(2), \ldots, x(j)), \]
for some $1 \leq j \leq m$.
A circular composition is an equivalence class of cyclically restricted
compositions where the equivalence is under circular shift.
For example, there are two possible circular Carlitz $3$-compositions over
$\mathbb{Z}_3$, each with the same total:
\[ \{012, 201, 120\}, \{021, 210, 102\}. \]

Let $\tilde{\mathcal{C}}_a(m; D)$ be the set of all circular $m$-compositions
of $a$ that are cyclically restricted according to $D$, and define
\[
\tilde{c}_a(m; D) = |\tilde{\mathcal{C}}_a(m; D)|,\qquad
\tilde{C}_a(z; D) = \sum_{m \geq 0}\tilde{c}_a(m; D)z^m.
\]

Let $P = \mathbb{Z}_{> 0} \times G$ be the poset where
$(j, a) \preceq (k, b)$ if and only if $j | k$ and $(k/j)a = b$.
The Moebius function $\mu_P$ of $P$ is defined recursively by
$\mu_P(s, s) = 1$ for $s \in P$ and $\mu_P(s, u) = -\sum_{s \preceq t \prec u}
\mu_P(s,t)$ for $s \prec u$ in $P$.
A finite sequence is aperiodic if it is not equal to any of its circular
shifts.

\begin{proposition} \label{pro:circlecount}
We have
\[
\tilde{c}_a(m; D) = \sum_{(d,b) \preceq (m,a)} \frac{1}{d}
  \sum_{(d', b') \preceq (d, b)} c_{b'}(d'; D) \mu_P((d',b'), (d,b)).
\]
\end{proposition}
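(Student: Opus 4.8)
The plan is to route the count through \emph{aperiodic} compositions and then Möbius-invert over $P$. First I would invoke the unique primitive-root decomposition of a finite sequence: every $x \in \SEQ_m(G)$ can be written uniquely as $x = u^d$ with $u$ aperiodic, where $u$ realizes the minimal period of $x$ and $d = m/|u|$ (so $|u| \mid m$); call $u$ the \emph{root} of $x$. Applying Lemma \ref{lem:uu} with $r = 0$, $x$ has a cyclic occurrence of $U$ if and only if its root does, so $x$ is cyclically restricted according to $D$ exactly when $u$ is. Since $G$ is abelian, the total is additive under powering, $\Sigma x = d\,\Sigma u$; writing $d' = |u|$ and $b' = \Sigma u$, the requirement $\Sigma x = a$ becomes $(m/d')\,b' = a$, which is precisely $(d',b') \preceq (m,a)$ in $P$.

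Next I would introduce the counts $\alpha_b(d; D)$ of aperiodic cyclically restricted $d$-compositions of $b$. Assigning to each cyclically restricted $m$-composition $x$ of $a$ its root $u$ gives a bijection onto the disjoint union, over $(d',b') \preceq (m,a)$, of the sets of aperiodic cyclically restricted $d'$-compositions of $b'$; the inverse sends such a $u$ to $u^{m/d'}$. This yields
\[ c_a(m; D) = \sum_{(d',b') \preceq (m,a)} \alpha_{b'}(d'; D). \]
The down-set of $(m,a)$ in $P$ is finite, since $m$ has finitely many divisors $d'$ and for each the equation $(m/d')b' = a$ has at most $|G|$ solutions $b'$; hence $P$ is locally finite below $(m,a)$ and the Möbius inversion formula applies, giving
\[ \alpha_a(m; D) = \sum_{(d',b') \preceq (m,a)} \mu_P\big((d',b'),(m,a)\big)\, c_{b'}(d'; D). \]

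The remaining step is to count orbits. Circular $m$-compositions of $a$ are the orbits of the circular-shift action of $C_m$ on cyclically restricted $m$-compositions of $a$, and the total is constant on an orbit because shifting preserves the multiset of parts. If $x = u^d$ with $u$ aperiodic of length $d'$, then $x$ has minimal period $d'$, so its orbit has exactly $d'$ elements, and every element of that orbit has a root that is a cyclic shift of $u$. Thus $[x] \mapsto [u]$ is a bijection between circular $m$-compositions of $a$ with root-length $d'$ and circular aperiodic $d'$-compositions of $b'$ (with $(d',b') \preceq (m,a)$), and since each $C_{d'}$-orbit of aperiodic sequences has size $d'$ there are $\alpha_{b'}(d'; D)/d'$ of the latter. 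Summing over the admissible $(d',b')$ gives
\[ \tilde{c}_a(m; D) = \sum_{(d',b') \preceq (m,a)} \frac{1}{d'}\,\alpha_{b'}(d'; D), \]
and substituting the inverted expression for $\alpha_{b'}(d'; D)$, then renaming $(d',b') \to (d,b)$, produces the stated formula. I expect the main obstacle to be the bookkeeping around the root decomposition, namely verifying that an orbit's size equals its root length (this is what supplies the $1/d$ factor) and that roots are shift-invariant across an orbit, rather than the inversion itself, which is routine once the additive identity $c = \sum \alpha$ is established.
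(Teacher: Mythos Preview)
Your proposal is correct and follows essentially the same route as the paper: decompose each cyclically restricted composition uniquely as a power of an aperiodic root, obtain $c_a(m;D)=\sum_{(d',b')\preceq(m,a)}\alpha_{b'}(d';D)$, M\"obius-invert over $P$, and then count circular compositions by grouping orbits by root length to get the $1/d$ factor. The paper's proof is terser (it cites standard word-combinatorics facts for the primitive-root decomposition and the orbit-size claim rather than arguing them), but the logical structure is identical to yours.
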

\begin{proof}
Let $\acyc(m, a)$ be the number of aperiodic cyclically restricted
$m$-compositions of $a$.
For any $m$-composition $x$ of $a \in G$, we have
$x = u \cdots u = u^{m/d}$ for some aperiodic $u$ and some $d$
which divides $m$, by \cite[Theorem 2.3.4]{shallit2008second}.
Thus by Lemma \ref{lem:uu},
\[
c_a(m; D) = \sum_{(d, b) \preceq (m, a)} \acyc(d, b).
\]
By the Moebius inversion formula \cite[Proposition 3.7.1]{stanley1},
\[
\acyc(m, a) = \sum_{(d, b) \preceq (m, a)} c_b(d; D) \mu_P((d,b), (m,a)).
\]
Now, a circular composition consists of all possible shifts of some composition
$x = u^{m/d}$ where $u$ is aperiodic, by \cite[Theorem
2.4.2]{shallit2008second}, so
\[
\tilde{c}_a(m; D) = \sum_{(d,b) \preceq (m,a)} \frac{1}{d} \acyc(d, b),
\]
which gives the result.
\end{proof}

\begin{theorem} \label{thm:circasympt}
Assume $D$ is regular and $c_a(m; D) \sim A_a \cdot B^m$ for $a \in G$.
We have
\[ \tilde{c}_a(m; D) = \frac{1}{m} A_a \cdot B^m (1 + O(\omega^m)), \qquad
  m \to \infty, 0 \leq \omega < 1. \]
All but an exponentially small proportion of
$\tilde{\mathcal{C}}_a(m; D)$ and
$\mathcal{C}_a(m; D)$ are aperiodic.
If $D$ satisfies the assumptions of Theorem \ref{thm:equal}, then $A_a$ does
not depend on $a$.
\end{theorem}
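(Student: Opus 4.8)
The plan is to reduce everything to the two identities established inside the proof of Proposition~\ref{pro:circlecount}, namely
\[
c_a(m; D) = \sum_{(d,b) \preceq (m,a)} \acyc(d, b)
\qquad\text{and}\qquad
\tilde{c}_a(m; D) = \sum_{(d,b) \preceq (m,a)} \tfrac{1}{d}\,\acyc(d, b),
\]
where $\acyc(d,b)$ counts aperiodic cyclically restricted compositions. The first thing I would record is that $B > 1$: since $D$ is regular, $D_\times$ has at least two vertices and is strongly connected and aperiodic, so Proposition~\ref{pro:2vert} forces $B > 1$. I would also note that we may assume $A_a > 0$, since otherwise $c_a(m; D) = 0$ for large $m$ by Proposition~\ref{pro:cyclicasympt}, whence $\tilde{c}_a(m; D) = 0$ and the claim is trivial.

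The core of the argument is an error-term separation in these two sums. In each, the top element $(m,a)$ contributes the dominant piece, while every proper $(d,b) \prec (m,a)$ has $d \mid m$ with $d < m$, hence $d \leq m/2$. Because $\acyc(d,b) \leq c_b(d; D) = O(B^d) = O(B^{m/2})$ uniformly in $b$ (taking a maximum of implied constants over the finite $G$), and the number of elements $(d,b) \preceq (m,a)$ is at most $|G|$ times the number of divisors of $m$, hence $O(m)$, the total contribution of proper elements is $O(m\,B^{m/2})$ in the first sum and, using $1/d \leq 1$, also $O(m\,B^{m/2})$ in the second. Isolating the top term of the first identity gives $\acyc(m,a) = c_a(m;D) + O(m B^{m/2})$. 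Substituting the sharpened form $c_a(m;D) = A_a B^m(1 + O(\theta^m))$, which is available from Proposition~\ref{pro:cyclicasympt} because $D$ is regular, yields $\acyc(m,a) = A_a B^m(1 + O(\omega^m))$. Plugging this into the top term of the second identity and absorbing the $O(m B^{m/2})$ tail then gives
\[
\tilde{c}_a(m;D) = \tfrac{1}{m}\acyc(m,a) + O(m B^{m/2}) = \tfrac{1}{m}A_a B^m\bigl(1 + O(\omega^m)\bigr),
\]
the tail being absorbed precisely because $B > 1$ makes $m^2 B^{-m/2}$ exponentially small; this is the first claim.

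For the aperiodicity statements I would reuse the same bound $\sum_{(d,b)\prec(m,a)}\acyc(d,b) = O(m B^{m/2})$. The number of non-aperiodic members of $\mathcal{C}_a(m;D)$ is exactly $c_a(m;D) - \acyc(m,a)$, which equals this $O(m B^{m/2})$ tail and is thus an exponentially small fraction of $c_a(m;D) \sim A_a B^m$. Likewise the number of non-aperiodic circular classes is $\tilde{c}_a(m;D) - \tfrac{1}{m}\acyc(m,a) = \sum_{(d,b)\prec(m,a)}\tfrac{1}{d}\acyc(d,b) = O(m B^{m/2})$, exponentially small relative to $\tilde{c}_a(m;D) \sim \tfrac{1}{m} A_a B^m$. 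Finally, if $D$ satisfies the hypotheses of Theorem~\ref{thm:equal}, then $D_\times$ is strongly connected and aperiodic, so Proposition~\ref{pro:cyclesequal} gives $c_a(m;D) = A \cdot B^m(1 + O(\theta^m))$ with $A$ independent of $a$; hence $A_a \equiv A$ and the leading constant of $\tilde{c}_a(m;D)$ is likewise independent of $a$.

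I expect the main obstacle to be the careful bookkeeping of the error terms rather than any single deep step: one must verify that proper divisors are bounded by $m/2$, that $\acyc(d,b)$ is dominated uniformly by $c_b(d;D) = O(B^d)$, and — crucially — invoke $B > 1$ from Proposition~\ref{pro:2vert} so that the polynomially many sub-dominant terms times $B^{m/2}$ are genuinely $O(\omega^m)$ smaller than the leading $\tfrac{1}{m} A_a B^m$. Extracting a single clean $\omega < 1$ that works uniformly across all three parts is the only point demanding real care.
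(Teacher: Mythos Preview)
Your proposal is correct and follows essentially the same argument as the paper: both separate the top term $(m,a)$ from the proper elements in the two sums from Proposition~\ref{pro:circlecount}, bound the tail by $O(m\,B^{m/2})$ via $\acyc(d,b)\le c_b(d;D)=O(B^d)$ and $d\le m/2$, and absorb it into an exponential error using $B>1$. Your write-up is, if anything, slightly more explicit than the paper's in invoking Proposition~\ref{pro:2vert} for $B>1$ and in handling the degenerate case $A_a=0$.
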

\begin{proof}
From above we know
\[
\tilde{c}_a(m; D) = \sum_{(d,b) \preceq (m,a)} \frac{1}{d} \acyc(d, b),
\]
where $\acyc(m, a)$ is the number of aperiodic cyclically restricted
$m$-compositions of $a$.
We claim that
$\tilde{c}_a(m; D) \sim \frac{1}{m} \acyc(m, a) \sim \frac{1}{m} c_a(m; D)$.

From Proposition \ref{pro:cyclicasympt} we have
$c_a(m; D) = A_a \cdot B^m(1 + O(\theta^m))$, where $B > 1$.
Now
\begin{align*}
  \sum_{(d,b) \prec (m,a)} \frac{1}{d} \acyc(d,b) &\leq
  \sum_{(d,b) \prec (m,a)} \frac{1}{d} c_b(d; D) \\
  &\leq |G| \frac{m}{2} A_a \cdot B^{m/2}
    \left(1 + O( \max(B^{-m/2}, \theta^{m/2}) ) \right).
\end{align*}

On the other hand, we have
\begin{align*}
c_a(m; D)  \geq \acyc(m, a) &=  c_a(m; D) - \sum_{(d,b) \prec (m, a)} \acyc(d,
  b) \\
&\geq c_a(m; D) - \sum_{(d,b) \prec (m, a)} c_b(d; D),
\end{align*}
and so
\[
- \sum_{(d,b) \prec (m, a)} c_b(d; D) \leq \acyc(m, a) - c_a(m; D) \leq 0.
\]
Thus
$\acyc(m, a) = A_a \cdot B^m(1 + O(\omega^m))$ where
$\omega = \max(\theta, B^{-1/2})$.
\end{proof}

\begin{theorem}
Assume $U$ is nonempty and suppose $\graphf{D} = \bar{D} - U$ is regular with
strongly connected derived digraph $D_\times$.

For $u \in V(\bar{D})$,
let $\mu(u)$ be the minimum number of occurrences of
$U$ in a composition in $\mathcal{P}(\bar{D}, \{u\}, N^{-}(u))$ with at least
$1$ occurrence of $V(D)$.
Let $\mu$ be the minimal such $\mu(u)$.
Assume for all sufficiently large values of $m$ there exist compositions
in $\mathcal{P}(m; \bar{D}, V(D), V(D))$ with exactly $1$ occurrence of $U$,
and that $p(m; D, V(D), V(D)) \sim A \cdot B^m$.

If $r \geq \max(\mu, 1), \mu \geq 0$ then the number of circular
$m$-compositions of $a \in G$ with exactly $r$ cyclic occurrences of $U$ is
\[
\tilde{c}_a(m, r; D)
= m^{r-\mu-1} A_{r,\mu} \cdot B^{m}(1 + O(m^{-1})), \qquad m \to \infty.
\]
\end{theorem}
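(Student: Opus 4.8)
The plan is to adapt the period--decomposition argument behind Proposition \ref{pro:circlecount} and Theorem \ref{thm:circasympt} so that it simultaneously tracks the number of cyclic occurrences of $U$. First I would refine the counting functions: let $\acyc(m,a,r)$ denote the number of aperiodic cyclically restricted $m$-compositions of $a$ with exactly $r$ cyclic occurrences of $U$. Writing any cyclically restricted $m$-composition $x$ of $a$ uniquely as $x = u^{m/d}$ with $u$ aperiodic of length $d$, Lemma \ref{lem:uu} shows that $x$ has $m/d$ times as many cyclic occurrences of $U$ as $u$ does. Hence $x$ has exactly $r$ cyclic occurrences precisely when $(m/d) \mid r$ and $u$ has $rd/m$ of them. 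Classifying by fundamental period, and recalling that a circular composition of fundamental period length $d$ is represented by exactly $d$ shifts, I would obtain
\[
\tilde{c}_a(m, r; D) = \sum_{\substack{(d,b) \preceq (m,a) \\ (m/d) \mid r}} \frac{1}{d}\, \acyc\!\left(d, b, \tfrac{rd}{m}\right), \qquad c_a(m, r; D) = \sum_{\substack{(d,b) \preceq (m,a) \\ (m/d) \mid r}} \acyc\!\left(d, b, \tfrac{rd}{m}\right),
\]
the second identity being the linear analogue underlying Theorem \ref{thm:cyclicr}.

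The key observation is that the aperiodic term $d = m$ dominates both sums: it contributes $\tfrac1m \acyc(m,a,r)$ to the first and $\acyc(m,a,r)$ to the second. Every remaining term has $d \mid m$ with $d \le m/2$, so by Proposition \ref{pro:cyclicasympt} I have the crude bound $\acyc(d, b, rd/m) \le c_b(d; D) = O(B^{m/2})$, with $B > 1$ by regularity and Proposition \ref{pro:2vert}. Since there are at most $m\,|G|$ such pairs $(d,b)$ and the weights $1/d$ are bounded by $1$, the combined contribution of the periodic terms to either sum is $O(m\,B^{m/2})$, which is exponentially smaller than the target growth $m^{r-\mu-1} B^m$.

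Combining these facts with the linear identity gives $\acyc(m,a,r) = c_a(m,r;D) + O(m\,B^{m/2})$, and Theorem \ref{thm:cyclicr} supplies $c_a(m,r;D) = m^{r-\mu} A_{r,\mu} B^m(1 + O(m^{-1}))$, where the hypothesis $r \ge \max(\mu,1)$ is exactly what lets that theorem apply to this leading term. Because $B>1$, the error $O(m\,B^{m/2})$ is absorbed into the relative error $O(m^{-1})$, so $\acyc(m,a,r) = m^{r-\mu} A_{r,\mu} B^m(1 + O(m^{-1}))$ as well. Substituting into the circular identity and again discarding the $O(m\,B^{m/2})$ periodic contribution yields
\[
\tilde{c}_a(m, r; D) = \frac{1}{m}\,\acyc(m,a,r) + O(m\,B^{m/2}) = m^{r-\mu-1} A_{r,\mu} B^m\bigl(1 + O(m^{-1})\bigr),
\]
as claimed, with the same constant $A_{r,\mu}$ inherited from the linear count.

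I expect the main obstacle to be the bookkeeping in the first step rather than the analysis. One must verify carefully that the occurrence count of a periodic composition factors exactly as $m/d$ times that of its aperiodic root (which is precisely where Lemma \ref{lem:uu} does the work), that this forces the divisibility constraint $(m/d)\mid r$, and that one correctly sums over all period-totals $b$ with $(d,b)\preceq(m,a)$ --- a subtlety that is invisible over $\mathbb{Z}$ but genuine in a torsion group, where $(m/d)b = a$ may have several solutions $b$. Once these identities are set up, the remainder is a direct repetition of the exponential-gap estimate already carried out in the proof of Theorem \ref{thm:circasympt}.
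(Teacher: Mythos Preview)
Your proposal is correct and is essentially the paper's own argument: both decompose by fundamental period, use Lemma~\ref{lem:uu} to see that the occurrence count scales by the period multiplier (the paper encodes this via the third coordinate of a poset $Q=\mathbb{Z}_{>0}\times G\times\mathbb{Z}_{>0}$ with $(j_1,a,j_2)\preceq(k_1,b,k_2)$ iff $j_1\mid k_1$, $(k_1/j_1)a=b$, $(k_1/j_1)j_2=k_2$), and then repeat the exponential-gap estimate from Theorem~\ref{thm:circasympt} to isolate the $d=m$ term and invoke Theorem~\ref{thm:cyclicr}. Your version bypasses the Moebius inversion on $Q$ by bounding $\acyc\le c$ directly, which is a harmless simplification of the same idea.
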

\begin{proof}
Let $Q = \mathbb{Z}_{> 0} \times G \times \mathbb{Z}_{>0}$ be a poset where
$(j_1, a, j_2) \preceq (k_1, b, k_2)$ if $j_1 | k_1$, $(k_1/j_1)a = b$,
and $(k_1/j_1)j_2 = k_2$.
The Moebius function $\mu_Q$ of $Q$ is defined recursively by
$\mu_Q(s, s) = 1$ for $s \in Q$ and $\mu_Q(s, u) = -\sum_{s \preceq t \prec u}
\mu_Q(s,t)$ for $s \prec u$ in $Q$.
By analogy to Proposition \ref{pro:circlecount} we have
\begin{align*}
&\tilde{c}_a(m, r) \\
=& \sum_{(d_1,b,d_2) \preceq (m,a,r)}
\frac{1}{d}
  \sum_{(d_1', b',d_2') \preceq (d_1, b, d_2)}
  c_{b'}(d_1', d_2'; D) \mu_Q((d_1',b',d_2'), (d_1,b,d_2)).
\end{align*}

Following the proof of Theorem \ref{thm:circasympt}, the dominant term is
$m^{-1} c_{a}(m, r; D)$, so we conclude with reference to
Theorem \ref{thm:cyclicr}.
\end{proof}

\begin{definition}
A \emph{mixture} of two random variables $X, Y$ with weights $0 \leq p, 1-p
\leq 1$ is a random variable $Z$ such that the distribution functions satisfy
$F_Z(x) = p F_X(x) + (1-p)F_Y(x), x \in \mathbb{R}$.
\end{definition}

The following lemma is an expedient used to show when normalized convergence in
distribution holds up to low-probability events.
\begin{lemma} \label{lem:mixture}
Let $X_n, Y_n \geq 0$ be $L^2$ random variables for
$n \in \mathbb{Z}_{>0}$.
Let $Z_n$ be a mixture of $X_n$ and $Y_n$ with weights $p_n$ and $1-p_n$, where
$p_n \to 1$.
Assume that $E(X_n)$ or $E(Z_n)$ are bounded away from $0$, and that
$\Var(Z_n)$ or $\Var(X_n)$ are bounded away from $0$, and that
\[(1-p_n) \left(E(Y_n^2) + E(Y_n)E(X_n) + E(X_n^2) \right) = o(1).\]
Then we have
$(X_n - E(X_n))/\sqrt{\Var(X_n)} \Rightarrow F$
if and only if
$(Z_n - E(Z_n))/\sqrt{\Var(Z_n)} \Rightarrow F$,
and
$E(X_n) \sim E(Z_n), \Var(X_n) \sim \Var(Z_n)$.
\end{lemma}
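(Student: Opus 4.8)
The plan is to read everything off the defining mixture identity $F_{Z_n} = p_n F_{X_n} + (1-p_n) F_{Y_n}$, after first extracting from the single moment hypothesis that the minority component $Y_n$ contributes negligibly to the first two moments. Write $q = 1-p_n$, $a = E(X_n)$, $b = E(Y_n)$. Since $X_n, Y_n \geq 0$, the three summands in $(1-p_n)(E(Y_n^2)+E(Y_n)E(X_n)+E(X_n^2)) = o(1)$ are individually nonnegative, so $q\,E(X_n^2)$, $q\,E(Y_n^2)$, and $q\,ab$ are each $o(1)$. First I would deduce $q\,E(X_n) = o(1)$ and $q\,E(Y_n) = o(1)$ as well; e.g.\ $\sqrt q\,E(X_n) = \sqrt{q\,E(X_n)^2} \leq \sqrt{q\,E(X_n^2)} = o(1)$ by Cauchy--Schwarz, and multiplying once more by $\sqrt q$ gives $q\,E(X_n) = o(1)$.

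With these estimates the mixture moment formulas $E(Z_n) = a + q(b-a)$ and $E(Z_n^2) = E(X_n^2) + q(E(Y_n^2) - E(X_n^2))$ give $E(Z_n) - E(X_n) = o(1)$ and $E(Z_n^2) - E(X_n^2) = o(1)$ directly. The one mildly delicate computation is $E(Z_n)^2 - E(X_n)^2 = 2q\,ab - 2q\,a^2 + q^2(b-a)^2$, which I would show is $o(1)$ term by term (using $q^2 b^2 \leq q\cdot q E(Y_n^2)$, and similarly for the other pieces) \emph{even though $a$ and $b$ may themselves grow with $n$}. This is exactly where unbounded means could cause trouble, and it is neutralized by the cross-term $q\,ab$ sitting in the hypothesis. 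Combining yields $\Var(Z_n) - \Var(X_n) = o(1)$. The bounded-away-from-$0$ assumption on $E(X_n)$ or $E(Z_n)$ then forces both to be bounded away from $0$ (their difference being $o(1)$), hence $E(X_n) \sim E(Z_n)$; the identical argument for the variances gives $\Var(X_n) \sim \Var(Z_n)$.

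For the distributional equivalence I would work with normalized thresholds. Writing $\mu_X,\sigma_X$ and $\mu_Z,\sigma_Z$ for the means and standard deviations and setting $\tilde X_n := (X_n-\mu_X)/\sigma_X$, the mixture identity gives, for fixed $t$,
\[
P\!\left(\tfrac{Z_n-\mu_Z}{\sigma_Z}\le t\right)
= p_n\, P\!\left(\tilde X_n \le s_n(t)\right) + O(1-p_n),
\qquad s_n(t) = \frac{\mu_Z-\mu_X}{\sigma_X} + \frac{\sigma_Z}{\sigma_X}\,t,
\]
and the moment asymptotics just established show $s_n(t)\to t$ for each fixed $t$ (the additive part $\to 0$ since its numerator is $o(1)$ and $\sigma_X$ is bounded away from $0$, and $\sigma_Z/\sigma_X \to 1$). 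Because $p_n \to 1$, the error term vanishes and the two normalized sequences share a limit whenever one exists.

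The hard part will be converting $s_n(t)\to t$ into genuine weak convergence, since $\tilde X_n$ is evaluated at a threshold that moves with $n$. I would handle this by a standard squeeze: for continuity points $t' < t < t''$ of $F$ we have $s_n(t') < t < s_n(t'')$ eventually, so monotonicity of distribution functions traps $P(\tilde X_n \le s_n(t))$ between $P(\tilde X_n \le t')$ and $P(\tilde X_n \le t'')$; letting $t',t''$ approach $t$ through the (dense) set of continuity points and using continuity of $F$ at $t$ pins down the limit. Reading the displayed identity left-to-right proves $\tilde X_n \Rightarrow F \implies (Z_n-\mu_Z)/\sigma_Z \Rightarrow F$, and reading it right-to-left (dividing through by $p_n \to 1$ and squeezing in the other variable) proves the converse, completing the equivalence.
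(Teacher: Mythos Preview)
Your argument is correct and follows essentially the same route as the paper: both use the mixture moment identities $E(Z_n)=p_nE(X_n)+(1-p_n)E(Y_n)$ and $E(Z_n^2)=p_nE(X_n^2)+(1-p_n)E(Y_n^2)$, bound the minority terms using the hypothesis, and finish with a CDF argument via $F_{Z_n}=p_nF_{X_n}+(1-p_n)F_{Y_n}$. The main stylistic difference is that the paper packages the moving-threshold step by invoking Slutsky's theorem (Theorem~\ref{thm:slut}) to pass between $(X_n-\mu_X)/\sigma_X$ and $(X_n-\mu_Z)/\sigma_Z$, whereas you reprove that step by hand with a squeeze; your treatment is also a bit more explicit in deriving $q\,E(X_n)=o(1)$ and $q\,E(Y_n)=o(1)$, which the paper leaves implicit.

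One small slip to fix in your squeeze: the inequality you need is $t' < s_n(t) < t''$ (eventually, because $s_n(t)\to t$), not $s_n(t') < t < s_n(t'')$. With the correct inequality, monotonicity of distribution functions gives $P(\tilde X_n\le t')\le P(\tilde X_n\le s_n(t))\le P(\tilde X_n\le t'')$, and then your limiting argument goes through as written.
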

\begin{proof}
We have $E(Z_n) = p_n E(X_n) + (1-p_n)E(Y_n)$, and in general
\begin{align*}
E(Z_n^2) &= 2 \int_0^\infty x P(Z_n > x)dx \\
  &= 2 \int_0^\infty x p_n P(X_n > x) + x(1-p_n) P(Y_n > x)dx \\
  &= p_n E(X_n^2) + (1-p_n) E(Y_n^2)
\end{align*}
by \cite[Ex.\ 22b]{resnick2013probability}.
Now
\begin{align*}
\Var(Z_n) &= E(Z_n^2) - E(Z_n)^2 \\
  &= p_n E(X_n^2) + (1-p_n) E(Y_n^2) - (p_n E(X_n) + (1-p_n)E(Y_n))^2 \\
  &= p_n \Var(X_n) + (1-p_n) \Var(Y_n) + p_n(1-p_n)(E(X_n) - E(Y_n))^2.
\end{align*}
From the assumptions we know $(1-p_n) \Var(Y_n) \leq (1-p_n) E(Y_n^2) = o(1)$.
And
\begin{align*}
p_n(1-p_n)(E(X_n) - E(Y_n))^2 &\leq (1-p_n)2(E(X_n)^2 + E(Y_n)E(X_n) +
    E(Y_n)^2) \\
  &\leq (1-p_n)2(E(X_n^2) + E(Y_n)E(X_n) + E(Y_n^2)) \\
  &= o(1).
\end{align*}
Thus
\[ E(Z_n) \sim E(X_n) \text{ and } \Var(Z_n) \sim \Var(X_n). \]
By Theorem \ref{thm:slut} theorem we have
\[
\frac{X_n - E(X_n)}{\sqrt{\Var(X_n)}} \Rightarrow F \textrm{ iff }
  \frac{X_n - E(Z_n)}{\sqrt{\Var(Z_n)}} \Rightarrow F
\]
and
\[
\frac{Z_n - E(Z_n)}{\sqrt{\Var(Z_n)}} \Rightarrow F \textrm{ iff }
  \frac{Z_n - E(X_n)}{\sqrt{\Var(X_n)}} \Rightarrow F.
\]
If $\mathcal{C}(F) \subseteq \mathbb{R}$ is the set of points where $F$ is
continuous, then for $x \in \mathcal{C}(F)$ we have
\begin{align*}
\lim_{n \to \infty} F_{Z_n}\left(\sqrt{\Var(Z_n)}x + E(Z_n)\right) =&
\lim_{n \to \infty} p_n F_{X_n}\left(\sqrt{\Var(Z_n)}x + E(Z_n)\right) \\
& \  + (1-p_n)F_{Y_n}\left(\sqrt{\Var(Z_n)}x + E(Z_n)\right) \\
=& \lim_{n \to \infty}p_n F_{X_n}\left(\sqrt{\Var(Z_n)}x + E(Z_n)\right) \\
=& \lim_{n \to \infty}F_{X_n}\left(\sqrt{\Var(Z_n)}x + E(Z_n)\right) \\
=& \lim_{n \to \infty}F_{X_n}\left(\sqrt{\Var(X_n)}x + E(X_n)\right).
  \qedhere
\end{align*}
\end{proof}

\begin{theorem} \label{thm:circdist}
Assume that $|G| \geq 2$ and that $U \subset \SEQ_\sigma(G)$ is non-empty.
Then the number of cyclic occurrences of $U$ in a uniform random
circular $m$-composition of $a \in G$ is asymptotically normal with mean and
variance asymptotic to those of the number of occurrences of $U$ in a uniform
random word over $G$.
\end{theorem}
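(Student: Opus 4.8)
The plan is to transfer the asymptotic normality established in Theorem~\ref{thm:cyclicdist} for cyclic compositions over to circular compositions, exploiting two facts: aperiodic compositions dominate, and a circular shift does not change the number of cyclic occurrences of $U$. Throughout, $G$ is abelian (as assumed in this section), so the total is preserved under circular shift and the cyclic-occurrence count of $U$ is shift-invariant; hence it descends to a well-defined statistic on circular compositions. First I would set the stage with the full de Bruijn graph $\bar{D}$ on $\SEQ_\sigma(G)$, which is regular and for which $c_a(m; \bar{D})$ is just the number of unrestricted $m$-compositions of $a$, namely $|G|^{m-1} \sim \frac{1}{|G|}|G|^m$. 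Theorem~\ref{thm:circasympt} then applies and tells us that all but an exponentially small proportion of both the cyclic compositions in $\mathcal{C}_a(m; \bar{D})$ and the circular compositions in $\tilde{\mathcal{C}}_a(m; \bar{D})$ are aperiodic.

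Write $X_m$ for the number of cyclic occurrences of $U$ in a uniform random element of $\mathcal{C}_a(m; \bar{D})$, write $\tilde{X}_m$ for the same statistic on a uniform random element of $\tilde{\mathcal{C}}_a(m; \bar{D})$, and let $X_m^{\mathrm{ap}}$ and $\tilde{X}_m^{\mathrm{ap}}$ denote the corresponding statistics when we condition on the composition lying in an aperiodic class. The conceptual crux is the identity $X_m^{\mathrm{ap}} \stackrel{d}{=} \tilde{X}_m^{\mathrm{ap}}$. Each aperiodic class contains exactly $m$ cyclic compositions (its $m$ distinct shifts), and by shift-invariance all $m$ share the same cyclic-occurrence count. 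Consequently, for any value $v$, the number of aperiodic cyclic compositions with count $v$ is exactly $m$ times the number of aperiodic classes with count $v$; dividing by the respective population sizes $\acyc(m,a)$ and $\acyc(m,a)/m$ shows the two conditional laws coincide.

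With this in hand the remainder is bookkeeping via Lemma~\ref{lem:mixture}. Since $X_m$ is a mixture of $X_m^{\mathrm{ap}}$ and its periodic counterpart with aperiodic weight $p_m \to 1$ exponentially, and Theorem~\ref{thm:cyclicdist} gives asymptotic normality of $X_m$ together with the $\Theta(m)$ growth of its mean and variance, Lemma~\ref{lem:mixture} transfers asymptotic normality (and the mean/variance asymptotics) to $X_m^{\mathrm{ap}}$. By the distributional identity this gives the same for $\tilde{X}_m^{\mathrm{ap}}$, and a second application of Lemma~\ref{lem:mixture} lifts it from $\tilde{X}_m^{\mathrm{ap}}$ back up to the full $\tilde{X}_m$.

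The main obstacle I expect is not any single deep step but the careful verification of the hypotheses of Lemma~\ref{lem:mixture} in both directions. One must check that the variables are nonnegative and $L^2$ (immediate, as every count is bounded by $m$), that the mean and variance are bounded away from $0$ (which follows because Theorem~\ref{thm:cyclicdist}, via Proposition~\ref{prop:markovvisits}, gives $E(X_m) \sim c_1 m$ and $\Var(X_m) \sim c_2 m$ with $c_1, c_2 > 0$ under the standing hypotheses $|G| \geq 2$ and $\emptyset \neq U \subset \SEQ_\sigma(G)$), and that the controlling product $(1-p_m)\bigl(E(Y_m^2) + E(Y_m)E(X_m) + E(X_m^2)\bigr)$ is $o(1)$. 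The last point is exactly where the exponential rarity of periodic compositions from Theorem~\ref{thm:circasympt} does the work: each moment there is $O(m^2)$ while $1-p_m$ decays exponentially, so the product is $o(1)$. Since the limiting mean and variance asymptotics pass unchanged through both mixture steps, they agree with those for occurrences of $U$ in a uniform random word over $G$, which is precisely the asserted conclusion.
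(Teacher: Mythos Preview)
Your proposal is correct and follows essentially the same route as the paper: transfer from cyclic to aperiodic cyclic via Lemma~\ref{lem:mixture}, identify aperiodic cyclic with aperiodic circular distributionally, then transfer from aperiodic circular to circular via a second application of Lemma~\ref{lem:mixture}. If anything, your write-up is more explicit than the paper's in justifying the distributional identity $X_m^{\mathrm{ap}} \stackrel{d}{=} \tilde{X}_m^{\mathrm{ap}}$ and in checking the hypotheses of Lemma~\ref{lem:mixture}.
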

\begin{proof}
Let $X_m, X_m^{\langle ap \rangle}, X_m^{\langle ap, c \rangle},
X_m^{\langle c \rangle}$
be the number of cyclic occurrences of $U$ in a uniform random
$m$-composition of $a$, aperiodic $m$-composition of $a$,
aperiodic circular $m$-composition of $a$,
and
circular $m$-composition of $a$.

By Theorem \ref{thm:cyclicdist} we have $(X_m - E(X_m)) /\Var(X_m) \Rightarrow
N(0,1)$.
The quantities $E(X_m)$ and $\Var(X_m)$ are asymptotically proportional to $m$
thus bounded away from $0$.

The number of occurrences in a uniform random (circular) composition is a
mixture of the number of occurrences in a uniform random periodic (circular)
composition and the number of occurrences in a uniform random aperiodic
(circular) composition.
The weights are simply the proportion of (circular) compositions
that are periodic and aperiodic, respectively.
By Theorem \ref{thm:circasympt}, the proportion of $m$-compositions,
circular or not, that are periodic is exponentially small.
There can be at most $m$ occurrences in an $m$-composition, so
moments of the number of occurrences of $U$ in a (circular) $m$-composition are
$m^{O(1)}$.

We are set up to apply Lemma \ref{lem:mixture} twice.
The first application allows us to conclude
that $X_m$ and $X_m^{\langle ap \rangle}$ have the same
limiting distribution.
The second gives that $X_m^{\langle c \rangle}$ and
$X_m^{\langle ap,c \rangle}$ have the same limiting distribution.
Clearly $X_m^{\langle ap \rangle}$ and $X_m^{\langle ap, c \rangle}$ have
the same distributions for all $m$, so we are done.
\end{proof}

Some examples of circular objects follow.

\begin{example}
For a composition $x = (x(1), \ldots, x(m))$, we define
\[ \gap(x) = \max_i x(i) - \min_i x(i) + 1 - |\{x(i) : i = 1,2,\ldots,m\}|, \]
which is the number of parts missing between the minimum and maximum parts
of $x$.
If $\gap(x) = 0$ we say $x$ is \emph{gap-free}.
Research Direction 3.1 parts (3) and (4) in \cite[p.~86]{cofc} ask
for an explicit generating function for the number of circular
compositions/words $x$ such that $\gap(x) = \ell$.

Let $c(m)$ be the number of gap-free $k$-ary words and let
$\tilde{c}(m)$ be the number of circular gap-free words.
The number of gap-free $k$-ary words with $j$ distinct letters is $(k-j+1)j! \{
{m \atop j} \}$.
Thus
\[c(m) = \sum_{j=1}^k (k-j+1) j! \left\{ {m \atop j} \right\} \sim
\sum_{j=1}^k (k-j+1) j^m \sim k^m,\]
where we apply the asymptotics of the Stirling subset numbers \cite{NIST:DLMF}.
The first letter in a gap-free $m$-word is arbitrary if the remaining
$(m-1)$-word has $k$ distinct letters.
The number of such words is $k! \left\{ {m-1 \atop k} \right\} \sim k^{m-1}$,
so the first letter is arbitrary in almost all gap-free words.
Thus for an abelian finite group $G$, the number of gap-free $m$-compositions
of $a$ is \( c_a(m) \sim k^{m-1}\).

Using the familiar Moebius function $\mu$, as in \cite{bender1975applications},
we have
\begin{align*}
\tilde{c}(m) =& \sum_{d | m} \frac{1}{d} \sum_{d'|d} \mu(d/d')
  c(d') \\
\sim& \frac{1}{m} k^m.
\end{align*}
The number of circular gap-free $m$-compositions of $a \in G$ is
\( \tilde{c}_a(m) \sim \frac{1}{m} k^{m-1}\).
\end{example}

\begin{example}
Considering avoidance of the subword pattern $132$, for any total $a$ there
is $1$ composition with $1$ part, namely $(a)$.
For $m \geq 2$, some compositions are grouped into non-trivial equivalence
classes.
For $m=1, \ldots, 5$, the numbers of $132$-avoiding circular $m$-compositions
of $0$ over $\mathbb{Z}_5$ are $1,3,7,23,82$, and the counts for
$m$-compositions of $1$ are $1,3,7,23,77$.
\end{example}

\subsection{Note on counting palindromic compositions} \label{sec:palcomps}

\begin{notation}
\label{not:rev}
For a finite sequence $x=(x(1), \ldots, x(m))$, the reversed sequence is
written $\cev{x} = (x(m), \ldots, x(1))$.
\end{notation}

An unlabeled undirected weighted path of length $m$ restricted according to $D$
is equivalent to an unordered pair $\{x, \cev{x}\}$ where $x, \cev{x} \in
\mathcal{P}(m; D)$, or the singleton $\{x\}$ if $x \in \mathcal{P}(m; D)$
and $x = \cev{x}$.
These may also be called undirected words.
For simplicity we assume all vertices of $D$ are allowed as start and
finish vertices.

\begin{proposition}
Assume $D$ is such that $x \in \mathcal{P}(D) \implies \cev{x} \in
\mathcal{P}(D)$.
Let $\Xi = \{ \xi : \xi \cev{\xi} \in \mathcal{P}(D), \xi \in V(D) \}$.
If $m \geq 2\sigma$ is even,
the number of $G$-weighted undirected paths of length $m$ with total $a$
restricted by $D$ is
\begin{align*}
\tilde{p}_a(m; D) = &\frac{1}{2}p_a(m; D)
+ \sum_{b: 2b = a} \frac{1}{2}p_{b}(m/2; D, V(D), \Xi).
\end{align*}

For $c \in G$, let $\Xi_c =
\{\xi : \xi c \cev{\xi} \in \mathcal{P}(D), \xi \in V(D)\}$.
If $m \geq 2\sigma$ is odd,
the number of $G$-weighted undirected paths of length $m$ with total $a$
restricted by $D$ is
\begin{align*}
\tilde{p}_a(m; D) = &\frac{1}{2}p_a(m; D)
+ \frac{1}{2}\sum_{c \in G} \sum_{b: 2b + c = a} p_{b}((m-1)/2; D, V(D), \Xi_c).
\end{align*}
\end{proposition}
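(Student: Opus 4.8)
The plan is to count the undirected paths as orbits of the two-element reversal group $\{\Id, \rev\}$ acting on $\mathcal{P}_a(m; D)$, the set of length-$m$ compositions of total $a$ restricted by $D$. Since $G$ is abelian we have $\Sigma \cev{x} = \Sigma x$, so reversal preserves the total, and the hypothesis $x \in \mathcal{P}(D) \implies \cev{x} \in \mathcal{P}(D)$ guarantees it preserves membership; hence the action is well defined. Each orbit of size $2$ is an unordered pair $\{x, \cev{x}\}$ with $x \neq \cev{x}$, while each orbit of size $1$ is a palindrome $x = \cev{x}$. Writing $q_a$ for the number of palindromes in $\mathcal{P}_a(m; D)$, a direct count of orbits (equivalently, Burnside's lemma with $\fix(\rev) = q_a$) gives $\tilde{p}_a(m; D) = q_a + \tfrac{1}{2}(p_a(m; D) - q_a) = \tfrac{1}{2} p_a(m; D) + \tfrac{1}{2} q_a$. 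Both displayed formulas then reduce to evaluating $q_a$.

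Second, I would set up a bijection between palindromes and their first halves. For even $m = 2\ell$ a palindrome is determined by $\xi = (x(1), \ldots, x(\ell))$ via $x = \xi {}^\frown \cev{\xi}$, with total $\Sigma x = 2 \Sigma \xi$; for odd $m = 2\ell + 1$ it is determined by $\xi = (x(1), \ldots, x(\ell))$ together with the central part $c = x(\ell+1)$ via $x = \xi {}^\frown (c) {}^\frown \cev{\xi}$, with total $2 \Sigma \xi + c$. Setting $b = \Sigma \xi$, the constraint on the total becomes $2b = a$ (even case) or $2b + c = a$ (odd case), which explains the outer sums in the two formulas. The remaining task is to characterise exactly which half-words $\xi$ produce a restricted palindrome.

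The crux is the junction analysis. A composition lies in $\mathcal{P}(D)$ iff every length-$\sigma$ subword is allowed. Subwords contained in the first half are controlled by requiring $\xi$ to trace a walk in $D$, i.e.\ to lie in $\mathcal{P}_b(m/2; D, V(D), \cdot)$; subwords contained in the second half are then automatically allowed because that half is $\cev{\xi}$ and $D$ is reversal-closed. The only genuinely new constraints are the subwords straddling the centre, and these depend only on the terminal vertex $v = (x(\ell - \sigma + 1), \ldots, x(\ell))$ of the first half (and on $c$ in the odd case). I would verify that the straddling subwords are all allowed precisely when $v {}^\frown \cev{v} \in \mathcal{P}(D)$ (even) respectively $v {}^\frown (c) {}^\frown \cev{v} \in \mathcal{P}(D)$ (odd); these are exactly the membership conditions defining $\Xi$ and $\Xi_c$. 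Consequently the admissible first halves are the legal walks ending in $\Xi$ (resp.\ $\Xi_c$), giving $q_a = \sum_{b : 2b = a} p_b(m/2; D, V(D), \Xi)$ in the even case and $q_a = \sum_{c \in G} \sum_{b : 2b + c = a} p_b((m-1)/2; D, V(D), \Xi_c)$ in the odd case. Substituting into the orbit formula yields the claim.

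I expect the junction analysis to be the main obstacle: one must check carefully that, given reversal-closedness, no subword other than the central-straddling ones imposes a constraint beyond $\xi$ being a legal walk, and that the straddling window is captured exactly by the $2\sigma$-letter (resp.\ $(2\sigma+1)$-letter) word $v {}^\frown \cev{v}$ (resp.\ $v {}^\frown (c) {}^\frown \cev{v}$). The hypothesis $m \geq 2\sigma$ is what makes this clean: it forces each half to have length at least $\sigma$, so the terminal vertex $v$ is well defined and the central window does not interact with the endpoints, keeping the decomposition into \emph{legal first-half walk} plus \emph{legal junction} genuinely independent.
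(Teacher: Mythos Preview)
Your proposal is correct and follows the same approach as the paper: count orbits under reversal (equivalently, divide by two with an adjustment for palindromes), then biject palindromes with their first halves subject to a junction condition on the terminal vertex. The paper's proof is in fact much terser than yours---it simply asserts the correspondence between palindromes and $\mathcal{P}(m/2; D, V(D), \Xi)$ without spelling out the junction analysis---so your more careful treatment of why the straddling subwords are governed exactly by $v\cev{v}$ (resp.\ $v c \cev{v}$) fills in what the paper leaves implicit.
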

\begin{proof}
The number of undirected paths is determined by dividing by $2$, with an
adjustment for palindromic compositions: those $x$ such that $x = \cev{x}$.
If $m$ is even,
the set of palindromic $m$-compositions is in correspondence with
$\mathcal{P}(m/2; D, V(D), \Xi)$ and
\begin{align*}
\tilde{p}_a(m; D) &=
\frac{1}{2} \left(p_a(m; D) - \sum_{b: 2b = a} p_{b}(m/2; D, V(D), \Xi)
  \right) \\
  & \qquad {} + \sum_{b: 2b = a} p_{b}(m/2; D, V(D), \Xi).
\end{align*}
The case of even $m$ is similar.
\end{proof}

The analogous result for integer compositions is found in
\cite[\S 11]{infinite}.

%

\section{Subsequence pattern avoidance}
\label{sec:subseq}

Given a word $w$ over $[k]$, the \emph{reduction} of $w$, written $\red(w)$, is
obtained by replacing the $j$\textsuperscript{th} smallest letters
of $w$ with $j$'s, for all $j$.
For example, $\red(46632) = 34421$.
A \emph{subsequence pattern}, sometimes called a classical pattern, is a word
over some $[k]$ written with hyphens between letters:
$1 \hype 1 \hype 1 \hype 1 \hype 3 \hype 2 \hype 2 = 1^4 \hype 3 \hype 2^2 \in
[3]^7 = \SEQ_7([3])$.
Given words $w$ of length $m$ and $\tau$ of length $l$,
an occurrence of $\tau$, as a subsequence pattern, in
$w$ is a sequence of indices $1 \leq i_1 < \cdots < i_l \leq m$
such that $\red(w(i_1), \ldots, w(i_l)) = \tau$.

A \emph{partially ordered pattern} is similar to a
subsequence pattern except that not all letters are comparable.
The letters in a partially ordered pattern are from a partially ordered
alphabet; letters shown with the same number of primes are comparable to each
other (e.g.~$1''$ and $2''$), while letters shown without primes are comparable
to all letters of the alphabet.
An occurrence of a partially ordered pattern
in a word $w$ is a distinguished subsequence of terms of $w$ such that the
relative order of two entries in the subsequence need be the same as that of
the corresponding letters in the pattern only if the corresponding letters in
the pattern are comparable; e.g.~the partially ordered pattern
$1' \hype 1'' \hype 2$ is found in the
word $42213$ three times as $4 \underline{2}\underline{2} 1 \underline{3}$, $4
\underline{2} 2 \underline{1} \underline{3}$ and
$42\underline{2}\underline{1}\underline{3}$ (the subsequences of length three
in which the third letter is larger than the first two).

A \emph{generalized pattern} is again similar to a subsequence pattern
except there may or may not be a hyphen between adjacent letters.
If there is no hyphen, those two letters can only match with adjacent letters
in a word.
For example, if $\tau = 11 \hype 2$, then $424135$ has no occurrences of $\tau$
but $244135$ has the occurrence
$2 \underline{4} \underline{4} 13 \underline{5}$.

Subsequence patterns were first studied in the context of permutations
\cite{stanley2007increasing} but are now adapted to different objects.
The number of $k$-ary words of length $m$ avoiding a given subsequence or
generalized pattern
has been studied for a number of different patterns \cite{automata,
burstein1998enumeration, functionaleqs, cofc, restricted132,
pudwell2008enumeration, regev, dmtcs:2140, jelinek2009wilf}.
Specifically, exact results for the avoidance of various subsequence patterns
with at most 2 distinct letters were found in \cite{atmost2}.
For partially ordered pattern-based enumeration for words and other objects,
see \cite{combinterp, popcomp, intropogp, pogpwords}.
The article \cite{gao2011counting} counts words with $r \geq 0$ occurrences of
a some simple subsequence patterns.

Occurrences of subsequence, partially ordered, and generalized patterns
are defined for compositions as they are for words.
The counting question simply changes to, how many compositions with
length $m$ and total $n$ avoid the pattern?

A generating function counting
integer compositions avoiding some $3$-letter patterns
is given in the note \cite{savage2006pattern}, which is a simplification
of earlier work in \cite{atkinson1995priority}.
A recurrence relation is also given in \cite{albert2001permutations}.
Compositions avoiding the remaining $3$-letter patterns, and pairs of
$3$-letter patterns are counted in \cite{heubach2006avoiding}.
That paper also looks at the subsequence pattern $1^p \hype 2\hype 1^q$.
Partially ordered patterns in compositions are considered in
\cite{popcomp}.
Compositions avoiding a generalized pattern of length $3$ are counted
using generating functions in \cite[\S~5.3]{cofc}.

\begin{remark}
Let $p_k(m, r)$ be the number of $k$-ary $m$-words with $r$ occurrences of
the pattern $1 \hype \cdots \hype 1 = 1^p$.
A simple argument shows
\[
\sum_{m, r \geq 0} p_k(m,r) \frac{z^m}{m!} u^r
= \left( \sum_{i \geq 0} u^{\binom{i}{p}} \frac{z^i}{i!} \right)^k.
\]
The recent paper \cite{1p} sheds light on expressions of this form, by
establishing integral representations such as
\[
\sum_{n \geq 0} g_n q^{n^2} z^n = \frac{1}{\sqrt{2\pi}} \int_0^{\infty}
  \left(\sum_{b = \pm 1} G\left(e^{bt \sqrt{2\log(q)}} z\right)\right)
  e^{-t^2 / 2} dt,
\]
where $G(z) = \sum_{n \geq 0} g_n z^n$.
Enumerative applications of these representations have yet to be explored.
\end{remark}

\begin{remark} \label{rem:globalcycle}
In the language of \S \ref{sec:intro}, we deal with paths avoiding global
occurrences of digraph patterns.
Undirected paths and directed and undirected cycles are approached in a
similar manner.

For a weighted digraph $\Gamma$, let $s(\Gamma)$ be the symmetric closure of
$\Gamma$, i.e.\ the underlying undirected graph.
Given a weighted path $\Gamma_p$, and digraph pattern instance $P$
which is also a weighted path,
an occurrence of $P$ in $s(\Gamma_p)$ is either an occurrence of $P$ in
$\Gamma_p$ or an occurrence of $P^{-1}$ in $\Gamma_p$, where $P^{-1}$ is
$P$ with arcs reversed.
Let $c(\Gamma_p)$ be the directed cycle formed by adding an arc to $\Gamma_p$.
Then an occurrence of $P$ in $c(\Gamma_p)$ is the occurrence of some
circular shift of $P$ in $\Gamma_p$.
Occurrences of $P$ in $s(c(\Gamma_p))$ are occurrences of circular shifts
and/or reversals of $P$ in $\Gamma_p$.
\end{remark}

\subsection{Words and integer compositions}

This section fills some gaps in the literature on words and integer
compositions that avoid a pattern.
Our main tools are recurrence relations and generating functions, and we
use various standard counting techniques.

\begin{remark}
The random sampling in this section is performed by exploiting the structure
of recurrence relations.
The method achieves exact uniform sampling and makes use of
two rules, one for addition and one for multiplication.
Assume there are three classes of objects, $A, B, C$ and the number
of objects in each are $a,b,c$.
We have the relation $a = b + c$ if $A = B \disjun C$.
Then to draw an object uniformly randomly from $A$, we may draw an
object from $B$ with probability $b/(b + c)$ or an object
from $C$ with probability $c/(b + c)$.
Now if $A = B \times C$, we have $a = bc$.
Here we may draw uniformly at random from $A$ by independently drawing from
both $B$ and $C$.
This simple method is often applicable where we have a recurrence
relation, in which case we recurse until reaching a base case.
\end{remark}

\subsubsection{Pairs of generalized patterns of length $3$} \label{sec:pairs}

While we do not consider every possible pair of generalized patterns of length
$3$ in this section, we give a number of representative examples.
We expect similar techniques apply to most of the remaining such pattern pairs.


\paragraph*{The pair $\{11\hype 2, 12\hype 3\}$}


We use the generating function $P_k(w| z, u)$ to enumerate integer compositions
over $[k]$ starting with
the subword $w$ and avoiding $\{11\hype 2, 12\hype 3\}$, where $z$ marks the
total and $u$ marks the length.
We write $P_k(z, u)$ for $P_k(e| z, u)$ where $e$ is the sequence of length
$0$, and we write $P(z, u)$ to refer to $\lim_{k \to \infty} P_k(z,u)$.

\begin{proposition}
We have
\[
P(z,u) = \frac{1}{1-u z} \prod_{i \geq 2}
  \left( 1- u z^i \prod_{j=1}^{i-1} (1 + u z^j) \right)^{-1}.
\]
\end{proposition}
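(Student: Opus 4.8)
The plan is to prove the product formula by a structural decomposition of the valid compositions, refining by the largest admissible part and then letting $k \to \infty$. First I would rewrite avoidance as a single condition. An occurrence of $11\hype 2$ is an index $i$ with $w(i) = w(i+1)$ together with some $j > i+1$ with $w(j) > w(i)$, and an occurrence of $12\hype 3$ is an index $i$ with $w(i) < w(i+1)$ together with some $j > i+1$ with $w(j) > w(i+1)$. Merging the two, I would show that $w$ avoids $\{11\hype 2, 12\hype 3\}$ if and only if, for every $p \ge 2$ with $w(p-1) \le w(p)$, one has $w(p) \ge w(j)$ for all $j > p$. In words: once a weak ascent rises into a value $v$, every later part is $\le v$; equivalently, every weak-ascent top dominates the suffix to its right.

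Next I would prove the recurrence $P_k(z,u) = \bigl(1 - uz^k\prod_{j=1}^{k-1}(1+uz^j)\bigr)^{-1} P_{k-1}(z,u)$ by a bijective decomposition keyed on the positions of the part $k$. The key claim is that in a valid composition over $[k]$ the parts strictly between two consecutive $k$'s, and the parts before the first $k$, must be strictly decreasing: a weak ascent there would have top $<k$, contradicting the domination condition because a $k$ occurs later. By contrast, the parts after the last $k$ are unconstrained by the earlier ones and form an arbitrary valid composition over $[k-1]$. Hence every valid composition factors uniquely as $B_1 \cdots B_n\, T$, where each block $B_i$ is a strictly decreasing (hence repetition-free) sequence of values from $\{1,\dots,k-1\}$ followed by a single $k$, and $T$ is valid over $[k-1]$. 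Each block has generating function $uz^k\prod_{j=1}^{k-1}(1+uz^j)$ — choose which smaller values appear, their decreasing order being forced — a sequence of $n \ge 0$ blocks contributes the geometric factor, and $T$ contributes $P_{k-1}$.

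I would then verify both directions of this decomposition. The delicate point, and the one I expect to be the main obstacle, is showing that no occurrence of either pattern straddles a block boundary or the boundary into $T$: within $B_1 \cdots B_n$ the only adjacent weak ascents rise into a value $k$, whose top cannot be exceeded, while every adjacency at a block/tail boundary is a strict descent, so any occurrence is confined to a single block or to $T$ and is therefore excluded. Checking the reverse direction via the domination characterization confirms that every such concatenation is valid, and uniqueness follows because the positions of $k$ determine the blocks and the tail.

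Finally, with base case $P_1(z,u) = (1-uz)^{-1}$ (all-$1$ compositions, which are always valid), iterating the recurrence gives the finite product $\frac{1}{1-uz}\prod_{i=2}^{k}\bigl(1 - uz^i\prod_{j=1}^{i-1}(1+uz^j)\bigr)^{-1}$, and passing to $k \to \infty$ yields the stated formula for $P(z,u)$. This limit is well defined as a formal power series because the $i$-th factor equals $1 + O(z^i)$, so each coefficient of $z^n u^r$ stabilizes once $k \ge n$. Once the structural characterization and the boundary-confinement of occurrences are established, the remaining generating-function bookkeeping is routine.
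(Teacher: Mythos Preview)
Your proof is correct and reaches the same key recurrence
\[
P_k(z,u) = \left(1 - uz^k \prod_{j=1}^{k-1}(1+uz^j)\right)^{-1} P_{k-1}(z,u),
\]
but by a genuinely different route from the paper. The paper conditions on the \emph{first} part, obtaining a recurrence for $P_k(i\mid z,u)$ in terms of $P_k(j\mid z,u)$ and $P_j(z,u)$, then introduces the differences $G_k(i) = P_k(i\mid z,u) - P_{k-1}(i\mid z,u)$, solves for $G_k(i)$ by induction, and sums over $i$ using a telescoping product identity to reach the recurrence above. Your argument instead gives a direct bijective decomposition keyed on the \emph{largest} part $k$: the weak-ascent-top characterization forces everything before the last $k$ to be a sequence of blocks, each a strictly decreasing word over $[k-1]$ capped by a $k$, with an arbitrary valid $[k-1]$-composition as tail. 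This reads off the factor $uz^k\prod_{j=1}^{k-1}(1+uz^j)$ combinatorially with no algebraic manipulation. Your approach is more transparent and explains structurally why the product form arises; the paper's approach is more mechanical but illustrates a reusable first-letter/difference technique applicable even when a clean block decomposition is not apparent.
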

\begin{proof}
We follow the proof of Theorem~5.21 \cite{cofc} at least in spirit.

Take a composition $x$ over $[k]$ that avoids $\{11\hype 2, 12\hype 3\}$.
Assume $x$ begins with the part $i$ (where $1 \leq i \leq
k$).
Then $x$ is either $i$ by itself or begins $(i,j, \ldots)$ for some part
$j$.
Now if $j<i$, then the first part of $x$ cannot be involved in an occurrence of
the pattern set, so the composition $(j,x(3), \ldots, x(m))$ is arbitrary as
long as it avoids the pattern set.
On the other hand, if $j \geq i$, no later parts may be greater than $j$
so the composition $(x(3), \ldots, x(m))$ is an arbitrary composition
over $[j]$ avoiding the pattern set.
This gives, for $k \geq 1, 1 \leq i \leq k$,
\begin{align} \label{eq:112123rec}
P_k(i| z, u) &= z^i u +
    \sum_{j = 1}^{i-1} P_k(ij| z, u) + \sum_{j = i}^k P_k(ij| z, u ) \\\nonumber
  &= z^i u + z^i u \left( \sum_{j=1}^{i-1} P_k(j| z,u)
    + \sum_{j=i}^k z^j u P_j(z, u) \right).
\end{align}

Define $G_k(i) = P_k(i| z,u) - P_{k-1}(i|z,u)$ for $k \geq 2$ and
$1 \leq i < k$.
By Equation (\ref{eq:112123rec}) we have
$G_k(i) = z^i u \left( \sum_{j=1}^{i-1}G_k(j) + z^k u P_k(z,u)\right)$
for $1 \leq i < k$.
It can then be seen by induction that
$G_k(i) = u^2 z^{i+k} P_k(z,u) \prod_{j=1}^{i-1}(1+u z^j)$,
$1 \leq i < k$.
We naturally define $G_k(k) = u z^k P_k(z,u)$.
Induction or a combinatorial argument also show that for $k \geq 2$ we have
\begin{align*}
P_k(z,u) - P_{k-1}(z,u)
=& \sum_{i=1}^{k-1} G_k(i) + G_k(k) \\
=& u z^k P_{k}(z,u) \sum_{i=1}^{k-1} u z^i \prod_{j=1}^{i-1}(1+u z^j) +
  u z^k P_k(z,u)\\
=& u z^k P_k(z,u) \left(-1 + \prod_{j=1}^{k-1} (1 + u z^j) \right) +
  u z^k P_k(z,u) \\
=& u z^k P_k(z,u) \prod_{j=1}^{k-1} (1 + u z^j),
\end{align*}
so
\[ P_k(z,u) = \left( 1- u z^k \prod_{j=1}^{k-1} (1 + u z^j)
\right)^{-1} P_{k-1}(z,u). \]
With the initial condition $P_1(z,u) = 1/(1-u z)$ we have
\[
P_k(z,u) = \frac{1}{1-u z} \prod_{i=2}^k
  \left( 1- u z^i \prod_{j=1}^{i-1} (1 + u z^j) \right)^{-1}.
\]
We conclude the result by letting $k \to \infty$.
\end{proof}

Figure \ref{fig:122123} shows randomly generated compositions avoiding
$\{12 \hype 2, 12 \hype 3\}$.
Table \ref{tab:122123} show initial counts.

\begin{table}
\centering
\begin{tabular}[c]{|c|rrrrrrrrrrr|}
\hline
\diagbox{$n$}{$m$} & $0$ & $1$ & $2$ & $3$ & $4$ & $5$ & $6$
  & $7$ & $8$ & $9$ & $10$\\ \hline
0& 1 & 0 & 0 & 0 & 0 & 0 & 0 & 0 & 0 & 0 & 0 \\
1& 0 & 1 & 0 & 0 & 0 & 0 & 0 & 0 & 0 & 0 & 0 \\
2& 0 & 1 & 1 & 0 & 0 & 0 & 0 & 0 & 0 & 0 & 0 \\
3& 0 & 1 & 2 & 1 & 0 & 0 & 0 & 0 & 0 & 0 & 0 \\
4& 0 & 1 & 3 & 2 & 1 & 0 & 0 & 0 & 0 & 0 & 0 \\
5& 0 & 1 & 4 & 5 & 2 & 1 & 0 & 0 & 0 & 0 & 0 \\
6& 0 & 1 & 5 & 8 & 6 & 2 & 1 & 0 & 0 & 0 & 0 \\
7& 0 & 1 & 6 & 12 & 12 & 6 & 2 & 1 & 0 & 0 & 0 \\
8& 0 & 1 & 7 & 17 & 20 & 15 & 6 & 2 & 1 & 0 & 0 \\
9& 0 & 1 & 8 & 23 & 33 & 28 & 16 & 6 & 2 & 1 & 0 \\
10& 0 & 1 & 9 & 29 & 50 & 50 & 35 & 16 & 6 & 2 & 1 \\\hline
\end{tabular}
\caption{Counts of the $m$-compositions of $n$ avoiding
  $\{12 \hype 2, 12 \hype 3\}$.}
\label{tab:122123}
\end{table}

\begin{figure}
\centering
\includegraphics[width=5in]{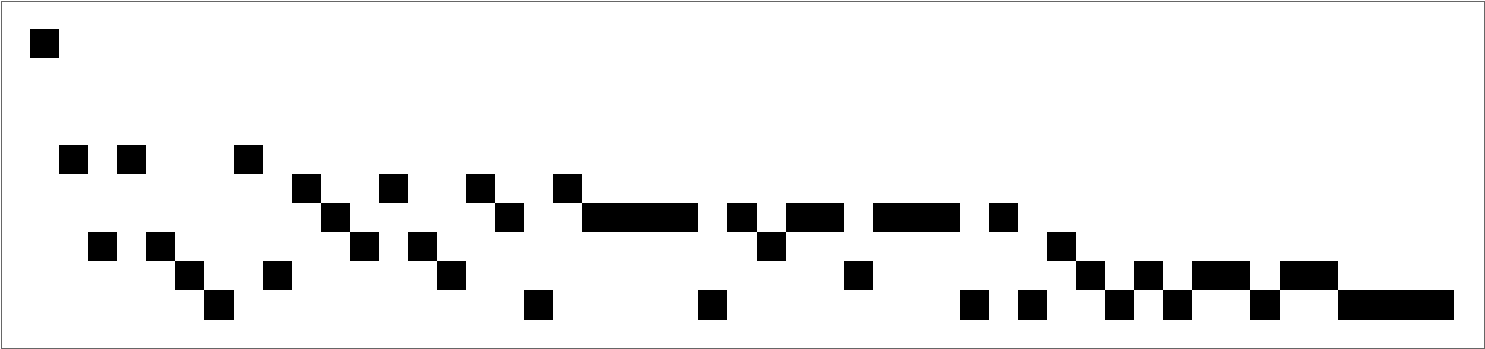}
\includegraphics[width=4.8in]{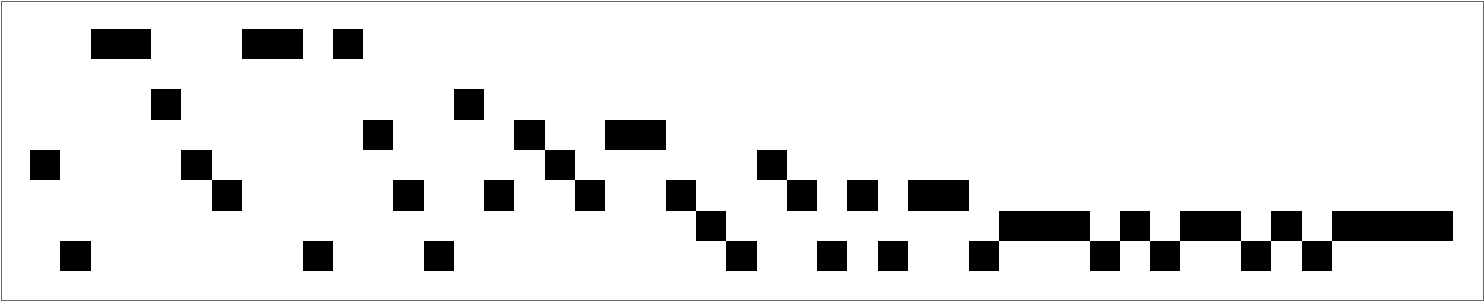}

\caption{Uniform-randomly generated compositions of $150$ avoiding
  $\{12 \hype 2, 12 \hype 3\}$.
  \label{fig:122123}}
\end{figure}

\paragraph*{The pair $\{21\hype 2, 2\hype 12\}$}

We count $k$-ary words avoiding the set of generalized patterns
$\{21\hype 2, 2\hype 12\}$.
Note that it is not true that letters $1$ must be found only in contiguous
blocks at the very beginning and/or very end of a word.
For example, $312$ avoids the patterns and has the least letter in the
middle.

Let $p_k(m)$ be the number of $k$-ary $m$-words that avoid
$\{21\hype 2, 2\hype 12\}$.
\begin{proposition}
For $k \geq 1$ we have
\[ p_k(m) = \frac{2^{1-k}}{(k-1)!}m^{2k-2} + O(m^{2k-3}),
  \qquad m \to \infty. \]
\end{proposition}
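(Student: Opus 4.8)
The plan is to reduce everything to the \emph{run structure} of a word and then count words by that structure. Write an avoiding word $w$ as a concatenation of maximal constant blocks (runs) with values $v_1, \dots, v_t$ (so $v_i \neq v_{i+1}$) and positive lengths summing to $m$. Unwinding the two generalized patterns, an occurrence of $21\hype 2$ is an adjacent descent $w(i) > w(i+1)$ followed later by a letter equal to $w(i)$, while an occurrence of $2\hype 12$ is an adjacent ascent $w(j) < w(j+1)$ preceded earlier by a letter equal to $w(j+1)$. Since every strict adjacency of $w$ sits at a run boundary, avoidance depends only on the value sequence $v_1,\dots,v_t$, and it is equivalent to two run conditions: (D) whenever $v_i > v_{i+1}$ the value $v_i$ never recurs in a later run, and (A) whenever $v_i < v_{i+1}$ the run $i+1$ is the first occurrence of $v_{i+1}$. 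First I would record the consequences that a value is descended from at most once (after which it is dead) and is re-entered only from above, so a recurring value behaves as a ``valley'' nested below higher letters.

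Next I would bound the number of runs, which controls the degree. Each adjacent ascent lands on a fresh value, and these targets are distinct and never equal to $v_1$, so the number $A$ of ascents satisfies $A \le k-1$; each adjacent descent kills its source, so distinct values act as descent sources at most once each and never include the least value present, giving $D \le k-1$. Hence $t = 1 + A + D \le 2k-1$, with equality only when all $k$ letters are used. The governing quantity is therefore $f(k)$, the number of admissible value sequences of maximal length $2k-1$.

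Assembling the count is then routine: a word with a prescribed admissible value sequence of length $t$ is exactly a choice of $t$ positive run lengths summing to $m$, of which there are $\binom{m-1}{t-1}$, and distinct value sequences give disjoint word sets. Summing over the finitely many admissible sequences,
\[ p_k(m) = \sum_{\pi} \binom{m-1}{|\pi|-1} = f(k)\binom{m-1}{2k-2} + O(m^{2k-3}), \]
since the sequences with fewer than $2k-1$ runs contribute $O(m^{2k-3})$. Using $\binom{m-1}{2k-2} \sim m^{2k-2}/(2k-2)!$, the leading coefficient is $f(k)/(2k-2)!$, so the degree statement $2k-2$ and the error term are immediate, and the claim reduces to the single identity
\[ f(k) = \frac{(2k-2)!}{2^{\,k-1}(k-1)!} = (2k-3)!!. \]

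The main obstacle is exactly this enumeration of the maximal value sequences. I would argue by induction on $k$, conditioning on the unique block carrying the largest letter $k$ — unique because $k$ is entered only by an ascent into a fresh value and dies once it is left. Deleting that block and merging its two neighbours when they carry equal values sends a maximal $[k]$-sequence to a maximal $[k-1]$-sequence, and conversely one can insert $k$ into the interior of any run of a maximal $[k-1]$-sequence (splitting it into two equal-valued runs), which yields the clean lower bound $f(k) \ge (2k-3)f(k-1)$ and base cases $f(1)=f(2)=1$, $f(3)=3$. The delicate and risky step — the one I expect to dominate the work — is showing this insertion map is \emph{onto}, i.e.\ that in every maximal sequence the peak block returns to the very value it ascended from; the conditions (A) and (D) permit lower letters to recur as several distinct nested valleys, so one must verify carefully, across all occurrences of each value, that no ``asymmetric'' maximal profile (peak entered from one value and exited to another) survives. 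Establishing this, so that the recursion collapses to $(2k-3)!!$ and hence to the stated constant $2^{1-k}/(k-1)!$, is where the real difficulty lies.
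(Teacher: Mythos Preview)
Your run-decomposition approach is sound and genuinely different from the paper's generating-function recursion: you correctly show that avoidance depends only on the value sequence, correctly bound the number of runs by $2k-1$, and correctly reduce the leading constant to $f(k)/(2k-2)!$. But the step you flagged as ``delicate and risky'' does fail. Your surjectivity conjecture --- that in every maximal value sequence the block carrying $k$ is flanked by equal values --- is false already for $k=4$: the sequence $V=(1,2,4,1,3,2,1)$ satisfies your conditions (A) and (D) (ascent targets $2,4,3$ are fresh; descent sources $4,3,2$ never recur), has length $2\cdot4-1=7$, yet the neighbours of $4$ are $2$ and $1$. With its reversal $(1,2,3,1,4,2,1)$ this gives two maximal sequences not obtained by your insertion map, so $f(4)=17$, not $(2\cdot4-3)!!=15$. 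Hence the leading coefficient for $k=4$ is $17/720$, not $2^{-3}/3!=15/720$, and the stated formula is incorrect.

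It is worth noting that the paper's own proof has the matching flaw. Its recurrence rests on the claim that deleting the contiguous $k$-block from an avoiding word always yields an avoiding word over $[k-1]$; but $w=24132$ avoids $\{21\hype 2,\,2\hype 12\}$ while the deletion $w'=2132$ contains $21\hype 2$ at positions $(1,2,4)$. (Equivalently, the word $1241321$, whose value sequence is the $V$ above, is another witness.) So the recurrence $p_k(m)=p_{k-1}(m)+\sum_b(m-b+1)p_{k-1}(m-b)$ undercounts, the table entries derived from it are off (e.g.\ $p_4(5)=528$, not $526$), and both routes --- yours and the paper's --- founder on the same underlying phenomenon: the largest letter can legitimately sit between two \emph{different} lower values. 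Your framework still delivers the degree $m^{2k-2}$; only the constant $f(k)$, and hence the proposition as stated, needs to be revisited.
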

\begin{proof}
Take a $k$-ary word $w$ avoiding $\{21\hype 2, 2\hype 12\}$.
There are $p_{k-1}(m)$ such words with no letters $k$.
We assume the greatest letter present in $w$ is $k$.
All copies of $k$ must be contiguous in order to avoid the patterns.
If we delete these copies of $k$ from $w$, the remaining word has the same
structure but is a word over $[k-1]$.
If there are $b$ letters $k$, there are $m-b+1$ possible positions of the
contiguous run of these letters.
Thus
\[ p_k(m) = p_{k-1}(m) + \sum_{b=1}^m (m-b+1) p_{k-1}(m-b),
  \qquad k \geq 1, m \geq 0, \]
and $p_0(m) = [m=0]$.
Passing to the generating function
$P_k(z) = \sum_{m \geq 0} p_k(m) z^m$
gives
\[ P_k(z) = P_{k-1}(z) + \frac{z}{1-z} D_z(z P_{k-1}(z)), P_0(z) =
  1. \]
By induction for $k \geq 1$, $P_k(z)$ has a unique singularity at $1$ where
it has a pole of order $2k-1$ and
 \[ P_k(z) = \frac{\prod_{i=1}^{k-1}(2i-1)}{(1-z)^{2k-1}} + O((1-z)^{-(2k-2)}),
  \qquad z \to 1.\]
Using Theorem \ref{thm:ratgf} we extract asymptotics for the coefficients of
$P_k(z)$ to obtain
\[ p_k(m) \sim \frac{\prod_{i=1}^{k-1} (2i-1)}{(2k-2)!}m^{2k-2}. \]
We have
\[
\frac{\prod_{i=1}^{k-1} (2i-1)}{(2k-2)!} =
\frac{2k-3}{(2k-2)(2k-3)}
\frac{\prod_{i=1}^{k-2} (2i-1)}{(2(k-1)-2)!} = \frac{2^{1-k}}{(k-1)!}
\]
so we may conclude the result.
\end{proof}

Table \ref{tab:212212} gives initial counts of words avoiding this pattern
set $\{21\hype 2, 2\hype 12\}$, and Figure \ref{fig:212212} has
randomly generated examples.

\begin{table}
\centering
\begin{tabular}[c]{|c|rrrrrrrrrrr|}
\hline
\diagbox{$k$}{$m$} & $0$ & $1$ & $2$ & $3$ & $4$ & $5$ & $6$
  & $7$ & $8$ & $9$ & $10$\\ \hline
2& 1 & 2 & 4 & 7 & 11 & 16 & 22 & 29 & 37 & 46 & 56 \\
3& 1 & 3 & 9 & 24 & 56 & 116 & 218 & 379 & 619 & 961 & 1431 \\
4& 1 & 4 & 16 & 58 & 186 & 526 & 1324 & 3011 & 6283 & 12196 & 22276 \\\hline
\end{tabular}
\caption{Counts of the $k$-ary $m$-words avoiding $\{21 \hype 2, 2 \hype 12\}$.}
\label{tab:212212}
\end{table}

\begin{figure}
\centering
\includegraphics[width=5.5in]{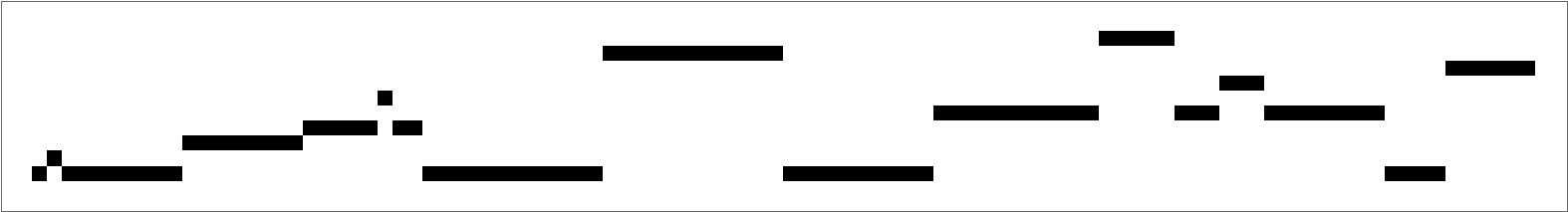}
\includegraphics[width=5.5in]{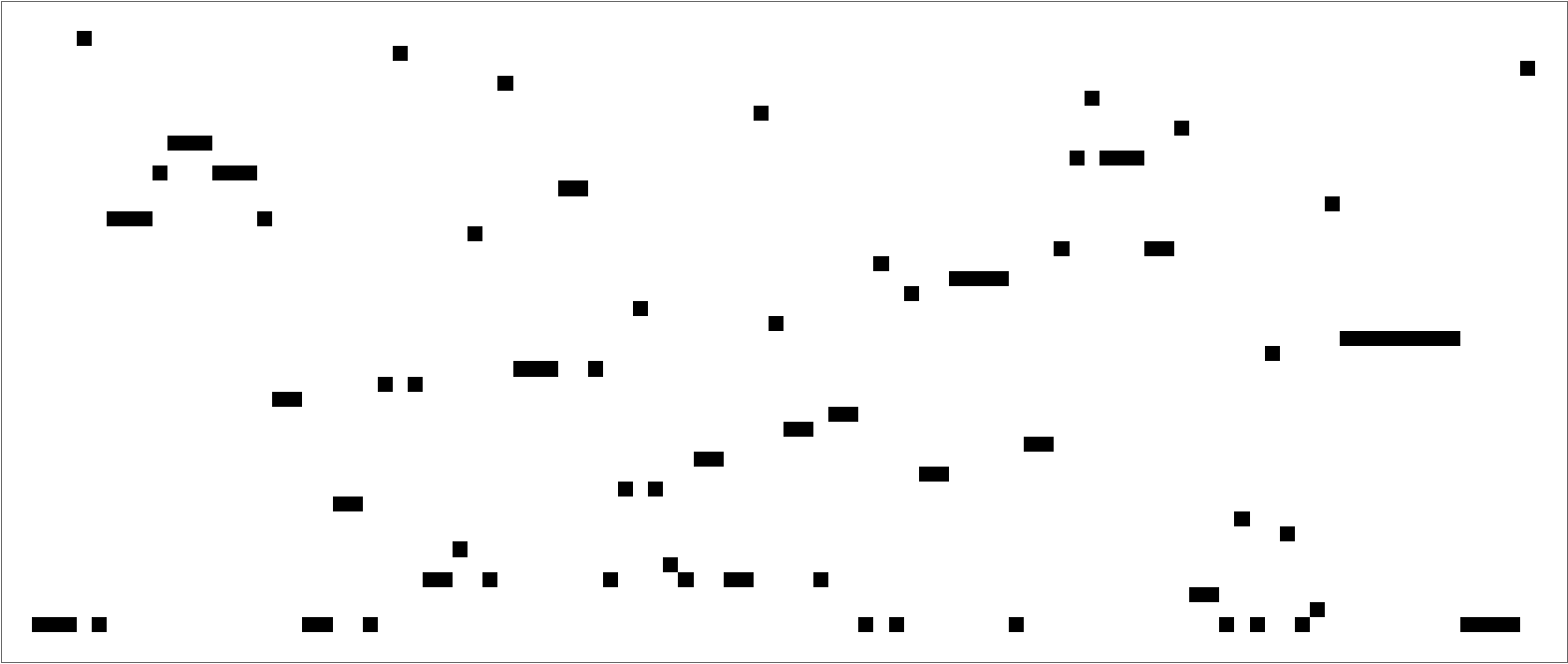}

\caption{Uniform-randomly generated $10$-ary (above) and $40$-ary (below)
  $100$-words avoiding $\{21 \hype 2, 2 \hype 12\}$.
  \label{fig:212212}}
\end{figure}

\paragraph*{The pair $\{11\hype 2, 12\hype 1\}$}

Let $P_A(w|z,u)$ be the generating function for integer compositions over the
finite set $A \subset \mathbb{Z}_{>0}$, starting with the subword $w$, that
avoid the pattern set $\{11\hype 2, 12\hype 1\}$, where $z$ marks total and $u$
marks length.
The generating function $P_A(z,u)$ refers to $P_A(e|z,u)$ where $e$ is the
empty word.
We use the notation $M(A,i) = \{ j: j \in A, j \leq i \}$.

\begin{proposition}
We have
\[ P_A(z, u) = 1 + \sum_{i \in A}P_A(i|z,u), \]
and
\begin{align*}
P_A(i| z, u) =& z^i u + \sum_{j \in A, j < i} z^i u P_A(j|z,u) + z^{2i}u^2
P_{M(A,i)}(z,u) \\
&\qquad {} + \sum_{j \in A, j > i} z^i u P_{A \setminus \{i\}}(j|z,u).
\end{align*}
\end{proposition}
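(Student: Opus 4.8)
The first identity is a direct combinatorial decomposition: every composition over $A$ avoiding $\{11\hype 2, 12\hype 1\}$ is either empty, contributing the monomial $1$, or has a unique first part $i \in A$, and those beginning with $i$ are exactly what $P_A(i|z,u)$ enumerates; summing over $i \in A$ gives $P_A(z,u) = 1 + \sum_{i \in A} P_A(i|z,u)$.

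For the second identity I would argue by a case analysis on the second part of a composition $x = (i, x(2), \ldots, x(m))$, following in spirit the proof of Theorem~5.21 in \cite{cofc}. The degenerate composition $x = (i)$ contributes $z^i u$. The crucial preliminary observation is that in each length-$3$ generalized pattern, $11\hype 2$ and $12\hype 1$, the leading letter occupies the smallest index and is immediately followed (no hyphen) by the second letter. Hence if the first part $x(1)$ participates in any occurrence, it must serve as that leading letter, so the occurrence uses precisely the adjacent pair $(x(1), x(2))$ together with one later part. This localizes the effect of $x(1)$ entirely to the relation between $i = x(1)$ and $j = x(2)$.

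Writing $j = x(2)$, I would split into three cases. If $j < i$, then $(x(1),x(2))$ is neither an adjacent repeat nor an adjacent ascent, so $x(1)$ lies in no occurrence; deleting it gives a bijection onto avoiding compositions over $A$ beginning with $j$, yielding $\sum_{j<i} z^i u\,P_A(j|z,u)$. If $j = i$, the pair $(i,i)$ is the leading ``$11$'' of $11\hype 2$, which forbids every subsequent part from exceeding $i$ and imposes no other constraint; deleting the two leading parts gives a bijection onto avoiding compositions over $M(A,i)$, yielding $z^{2i}u^2\,P_{M(A,i)}(z,u)$. If $j > i$, the pair $(i,j)$ is the leading ``$12$'' of $12\hype 1$, which forbids any later part from equalling $i$; deleting $x(1)$ gives a bijection onto avoiding compositions over $A\setminus\{i\}$ beginning with $j$, yielding $\sum_{j>i} z^i u\,P_{A\setminus\{i\}}(j|z,u)$. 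Adding the four contributions produces the claimed equation.

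The main obstacle is verifying that each deletion is a genuine bijection rather than merely an injection or a crude count, i.e.\ that prepending the peeled part(s) creates no occurrence beyond the single one responsible for the stated alphabet restriction, and that deletion neither destroys nor manufactures occurrences among the retained positions. The delicate checks are the occurrences anchored at position $2$, where the peeled prefix meets the suffix (for instance ruling out an $11\hype 2$ or $12\hype 1$ starting at $x(2)$ when $x(1)=x(2)=i$ and all later parts lie in $M(A,i)$), and confirming that the admissible alphabet for the suffix is exactly $M(A,i)$ in the case $j=i$ and exactly $A\setminus\{i\}$ in the case $j>i$, neither larger nor smaller.
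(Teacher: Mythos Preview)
Your proposal is correct and follows essentially the same argument as the paper: both decompose by the second part $j$ into the cases $j<i$, $j=i$, $j>i$, observe that the adjacency in both patterns forces any occurrence involving $x(1)$ to use the pair $(x(1),x(2))$, and deduce the respective alphabet restrictions $A$, $M(A,i)$, $A\setminus\{i\}$ for the suffix. If anything, your write-up is more explicit than the paper's about why each deletion is a genuine bijection.
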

\begin{proof}
Let $x$ be a composition over $A$ avoiding $\{11\hype 2, 12\hype 1\}$.
If $x$ begins with the part $i \in A$, either $x = (i)$ or $x = (i,j,\ldots)$.
In the latter case we may have $j < i, j=i$, or $j > i$, so we get
\[
P_A(i| z, u) = z^i u +
  \sum_{j \in A, j < i}P_A(ij|z,u) + P_A(ii|z,u)
  + \sum_{j \in A, j > i} P_A(ij|z,u). \]
If $j<i$ then $i$ is part of an occurrence only if $j$ is.
So if we delete $i$ the remaining composition is arbitrary.
If $j=i$ then $x = (i,i,\ldots)$.
In order to avoid $11 \hype 2$ the composition remaining after deleting
$ii$ is arbitrary as long as no parts are above $i$.
Finally if $j>i$ we may delete $i$ and have an arbitrary composition
starting with $j$ as
long as the part $i$ does not appear.
Thus we have
\begin{align*}
& z^i u +
  \sum_{j \in A, j < i}P_A(ij|z,u) + P_A(ii|z,u)
  + \sum_{j \in A, j > i} P_A(ij|z,u) \\
=& z^i u + \sum_{j \in A, j < i} z^i u P_A(j|z,u) + z^{2i}u^2 P_{M(A,i)}(z,u) \\
&\qquad {} + \sum_{j \in A, j > i} z^i u P_{A \setminus \{i\}}(j|z,u).
  \qedhere
\end{align*}
\end{proof}


Table \ref{tab:112121} shows initial counts of compositions avoiding
$\{11\hype 2, 12\hype 1\}$, and Figure \ref{fig:112121} shows
randomly-generated objects.

\begin{table}
\centering
\begin{tabular}[c]{|c|rrrrrrrrrrr|}
\hline
\diagbox{$n$}{$m$} & $0$ & $1$ & $2$ & $3$ & $4$ & $5$ & $6$
  & $7$ & $8$ & $9$ & $10$\\ \hline
0 &1 & 0 & 0 & 0 & 0 & 0 & 0 & 0 & 0 & 0 & 0 \\
1 &0 & 1 & 0 & 0 & 0 & 0 & 0 & 0 & 0 & 0 & 0 \\
2 &0 & 1 & 1 & 0 & 0 & 0 & 0 & 0 & 0 & 0 & 0 \\
3 &0 & 1 & 2 & 1 & 0 & 0 & 0 & 0 & 0 & 0 & 0 \\
4 &0 & 1 & 3 & 1 & 1 & 0 & 0 & 0 & 0 & 0 & 0 \\
5 &0 & 1 & 4 & 4 & 1 & 1 & 0 & 0 & 0 & 0 & 0 \\
6 &0 & 1 & 5 & 8 & 2 & 1 & 1 & 0 & 0 & 0 & 0 \\
7 &0 & 1 & 6 & 11 & 7 & 2 & 1 & 1 & 0 & 0 & 0 \\
8 &0 & 1 & 7 & 17 & 11 & 4 & 2 & 1 & 1 & 0 & 0 \\
9 &0 & 1 & 8 & 24 & 24 & 10 & 4 & 2 & 1 & 1 & 0 \\
10&0 & 1 & 9 & 30 & 42 & 16 & 6 & 4 & 2 & 1 & 1 \\\hline
\end{tabular}
\caption{Counts of the $m$-compositions of $n$ avoiding
  $\{11 \hype 2, 12 \hype 1\}$.}
\label{tab:112121}
\end{table}

\begin{figure}
\centering
\includegraphics[width=1.6in]{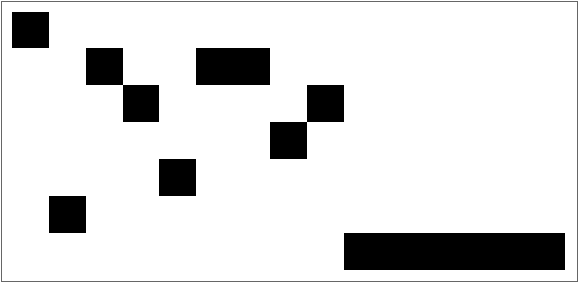}
\includegraphics[width=1.3in]{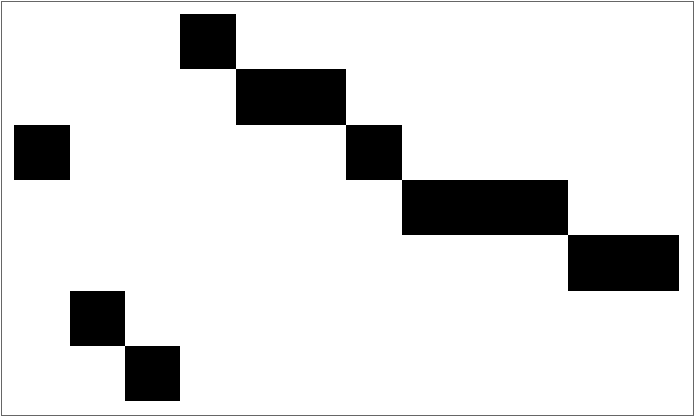}
\includegraphics[width=2.2in]{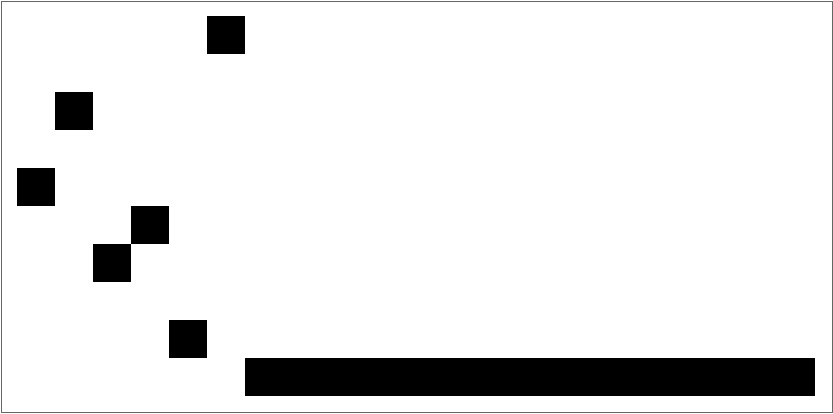}

\caption{Uniform-randomly generated compositions of $50$
  avoiding $\{11\hype 2, 12\hype 1 \}$.
  \label{fig:112121}}
\end{figure}

\paragraph*{The pair $\{12\hype 3, 3\hype 21\}$}

Let $p_k(m)$ be the number of $k$-ary $m$-words that avoid the pattern set
$\{12\hype 3, 3\hype 21\}$.

\begin{proposition}
For $k \geq 2$ we have
$p_k(m) \sim A_k \cdot \left(\sqrt{k-1}+1\right)^m$, $m \to \infty$.
\end{proposition}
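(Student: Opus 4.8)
The plan is to obtain a recurrence expressing $P_k(z) = \sum_{m \ge 0} p_k(m)\,z^m$ in terms of $P_{k-1}(z)$ by decomposing a $k$-ary word avoiding $\{12\hype 3,\, 3\hype 21\}$ around the occurrences of its largest letter $k$, and then to read off the asymptotics from the dominant singularity using Theorem~\ref{thm:ratgf}, exactly in the spirit of the preceding propositions. A word using no letter $k$ is counted by $P_{k-1}(z)$, so the real work is to count words that contain at least one $k$.

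The key structural lemma, and the step I expect to be the main obstacle, is the following description of an avoiding word $w$ that contains a letter $k$. Since every adjacent ascent whose top is smaller than $k$ would combine with a later $k$ to form $12\hype 3$, the prefix of $w$ before its first $k$ admits no adjacent ascent and is thus weakly decreasing; a second analysis via $3\hype 21$ shows this prefix can have at most one strict descent, so it has the form $a^c b^d$ with $a>b$ in $[k-1]$ (allowing an empty or single-value run). The avoidance class is invariant under reversal, which interchanges $12\hype 3$ and $3\hype 21$, so the suffix after the last $k$ is weakly increasing of the mirror form $c^e d^f$ with $c<d$. Finally, any maximal block of non-$k$ letters strictly between two $k$'s must be a constant run $v^j$ with $v \in [k-1]$, because an internal ascent would pair with the following $k$ to give $12\hype 3$ and an internal descent would pair with the preceding $k$ to give $3\hype 21$. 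The delicate part is proving these local conditions are also \emph{sufficient}, i.e.\ that no long-range occurrence can straddle prefix, interior $k$-blocks, and suffix; I would settle this by checking that in such a word every adjacent ascent has top $k$ or lies in the suffix (so nothing larger follows it) and every adjacent descent has top $k$ or lies in the prefix (so nothing larger precedes it).

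Granting the characterization, each piece translates into a generating function with $z$ marking length. Writing $n = k-1$, the prefix (and, identically, the suffix) contributes
\[ \mathrm{Pre}(z) = 1 + n\,\frac{z}{1-z} + \binom{n}{2}\left(\frac{z}{1-z}\right)^{2}, \]
while the interior—a letter $k$ followed by any number of (constant-dip, $k$) blocks—contributes
\[ \mathrm{Core}(z) = \frac{z}{\,1 - z\left(1 + n\frac{z}{1-z}\right)} = \frac{z(1-z)}{\,1 - 2z + (1-n)z^{2}}. \]
This gives $P_k(z) = P_{k-1}(z) + \mathrm{Pre}(z)^{2}\,\mathrm{Core}(z)$, so each $P_k$ is rational. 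The denominator $1 - 2z + (1-n)z^{2} = 1 - 2z + (2-k)z^{2}$ has smallest positive root $z^{\ast} = 1/(\sqrt{k-1}+1)$, a simple zero, whereas $\mathrm{Pre}(z)^{2}$ is analytic and positive at $z^\ast$ and singular only at $z=1 > z^{\ast}$; by induction the dominant singularity of $P_{k-1}$ sits at $1/(\sqrt{k-2}+1) > z^{\ast}$. Hence $z^{\ast}$ is the unique dominant singularity of $P_k$ and is a simple pole, and Theorem~\ref{thm:ratgf} yields $p_k(m) \sim A_k\,(\sqrt{k-1}+1)^{m}$ with $A_k>0$, the positivity being automatic for a simple pole at a positive real point of a series with nonnegative coefficients. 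As sanity checks, $k=2$ gives $P_2(z)=1/(1-2z)$ and $p_2(m)=2^{m}$, and the first coefficients of $P_3(z)$ agree with direct enumeration (the only forbidden length-$3$ words being $123$ and $321$).
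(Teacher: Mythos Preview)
Your proof is correct and follows essentially the same approach as the paper's: the same decomposition around the maximal letter $k$ into a prefix avoiding $\{12,\,3\hype 21\}$, a suffix avoiding $\{21,\,12\hype 3\}$, and constant interior runs, leading to the same recursion $P_k(z)=P_{k-1}(z)+G_{k-1}(z)^2\cdot\dfrac{z(1-z)}{1-2z+(2-k)z^2}$ (your $\mathrm{Pre}$ and $\mathrm{Core}$ coincide with the paper's $G_{k-1}$ and its core factor). The only cosmetic difference is that the paper unrolls the recursion into an explicit finite sum before locating the dominant pole, whereas you argue by induction that $P_{k-1}$ is analytic at $z^\ast=1/(\sqrt{k-1}+1)$; both routes give the simple pole at $z^\ast$ and the stated asymptotic.
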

\begin{proof}
Let $w$ be a $k$-ary word avoiding $\{12\hype 3, 3\hype 21\}$.
Then either $w$ contains no letters $k$, or $w$ can be written as the
concatenation
\[ w' k^{j_1} w_1 \cdots k^{j_r} w_r k^{s} w'', \]
where $r \geq 0$, $w'$ is a word over $[k-1]$ that avoids $\{12,3\hype 21\}$,
$w''$ is a word on $[k-1]$ that avoids $\{21,12\hype 3\}$, and
the $w_i$ are words on $[k-1]$ that avoid $\{12,21\}$.

Nonempty words avoiding $\{12,21\}$ clearly have one distinct letter repeated
some number of times.
Words avoiding $\{21,12\hype 3\}$ are either empty, have one distinct letter,
or have exactly one increase and no decreases.

This translates to
\[
P_k(z) = P_{k-1}(z) + G_{k-1}(z)
  \frac{1}{1-(z/(1-z))H_{k-1}(z)}
 \frac{z}{1-z} G_{k-1}(z),
\]
where $G_k(z)$ counts words avoiding $\{21,12\hype 3\}$
(or $\{12,3\hype 21\}$), so
\[
G_k(z) = 1 + \sum_{i=1}^k \frac{z}{1-z} + \sum_{i=1}^{k-1}\frac{z}{1-z}
  \sum_{j=i+1}^{k} \frac{z}{1-z}
= 1 + k \frac{z}{1-z} + \frac{k^2 -k}{2}\frac{z^2}{(1-z)^2},
\]
and
$H_k(z) = k z/(1-z)$ counts nonempty words avoiding $\{12,21\}$.

Iterating the recurrence relation, we have
\[
P_k(z) = \frac{1}{1-z} + \sum_{j=2}^{k} G_{j-1}^2(z)\frac{z}{1-z}
  \frac{1}{1-H_{j-1}(z)z/(1-z)}.
\]
We examine the factor
\[
\frac{1}{1-H_{j-1}z/(1-z)} = \frac{-z^2+2 z-1}{j z^2-2 z^2+2 z-1}
\]
The root of $j z^2-2 z^2+2 z-1$ with smallest absolute value is
$z = \frac{1}{\sqrt{j-1}+1}$.
For $j \geq 2$, this value is a simple pole less than $1$ and decreasing
(toward $0$).
By Theorem \ref{thm:ratgf} we conclude the statement.
\end{proof}

Table \ref{tab:123321} shows initial coefficients of $P_k(z)$.

\begin{table}
\centering
\begin{tabular}[c]{|c|rrrrrrrrrr|}
\hline
\diagbox{$k$}{$m$} & $0$ & $1$ & $2$ & $3$ & $4$ & $5$ & $6$
  & $7$ & $8$ & $9$\\ \hline
3& 1 & 3 & 9 & 25 & 65 & 162 & 394 & 946 & 2258 & 5379 \\
4& 1 & 4 & 16 & 56 & 174 & 502 & 1388 & 3755 & 10059 & 26857 \\
5& 1 & 5 & 25 & 105 & 375 & 1211 & 3689 & 10920 & 31920 & 92930 \\\hline
\end{tabular}
\caption{Counts of $k$-ary $m$-words avoiding $\{12 \hype 3, 3 \hype 21\}$.}
\label{tab:123321}
\end{table}

\subsubsection{Some partially ordered patterns with $2$ letters} \label{sec:pops}

Here we consider the family of partially ordered patterns of the form
$2^p \hype 1' \hype \cdots \hype 1^{(q)} \hype 2^r =
2\hype \cdots \hype 2\hype  1' \hype 1'' \hype
  \cdots \hype 1^{(q)} \hype 2\hype \cdots\hype 2$.
We break into cases depending on the values of $p,q,r$.

\paragraph*{Case $p,q,r\geq 1$}

%
Let $h_k(n,m)$ be the number of integer $m$-compositions of $n$ over $[k]$ that
avoid $2^p \hype 1' \hype \cdots \hype 1^{(q)} \hype 2^r$ where
$p,q,r \geq 1$.

\begin{proposition} \label{pro:2112etc}
We have the recurrence relation
\begin{align*}
&h_k(n,m) \\
  &= \sum_{b=0}^{m}[0 \leq b < p + r \textrm{ or } b > m-q]
  \binom{m}{b}h_{k-1}(n-bk,m-b) \\
  & \qquad {} + \sum_{b=p+r}^{m-q} \sum_{t=M}^{M+q-1}
    \binom{w-2}{M-2}\binom{m-t+1}{(p-1) + (r-1) + 1} h_{k-1}(n-bk, m-b),
\end{align*}
for $m, n, \geq 0$ and $k \geq 2$.
For $k=1$, we have $h_1(n,m) = [n=m]$.
\end{proposition}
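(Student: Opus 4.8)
The plan is to count by conditioning on the largest letter $k$, exactly as in the largest-letter arguments for composition pattern avoidance in \cite{cofc}. First I would pin down what an occurrence of $2^p \hype 1' \hype \cdots \hype 1^{(q)} \hype 2^r$ actually is: since the letters $2$ form a single repeated comparable letter while the $1^{(i)}$ are mutually incomparable letters all lying below $2$, an occurrence is precisely a common value $w$ together with positions carrying $p$ copies of $w$, then $q$ parts each strictly less than $w$, then $r$ further copies of $w$, read left to right. The values of the $q$ middle parts are otherwise unconstrained, because the $1^{(i)}$ are pairwise incomparable; the only requirement is that each be smaller than $w$.

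Next I would split occurrences according to whether $w=k$ or $w<k$. An occurrence with $w<k$ can use no part equal to $k$ (such a part is too large to serve as a $1^{(i)}$ and unequal to $w$, so cannot serve as a $2$), hence it survives intact upon deleting the parts equal to $k$, and conversely every occurrence of the reduced $[k-1]$-composition lifts back. Consequently an $m$-composition $x$ of $n$ over $[k]$ avoids the pattern if and only if the $[k-1]$-composition obtained by deleting its $k$'s is pattern-avoiding \emph{and} $x$ has no occurrence with $w=k$. Writing $b$ for the number of parts equal to $k$, the deleted composition is an arbitrary pattern-avoiding $[k-1]$-composition of $n-bk$ with $m-b$ parts, counted by $h_{k-1}(n-bk,m-b)$; crucially this choice is independent of how the $k$'s are interleaved, since avoidance of a $w=k$ occurrence depends only on the positions of $k$ versus non-$k$ parts, not on the non-$k$ values. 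This yields a recurrence $h_k(n,m)=\sum_b N_b\,h_{k-1}(n-bk,m-b)$, where $N_b$ is the number of admissible interleavings with $b$ copies of $k$.

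The core step is computing $N_b$. Recording the $k$-positions and non-$k$-positions as a binary word, a $w=k$ occurrence exists precisely when, taking the earliest $p$ and latest $r$ copies of $k$ (which maximises the middle window), at least $q$ non-$k$ parts lie strictly between the $p$-th and the $(b-r+1)$-th copy of $k$. When $b<p+r$ there are too few copies of $k$ and when $b>m-q$ too few non-$k$ parts, so all $\binom{m}{b}$ interleavings are admissible, giving the first sum. For $p+r\le b\le m-q$ I would impose that at most $q-1$ non-$k$ parts sit strictly between the $p$-th and $(b-r+1)$-th copies, split each word into a prefix ending at the $p$-th copy, the strictly interior block, and a suffix beginning at the $(b-r+1)$-th copy, and count the interior block with $\binom{(b-r-p)+s}{s}$ where $s$ is its number of non-$k$ parts. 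Summing $s$ from $0$ to $q-1$ and collapsing the prefix/suffix convolution $\sum_{x+y=m-b-s}\binom{p-1+x}{x}\binom{r-1+y}{y}=\binom{p+r-1+m-b-s}{p+r-1}$ (the coefficient of $z^{m-b-s}$ in $(1-z)^{-(p+r)}$), then reindexing $t=b-p-r+2+s$ with $M=b-p-r+2$, produces exactly the stated inner double sum with factors $\binom{t-2}{M-2}\binom{m-t+1}{p+r-1}$ over $M\le t\le M+q-1$.

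I expect the main obstacle to be this convolution collapse together with the bookkeeping of the reindexing, and, just as delicate, the verification that the earliest-$p$/latest-$r$ choice of copies is genuinely the correct criterion for the existence of a $w=k$ occurrence (so that $N_b$ is counted without double counting or omission). The base case $k=1$ is immediate: the unique $[1]$-composition of length $m$ is $1^m$, which has a single value and hence admits no occurrence, so $h_1(n,m)=[n=m]$.
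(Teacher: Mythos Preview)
Your proposal is correct and follows essentially the same route as the paper: condition on the number $b$ of parts equal to $k$, observe that occurrences with top value $<k$ are unaffected by the placement of the $k$'s, and reduce the $w=k$ avoidance to a purely positional condition (at most $q-1$ non-$k$ parts between the $p$-th $k$ from the left and the $r$-th $k$ from the right). The only cosmetic difference is that the paper obtains the second binomial $\binom{m-t+1}{p+r-1}$ directly by treating the middle block as a single super-letter among $m-t+1$ items, whereas you derive it via the Vandermonde convolution $\sum_{x+y=m-b-s}\binom{p-1+x}{x}\binom{r-1+y}{y}$; after your reindexing $t=M+s$ the two formulas coincide exactly (note the $\binom{w-2}{M-2}$ in the displayed statement is a typo for $\binom{t-2}{M-2}$).
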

\begin{proof}
Assume $k \geq 2$ and let $b$ be the number of letters $k$ in a word $w$.
If $b \leq p+r-1$ or $b \geq m - q + 1$, these letters cannot be part of an
occurrence, so their positions do not matter, thus there are $\binom{n }{
b}h_{k-1}(n-bk,m-b)$ such words $w$.
If $b \geq p+r$, then between the $p$\textsuperscript{th} $k$ from the left and
the $r$\textsuperscript{th} $k$ from the right there must be at most $q-1$
letters that are not $k$.
Let $t$ be the number of all letters between the $p$\textsuperscript{th} $k$
from the left and the $r$\textsuperscript{th} $k$ from the right, and let
$M=b-(p-1)-(r-1)$ be the number of letters $k$ among those letters.
Then there are
\[ \sum_{t=M}^{M+q-1} \binom{t-2}{M-2}\binom{m-t+1}{(p-1) + (r-1) + 1} \]
possible ways of placing the letters $k$ in $w$.
The first binomial coefficient chooses the letters $k$ between the
$p$\textsuperscript{th} from left and $r$\textsuperscript{th} from right, and
the second chooses the position of the remaining letters $k$ as well as the
position of the $p$\textsuperscript{th} from the left.

So for $m \geq p+r$, this gives
\begin{align*}
&h_k(n,m) \\
  &= \sum_{b=0}^{m}[0 \leq b < p + r \textrm{ or } b > m-q]
  \binom{m}{b}h_{k-1}(n-bk,m-b) \\
  & \qquad {} + \sum_{b=p+r}^{m-q} \sum_{t=M}^{M+q-1}
    \binom{t-2}{M-2}\binom{m-t+1}{(p-1) + (r-1) + 1} h_{k-1}(n-bk, m-b),
\end{align*}
as desired.
It can be verified that the recurrence is valid as well for the values
$0 \leq m < p + r$.
\end{proof}

We note that $h_k(n,m)$ is a function of $p+r$ rather than $p$ and $r$
independently.

\paragraph*{Case $p=1, q=2, r=1$}

For the special case $\tau = 2 \hype 1' \hype 1'' \hype 2$ we illustrate
an asymptotic analysis.
We further simplify by ignoring totals and counting words.
Let $H_k(z) = \sum_{m \geq 0} h_k(m) z^m$ where $h_k(m)$ is the number of
$k$-ary $m$-words that avoid $2 \hype 1' \hype 1'' \hype 2$.

\begin{proposition}
If $k \geq 2$ we have
$h_k(m) = \frac{A_k}{(3(k-1))!} m^{3(k-1)}(1 + O(m^{-1})), m \to \infty$,
where $A_k = \prod_{j=1}^{k-1}(1+3(j-1))$.
\end{proposition}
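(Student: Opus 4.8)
The plan is to pass to the ordinary generating function $H_k(z)=\sum_{m\ge 0}h_k(m)z^m$ (where $z$ now marks length) and read off the asymptotics from its singularity at $z=1$. First I would record a recurrence in $k$ for $h_k(m)$. Summing the recurrence of Proposition~\ref{pro:2112etc} over all totals $n$ and specializing to $p=1,q=2,r=1$—equivalently, decomposing a word on the positions of its largest letter $k$—the coefficient multiplying $h_{k-1}(m-b)$, where $b$ is the number of copies of $k$, simplifies greatly. Avoidance forces at most $q-1=1$ smaller letter strictly between the first and last copy of $k$; the configurations with none give one contiguous block of $k$'s, insertable in $m-b+1$ gaps, and those with exactly one give two nonempty blocks flanking a chosen separator, contributing $(m-b)(b-1)$. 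Their sum is $(m-b+1)+(m-b)(b-1)=1+b(m-b)$, so
\[ h_k(m)=\sum_{b=0}^{m}\bigl(1+b(m-b)\bigr)\,h_{k-1}(m-b),\qquad h_1(m)=1. \]

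Next I would convert this convolution into a functional equation. Using $\sum_a a\,h_{k-1}(a)z^a=zH_{k-1}'(z)$ and $\sum_b b\,z^b=z/(1-z)^2$, the recurrence becomes
\[ H_k(z)=\frac{1}{1-z}\,H_{k-1}(z)+\frac{z^2}{(1-z)^2}\,H_{k-1}'(z),\qquad H_1(z)=\frac{1}{1-z}. \]
I would then prove by induction on $k$ that $H_k$ is rational with its only finite singularity a pole at $z=1$, of order $3(k-1)+1$, and that $C_k:=\lim_{z\to 1}(1-z)^{3(k-1)+1}H_k(z)=A_k$. In the inductive step the first summand contributes a pole of order $3k-4$, while the second is dominant: differentiation raises the pole order of $H_{k-1}$ by one and the prefactor $z^2/(1-z)^2$ by two more, giving order $3k-2=3(k-1)+1$. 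Tracking leading coefficients, the singular part $A_{k-1}(1-z)^{-(3k-5)}$ of $H_{k-1}$ is sent by the derivative to $(3k-5)A_{k-1}(1-z)^{-(3k-4)}$, and multiplication by $z^2/(1-z)^2$ (with $z^2\to 1$) yields $C_k=(3k-5)A_{k-1}$; since $A_k=\prod_{j=1}^{k-1}(3j-2)$ satisfies exactly $A_k=(3k-5)A_{k-1}$, this gives $C_k=A_k$ and closes the induction.

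With the pole structure established, Theorem~\ref{thm:ratgf} finishes the argument: because the sole pole sits at $\alpha=1$, the coefficient $[z^m]H_k(z)$ is a polynomial $\Pi(m)$ of degree $3(k-1)$ whose leading coefficient is $C_k/(3(k-1))!=A_k/(3(k-1))!$, so $h_k(m)=\frac{A_k}{(3(k-1))!}m^{3(k-1)}(1+O(m^{-1}))$. The main obstacle is the inductive singular analysis—specifically verifying that the derivative term dominates and that its leading coefficient obeys the same recurrence $C_k=(3k-5)A_{k-1}$ as the product $A_k$; the bookkeeping that $z=1$ remains the unique singularity at every step (so that the error term is genuinely $O(m^{-1})$) also needs care, though it follows routinely from the recurrence.
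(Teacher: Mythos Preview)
Your proposal is correct and follows essentially the same approach as the paper: derive the recurrence $h_k(m)=\sum_{b=0}^m\bigl((m-b+1)+(m-b)(b-1)\bigr)h_{k-1}(m-b)$ from Proposition~\ref{pro:2112etc}, pass to the generating-function recurrence $H_k(z)=\frac{1}{1-z}H_{k-1}(z)+\frac{z^2}{(1-z)^2}H_{k-1}'(z)$, and then show by induction that $H_k$ is rational with a unique pole at $z=1$ of order $3(k-1)+1$ and leading coefficient $A_k$, finishing with Theorem~\ref{thm:ratgf}. Your write-up is in fact more explicit than the paper's about the inductive verification that $C_k=(3k-5)A_{k-1}=A_k$, which is exactly the content the paper leaves to the reader.
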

\begin{proof}
By Proposition \ref{pro:2112etc} we know
\begin{align*}
h_k(m) &= h_{k-1}(m) + m h_{k-1}(m-1) \\
  &\qquad + \sum_{b=2}^{m-2}((m-b+1) + (m-b)(b-1))h_{k-1}(m-b) \\
  &\qquad + m h_{k-1}(1) + h_{k-1}(0) \\
&=\sum_{b=0}^m((m-b+1) + (m-b)(b-1))h_{k-1}(m-b).
\end{align*}
Passing to generating functions, we have
\[ H_k(z) = \frac{1}{1-z}H_{k-1}(z) + \frac{z^2}{(1-z)^2}H'_{k-1}(z) \]
for $k \geq 2$, and $H_1(z) = 1/(1-z)$.

By induction $H_k(z)$ is rational with unique singularity at $z=1$ and
$H_k(z) \sim A_k \frac{1}{(1-z)^{1 + 3(k-1)}}, z \to 1$ so
by Theorem \ref{thm:ratgf} we conclude the statement.
\end{proof}

Table \ref{tab:2112} shows initial coefficients of $H_k(z)$.
Figure \ref{fig:2112} has uniform-randomly generated words avoiding
$2 \hype 1' \hype 1'' \hype 2$.

\begin{table}
\centering
\begin{tabular}[c]{|c|rrrrrrrrrr|}
\hline
\diagbox{$k$}{$m$} & $0$ & $1$ & $2$ & $3$ & $4$ & $5$ & $6$
  & $7$ & $8$ & $9$ \\ \hline
1& 1 & 1 & 1 & 1 & 1 & 1 & 1 & 1 & 1 & 1  \\
2& 1 & 2 & 4 & 8 & 15 & 26 & 42 & 64 & 93 & 130 \\
3& 1 & 3 & 9 & 27 & 76 & 196 & 462 & 1002 & 2019 & 3817 \\
4& 1 & 4 & 16 & 64 & 242 & 844 & 2692 & 7852 & 21043 & 52184 \\
5& 1 & 5 & 25 & 125 & 595 & 2635 & 10743 & 40163 & 137738 & 434798 \\\hline
\end{tabular}
\caption{Counts of the $k$-ary words of length $m$ avoiding
  $2 \hype 1' \hype 1'' \hype 2$.}
\label{tab:2112}
\end{table}

\begin{figure}
\centering
\includegraphics[width=5.5in]{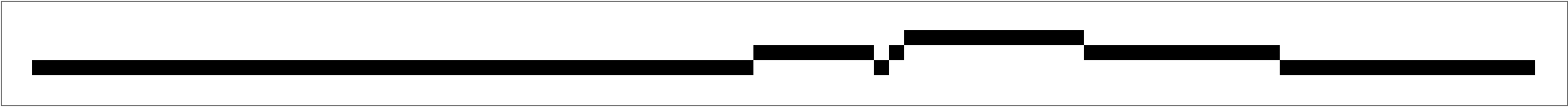}
\includegraphics[width=5.5in]{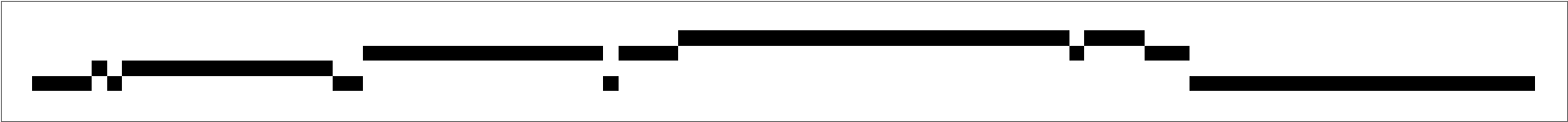}
\includegraphics[width=5.5in]{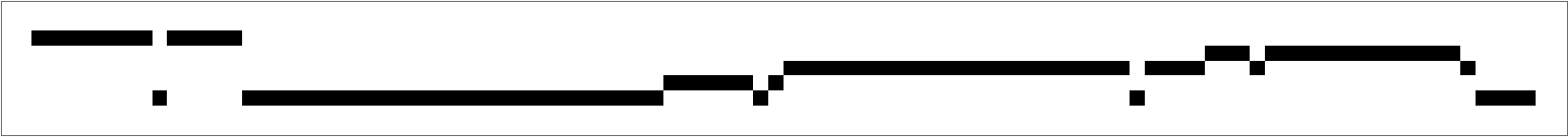}
\includegraphics[width=5.5in]{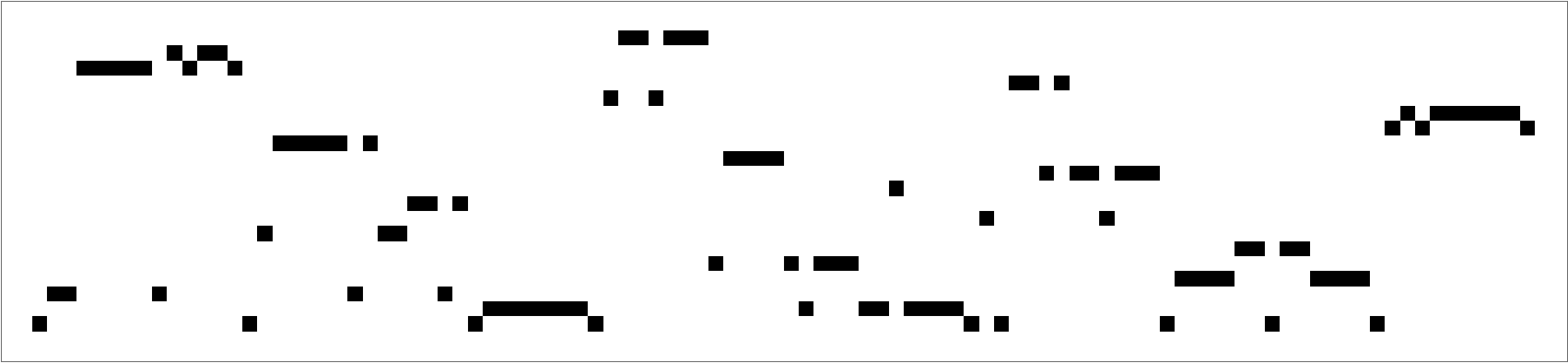}

\caption{Uniform-randomly generated $k$-ary $100$-words where $k=3,4,5,20$
  (top to bottom) avoiding $2 \hype 1' \hype 1'' \hype 2$.
  \label{fig:2112}}
\end{figure}

\paragraph*{Case $p=0$}

If we have $q, r \geq 1$ but
we allow $p=0$, we have the pattern
$\tau = 1' \hype 1'' \hype \cdots \hype 1^{(q)} \hype 2^r$.
Words avoiding $\tau$ were counted in \cite[\S 2]{gao2011counting};
integer compositions were left as an open problem.
Let $h_k(n,m)$ be the number of integer $m$-compositions of $n$ over
$[k]$ that avoid $\tau$.

\begin{proposition}
For $k \geq 2$ and $m \geq q+r$ we have
\begin{align*}
  h_k(n,m) &= \sum_{j=1}^q \binom{q}{j}h_k(n-jk, m-j) (-1)^{j+1} \\
  &\qquad {} + \sum_{b=0}^{r-1}\binom{m-q}{b}h_{k-1}(n-bk, m-b),
\end{align*}
and $h_1(n,m) = [n=m]$.
\end{proposition}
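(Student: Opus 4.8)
The plan is to condition on the number of parts equal to the maximum letter $k$ and then reduce the stated recurrence, via a finite-difference manipulation, to a pair of binomial identities. Recall that an occurrence of $\tau = 1'\hype\cdots\hype 1^{(q)}\hype 2^r$ in a composition $w$ is a choice of indices $i_1<\cdots<i_q<i_{q+1}<\cdots<i_{q+r}$ together with a value $v$ such that $w(i_{q+1})=\cdots=w(i_{q+r})=v$ and $w(i_1),\ldots,w(i_q)<v$ (the first $q$ entries are mutually incomparable, so only their comparisons with the common value $v$ are constrained). The base case $h_1(n,m)=[n=m]$ is immediate, since over $[1]$ there is a unique composition of each length and no room for a strict inequality. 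First I would record the structural fact that for $k\geq 2$ a composition over $[k]$ avoids $\tau$ if and only if (i) the subsequence of its non-$k$ parts avoids $\tau$ over $[k-1]$, and (ii) there is no occurrence with $v=k$: any occurrence with $v\leq k-1$ uses only parts of value $<k$, so it survives deletion of the $k$'s, and conversely inserting $k$'s never creates an occurrence with $v\leq k-1$.

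Next I would make condition (ii) positional. With $b$ copies of $k$ interleaved among $L=m-b$ non-$k$ parts, an occurrence with $v=k$ exists exactly when one can place $q$ non-$k$ parts before $r$ of the $k$'s; taking the $q$ earliest non-$k$ parts, this happens iff at least $r$ of the $k$'s follow the $q$-th non-$k$ part. Since this depends only on relative positions, the number $G(b,L)$ of admissible interleavings is independent of the underlying $[k-1]$-composition, and \[ h_k(n,m)=\sum_{b\geq 0} G(b,m-b)\,h_{k-1}(n-bk,m-b). \] Splitting on the number $s\leq r-1$ of $k$'s lying after the $q$-th non-$k$ part gives, when $L\geq q$, \[ G(b,L)=\sum_{s=0}^{r-1}\binom{q-1+b-s}{q-1}\binom{L-q+s}{s}, \] while if $b<r$ or $L<q$ no occurrence is possible and $G(b,L)=\binom{b+L}{b}$ (the two formulas agree by Vandermonde).

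I would then rewrite the target. Using $\sum_{j=1}^q\binom{q}{j}(-1)^{j+1}x_j=x_0-\sum_{j=0}^q\binom{q}{j}(-1)^j x_j$, the stated recurrence is equivalent to \[ \sum_{j=0}^q\binom{q}{j}(-1)^j h_k(n-jk,m-j)=\sum_{b=0}^{r-1}\binom{m-q}{b}h_{k-1}(n-bk,m-b). \] Substituting the decomposition above and setting $c=j+b$, then collecting the coefficient of $h_{k-1}(n-ck,m-c)$, it suffices to prove for every $c$ that \[ \sum_{j=0}^{\min(q,c)}\binom{q}{j}(-1)^j G(c-j,\,m-c)=\binom{m-q}{c}\,[\,c<r\,]. \] When $c<r$ every $b=c-j$ is below $r$, so $G(c-j,m-c)=\binom{m-j}{c-j}$ and the classical identity $\sum_j\binom{q}{j}(-1)^j\binom{m-j}{c-j}=[x^c](1+x)^{m-q}=\binom{m-q}{c}$ finishes this case (it also covers the degenerate range $L=m-c<q$, where $\binom{m-q}{c}=0$).

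The crux is the case $c\geq r$ with $L=m-c\geq q$. Here I would insert the sum-formula for $G$, exchange the $j$- and $s$-sums, and observe that the inner sum $\sum_{j=0}^q\binom{q}{j}(-1)^j\binom{q-1+c-s-j}{q-1}$ is, up to sign, the $q$-th finite difference in $j$ of a polynomial of degree $q-1$, hence vanishes. The delicate point — and the main obstacle — is that this cancellation uses the \emph{polynomial} value of $\binom{q-1+c-s-j}{q-1}$, which coincides with the combinatorial value appearing in $G$ precisely when no argument $q-1+c-s-j$ is negative; since $j\leq q$ and $s\leq r-1$ one has $q-1+c-s-j\geq c-r\geq 0$ exactly in the range $c\geq r$, so the two binomials agree and the inner sum is genuinely $0$. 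It is this same positivity threshold that blocks cancellation when $c<r$ and leaves the residue $\binom{m-q}{c}$, so the dichotomy $[\,c<r\,]$ is forced. The remaining work is bookkeeping of boundary conventions (empty ranges, $L<q$, feasibility of $n$), which I expect to be routine.
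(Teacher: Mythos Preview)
Your proof is correct but takes a genuinely different route from the paper's. The paper's argument is a direct two-step decomposition: first, by inclusion--exclusion, the number of $\tau$-avoiding compositions with at least one $k$ among the first $q$ positions is $\sum_{j=1}^q (-1)^{j+1}\binom{q}{j}h_k(n-jk,m-j)$, because inserting a $k$ into any subset of the first $q$ positions of a $\tau$-avoiding $(m-j)$-composition is a bijection onto $\tau$-avoiding compositions with $k$'s at those prescribed positions (such an inserted $k$ can play neither a ``$1'$'' role, being maximal, nor a ``$2$'' role, having fewer than $q$ letters to its left). Second, a composition with no $k$ in its first $q$ positions either has $b<r$ copies of $k$ --- placed freely in the last $m-q$ slots, with the non-$k$ subword an arbitrary $\tau$-avoiding composition over $[k-1]$ --- or has $b\geq r$ copies of $k$, which together with the first $q$ (non-$k$) letters forces an occurrence.

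Your approach instead first proves the pure-in-$k$ recurrence $h_k(n,m)=\sum_b G(b,m-b)\,h_{k-1}(n-bk,m-b)$ with an explicit interleaving count $G$, and then recovers the stated mixed recurrence by substituting this expansion back and reducing to two binomial identities (Vandermonde for $c<r$, and the vanishing $q$-th finite difference of a degree-$(q-1)$ polynomial for $c\geq r$). This is longer and requires the positivity check $q-1+c-s-j\geq c-r\geq 0$ that you correctly isolate as the crux, but it yields as a byproduct a recurrence with only $h_{k-1}$ on the right-hand side, which can be more convenient for direct computation. The paper's route is shorter and explains combinatorially \emph{why} the $\binom{q}{j}$ inclusion--exclusion term appears; yours trades that insight for an explicit formula for the admissible placements of the maximal letter.
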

\begin{proof}
For the range $m \geq q+r$, we recursively count $m$-compositions $x$
avoiding $\tau$
by first counting $x$ such that at least one of the first $q$ letters is $k$.
By the principle of inclusion-exclusion,
the number of such $x$ is \[\sum_{j=1}^q N_j (-1)^{j+1}, \]
where $N_j$ is the sum, over all $j$-subsets of the first $q$ positions, of the
number of compositions $x$ with $k$'s in the positions given by the subset.
The quantity $N_j$ is given by \[ N_j = \binom{q}{j}h_k(n-jk, m-j), \]
since inserting $j$ copies of $k$ into any of the first $q$ positions of
an $(m-j)$-composition is reversible and does not affect the number of
occurrences of $\tau$.

Now we count the compositions $x$ that have no letters $k$ in their first $q$
positions.
Let $b$ be the number of letters $k$ in $x$.
If $b \leq r-1$, then there are
not enough letters $k$ to be part of a pattern, so there are
\[ \sum_{b=0}^{r-1}\binom{m-q}{b}h_{k-1}(n-bk, m-b), \]
compositions of this kind.

If $b \geq r$ then there is at least one occurrence of $\tau$.
Thus we have, for $m \geq q+r, k \geq 2$,
\begin{align*} \label{eq:frmk}
  h_k(n,m) &= \sum_{j=1}^q \binom{q}{j}h_k(n-jk, m-j) (-1)^{j+1} \\
  &\qquad {} + \sum_{b=0}^{r-1}\binom{m-q}{b}h_{k-1}(n-bk, m-b).
\end{align*}

For $m < q + r$, we have
$h_k(n,m) = [z^n u^m] \left(\sum_{j=1}^k z^j u\right)^m$.
\end{proof}

We expect similar techniques to those used for $p,q,r \geq 1$ and
$p=0$ apply to count avoidance of $\tau$ where
$\tau$ involves the letter $2$ and mutually incomparable symbols $1^{(j)}$.

%


Figure \ref{fig:112} shows randomly generated compositions avoiding
$1' \hype 1'' \hype 2$.

\begin{figure}
\centering
\includegraphics[width=2.15in]{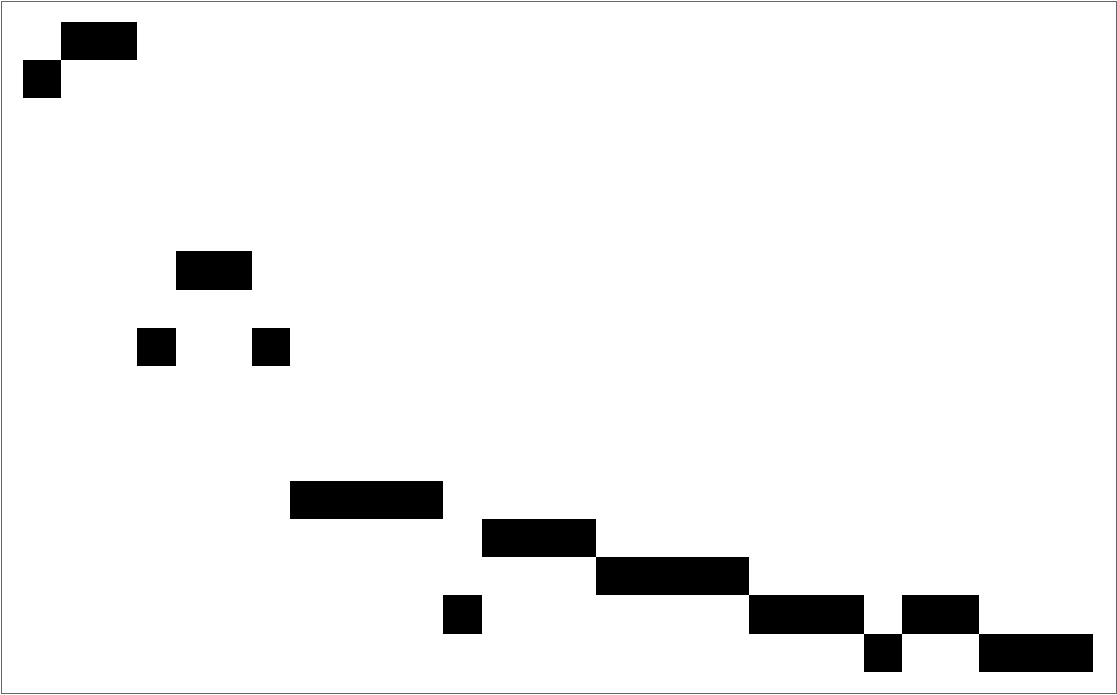}
\includegraphics[width=2in]{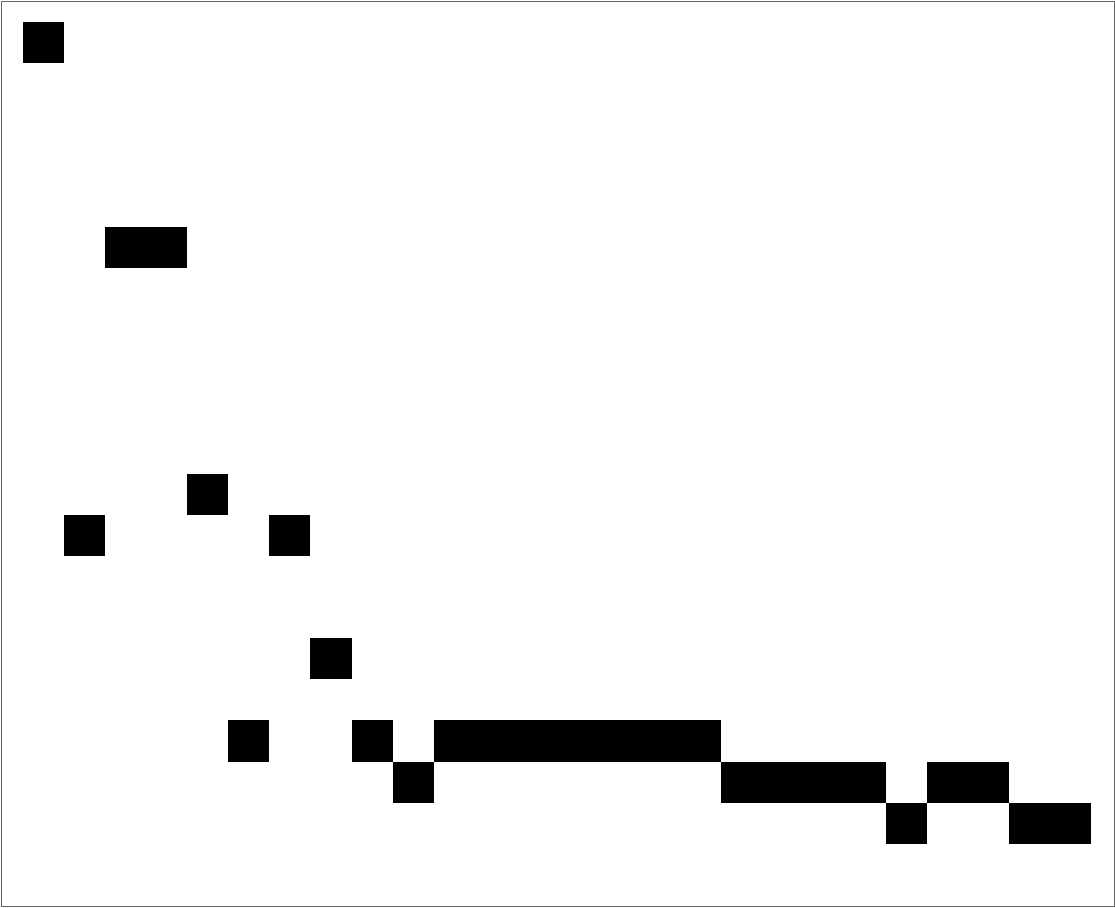}

\caption{Uniform-randomly generated compositions of $150$ avoiding $1' \hype
  1'' \hype 2$.
  \label{fig:112}}
\end{figure}

\subsubsection{Note on counting with symmetries} \label{sec:subseqsym}


\paragraph*{Reversal (unlabeled undirected paths)}
As in \S \ref{sec:palcomps}
we say that an \emph{undirected word} is an unordered pair $\{w, \cev{w}\}$
where $w \neq \cev{w}$ or simply $\{w\}$ if $w = \cev{w}$.
The pair $\{w, \cev{w}\}$ avoids a pattern $\tau$ if and only if both $w$ and $\cev{w}$
avoid $\tau$.
For a subsequence pattern $\tau$, we define the set $\folds(\tau)$ to be all
possible words $\tau'$ obtained by the following procedure.
Split $\tau$ into two subwords $\tau = \tau_1 \tau_2$.
Take words $\tau'$ such that $\tau_1$ and $\cev{\tau_2}$ are subsequences of
$\tau'$.

\begin{proposition}
Let $\tilde{p}_k(m; T)$ be the number of undirected $k$-ary $m$-words that
avoid all subsequence patterns in the set $T$.
Then for even $m$ we have
\[ \tilde{p}_k(m; \{\tau\}) = \frac{1}{2}p_k(m; \{\tau, \cev{\tau}\})
  + \frac{1}{2}p_k(m/2; \folds(\tau)). \]
\end{proposition}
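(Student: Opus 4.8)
The plan is to follow the divide-by-two-with-palindrome-correction template of the proposition in \S\ref{sec:palcomps}, and then to identify the palindrome count with $p_k(m/2;\folds(\tau))$.

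First I would record the basic symmetry fact that $\cev{w}$ contains the subsequence pattern $\tau$ if and only if $w$ contains $\cev{\tau}$; this is immediate from the definition of occurrence, since reversing the ambient word turns an increasing sequence of indices into a decreasing one while leaving relative order unchanged, so reduction commutes with reversal. Consequently the pair $\{w,\cev{w}\}$ avoids $\tau$ exactly when $w$ avoids both $\tau$ and $\cev{\tau}$, i.e.\ when $w$ is counted by $p_k(m;\{\tau,\cev{\tau}\})$. Each undirected word $\{w,\cev{w}\}$ with $w\neq\cev{w}$ accounts for two such $w$, while each palindrome $w=\cev{w}$ accounts for one, so writing $P$ for the number of palindromic $m$-words avoiding $\tau$ gives
\[ \tilde{p}_k(m;\{\tau\}) = \tfrac12\bigl(p_k(m;\{\tau,\cev{\tau}\})-P\bigr)+P = \tfrac12 p_k(m;\{\tau,\cev{\tau}\}) + \tfrac12 P, \]
exactly as in \S\ref{sec:palcomps}. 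It then remains to prove $P = p_k(m/2;\folds(\tau))$ for even $m$.

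For even $m=2n$, every palindrome is uniquely $w=u\cev{u}$ for a half-word $u$ of length $n$, so palindromic $m$-words biject with $n$-words $u$. The crux is to show that $u\cev{u}$ contains $\tau$ if and only if $u$ contains some pattern of $\folds(\tau)$; the identity $P = p_k(n;\folds(\tau))$ then follows. For the forward direction I would take an occurrence of $\tau$ in $w=u\cev{u}$ and split its index set according to which half of $w$ each index lies in; this induces a split $\tau=\tau_1\tau_2$, where $\tau_1$ records the letters drawn from the first half and $\tau_2$ those from the second. Because the second half is $\cev{u}$, a second-half index at position $n+j$ reads the letter $u(n+1-j)$, so as the second-half indices increase in $w$ their underlying positions in $u$ decrease; reading the chosen positions of $u$ from left to right therefore realizes $\tau_1$ followed by the reversal $\cev{\tau_2}$. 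Reducing $u$ at the distinct positions used, taken in increasing order, yields a word $\tau'$ of which both $\tau_1$ and $\cev{\tau_2}$ are subsequences, hence $\tau'\in\folds(\tau)$ and $u$ contains $\tau'$. Conversely, an occurrence of any $\tau'\in\folds(\tau)$ in $u$ exhibits $\tau_1$ and $\cev{\tau_2}$ as subsequences of $u$; placing the $\tau_1$-positions in the first half of $w$ and the mirror images of the $\cev{\tau_2}$-positions in the second half reassembles, after undoing the reversal, an occurrence of $\tau$ in $w$.

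The main obstacle is this last equivalence, and within it two points demand care. First, the two index sets coming from the split may share a position of $u$ (a single letter of $u$ can be used once from each half of $w$), so the passage to the distinct sorted positions defining $\tau'$, and the reverse reassembly, must handle such coincidences correctly; this is precisely what lets $\folds(\tau)$ contain merges shorter than $|\tau|$. Second, I must verify that the cross relations in value between $\tau_1$-letters and $\tau_2$-letters dictated by $\tau$ are faithfully carried by the single word $\tau'$ — that is, that the reassembled subsequence reduces to $\tau$ itself and not merely to some word with the same two halves. Confirming that $\folds(\tau)$ records exactly these merges (neither too few, which would miss occurrences, nor too many, which would manufacture spurious ones) is where the bulk of the work lies; once that is in place, combining the displayed identity with $P=p_k(m/2;\folds(\tau))$ completes the proof.
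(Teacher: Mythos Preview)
Your approach is exactly the paper's: split into palindromic and non-palindromic undirected words, use the identity $\tilde{p}_k(m;\{\tau\}) = \tfrac12\bigl(p_k(m;\{\tau,\cev{\tau}\}) - P\bigr) + P$, and identify $P$ with $p_k(m/2;\folds(\tau))$ via the bijection $u\mapsto u\cev{u}$. The paper's proof simply asserts the palindrome correspondence (``if it is palindromic, it takes the form $u\cev{u}$ where $u$ avoids $\folds(\tau)$'') without further argument, so the care you take with the split-and-merge equivalence, the handling of shared positions in $u$, and the preservation of cross relations between $\tau_1$ and $\tau_2$ goes well beyond what the paper actually supplies.
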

\begin{proof}
An undirected word is either palindromic or not.
If not, it corresponds to a pair of $2$ distinct directed words.
If it is palindromic, it takes the form $u \cev{u}$ where $u$ avoids
$\folds(\tau)$.
Also,
\[
\tilde{p}_k(m; \{\tau\}) = \frac{1}{2} \big(p_k(m; \{\tau, \cev{\tau}\})
  - p_k(m/2; \folds(\tau)) \big)
 + p_k(m/2; \folds(\tau)). \qedhere
\]
\end{proof}

The case of odd $m$ is less straightforward.
Burstein \cite{burstein1998enumeration} counts a number of examples of
words avoiding the set $\{\tau, \cev{\tau}\}$ for where $\tau$ is a short
subsequence pattern with no repeated letters.
Avoiding $\folds(\tau)$ becomes quite restrictive but is not necessarily
impossible.
Any non-decreasing word avoids $\folds(2 \hype 1 \hype 3)$.


\paragraph*{Circular shift (unlabeled cycles)}

A circular word is an equivalence class of words where two words are equivalent
if one is a circular shift of the other.

As in Remark \ref{rem:globalcycle},
a word $w$ cyclically avoids a subsequence pattern $\tau$ if all circular
shifts of $w$ avoid the pattern.
Alternatively, $w$ cyclically avoids $\tau$ if $w$ avoids all circular
shifts of $\tau$.
We observe that we do not have property that if $u$ cyclically avoids a
pattern so does $uu$.
For example, to cyclically avoid $1 \hype 2 \hype 3$, we avoid the set
$T = \{1\hype 2\hype 3, 3\hype 1\hype 2, 2\hype 3\hype 1\}$, and if
$u=321$, then $u$ avoids the pattern but $u u=321321$
contains $2\hype 3\hype 1$.

We define the set $\merges_i(\tau)$ to contain all $\tau'$ produced by the
following procedure.
If $i$ is an integer satisfying $1 \leq i < |\tau|$, we consider any circular
shift $\tau^*$ of $\tau$ expressed as a concatenation of subwords
$\tau^* = \tau_1 \cdots \tau_i$, some of which may be empty.
For each $\tau^*$, include any word $\tau'$ such that each $\tau_j$ is a
subsequence of $\tau'$.

\begin{proposition}
Let $p_k(m; T)$ be the number of $k$-ary $m$-words avoiding subsequence
patterns in $T$, and let $\tilde{c}_k(m; T)$ be the same for circular $m$-words.
Let $t$ be the number of distinct letters in the subsequence pattern $\tau$.
We have
\begin{align*}
&\tilde{c}_k\big(m; \{\tau\}\big) \\
  &= \sum_{j | m} \frac{1}{j} \sum_{d|j} \mu(j/d) \Big(
  [m/d < |\tau|]p_k\big(d; \merges_{m/d}(\tau)\big) +
  [m/d \geq |\tau|]\alpha(d) \Big),
\end{align*}
where $\alpha(d) = \sum_{j=1}^{t-1} \binom{k}{j}
\left\{ {d\atop j} \right\}$ is the number of all $k$-ary $d$-words
with fewer than $t$ distinct letters, and $\mu$ is the Moebius function.
\end{proposition}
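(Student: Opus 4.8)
The plan is to count the circular words as orbits under cyclic shift, exactly as in Proposition \ref{pro:circlecount} and the gap-free example, but with one essential modification: because cyclic avoidance of a \emph{subsequence} pattern is not inherited by repetitions (as the $321\mapsto 321321$ example shows), the ``linear'' counts that feed the M\"obius inversion must be allowed to depend on how many times a primitive word is repeated. So the bracketed expression in the statement plays the role that a single base count $c(d')$ played in the gap-free case.

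First I would set up the orbit count. For $j\mid m$ let $A_m(j)$ be the number of aperiodic (primitive) words $u$ of length $j$ with $u^{m/j}$ cyclically avoiding $\tau$, and let $B_m(j)$ be the number of \emph{all} length-$j$ words $v$ with $v^{m/j}$ cyclically avoiding $\tau$. Since every length-$m$ word is uniquely a power of its primitive root (\cite[Theorem 2.3.4]{shallit2008second}) and the $j$ rotations of a primitive word of length $j$ form one circular word (\cite[Theorem 2.4.2]{shallit2008second}), the same necklace bookkeeping used in Proposition \ref{pro:circlecount} gives $\tilde{c}_k(m;\{\tau\})=\sum_{j\mid m}\frac1j A_m(j)$. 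The crucial observation for inversion is that if $v$ has length $j$ and primitive root $u$ of length $d\mid j$, then $v^{m/j}=u^{m/d}$ \emph{as words}, so $v^{m/j}$ cyclically avoids $\tau$ iff $u^{m/d}$ does; grouping the length-$j$ words by the length $d$ of their primitive root therefore yields $B_m(j)=\sum_{d\mid j}A_m(d)$. M\"obius inversion gives $A_m(j)=\sum_{d\mid j}\mu(j/d)B_m(d)$, and substituting reproduces the double sum in the statement. Everything then reduces to identifying $B_m(d)$ with the bracketed expression.

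Next I would evaluate $B_m(d)$, writing $i=m/d$ for the number of repetitions. By the paper's alternative description, $v^i$ cyclically avoids $\tau$ iff it avoids, as an ordinary subsequence pattern, every circular shift $\tau^*$ of $\tau$. Writing $v^i$ as a concatenation of its $i$ identical copies, an occurrence of $\tau^*$ distributes the letters of $\tau^*$ among the copies in order, so $v^i$ contains $\tau^*$ iff $\tau^*$ admits a factorization $\tau^*=\tau_1\cdots\tau_i$ into $i$ consecutive (possibly empty) blocks, each a subsequence of $v$. When $i\ge|\tau|$ I would argue that $v^i$ contains $\tau$ exactly when $v$ has at least $t$ distinct letters: if $v$ has $t$ distinct values we fix $t$ of them and read one suitably-valued letter from each of the first $|\tau|\le i$ copies, exhibiting an occurrence of $\tau$ itself; conversely, fewer than $t$ distinct values cannot realize a pattern with $t$ distinct values, and every circular shift of $\tau$ still has $t$ distinct values. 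Hence in this regime $B_m(d)$ counts the $k$-ary $d$-words with fewer than $t$ distinct letters, namely $\alpha(d)$. When $i<|\tau|$, the factorization criterion says $v^i$ contains $\tau$ iff, for some shift $\tau^*$ and some factorization into $i$ blocks, every block is a subsequence of $v$; taking the subword of $v$ spanned by the union of these embeddings shows this is equivalent to $v$ containing some common supersequence of the blocks, i.e.\ some element of $\merges_i(\tau)$. Thus $v^i$ cyclically avoids $\tau$ iff $v$ avoids $\merges_i(\tau)$, giving $B_m(d)=p_k(d;\merges_{m/d}(\tau))$.

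The main obstacle I anticipate is this last identification for $i<|\tau|$: making rigorous the passage from ``each block of some shift and factorization embeds into $v$'' to ``$v$ contains a single word of $\merges_i(\tau)$.'' The forward direction is immediate, since any element of $\merges_i(\tau)$ is a supersequence of each block; the reverse needs the union-of-embeddings argument together with the remark that avoiding the (infinitely described) set $\merges_i(\tau)$ coincides with avoiding its minimal elements, of which only finitely many can embed in a word of length $d$. I would also check the dichotomy at $i=|\tau|$ is placed correctly so that the distinct-letter count and the $\merges$ count never overlap, and confirm that the stated $\alpha(d)$ indeed tallies the $d$-words with fewer than $t$ distinct letters.
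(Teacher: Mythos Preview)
Your proposal is correct and follows essentially the same approach as the paper: defining, for each divisor $d$ of $m$, the count of length-$d$ words $v$ whose $m/d$-fold repetition cyclically avoids $\tau$ (your $B_m(d)$, the paper's $f_d$), applying M\"obius inversion over the divisor lattice to extract the aperiodic contributions, and then identifying $B_m(d)$ case-by-case according to whether $m/d\ge|\tau|$ or not. If anything, you supply more justification than the paper does for the $\merges$ identification and for the distinct-letter dichotomy; the paper simply asserts both without the supersequence argument or the careful treatment of the infinite pattern set.
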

\begin{proof}
The period of an $m$-word $w$ is the least integer $n$ such that $w=u^{m/n}$
for some word $u$.
Fix $m$, and for $j \leq m$, define $f_j$ to be the number of $k$-ary $m$-words
cyclically avoiding $\tau$ with period dividing $j$.
Then by Moebius inversion the number of words with period exactly $j$ is
\(
\sum_{d|j} \mu(d) f_{j/d},
\)
and so the number of all circular words is
\[
\sum_{j|m} \frac{1}{j} \sum_{d|j} \mu(d) f_{j/d} =
\sum_{j|m} \frac{1}{j} \sum_{d|j} \mu(j/d) f_{d}.
\]
If an $m$-word $w$ has period dividing $d$, then it has the form $w=u^{m/d}$
for a subword $u$ of length $d$.
If $m/d \geq |\tau|$ then $w$ cyclically contains $\tau$ if and only if $u$ contains at
least as many distinct letters as there are in $\tau$.
If $m/d < |\tau|$, then $w$ cyclically avoids $\tau$ if and only if $u$ avoids
$\merges_{m/d}(\tau)$.
\end{proof}

\subsection{Note on compositions over $\Zk$} \label{sec:zk}


The problem of counting compositions over a group that avoid a subsequence
pattern has not been addressed in prior literature, but was suggested in
\cite{abelian}.
Here we present a general technique illustrated for the pattern
$1' \hype 2 \hype 1''$.

\begin{proposition}
Let $P_k^{\langle a \rangle}(y)$ be the generating function for compositions
of $a$ over $\mathbb{Z}_k$ avoiding
the pattern set $\{1\hype 3\hype 2, 2\hype 3\hype 1, 1\hype 2\hype 1\}$
(alternatively the partially ordered pattern $1' \hype 2 \hype 1''$), where $y$
marks length.
Then
\[ [y^m]P_k^{\langle a \rangle}(y) = \frac{1}{k} m^{2k-2} + O(m^{2k-3}),
  \qquad m \to \infty. \]
\end{proposition}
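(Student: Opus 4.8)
The plan is to lift the problem to ordinary integer compositions over the alphabet $\{0,1,\dots,k-1\}$, apply a roots-of-unity multisection to isolate the residue $a$ of the total, and then read off coefficient asymptotics with Theorem~\ref{thm:ratgf}. The combinatorial engine is the observation that a word avoids $1'\hype 2\hype 1''$ if and only if it is \emph{valley-shaped}, i.e.\ weakly decreasing up to some position and weakly increasing thereafter. Indeed an occurrence of the pattern is exactly a triple $i<j<l$ with $w(i),w(l)<w(j)$, a subsequence peak; peeling off the leftmost global minimum shows that a word has such a peak precisely when it fails to be non-increasing-then-non-decreasing. Equivalently, for each threshold $v$ the super-level set $\{\,i : w(i)\ge v\,\}$ must be the complement of an interval, and these co-intervals are nested in $v$.

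Using this, first I would build the bivariate generating function $H_k(z,y)=\sum_{n,m}h_k(n,m)z^n y^m$, where $z$ marks the integer total and $y$ the length, by peeling off the maximal letter $k-1$: in a valley word the copies of $k-1$ can only form an initial block and a terminal block, with a (possibly empty) valley word over $\{0,\dots,k-2\}$ between them. This gives
\[
H_k(z,y)=\frac{1}{1-z^{k-1}y}+\frac{H_{k-1}(z,y)-1}{(1-z^{k-1}y)^2},\qquad H_1(z,y)=\frac{1}{1-y},
\]
where the $-1$ removes the empty middle already counted by the first term. Compositions of $a\in\mathbb{Z}_k$ are those integer compositions whose total is $\equiv a\pmod k$, so the multisection formula yields
\[
P_k^{\langle a\rangle}(y)=\frac1k\sum_{j=0}^{k-1}\omega^{-ja}H_k(\omega^j,y),\qquad \omega=e^{2\pi i/k}.
\]
Each $H_k(\omega^j,y)$ is rational in $y$, so Theorem~\ref{thm:ratgf} applies term by term.

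The asymptotics are then governed by the location and order of the $y$-poles. For the principal term $j=0$ every factor in the recurrence is a power of $1/(1-y)$, and an easy induction shows $H_k(1,y)$ has a pole at $y=1$ of order $2k-1$ whose leading coefficient is $1$ (the recurrence carries the top Laurent coefficient to itself, with base case $H_1$); extracting $[y^m]$ by Theorem~\ref{thm:ratgf} produces the leading term of order $m^{2k-2}$. The hard part will be controlling the nonprincipal terms $j\ne0$: here I would use that $\gcd(k-1,k)=1$, so $\omega^{j(l-1)}=1$ forces $k\mid j$. Consequently, for $j\ne0$ the top-level factor $1/(1-\omega^{j(k-1)}y)^2$ is analytic at $y=1$, and more generally no $j\ne0$ can make all $k-1$ peeling factors singular at a common point of $|y|=1$; thus every pole of $H_k(\omega^j,y)$ on the closed unit disc has order strictly below $2k-1$, contributing only $O(m^{2k-3})$. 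Verifying this pole-order bound uniformly in $j$, and checking that no off-$1$ unit-circle pole climbs to the same order, is the one genuinely delicate point, after which summing the estimates gives the stated result.
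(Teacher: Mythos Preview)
Your proposal is correct and follows essentially the same route as the paper: build a bivariate rational generating function for the pattern-avoiding compositions, apply the roots-of-unity multisection to isolate the residue class $a$, and then read off the asymptotics via Theorem~\ref{thm:ratgf} by comparing pole orders of the principal ($j=0$) and nonprincipal ($j\ne 0$) summands.

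The differences are cosmetic rather than structural. The paper quotes a closed-form expression for the generating function $P_k(x,y)$ over the alphabet $[k]$ from \cite[Example~5.62]{cofc}, whereas you re-derive the equivalent object over $\{0,\dots,k-1\}$ from the valley-word characterization and the peel-the-maximum recurrence; the two are related by $H_k(z,y)=P_k(z,y/z)$. Your pole bound for $j\ne 0$ is also the same observation the paper uses: since the factors $(1-\omega^{j l}y)$ for $l=0,\dots,k-1$ cannot all coincide when $j\ne 0$ (indeed $\omega^{j\cdot 0}=1\ne\omega^{j(k-1)}$ by $\gcd(k,k-1)=1$), the maximal pole order drops from $2k-1$ to at most $2(k-1)$. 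This is not actually delicate---the denominator of $H_k(\omega^j,y)$ is $(1-y)\prod_{l=1}^{k-1}(1-\omega^{jl}y)^2$, and one factor failing to vanish at any given $y_0$ already gives the bound---so you can state it directly rather than flag it as the hard step.
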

\begin{proof}
Let $P_k(x,y)$ be the generating function for integer compositions over the
part set $[k]$ avoiding $1' \hype 2 \hype 1''$, where $x$ marks total and
$y$ marks length.
Example~5.62 in \cite{cofc} provides the expression
\[ P_k(x,y) = \frac{1}{\prod_{d =1}^k (1-x^d y)^2} - \sum_{d =1}^k
  \frac{x^d y}{\prod_{b=d}^k (1-x^b y)^2}. \]

The multisection formula
\cite[Ex.\ 1.1.9]{goulden2004combinatorial}
for power series $F(z) = \sum_n f_n z^n$ is
\[ \frac{1}{k} \sum_{j = 0}^{k-1} e^{-2\pi i ja/k} F(e^{2 \pi ij/k} z)
  = \sum_{n \equiv a \pmod{k}} f_n z^n. \]

Using the multisection formula we have
\begin{align*}
P^{\langle a \rangle}_k(y)
=& \frac{1}{k}\sum_{c=1}^k e^{-2\pi i c a/k } P_k(e^{2\pi i c/k},y) \\
=& \frac{1}{k}\sum_{c=1}^k e^{-2\pi i c a/k }
  \left( \frac{1}{\prod_{d =1}^k (1-e^{2\pi i cd/k} y)^2} - \sum_{d =1}^k
  \frac{e^{2\pi i cd/k} y}{\prod_{b=d}^k (1-e^{2\pi i cb/k} y)^2}
  \right).
\end{align*}
Thus $P^{\langle a \rangle}_k(y)$ is rational.
We claim that any pole of $P^{\langle a \rangle}_k(y)$ other than $y=1$ has
order at most $2(k-1)$.
All poles other than $y=1$ would come from terms where $c \neq k$.
If $c \neq k$, then for $d=1, \ldots, k$, the factor
$e^{2\pi i cd/k}$ takes on at least $2$ different values e.g.\ at $d=1$ and
$d=k$, so we conclude the claim.
We claim the pole at $y=1$ has order $2k-1$.
Such poles can only come from the term $c=k$.
This term is
\begin{align*}
&\frac{1}{\prod_{d =1}^k (1- y)^2} - \sum_{d =1}^k
  \frac{y}{\prod_{b=d}^k (1-y)^2}\\
=& \frac{1}{\prod_{d =1}^k (1- y)^2} - \frac{y}{\prod_{d =1}^k (1- y)^2}
 - \sum_{d =2}^k \frac{y}{\prod_{b=d}^k (1-y)^2} \\
=& \frac{1}{(1-y)^{2k-1}} + O((1-y)^{-2(k-1)}).
\end{align*}
We conclude the second claim and the proposition follows
by applying Theorem \ref{thm:ratgf}.
\end{proof}

Table \ref{tab:set} shows initial coefficients of $P^{\langle a \rangle}_4(y)$.

\begin{table}
\centering
\begin{tabular}[c]{|c|rrrrrrrrrrr|}
\hline
\diagbox{$a$}{$m$} & $0$ & $1$ & $2$ & $3$ & $4$ & $5$ & $6$
  & $7$ & $8$ & $9$ & $10$ \\ \hline
0& 1 & 1 & 4 & 12 & 32 & 71 & 150 & 287 & 517 & 877 & 1436 \\
1& 0 & 1 & 4 & 13 & 34 & 76 & 154 & 294 & 526 & 893 & 1450 \\
2& 0 & 1 & 4 & 12 & 32 & 74 & 152 & 288 & 518 & 883 & 1440 \\
3& 0 & 1 & 4 & 13 & 32 & 75 & 154 & 294 & 522 & 891 & 1450 \\\hline
\end{tabular}
\caption{Counts of $m$-compositions of $a$ over $\mathbb{Z}_4$ avoiding
  $1' \hype 2 \hype 1''$.}
\label{tab:set}
\end{table}

Similar analysis can potentially be performed e.g.\ for compositions avoiding
a length-3 permutation pattern as
enumerated in \cite[Theorem~5.7]{cofc} 
and those avoiding the pattern $1 \hype 1 \hype 2$ as enumerated in
Theorem~5.13 in \cite[p.~139]{cofc}.

\begin{remark}
Note that if we wanted to count compositions mod $k$ using a recurrence
relation that recurses on $k$, we have the following problem.
While we can create a
composition over $[k]$ by creating one over $[k-1]$ and inserting some copies of
$k$, we must know the total of the composition over $[k-1]$ as a value
mod $k$, not mod $k-1$.
So the alphabet and modulus have to be tracked separately.
\end{remark}

\section{Conclusion} \label{sec:conc}

As a conclusion, this section mentions some relevant problems which are as yet
unsolved. The book \cite{cofc} contains a variety of proposed research problems
many of which are also unsolved.

The paper \cite{automata} uses finite automata to count words avoiding a
subsequence pattern.
The only cycles in the automata are loops and so an asymptotic form for
the number of accepted words is obtained directly.
Rather than using the technique of \S \ref{sec:zk}, it may be possible to
count compositions over a finite group that avoid a subsequence pattern by
combining the techniques of \cite{automata} and \S \ref{sec:localpaths}.


Consider locally restricted compositions where the parts come from a finite
generating set of an infinite group.
In the framework of \S \ref{sec:localpaths},
the base digraph $\graphf{D}$ is finite but the derived digraph
$\graphf{D}_\times$ is infinite.
Take for example the infinite group $\mathbb{Z}$ with generating set
$\{-1,0,1\}$.
Unrestricted compositions over $\{-1, 0, 1\}$ with total, say, $0$ no longer
form a regular language but do form a context-free language, recognizable
by a pushdown automaton.
There may be difficulty from finitely generated non-abelian groups, however,
due to the fact that the word problem is undecidable and therefore not
even context-free.
For finitely-generated abelian groups, it is possible that the number of
locally restricted compositions of $a$ is asymptotically independent of
$a$ but it is no longer possible for each total $a$ to be asymptotically
equally likely since the group is infinite.
The recent paper \cite{dahmani2018growth} explores other problems involving
finitely-generated groups and enumeration.



Suppose we have a group $G$ which is infinite but we also have a weight
function $W: G \to \mathbb{Z}_{>0}$.
As long as each preimage $W^{-1}(n)$ is finite, we can define the number of
locally restricted compositions of $a$ over $G$ with a given total weight $n$.
It is plausible that approaches in the above sections and \cite{infinite}
can be applied to this counting problem.


Circular integer compositions (unlabeled cycles weighted by positive integers)
appeared in the early paper \cite{servedio1995bijective}.
More recently, the enumerative study of locally restricted circular integer
compositions has progressed in \cite{hadjicostas2016cyclic,
hadjicostas2017cyclic} which study Carlitz compositions and restrictions on
the set of parts.
The conclusion section of \cite{hadjicostas2017cyclic} suggests expanding to
general local restrictions as in \cite{infinite}.
Other recent work \cite{wang2018meghann} has looked at part sizes in
circular integer compositions.


We can also consider a set of \emph{colored parts} $(j, c)$, where
$j \in \mathbb{Z}_{>0}$ and $c$ comes from a set of colors (with no particular
algebraic structure).
A \emph{colored integer composition} of $n$ is a sequence
$((j_1, c_1), \ldots, (j_m, c_m))$ where $\sum_i j_i = n$.
Questions about restricted integer compositions can be asked again for colored
integer compositions and for colored versions of compositions over a finite
group.
Some results and open problems mentioned in \cite{bg2018} are relevant.


In the sections above on local restrictions, we focus on \enquote{implicit}
results that cover a wide range of particular restrictions.
What this does not provide is a \enquote{simple} formula for exact counts,
or explicit constants within asymptotic expressions.
So there is the possibility of finding (more) explicit, but less general,
expressions to complement our implicit ones.
For a deep discussion of the meaning of explicitness in enumeration,
see \cite{answer, pak}.


In \S\ref{sec:subseq} we count $k$-ary words of length $m$ avoiding a
subsequence pattern set $T$.
A further parameter can be tracked, namely the length $p$ of the longest
contiguous run of a single letter.
This problem for subword pattern avoidance is addressed in
\cite{bender2016locally} but there is no previous work for subsequence
patterns.
It is quickly deduced that this is roughly equivalent to counting words
that avoid $T$ and also avoid the subword pattern $1^p$.
Combining subsequence and subword pattern avoidance presents a challenge.


Finite mappings from a set to itself correspond to functional digraphs, which
have a well-known structure \cite{flajolet1989random}.
Research such as \cite{arney1982random} has enumerated functional digraphs
with a kind of local restriction: the indegree of each vertex (a.k.a.\ the
number of its preimages) must lie within a fixed set $\Xi$.
More recent papers such as \cite{daniel} consider the distribution
of the least common multiple $T$ and product $B$ of the cycle lengths in
restricted functional digraphs, for particular $\Xi$.
These values $T, B$ are related to the sequence of iterations of the mapping.
Problems remain such as expanding to more general $\Xi$.


\ifofficial
\else
  \vspace{2\baselineskip}
  \textbf{Acknowledgements.}
The author thanks Zhicheng Gao, and also Toufik Mansour, Mike Newman, Daniel
Panario, and Michiel Smid for suggestions and corrections.
\fi

\cleardoublepage
\addcontentsline{toc}{section}{References}

\bibliography{bib}
\bibliographystyle{plain}
\end{document}